\theoremstyle{plain}
\newtheorem{lem}{Lemma}[subsection]
\newtheorem{prop}[lem]{Proposition}
\newtheorem{cor}[lem]{Corollary}
\theoremstyle{definition}
\newtheorem{const}[lem]{Construction}
\newtheorem{defi}[lem]{Definition}
\theoremstyle{remark}
\newtheorem*{remark}{Remark}
\newtheorem{example}{Example}[section]
\newtheorem{nota}[example]{Notation}
\newcommand{\trop}{\textnormal{trop}}
\numberwithin{equation}{section}
\newcommand{\bbG}{\mathbb{G}}
\newcommand{\bbN}{\mathbb{N}}
\newcommand{\bbR}{\mathbb{R}}
\newcommand{\bbV}{\mathbb{V}}
\newcommand{\bbZ}{\mathbb{Z}}
\newcommand{\cM}{\mathcal{ M}}
\newcommand{\cX}{\mathcal{ X}}
\newcommand{\cY}{\mathcal{ Y}}
\newcommand{\op}{\textnormal{op}}
\newcommand{\rc}{\operatorname{rc}}
\newcommand{\Hom}{\operatorname{Hom}}
\newcommand{\Top}{\operatorname{Top}}
\newcommand{\Lin}{\operatorname{Lin}}
\newcommand{\Id}{\operatorname{Id}}
\newcommand{\add}{\operatorname{Add}}
\newcommand{\colim}{\operatorname{colim}}
\newcommand{\pr}{\operatorname{pr}}
\newcommand{\Star}{\operatorname{Star}}
\newcommand{\drgdf}{\operatorname{drgdf}}
\newcommand{\rgdfst}{\operatorname{rgdfst}}
\newcommand{\val}{\operatorname{val}}
\newcommand{\Mtrop}{\mathrm{M}^{\trop}}
\renewcommand{\Im}{\operatorname{Im}}
\newcommand{\ord}{\operatorname{ord}}
\newcommand{\Aut}{\operatorname{Aut}}
\newcommand{\POIC}{\operatorname{POIC}}
\newcommand{\POICCmplxs}{\operatorname{pCmplxs}}
\newcommand{\POICSpcs}{\operatorname{pSpcs}}
\newcommand{\LinPOICCmplxs}{\operatorname{Lin-pCmplxs}}
\newcommand{\dist}{\operatorname{dist}}
\newcommand{\ST}{\operatorname{ST}}
\newcommand{\tint}{\textnormal{int}}
\newcommand{\pcomplexes}{\textnormal{c}}
\newcommand{\pspaces}{\textnormal{s}}
\newcommand{\st}{\operatorname{st}}
\newcommand{\Mf}{\operatorname{Mf}}
\newcommand{\ft}{\operatorname{ft}}
\newcommand{\Subd}{\operatorname{Subd}}
\newcommand{\fish}{\ensuremath{\operatorname{\begin{tikzpicture}
\draw[] (-0.5em,0.25em)--(-0.25em,0)--(-0.5em,-0.25em);
\draw[] (0,0) circle (0.25em);
\end{tikzpicture}}}}
\newcommand{\tailcirc}{\ensuremath{\operatorname{\begin{tikzpicture}
\draw[] (-0.5em,0.25em)--(-0.25em,0)--(-0.5em,-0.25em);
\draw[] (-0.25em,0)--(0.25em,0);
\draw[] (0.5em,0) circle (0.25em);
\end{tikzpicture}}}}
\newcommand{\eyetwolashes}{\ensuremath{\operatorname{\begin{tikzpicture}
\draw[] (-0.5em,0.25em)--(-0.25em,0);
\draw[] (0,0) circle (0.25em);
\draw[] (0.25em,0)--(0.5em,0.25em);

\end{tikzpicture}}}}
\title{A combinatorial extension of tropical cycles}
\author{Diego A. Robayo Bargans}
\address{FB Mathematik, RPTU Kaiserslautern, 67663 Kaiserslautern, Germany}
\email{robayo@mathematik.uni-kl.de}
\begin{document}

\begin{abstract}   
This article discusses a combinatorial extension of tropical intersection theory to spaces given by glueing quotients of partially open convex polyhedral cones by finitely many automorphisms. This extension is done in terms of linear poic-complexes and poic-fibrations, mainly motivated by the case of the moduli spaces of tropical curves of arbitrary genus and marking. We define tropical cycles of a linear poic-complex and of a poic-fibration, and discuss the pushforward maps in these situations. In the context of moduli spaces of tropical curves, we also discuss ``clutching morphisms'' and ``forgetting the marking'' morphisms. In a subsequent article we apply this framework to moduli spaces of discrete admissible covers and study the loci of tropical curves that appear as the source of a degree-$d$ discrete admissible cover of a genus-$h$ $m$-marked tropical curve, for fixed $d$, $h$ and $m$.
\end{abstract}

\keywords{partially open cones, tropical curves, tropical cycles, tropical moduli spaces.}

\subjclass{14T15, 52B20. }

\maketitle

\section{Introduction}

Tropical intersection theory is an established research field whose foundations are closely linked to enumerative applications (for instance \cite{Mikhalkin1}, \cite{GathmannMarkwigKFWDVVETG}, \cite{AllermannRau}). It initially dealt with cycles in tropical fans (\cite{GathmannKerberMarkwigTFMSTC}) and tropical varieties, and has been more generally extended to tropical spaces (\cite{CavalieriGrossMarkwig}). Some particular spaces of interest are the moduli spaces $\cM_{g,A}^{\trop}$ of $A$-marked genus-$g$ tropical curves, where $g\geq 0$ is an integer and $A$ is a finite set (subject to the condition $2g-2+\#A>0$). If $g=0$, these are well-behaved and fully understood tropical varieties. In the general case, these spaces are neither tropical varieties nor tropical spaces, and sadly the existing theory cannot be directly applied to them. In this article, we propose a combinatorial extension of the classical intersection theory to spaces such as $\cM_{g,A}^{\trop}$. This project arose as the technical background developed while seeking to generalize the results of \cite{VargasDraisma}. Their approach relies on a deformation argument and a thorough examination of certain paths in $\cM_g^\trop$, as well as the combinatorics of \emph{DT-morphisms}. In a subsequent article, we apply our framework to moduli spaces of discrete admissible covers and obtain a generalization of their results.

In what follows, a tropical curve simply means a discrete graph (it may have legs) with a given metric. No non-trivial weight functions on the vertices are considered and, for the present purposes, it is assumed that a discrete graph is connected with all its vertices at least $3$-valent. Fixing the underlying discrete graph and varying the metric gives rise to an open cone, where the faces of its closure correspond to contractions of edges of the graph. We consider the partially open polyhedral cone, containing this open cone and the additional faces that correspond to genus preserving edge contractions. Given an integer $g\geq 0$ and a finite set $A$ with $2g-2+\#A>0$, the space $\cM_{g,A}^\trop$ is obtained by taking all the partially open integral cones associated to (isomorphism classes of) genus-$g$ $A$-marked discrete graphs and identifying points representing isometric tropical curves. The presence of possible non-trivial automorphisms at the associated cone of a discrete graph of positive genus forces this presentation to not be a partially open fan (or partially open polyhedral complex), in contrast to the trivial genus case where the space is naturally a simplicial fan. The gist of our approach is to describe tropical cycles in $\cM^\trop_{g,A}$ by means of rational curves through ``cutting'' and ``glueing'' edges. More precisely, let $\mathbbm{g}=\{1,\dots,g,1^*,\dots,g^*\}$ and consider the map
\begin{equation*}
    \st_{g,A}\colon\cM^\trop_{0,A\sqcup \mathbbm{g}}\times \bbR^g_{>0}\to\cM^\trop_{g,A}, 
\end{equation*}
where $\st_{g,A}(\Gamma,\delta_1,\dots,\delta_g)$ is the graph given by connecting the $i$- and $i^*$-legs by an interval of length $\delta_i>0$ (and forgetting the $i$- and $i^*$-marked legs). Combinatorially, we are regarding the input tree as a spanning tree of the target graph. This map is an example of a poic-fibration, and we make use of it to describe cycles of $\cM^\trop_{g,A}$ as particular subcycles of $\cM^\trop_{0,A\sqcup \mathbbm{g}}\times \bbR_{>0}^g$. More precisely, we speak of $\st_{g,A}$-equivariant cycles. 

The previous construction is our motivating example from nature and, as it shows, prompts us directly to partially open integral cones (poics), partially open fans (as in \cite{GathmannOchseMSCTV}), and more generally to introduce poic-complexes. In simple terms, these are to partially open fans, as cone complexes (\cite{GrossITTTE}) are to fans. The underlying categorical point of view for this definition is further endorsed by the subsequent introduction of poic-fibrations and the role of automorphisms in the whole. Additional approaches to study $\cM_{g,A}^\trop$ have been done by considering non-trivial weight functions on the underlying discrete graphs, and using a barycentric subdivisions to understand its structure as an extended cone complex \cite{AbramovichCaporasoPayneTTMSC}. However, following our original motivation and keeping in mind subsequent constructions involving tropical admissible covers, we do not consider non-trivial weight functions. Nevertheless, the intrinsic interaction of the combinatorics of barycentric subdivisions and graph automorphisms is present in a different guise in our work through the compatibility of a subdivision with a given poic-fibration. 

 In \cite{CavalieriGrossMarkwig} a different approach to tropical intersection theory in $\cM_g^\trop$ (more generally in \emph{tropical spaces}) has been proposed, with the goal of introducing $\psi$-classes for moduli spaces of tropical curves of arbitrary genus. There the authors take a stacky approach to this space in the site of tropical spaces, and also allow for non-trivial weight functions on the underlying discrete graphs. This yields a non-geometric stack, and impedes the introduction of tropical cycles on these moduli spaces. They do introduce line bundles and divisors thereon, and describe how to intersect them with tropical cycles in spaces equipped with a morphism to the stack. Which is of special importance due to their interest in introducing tropical $\psi$-classes. We content ourselves with exhibiting the relationship between our methods and the tropical cycles on their cycle rigidification of Mumford curves.

A synopsis of the paper is as follows: in section $2$ the preliminaries of partially open polyhedral cones, discrete graphs, and the relevant notions concerning tropical curves are introduced. The definition of a discrete graph follows that of \cite{ChanGalatiusPayneTCGCTWCMg} and \cite{LenUlirschZakharovATC}, which is recalled for the sake of convenience. The necessary categories of discrete graphs are introduced in \ref{ssec: our categories of graphs}, with a detailed explanation of the corresponding morphisms. The section finishes with a presentation of the moduli space of $A$-marked genus-$g$ curves. Section $3$ deals with the notion of (linear) poic-complexes and the tropical intersection theory thereof. Examples and comparisons to the classical tropical intersection theory are realized in parallel to the introduction of these structures. In section $4$ the notions of poic-spaces and poic-fibrations are introduced. The notion of poic-spaces mimicks the presentation of $\cM_g^\trop$. Namely, a poic-space is space glued out of quotients of partially open integral cones by finitely many automorphisms. A poic-fibration is a special kind of morphism from a (linear) poic-complex to a poic-space with certain lifting properties. This is the central notion of the article and, together with the previous section, forms the core of this work. Additionally, the aforementioned poic-fibration over the moduli space of tropical curves of a fixed genus and marking is introduced, along with the ``forgetting the marking'' and ``clutching'' morphisms. In the spirit of clarity and better presentation, multiple technical constructions are postponed to the appendix. 

\subsection*{Acknowledgments:} I would like to thank A. Gathmann for several helpful and inspiring discussions that made possible this project and brought it into existence, as well as for proofreading and patience. Interactions made possible by the SFB-TRR 195 of the DFG have also benefited this project. I also thank A. Vargas and J. M. P\'erez for helpful conversations. This project has been carried out and realized through financial support of the DAAD. 

\section{Preliminaries} In the interest of clarity and readability, we fix notation and state precise definitions for our basic objects of interest: partially open integral (polyhedral) cones, and discrete graphs. We then describe a category of graphs and associate to each graph in this category a partially open integral (polyhedral) cone. These are the cones of metrics, and together with the previous category give rise to a presentation of the moduli space of tropical curves of a fixed genus and marking.

\subsection{Partially open integral cones.} Suppose $N$ is a free abelian group of finite rank and consider the vector space $N_\bbR:=N\otimes_\bbZ\bbR$. The dual of $N$ is the free abelian group (of finite rank) $N^\vee = \Hom_\bbZ(N,\bbZ)$, and there is a natural morphism $N^\vee\to N_\bbR^\vee$ that gives rise to an isomorphism $N^\vee \otimes_\bbZ\bbR\cong N_\bbR^\vee$. In this way, an element $f\in N^\vee$ gives rise to a closed half-space and an open half-space of $N_\bbR$ correspondingly:
\begin{align*}
    H_f &:= \{ x\in N_\bbR \colon f(x)\geq 0\},&
    H_f^o &:= \{ x\in N_\bbR\colon  f(x)>0\}.
\end{align*} 

\begin{defi}\label{defi: poic of Nr}
A \emph{partially open integral cone of $N_\bbR$} is a non-empty subset $\sigma\subset N_\bbR$ given as a finite intersection of open and closed half-spaces, i. e.
\begin{equation}
    \sigma = \bigcap_{i=0}^nH_{f_i} \cap \bigcap_{j=0}^m H_{g_j}^o\label{eq: poic}
\end{equation} 
where $f_i\in N^\vee$ and $g_j\in N^\vee$ for $0\leq i\leq n$ and $0\leq j\leq m$.
\end{defi}

\begin{nota}
If $\sigma$ is a partially open integral cone of $N_\bbR$ given by \eqref{eq: poic}, then the relative interior of $\sigma$ is the partially open integral cone of $N_\bbR$ given by
    \begin{equation*}
        \sigma^o = \bigcap_{\substack{i=0, \\ \sigma \subset \{f_i=0\}}}^nH_{f_i}\cap\bigcap_{\substack{i=0,\\
        \sigma\not\subset \{f_i=0\}}}^nH_{f_i}^o \cap \bigcap_{j=0}^m H_{g_j}^o.
    \end{equation*}
    Naturally if $\sigma$ is a partially open integral cone of $N_\bbR$, then $\sigma^o$ is also a partially open integral cone. In addition, we denote denote by $\Lin(\sigma)$ the linear subspace generated by $\sigma$, and by $d(\sigma)$ the dimension of $\sigma$ (namely $d(\sigma) = \dim \Lin(\sigma)$).
\end{nota}

\begin{defi}
Suppose $\sigma$ is a partially open integral cone of $N_\bbR$. A \emph{face} $\tau$ of $\sigma$ is a partially open integral cone given by $\tau = \sigma\cap H_f$, where $f\in N^\vee$ is such that $\sigma\subset H_{-f}$. We denote this situation by $\tau\leq \sigma$. If $\tau\leq \sigma$ and $\tau\neq \sigma$, then $\tau$ is called a \emph{proper face} and we emphasize this distinction by $\tau\lneq\sigma$.
\end{defi}

\begin{remark}
If $\tau\leq \sigma$, then $\Lin(\tau)$ is a subspace of $\Lin(\sigma)$ and $d(\tau)\leq d(\sigma)$. 
\end{remark} 

\begin{defi}\label{defi: poic}
A \emph{partially open integral cone} consists of a pair $(\sigma, N)$ where $N$ is a free abelian group of finite rank and $\sigma$ is a full-dimensional partially open polyhedral cone of $N_\bbR$. A \emph{face} of a partially open integral cone $(\sigma,N)$ is $(\tau,\Lin_N(\tau))$ where $\tau$ is a face of $\sigma$ in $N_\bbR$ and $\Lin_N(\tau) = N\cap \Lin(\tau)$. A \emph{subcone} of a partially open integral cone $(\sigma,N)$ is a partially open integral cone $(\xi,M)$ such that $\xi\subset \sigma$ and $M = N\cap \Lin(\xi)$.
\end{defi}

\begin{remark}
    If $(\sigma,N)$ is a partially open integral cone, then $(\sigma^o,N)$ is also a partially open integral cone.
\end{remark}

Hereafter, we will refer to a partially open integral cone simply as a poic. Observe that Definition \ref{defi: poic of Nr} is different from Definition \ref{defi: poic}: The latter consists of a pair, whereas the former merely refers to a subset of a specified vector space. Due to subsequent sections where we want to consider multiple cones that may not naturally inhabit a common vector space, we insist on this difference and draw attention to it.

When no risk of confusion seems near, the following notation is adopted:
\begin{nota}
If the pair $(\sigma,N)$ is a poic, then the lattice $N$ will be denoted by $N^\sigma$ and the pair simply by $\sigma$.    
\end{nota}

\begin{const}
Suppose $(\sigma,N)$ and $(\xi,M)$ are two poics. There is a natural isomorphism $N_\bbR\oplus M_\bbR \cong (N\oplus M)_\bbR$. Separately, the product $\sigma\times \xi$ is a partially open integral cone of $N_\bbR\oplus M_\bbR$. Abusing the aforementioned natural identification, let us denote by $\sigma\times\xi$ the corresponding cone of $(N\oplus M)_\bbR$, so that the pair $(\sigma\times\xi,N\oplus M)$ is also a poic.
\end{const}

\begin{defi}
Let $\sigma$ and $\xi$ be two poics. A \emph{morphism} $f\colon\sigma\to \xi$ consists of a linear morphism $f\colon  N^\sigma\to N^\xi$, such that the induced linear map $f_\bbR:N^\sigma_\bbR\to N^\xi_\bbR$ sends $\sigma$ to $\xi$. A morphism $f\colon \sigma\to \xi$ is called a \emph{face-embedding}, if $f$ identifies $\sigma$ with a face of $\xi$ (i. e. if $f_\bbR$ is injective and $f_\bbR(\sigma)$ is a face of $\xi$).
\end{defi}

\begin{example}
    If $\sigma$ and $\xi$ are two poics, then $\sigma\times \xi$ is also a poic and both projections $\sigma\times\xi\to\sigma$ and $\sigma\times\xi\to \xi$ are morphisms of poics.
\end{example}

\begin{example}
If $\tau$ is a face of $\sigma$ then the inclusion $\tau\to \sigma$ is a morphism (and undoubtedly a face-embedding) where the morphism between the lattices is the natural inclusion.
\end{example}

The data of poics and morphisms thereof, together with natural composition of maps, defines a category, which we denote by $\POIC$. A poic $\sigma$ carries naturally an underlying topological space: the cone itself. More precisely, if $(\sigma,N^\sigma)$ is a poic, let $|\sigma|$ denote the topological space given by $\sigma\subset N_\bbR$ with the Euclidean topology. If $\xi$ is also a poic and $f\colon \sigma\to\xi$ is a morphism, then $f$ induces a linear map $f\colon N^\sigma_\bbR\to N^\xi_\bbR$ which restricts to a continuous map
$|f|\colon |\sigma|\to|\xi|$. This association gives rise to a functor
\begin{equation}
    |\bullet|\colon \POIC\to \Top,\sigma\mapsto |\sigma|,\label{eqdefi: realization of poics}
\end{equation}
which we call the \emph{realization functor}. Furthermore, if $\sigma$ is a poic then $|\sigma|$ is called the \emph{realization of $\sigma$} (analogously with morphisms).

\subsection{Discrete graphs.} We use the definition of discrete graph with legs as in \cite{LenUlirschZakharovATC}.
 
\begin{defi}
    A \emph{discrete graph (possibly with legs)} $G$ consists of the data of a triple $(F(G),r_G,\iota_G)$ where:
    \begin{itemize}
        \item $F(G)$ is a finite set. 
        \item $r_G\colon F(G)\to F(G)$ is a map of sets, which we call the root map.
        \item $\iota_G\colon F(G)\to F(G)$ is an involution subject to $\iota_G\circ r_G=r_G$. 
    \end{itemize}
    The \emph{set of vertices} of $G$ is the subset of $F(G)$ given by $\Im(r_G)$. It will be denoted by $V(G)$ and its complement by $H(G)$. 
\end{defi}

Suppose $G$ is a discrete graph. Since the involution $\iota_G$ satisfies $\iota_G\circ r_G=r_G$, it restricts to an involution of $H(G)$ and hence partitions this set into orbits of size $2$ or $1$. We make use of the following definitions:

\begin{itemize}
    \item A \emph{leg} of $G$ is a size-$1$ orbit of $H(G)$. The set of legs will be denoted by $L(G)$.
    \item For $\ell\in L(G)$ given by $\ell=\{L\}\subset H(G)$, the \emph{boundary} is $\partial \ell \colon =\{ r_G(L)\}$.
    \item An \emph{edge} of $G$ is a size-$2$ orbit of $H(G)$. The set of edges will be denoted by $E(G)$.
    \item For $e\in E(G)$ given by $e=\{E,E^\prime\}\subset H(G)$, the \emph{boundary} is $\partial e\colon = \{r_G(E),r_G(E^\prime)\}$.
    \item A vertex is \emph{incident} to an edge or a leg if it is in its boundary.
    \item  The graph $G$ is \emph{connected}, if for any two different vertices $V,W\in V(G)$ there exists a sequence $e_1,\dots,e_n\in E(G)$ such that:
        \begin{enumerate}
            \item $V\in \partial e_1$ and $W\in \partial e_n$.
            \item For any $1\leq i\leq n-1$, $\partial e_i\cap \partial e_{i+1} \neq \varnothing$.
        \end{enumerate}
    \item The \emph{genus} of a connected graph $G$ is the number $g(G)\colon =\#E(G)-\#V(G)+1$.
    \item A \emph{tree} is a connected graph of genus $0$.
    \item A \emph{subgraph} $K$ of $G$ is a graph $(F(K),r_K,\iota_K)$ such that $F(K)\subset F(G)$ with $r_G|_{F(K)}=r_K$ and $\iota_G|_{F(K)} =\iota_K$.
    \item The \emph{valency} of $V\in V(G)$ is the number $\val(v) := \# r_G^{-1}(V)-1$. 
\end{itemize}

\begin{example}
    The theta graph (see Figure \ref{fig: theta graph}) is given by two vertices $A$ and $B$, and three edges $e_1$, $e_2$ and $e_3$ such that $\partial e_i = \{A,B\}$ (for $1\leq i\leq 3)$. To present this as a discrete graph we consider the set (the elements are just names)
    \begin{equation*}
        F(G) \colon = \{AB_1,AB_2,AB_3,BA_1,BA_2,BA_3,A,B\},
    \end{equation*}
    together with the root map
    \begin{equation*}
        r_G\colon F(G)\to F(G),\begin{array}{cc}
            r_G(AB_i)= A, & 1\leq i\leq 3 , \\
             r_G(BA_i)= B,& 1\leq i\leq 3, \\
             r_G(A) = A, & \\
             r_G(B) = B,
        \end{array}
    \end{equation*}
    and involution 
    \begin{equation*}
        \iota_G\colon F(G)\to F(G),\begin{array}{cc}
             \iota_G(AB_i)=BA_i, &1\leq i\leq 3,  \\
             \iota_G(BA_i)=AB_i, &1\leq i\leq 3,  \\
             \iota_G(A)=A, &  \\
             \iota_G(B)=B. &
        \end{array}
    \end{equation*}
    It is immediate to check that the previous data gives rise to a graph. In addition, $V(G)= \{A,B\}$, $L(G) = \varnothing $, and $E(G) = \{e_1,e_2,e_3\}$ where for $1\leq i\leq 3$
    \begin{equation*}
        e_i \colon =\{AB_i,BA_i\}.
    \end{equation*}
    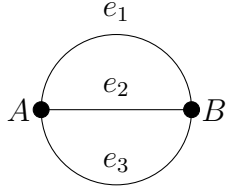
\begin{figure}[htbp]
        \centering
        \begin{tikzpicture}
            \draw[fill=none](0,0) circle (1);
            \draw[fill=black](-1,0) circle (3pt);
            \draw[fill=black](1,0) circle (3pt);
            \draw[](-1,0)--(1,0);
            \node[] at (-1.3,0) {$A$};
            \node[] at(1.3,0) {$B$};
            \node[] at (0,1.3) {$e_1$};
            \node[] at (0,0.3) {$e_2$};
            \node[] at (0,-0.7) {$e_3$};
        \end{tikzpicture}
        \caption{The theta graph}
        \label{fig: theta graph}
    \end{figure}
    
\end{example}

\begin{nota}
    Whenever we depict a graph, we will do so as in Figure \ref{fig: theta graph}. Namely, both the vertices and edges will be labeled.
\end{nota}

More than maps between graphs, we care for the bijective maps between them. However, it is neither burdensome nor  fruitless to state a general definition for what we call maps of graphs.

\begin{defi}\label{defi: maps of graphs}
 Suppose $G_1$ and $G_2$ are discrete graphs. A \emph{map of graphs} $f\colon G_1\to G_2$ is a map of sets $f\colon F(G_1)\to F(G_2)$ such that:
\begin{itemize}
    \item It commutes with the root maps (i. e. $r_{G_2}\circ f = f\circ r_{G_1}$).
    \item It commutes with the involution maps (i. e. $\iota_{G_2}\circ f = f\circ \iota_{G_1}$).
\end{itemize}
If the map $f\colon F(G_1)\to F(G_2)$ is a bijection, then the map of graphs is additionally called \emph{bijective}. In this case, $f$ induces a set bijection
$E(f):E(G_1)\to E(G_2)$.
\end{defi}
As will be apparent afterwards, edge contractions of graphs are of crucial interest for our endeavors. Hence, we carry out this construction separately, dealing meticulously with notation.

\begin{const}\label{const: edge contraction}
    Let $G$ be a discrete graph, and suppose $e\in E(G)$ is an edge of $G$. Let $F(G/e)$ denote the set obtained from $F(G)$ by identifying the subset $e\cup\partial e\subset F(G)$ into a single element $V_e$. This identification gives rise to a natural surjective map $p_e\colon F(G)\to F(G/e)$. Furthermore, the following maps are well-defined
    \begin{align*}
        &r_{G/e}\colon  F(G/e)\to F(G/e),& H\mapsto \begin{cases}
            p_e(r_G(H)), &\textnormal{ if }H\neq V_e, \\
             V_e,&\textnormal{ if }H=V_e,\\
        \end{cases}\\
        &\iota_{G/e}\colon F(G/e)\to F(G/e),&H \mapsto \begin{cases}
            p_e(\iota_G(H)), &\textnormal{ if }H\neq V_e,\\
            V_e,&\textnormal{ if }H=V_e,\\
        \end{cases}\\
    \end{align*}
    and these maps satisfy $\iota_{G/e}\circ r_{G/e} = r_{G/e}$.
\end{const}

\begin{defi}
    For a discrete graph $G$ with $e\in E(G)$, the \emph{contraction of $e$ in $G$} is the graph $G/e$ given by the triple $(F(G/e),r_{G/e},\iota_{G/e})$.
\end{defi}

We leave the proof of the next lemma to the reader. 

\begin{lem}\label{lem: contraction of two edges}
    Let $G$ be a discrete graph, with $e_1,e_2\in E(G))$ two different incident edges. Then $\left(G/e_1\right)/e_2$ is naturally bijective to $\left(G/e_2\right)/e_1$.
\end{lem}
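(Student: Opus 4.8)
The plan is to prove the claimed natural bijection $(G/e_1)/e_2 \cong (G/e_2)/e_1$ by exhibiting an explicit bijection of the underlying half-edge sets and checking that it commutes with the root maps and the involutions, as required by Definition \ref{defi: maps of graphs}. The key observation is that each iterated contraction identifies a certain subset of $F(G)$ to a single point, and since $e_1$ and $e_2$ are incident (i.e.\ share a vertex), the two contraction orders collapse \emph{the same} subset of $F(G)$, just in a different order. So I would first pin down, concretely, which subset of $F(G)$ gets identified in each case.

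First I would unwind the definitions. Contracting $e_1$ identifies $e_1 \cup \partial e_1$ to a single vertex $V_{e_1}$, via the surjection $p_{e_1}\colon F(G)\to F(G/e_1)$. Now in $G/e_1$ the edge $e_2$ persists (its two half-edges survive since $e_2\neq e_1$), but because $e_1$ and $e_2$ are incident, one endpoint of $e_2$ has been merged into $V_{e_1}$; thus $\partial_{G/e_1}(e_2) = \{V_{e_1}, W\}$ for some surviving vertex $W$ (the other endpoint of $e_2$). Contracting $e_2$ in $G/e_1$ then identifies $e_2 \cup \{V_{e_1}, W\}$ to a point. Tracing back through $p_{e_1}$, the total subset of $F(G)$ that becomes a single element of $(G/e_1)/e_2$ is $e_1 \cup e_2 \cup \partial e_1 \cup \partial e_2$. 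The point is that this union is manifestly symmetric in $e_1$ and $e_2$, so repeating the computation for $(G/e_2)/e_1$ yields the identical subset. This symmetry is the heart of the argument, and once it is made precise the bijection writes itself.

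Concretely, I would let $S := e_1\cup e_2\cup\partial e_1\cup\partial e_2\subset F(G)$ and note that both composite surjections $F(G)\to F((G/e_1)/e_2)$ and $F(G)\to F((G/e_2)/e_1)$ factor through the quotient of $F(G)$ that collapses exactly $S$ to a single element while being injective on the complement $F(G)\setminus S$. This gives a canonical bijection $\phi\colon F((G/e_1)/e_2)\to F((G/e_2)/e_1)$: it is the identity on the common image of $F(G)\setminus S$ and sends the merged vertex coming from $S$ on one side to the merged vertex coming from $S$ on the other. To finish I would verify that $\phi$ commutes with $r$ and $\iota$. On elements outside the collapsed vertex this is immediate since both iterated root maps and involutions are computed as $p_{e_2}\circ p_{e_1}$ (respectively $p_{e_1}\circ p_{e_2}$) applied to $r_G$ and $\iota_G$, and these agree on $F(G)\setminus S$; on the collapsed vertex both $r$ and $\iota$ return that vertex itself by construction, matching under $\phi$. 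A small compatibility check is also needed where an element $H\notin S$ has $r_G(H)\in S$ or $\iota_G(H)\in S$, but in every such case both sides send $H$ to the single merged vertex, so the maps still agree.

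The main obstacle, such as it is, will be bookkeeping rather than mathematics: one must be careful that the half-edges of $e_2$ genuinely survive the first contraction (guaranteed by $e_1\neq e_2$) and that ``incident'' is used correctly to locate which endpoint of $e_2$ lands in $V_{e_1}$. There is a mild case distinction depending on how many vertices $e_1$ and $e_2$ share: generically $\partial e_1$ and $\partial e_2$ meet in exactly one vertex, but if both endpoints coincide (parallel edges) or if either edge is a loop, the set $S$ is smaller, and one should check the bijection $\phi$ is still well defined. Since the description of $S$ as $e_1\cup e_2\cup\partial e_1\cup\partial e_2$ is symmetric regardless of these coincidences, the argument goes through uniformly, and this is precisely why phrasing everything in terms of the single collapsed subset $S$ is the cleanest route.
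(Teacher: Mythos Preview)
The paper does not actually give a proof of this lemma; the line immediately preceding it reads ``We leave the proof of the next lemma to the reader.'' Your approach is correct and is exactly the natural way to fill in this omitted argument: identify the symmetric subset $S = e_1\cup e_2\cup \partial e_1\cup \partial e_2$ that gets collapsed under either order of contraction, observe that both composite quotient maps $F(G)\to F((G/e_i)/e_j)$ are bijective on $F(G)\setminus S$ and send $S$ to the single new vertex, and then verify compatibility with the root and involution maps case by case. Your handling of the edge cases (parallel edges, loops) via the uniform description of $S$ is clean and correct.
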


Suppose $K$ is a legless subgraph of $G$. It is clear from Lemma \ref{lem: contraction of two edges} that contracting the edges of $K$ in $G$ in any order will produce naturally bijective distinct graphs. The distinction between them only comes from the newly introduced vertices in the last step of the successive contractions. The following definition is made in order to dump these natural identifications and replace them by a single object:

\begin{defi}
Let $K$ be a legless subgraph of $G$ with $K=K_1\sqcup \dots \sqcup K_m$ its connected components. The \emph{contraction of $K$ in $G$} is the graph $G/K$ obtained by the succesive contractions of the edges of $K_1, \dots ,K_m$, where the new vertex obtained by the contraction of all the edges of the component $K_i$ is denoted by $V_{K_i}$.
\end{defi}

We remark that if $K$ is subgraph of $G$ then the contraction of $K$ in $G$ is a new graph $G/K$ whose set of vertices consists of $V(G)\backslash V(K)$ and one additional vertex from each component of $K$. Some immediate computations show that if $G$ is connected, then so is $G/K$ (for arbitrary $K$), and when $K$ is connected
\begin{equation*}
g(G/K) = g(G)-g(K).
\end{equation*}

\begin{lem}\label{lem: map of graphs is contraction}
    Suppose $f\colon G_1\to G_2$ is a map of graphs such that:
    \begin{itemize}
        \item For every $V\in V(G_2)$, the inverse image $f^{-1}(V)\subset F(G_1)$ is a legless tree.
        \item For every $e\in E(G_2)\cup L(G_2)$, its inverse image under $f$ consists of a single edge or leg of $G_1$.
    \end{itemize}
    Then there is a natural bijective map of graphs $G_1/f^{-1}\left(V(G_2)\right)\to G_2$.
\end{lem}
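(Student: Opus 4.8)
The plan is to construct the claimed bijective map of graphs explicitly and then verify it is well-defined and bijective. First I would set $K := f^{-1}(V(G_2))$, understood as the subgraph of $G_1$ whose flags are the preimages under $f$ of the vertices of $G_2$; by the first hypothesis, over each $V\in V(G_2)$ the fiber $f^{-1}(V)$ is a legless tree $K_V$, and these are disjoint (since the $V$'s are distinct and $f$ is a function on flags), so $K=\bigsqcup_{V\in V(G_2)}K_V$ is exactly the decomposition of $K$ into connected components. Then $G_1/K$ has vertex set $(V(G_1)\setminus V(K))\cup\{V_{K_V}\}_{V\in V(G_2)}$; but every vertex of $G_1$ lies in some fiber $f^{-1}(V)$, hence in $V(K)$, so in fact $V(G_1)\setminus V(K)=\varnothing$ and the vertices of $G_1/K$ are precisely the new vertices $V_{K_V}$, one for each $V\in V(G_2)$.

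Next I would define the candidate map $\bar f\colon G_1/K\to G_2$ at the level of flag sets. Recall $F(G_1/K)$ is obtained from $F(G_1)$ by the surjection collapsing each $K_V\cup\partial K_V$ to the single flag $V_{K_V}$; denote this quotient by $p\colon F(G_1)\to F(G_1/K)$. I claim $f$ descends along $p$: one checks that $f$ is constant on each collapsed set, sending $V_{K_V}$ to $V$ and, for a non-collapsed flag $H$ (i.e. a half-edge belonging to an edge of $G_1$ not in $K$), sending $p(H)$ to $f(H)$. Using the universal property of the quotient, this defines $\bar f$ uniquely with $\bar f\circ p = f$. I would then verify $\bar f$ is a map of graphs, i.e. that it commutes with $r$ and $\iota$: this follows because $f$ already commutes with $r_{G_1},\iota_{G_1}$ and the induced maps $r_{G_1/K},\iota_{G_1/K}$ from Construction \ref{const: edge contraction} are precisely the descents of $r_{G_1},\iota_{G_1}$ along $p$, so the two descents agree.

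It then remains to show $\bar f$ is bijective on flags, which is where the two hypotheses combine. On vertices, $\bar f$ sends $V_{K_V}\mapsto V$, and by the count above this is a bijection $V(G_1/K)\to V(G_2)$. On the remaining flags (the half-edges of edges and legs), the second hypothesis says each $e\in E(G_2)\cup L(G_2)$ has $f^{-1}(e)$ equal to a single edge or leg of $G_1$; such edges/legs survive the contraction (their flags are not collapsed, since collapsing only affects flags rooted at vertices inside some $K_V$ and edges/legs internal to a tree $K_V$ would have to be edges of $K$, not preimages of edges of $G_2$), so $\bar f$ restricts to a bijection between the half-edges of $G_1/K$ and those of $G_2$. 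Combining the vertex and half-edge bijections gives that $\bar f$ is a bijection on all of $F(G_1/K)$, hence a bijective map of graphs.

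I expect the main obstacle to be the bookkeeping that every edge/leg of $G_2$ lifts to an edge/leg whose half-edges genuinely survive contraction, and dually that no ``extra'' uncontracted edge of $G_1$ appears in $G_1/K$. The subtle point is ruling out that a half-edge of an edge in $f^{-1}(e)$ could be incident to a vertex inside some tree $K_V$ and thereby get absorbed into $V_{K_V}$: this is fine, since contraction only identifies half-edges \emph{belonging to} the contracted edges together with their boundary vertices, and a half-edge of an edge not in $K$ is never itself collapsed (only its root may be renamed to $V_{K_V}$), so the edge persists with the correct boundary. Making this precise via the explicit description of $F(G_1/K)$ in Construction \ref{const: edge contraction}, together with $\iota_{G_1}\circ r_{G_1}=r_{G_1}$ to ensure legs and edges are distinguished correctly, is the technical heart of the argument; everything else is a formal consequence of the universal property of the quotient.
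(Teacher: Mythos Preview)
Your proof is correct. You construct the quotient map $\bar f$ directly in one step by showing $f$ is constant on each collapsed set $F(K_V)=f^{-1}(V)$ and then checking bijectivity flag-by-flag using the two hypotheses.

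The paper takes a slightly different route: it reduces to the special case where only a single vertex $V\in V(G_2)$ has a non-trivial tree as its preimage (all other fibers being single vertices), arguing that any $f$ as in the statement factors as a composition of such maps. In that special case the bijection $F(G_1)\setminus f^{-1}(V)\to F(G_2)\setminus\{V\}$ is immediate from the second hypothesis, and the rest follows. This decomposition argument implicitly leans on Lemma~\ref{lem: contraction of two edges} (commutativity of successive contractions), whereas your direct construction avoids any induction and handles all the trees $K_V$ at once. Your approach is more self-contained; the paper's approach has the advantage of isolating the simplest non-trivial case. One small notational quibble: what you write as ``$K_V\cup\partial K_V$'' should just be $F(K_V)$, since $K_V$ is already a subgraph containing its own vertices.
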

\begin{proof}
    We show the lemma in the case where there is a vertex $V\in V(G_2)$ such that for all $W\in V(G_2)\backslash \{V\}$ the inverse image $f^{-1}(W)$ is just a vertex (a trivial legless tree). This is sufficient, since every map of graphs as in the statement can be decomposed as a composition of these. In this case, if $f^{-1}(V)$ is just a vertex, then $f$ is a bijective map between $G_1$ and $G_2$. Therefore, suppose $f^{-1}(V)$ is a non-trivial legless tree. The second condition implies that $f$ restricts to a bijection between $F(G_1)\backslash f^{-1}(V)$ and $F(G_2)\backslash\{V\}$, and hence induces a bijection $F(G_1/f^{-1}(V))\to F(G_2)$. The commutation with respect to root and involution maps follows directly from that of $f$. 
\end{proof}

The following definition is motivated by Lemma \ref{lem: map of graphs is contraction}.

\begin{defi}\label{defi: morphs GgA}
    A map of graphs $f\colon G_1\to G_2$ is a \emph{contraction}, if for every $V\in V(G_2)$ the subgraph defined by the inverse image $f^{-1}(V)$ is a legless tree of $G_1$.
\end{defi}

\subsection{Some categories of graphs and their structures.}\label{ssec: our categories of graphs} Suppose $A$ is a finite set. A \emph{discrete graph with $A$-marked legs} is a discrete graph $G$ with a bijection 
\begin{equation*}
\ell_\bullet(G)\colon A\to L(G),a\mapsto \ell_a(G),
\end{equation*} 
which we will call the \emph{marking}. For an element $a\in A$, the \emph{$a$-leg of $G$} is simply $\ell_a(G)\in L(G)$. A \emph{map of $A$-marked graphs} is simply a map of graphs that commutes with the respective markings.

It is clear that a composition of contractions is a contraction. Suppose $2g+\#A-2>0$ and let $\bbG_{g,A}$ denote the category specified by:
\begin{itemize}
    \item The objects of $\bbG_{g,A}$ consist of the connected genus-$g$ discrete graphs with $A$-marked legs whose vertices are at least $3$-valent.
    \item For two objects $G_1$ and $G_2$ of $\bbG_{g,A}$, the set of morphisms $\Hom_{\bbG_{g,A}}(G_1,G_2)$ consists of the contractions $f\colon G_1\to G_2$.
    \item Composition of morphisms is just composition of maps.
\end{itemize}

\begin{remark}
    If $A$ is empty, then this category is equivalent to the category $\mathbb{\Gamma}_g$ of \cite{ChanGalatiusPayneTCGCTWCMg}. 
\end{remark}

\begin{nota}
    In case $A=\{1,\dots,n\}$ for $n\in \bbN$, then the category $\bbG_{g,A}$ will simply be denoted by $\bbG_{g,n}$. In this case, an $A$-marked graph $G$ will simply be called an $n$-marked graph, and for $1\leq i\leq n$ the \emph{$i$th leg} will be $\ell_i(G)$.
\end{nota}

\begin{example}
    Let $g=2$ and $n=1$. Analogous to the previous example, Figure \ref{fig: G21} depicts a skeleton of $\bbG_{2,1}$, which excludes the automorphisms of the objects and where the arrows denote the contracted edge.
    \begin{figure}[htbp]
        \centering
        \begin{tikzpicture}
            \draw[fill=none](-5,10) circle (1);
            \draw[fill=black](-5,11) circle (3pt);
            \draw[fill=black](-5,9) circle (3pt);
            \draw[fill=black](-4,10) circle (3pt);
            \draw[](-5,11)--(-5,9) node [midway, left] {$c_2$};
            \draw[dashed, line width = 1pt](-4,10)--(-3.5,9);
            \node at (-6.3,10) {$c_1$};
            \node at (-4.1,10.9) {$c_3$};
            \node at (-4.1,9.1) {$c_4$};

            \draw[fill=none](-1.5,10) circle (1);
            \draw[fill=none](1.5,10) circle (1);
            \draw[fill=black](-0.5,10) circle (3pt);
            \draw[fill=black](0.5,10) circle (3pt);
            \draw[fill=black](2.5,10) circle (3pt);
            \draw[](-0.5,10)--(0.5,10) node [midway, above] {$d_2$};
            \draw[dashed, line width = 1pt](2.5,10)--(2.5,9);
            \node at (-1.5,11.3) {$d_1$};
            \node at (1.5,11.3) {$d_3$};
            \node at (1.5,9.3) {$d_4$};

            \draw[fill=none](4,10) circle (1);
            \draw[fill=none](8,10) circle (1);
            \draw[fill=black](5,10) circle (3pt);
            \draw[fill=black](7,10) circle (3pt);
            \draw[fill=black](6,10) circle (3pt);
            \draw[dashed,line width=1pt](6,10)--(6,9);
            \draw[](5,10)--(6,10) node [midway, above] {$e_2$};
            \draw[](6,10)--(7,10) node [midway, above] {$e_3$};
            \node at (4,11.3) {$e_1$};
            \node at (8,11.3) {$e_4$};

            \draw[to-,line width = 2pt,black](0,6.5)--(0,8.5) node [midway, left] {$d_2$};
            \draw[to-,line width = 2pt,black](-1.5,6.5)--(-2.5,8.5) node [midway, left] {$c_2$};
            \draw[to-,line width = 2pt,black](-2.5,6.5)--(-3.5,8.5)node [midway,left] {$c_1$};
            \draw[to-,line width = 2pt,black](-5.5,6.5)--(-5.5,8.5) node [midway, left] {$c_3$};
            \draw[to-,line width = 2pt,black](-4.5,6.5)--(-4.5,8.5) node [midway, left] {$c_4$};
            \draw[to-,line width = 2pt,black](3,6.5)--(2,8.5) node [midway, left]{$d_3$};
            \draw[to-,line width = 2pt,black](4,6.5)--(3,8.5) node [midway, left] {$d_4$};
            \draw[to-,line width = 2pt,black](6.5,6.5)--(6.5,8.5) node [midway, left] {$e_3$};
            \draw[to-,line width = 2pt,black](5.5,6.5)--(5.5,8.5) node [midway, left] {$e_2$};

            \draw[fill=none](-5,5) circle (1);
            \draw[fill=black](-6,5) circle (3pt);
            \draw[fill=black](-4,5) circle (3pt);
            \draw[](-6,5)--(-4,5) node [midway, above] {$i_2$};
            \draw[dashed, line width = 1pt](-4,5)--(-4,4);
            \node at (-5,6.3) {$i_1$};
            \node at (-5,4.3) {$i_3$};
            
            \draw[fill=none](-1,5) circle (1);
            \draw[fill=none](1,5) circle (1);
            \draw[fill=black](0,5) circle (3pt);
            \draw[fill=black](2,5) circle (3pt);
            \draw[dashed, line width = 1pt](2,5)--(2,4);
            \node at (-1,6.3) {$j_1$};
            \node at (1,6.3)  {$j_2$};
            \node at (1,4.3)  {$j_3$};

            \draw[fill=none](4.5,5) circle (1);
            \draw[fill=none](7.5,5) circle (1);
            \draw[fill=black](5.5,5) circle (3pt);
            \draw[fill=black](6.5,5) circle (3pt);
            \draw[dashed,line width=1pt](6.5,5)--(6.5,4);
            \draw[](5.5,5)--(6.5,5) node [midway,above] {$k_2$};
            \node at (4.5,6.3) {$k_1$};
            \node at (7.5,6.3) {$k_3$};

            \draw[to-,line width = 2pt,black](-4,1.5)--(-5,3.5) node [midway, left]{$i_1$};
            \draw[to-,line width = 2pt,black](-3,1.5)--(-4,3.5) node [midway, left]{$i_2$};
            \draw[to-,line width = 2pt,black](-2,1.5)--(-3,3.5) node [midway, left]{$i_3$};
            \draw[to-,line width = 2pt,black](-0.5,1.5)--(-0.5,3.5)node [midway, left]{$j_2$};
            \draw[to-,line width = 2pt,black](0.5,1.5)--(0.5,3.5)node [midway, left]{$j_3$};
            \draw[to-,line width = 2pt,black](3,1.5)--(5,3.5) node [midway, above left]{$k_2$};

            \draw[fill=none](-1,0) circle (1);
            \draw[fill=none](1,0) circle (1);
            \draw[fill = black](0,0) circle (3pt);
            \draw[dashed,line width=1pt](0,0)--(0,-1);
            \node at (-1,1.2) {$m_1$};
            \node at (1,1.2) {$m_2$};
            
        \end{tikzpicture}
        \caption{Skeleton of $\bbG_{2,1}$ without automorphisms of objects.}
        \label{fig: G21}
    \end{figure}
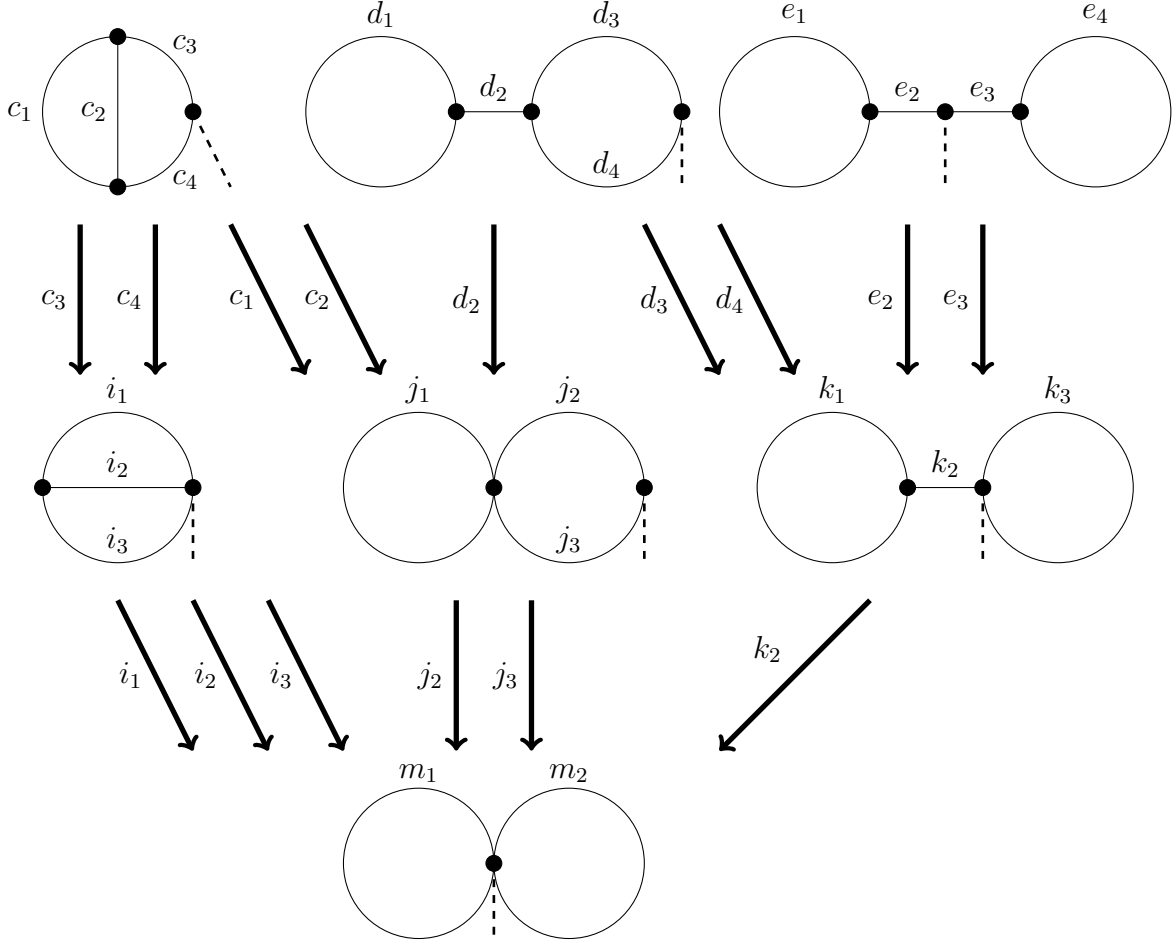
\end{example}

\begin{prop}\label{prop: G0n iso classes}
    Any object of $\bbG_{0,A}$ has no non-trivial automorphisms, and any skeleton of this category is a finite poset. Furthermore, for any integer $g\geq 0$ with $2g+\#A-2\geq 0$, the category $\bbG_{g,A}$ has finitely many isomorphism classes. 
\end{prop}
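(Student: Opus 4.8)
The plan is to treat the three assertions in an order that lets a single size bound serve twice, so I begin with the finiteness count. For any $G\in\bbG_{g,A}$ I would count half-edges by valency: since each edge contributes two half-edges and each of the $\#A$ legs contributes one, summing $\val(V)=\#r_G^{-1}(V)-1$ over $V\in V(G)$ gives $\sum_{V\in V(G)}\val(V)=2\#E(G)+\#A$. As every vertex is at least $3$-valent, $3\#V(G)\leq 2\#E(G)+\#A$, and substituting the genus relation $\#E(G)=\#V(G)+g-1$ yields $\#V(G)\leq 2g-2+\#A$ and hence $\#E(G)\leq 3g-3+\#A$. Thus $\#F(G)=\#V(G)+2\#E(G)+\#A$ is bounded in terms of $g$ and $\#A$ alone. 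Since an object is the datum of the finite set $F(G)$ with the maps $r_G,\iota_G$ and a marking $A\to L(G)$, up to isomorphism we may assume $F(G)$ lies in a fixed finite set; only finitely many such triples and markings then occur, so $\bbG_{g,A}$ has finitely many isomorphism classes.

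\emph{Rigidity in genus $0$.} Let $G\in\bbG_{0,A}$ be a tree and $\phi$ an automorphism. Commuting with the marking forces $\phi$ to fix each leg, and commuting with $r_G$ then fixes each boundary vertex $\partial\ell_a(G)$. The decisive structural input is that every edge of $G$ lies on the (unique) path joining the boundary vertices of two legs: deleting an edge $e$ splits the tree into two subtrees, and a leg-free subtree would be a finite tree all of whose vertices keep valency $\geq 3$ except the endpoint of $e$ (which has valency $\geq 2$), contradicting the existence of a leaf. Since $\phi$ fixes the endpoints of every such path and paths in a tree are unique, $\phi$ fixes these paths pointwise, hence fixes all vertices and edges; commutation with $r_G$ then pins down each individual half-edge, so $\phi=\Id$.

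\emph{The skeleton is a finite poset.} Finiteness is the first paragraph with $g=0$. Reflexivity and transitivity of the relation ``there is a contraction $G_1\to G_2$'' are clear (the identity, and the fact that contractions compose); antisymmetry holds because a contraction never increases the number of edges, so mutual contractions force $\#E(G_1)=\#E(G_2)$, contract nothing, and are therefore isomorphisms, giving $G_1=G_2$ in a skeleton. For thinness I would use that in an at-least-$3$-valent tree the edges inject into the set of \emph{splits} of $A$ (the bipartition of the legs induced by deleting an edge): two edges with a common split would force an intermediate vertex of valency $2$. Any contraction $f\colon G_1\to G_2$ must then send each non-contracted edge to the unique edge of $G_2$ with the same split and must contract exactly the edges of $G_1$ whose split is not realized in $G_2$; hence the contracted set $S\subseteq E(G_1)$ is determined by the pair $(G_1,G_2)$. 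Via Lemma \ref{lem: map of graphs is contraction}, $f$ factors as $G_1\to G_1/S\to G_2$ with the second arrow an isomorphism, which is unique by the genus-$0$ rigidity just established; thus there is at most one contraction $G_1\to G_2$, and the skeleton is thin, hence a finite poset.

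\emph{Main obstacle.} I expect the delicate point to be thinness: one must show the contracted edge set is intrinsic to $(G_1,G_2)$, and the cleanest route is the injectivity of the edge-to-split assignment in $3$-valent trees, which is essentially the same rigidity phenomenon that underlies the absence of automorphisms. The remaining ingredients—the valency count and the uniqueness of paths in a tree—are routine.
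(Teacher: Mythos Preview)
Your argument is correct. The paper itself offers no proof at all: it simply records that ``both statements are well known facts, that can also be readily checked'' and moves on. You, by contrast, supply the standard verifications in full---the valency/Euler bound $\#V(G)\leq 2g-2+\#A$ for finiteness, the leg-fixing plus path-uniqueness argument for rigidity of marked trees, and the split-injectivity argument for thinness of the skeleton. This is precisely the kind of ``readily checked'' content the paper is gesturing at, so there is no divergence of method to speak of; you have simply written out what the authors left to the reader. One small remark: your phrase ``would force an intermediate vertex of valency $2$'' is a slight shorthand---the actual contradiction is that the middle region between two edges with the same split would be a leg-free subtree with all valencies $\geq 2$, which is impossible for a finite tree with at least one edge (and the single-vertex case is excluded by $3$-valency). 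The conclusion is unaffected.
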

\begin{proof}
    Both statements are well known facts, that can also be readily checked.
\end{proof}

\subsection{Tropical curves and their moduli spaces.} A \emph{metric} on a discrete graph $G$ is a function $\delta\colon  E(G)\to \bbR_{>0}$. Such a pair gives rise to a metric space: to each edge an interval of length prescribed by $\delta$ is associated, to each leg a half line $\bbR_{\geq 0}$ is associated, and all these are glued according to the incidence relations of $G$. For a finite set $A$, an \emph{$A$-marked tropical curve} $\Gamma$ is a metric space so arising from a pair $(G,\delta)$, where $G$ is an $A$-marked discrete graph and $\delta$ is a metric on $G$. 

Naturally, many different pairs can yield isometric spaces, and hence some further restrictions are necessary to study such spaces. More interestingly, we want to understand and study some invariants of the space parametrizing the connected genus-$g$ $A$-marked tropical curves. This is the moduli space of tropical curves $\cM_{g,n}^{\trop}$, and our previously introduced category $\bbG_{g,A}$ provides a neat way of constructing (and studying) it.

\begin{nota}
For a finite set $A$, we let $\bbR_{\geq0}^A$ denote the poic given by the non-negative orthant in the vector space $(\bbZ^A)_\bbR$ and the lattice $\bbZ^A$. Analogously, we let $\bbR_{>0}^A$ denote the poic given by the positive orthant in this same vector space and the same lattice. It is clear that subsets $B\subset A$ give rise to face-embeddings $\bbR_{\geq0}^B\to \bbR_{\geq0}^A.$   
\end{nota}

\begin{const}\label{const: face-embeddings}
   Suppose $A$ is a fixed finite set, and $g$ is a non-negative number with $2\#A+g-2>0$. Let $G$ be an object of $\bbG_{g,A}$. Unmarked subgraphs of $G$ that do not have vertices as connected components correspond to subsets of $E(G)$, and hence to particular face-embeddings of the poic $\bbR_{\geq0}^{E(G)}$. If $K$ is an unmarked subgraph of $G$ whose connected components are non-trivial trees, then $G/K$ is an object of $\bbG_{g,A}$, $E(G/K)=E(G)\backslash E(K)$, and the inclusion $E(G/K)\subset E(G)$ gives rise to a face-embedding which we denote by $f_K\colon \bbR^{E(G/K)}_{\geq0}\to\bbR_{\geq0}^{E(G)}$.
\end{const}

\begin{defi}
    Let $G$ be as above. The \emph{cone of metrics} of $G$ is the poic $\sigma_G$ contained in $\bbR^{E(G)}_{\geq0}$, given by $\bbR^{E(G)}_{>0}$ and all the images $f_K(\bbR^{E(G/k)}_{>0})$ where $K$ is an unmarked subgraph of $G$ whose connected components are non-trivial trees. 
\end{defi}
\begin{remark}
     Recall that a metric on $G$ is a function $\delta\colon E(G)\to \bbR_{>0}$, hence it corresponds uniquely to a point of the underlying polyhedral cone of $\bbR_{>0}^{E(G)}$.
\end{remark}
\begin{example}
    In Figure \ref{fig:cones of metrics} the cones of metrics of a full set of representatives of $\bbG_{1,2}$ are depicted. For the sake of notational simplicity, we do not explicitize the lattice in these cases. Non-present facets are depicted by dashed lines, and non-present vertices are depicted by unfilled points.
    \begin{figure}[htbp]
        \centering
        \begin{tikzpicture}
            \draw[fill=none](-5,5) circle (1);
            \draw[fill=black](-6,5) circle (3pt);
            \draw[fill=black](-4,5) circle (3pt);
            \draw[dashed](-6,5)--(-7,4) node [pos=1.2]{$1$};
            \draw[dashed](-4,5)--(-3,4) node [pos=1.2]{$2$};
            \node[] at (-5,6.2) {$a$};
            \node[] at (-5,3.8) {$b$};

            \draw[line width=2pt](0,3.5)--(0,7) node [midway, left]{$b$};
            \draw[line width=2pt](0,3.5)--(3.5,3.5) node [midway, below] {$a$};
            \draw[fill=gray,opacity=0.3,line width=0pt] (0,3.5)--(0,7)--(3.5,7)--(3.5,3.5);
            \draw[line width=1pt,fill=white](0,3.5) circle (3pt);

            \draw[fill=none](-4,0) circle (1);
            \draw[fill=black](-5,0) circle (3pt);
            \draw[](-5,0)--(-6,0)node [midway, above]{$x$};
            \draw[dashed](-6,0)--(-7,1)node [pos=1.2]{$1$};
            \draw[dashed](-6,0)--(-7,-1)node [pos=1.2]{$2$};
            \node[] at (-2.8,0) {$y$};

            \draw[line width=2pt](0,-1.5)--(0,2)node [midway, left]{$y$};
            \draw[dashed,line width=2pt](0,-1.5)--(3.5,-1.5)node [midway, below]{$x$};
            \draw[fill=gray,opacity=0.3,line width=0pt] (0,-1.5)--(0,2)--(3.5,2)--(3.5,-1.5);
            \draw[line width=1pt,fill=white](0,-1.5) circle (3pt);

            \draw[fill=none](-5,-5) circle (1);
            \draw[fill=black](-6,-5) circle (3pt);
            \draw[dashed](-6,-5)--(-7,-4)node [pos=1.2]{$1$};
            \draw[dashed](-6,-5)--(-7,-6)node [pos=1.2]{$2$};
            \node[] at (-3.8,-5) {$t$};

            \draw[line width=2pt](0,-5)--(3.5,-5)node [midway, below]{$t$};
            \draw[line width=1pt,fill=white](0,-5) circle (3pt);

            \draw (-7.5,8.25)--(4,8.25);
            \draw (-7.5,7.5)--(-1.75,7.5) node [midway, above] {Object of $\bbG_{1,2}$};
            \draw (-1.75,7.5)--(4,7.5) node [midway, above] {Associated cone of metrics};
            \draw (-7.5,2.75)--(4,2.75);
            \draw (-7.5,-3.25)--(4,-3.25);
            \draw (-7.5,8.25)--(-7.5,-6.5);
            \draw (4,8.25)--(4,-6.5);
            \draw (-1.75,8.25)--(-1.75,-6.5);
            \draw (-7.5,-6.5)--(4,-6.5);
        \end{tikzpicture}
        \caption{Examples of cones of metrics.}
        \label{fig:cones of metrics}
    \end{figure}
\end{example}
Let $A$, $g$ and $G$ be as in Construction \ref{const: face-embeddings}, and suppose $G^\prime$ is an additional object of $\bbG_{g,A}$ and $f\in \Hom_{\bbG_{g,A}}(G^\prime,G)$ (so $f$ is a contraction). By definition $f^{-1}(V(G))$ is an unmarked subgraph of $G^\prime$. Let $K$ denote the subgraph of $G^\prime$ obtained from $f^{-1}(V(G))$ by removing the isolated vertices. From Lemma \ref{lem: map of graphs is contraction}, it follows that $G^\prime/K\cong G$, and therefore the contraction $f$ gives rise to:
\begin{itemize}
        \item A face-embedding $\iota_K\colon \sigma_{G^\prime/K}\to \sigma_{G^\prime}$, given by the natural inclusion $E(G^\prime/K)\subset E(G^\prime)$.
        \item A bijection $E(G)\to E(G^\prime/K)$ which gives an isomorphism of poics $L_f\colon \sigma_G\to \sigma_{G^\prime/K}$.
    \end{itemize}
Notice that the composition $\iota_K\circ L_f$ is then a face-embedding, which we denote by $\iota_f\colon\sigma_G\to\sigma_{G^\prime}$. 
\begin{lem} \label{lem: moduli space of n-marked genus g graphs}
    For a non-negative integer $g$ and a finite set $A$ with $2g+\#A-2>0$, the cone of metrics construction gives rise to a functor
    \begin{equation} \Mtrop_{g,A}\colon \bbG_{g,A}^{\op}\to \textnormal{POIC},\end{equation}
    defined as follows: 
    \begin{itemize}
        \item At an object $G$ of $\bbG_{g,A}$, the poic $\Mtrop_{g,A}(G)$ is simply $\sigma_G$,
        \item At a morphism $f\in\Hom_{\bbG_{g,A}}(G^\prime,G)$, the poic morphism $\Mtrop_{g,A}(f)$ is the face-embedding $\iota_f\colon \sigma_G\to \sigma_{G^\prime}$.
    \end{itemize}
    Moreover if $G$ is an object of $\bbG_{g,A}$ and $\tau$ is a face of $\Mtrop_{g,A}(G)$, then there exists up to isomorphism a unique $h\in \Hom_{\bbG_{g,A}}(G,H)$ such that $\Mtrop_{g,A}(h)\colon \sigma_H\to\sigma_G$ is isomorphic to $\tau\leq \sigma_G$.
\end{lem}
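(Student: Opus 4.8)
The plan is to treat the functoriality claim and the face-correspondence claim separately. Throughout I use the encoding, established in the discussion preceding the statement, that a contraction $f\colon G'\to G$ determines the subgraph $K$ of $G'$ (the non-isolated part of $f^{-1}(V(G))$) together with the identifications $G'/K\cong G$ and $E(G)\xrightarrow{\sim}E(G')\setminus E(K)$, and that under these identifications $\iota_f$ is exactly the coordinate face-embedding $\bbR^{E(G)}_{\geq0}\cong\bbR^{E(G')\setminus E(K)}_{\geq0}\hookrightarrow\bbR^{E(G')}_{\geq0}$.

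First I would verify the functor axioms. For the identity $\Id_G$ one has $\Id_G^{-1}(V(G))=V(G)$, so the associated subgraph $K$ is empty and $\iota_{\Id_G}=\Id_{\sigma_G}$. For compatibility with composition, let $g\colon G''\to G'$ and $f\colon G'\to G$ be contractions, with associated subgraphs $K_g\subset G''$ and $K_f\subset G'$. Since a composition of contractions is again a contraction, $f\circ g$ has an associated subgraph $K_{fg}\subset G''$, and I would identify its edge set as $E(K_g)$ together with the edges of $G''$ corresponding to $E(K_f)$ under $E(G')\cong E(G'')\setminus E(K_g)$. The point to check is that the iterated contraction $(G''/K_g)/\overline{K_f}$, where $\overline{K_f}$ is the image of $K_f$ in $G''/K_g\cong G'$, agrees with $G''/K_{fg}$; this is precisely the order-independence of edge contractions of Lemma \ref{lem: contraction of two edges}. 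Once the edge sets are matched, the three face-embeddings are literally composable coordinate inclusions, yielding $\iota_{f\circ g}=\iota_g\circ\iota_f$, i.e.\ the contravariant composition law. I expect this bookkeeping to be the main obstacle, since it is the one place where one must argue carefully rather than read a property off a definition.

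For the ``moreover'' statement I would use that, by definition of the cone of metrics, $\sigma_G$ is the disjoint union of the relatively open cones $f_K(\bbR^{E(G/K)}_{>0})$ as $K$ ranges over the unmarked subgraphs of $G$ whose connected components are non-trivial trees, with $K=\varnothing$ giving $\bbR^{E(G)}_{>0}$; each such piece consists exactly of the points of $\sigma_G$ whose support is $E(G)\setminus E(K)$. Given a face $\tau\leq\sigma_G$, I would note that $\tau$ is again a poic, so its relative interior $\tau^o$ is a single relatively open cone and therefore has constant support $P\subseteq E(G)$. Setting $E(K):=E(G)\setminus P$, every point of $\sigma_G$ of support $P$ lies in $\tau^o$, so $\tau^o$ equals the piece $f_K(\bbR^{E(G/K)}_{>0})$; in particular this piece lies in $\sigma_G$, which forces $K$ to be one of the distinguished subgraphs. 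Hence $H:=G/K$ is an object of $\bbG_{g,A}$, and the contraction $h\colon G\to H$ it defines satisfies $\tau=f_K(\sigma_H)=\Im(\iota_h)$, giving existence.

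Finally, for uniqueness up to isomorphism I would observe that $K$ was recovered from $\tau$ intrinsically, as the complement in $E(G)$ of the support of $\tau^o$. Thus any contraction $h'\colon G\to H'$ with $\Mtrop_{g,A}(h')$ isomorphic to $\tau\leq\sigma_G$ must have the same image $\tau$, hence contract exactly the edges of $K$; so $H'\cong G/K\cong H$ via the natural bijection of contracted graphs, and this bijection intertwines $h$ and $h'$. Therefore $h$ is unique up to isomorphism, which completes the argument.
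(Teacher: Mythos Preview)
Your proposal is correct and follows essentially the same route as the paper: reduce functoriality to the compatibility of the coordinate face-embeddings via the edge-set bookkeeping of iterated contractions, and read the face correspondence directly off the description of $\sigma_G$ as the union of the pieces $f_K(\bbR^{E(G/K)}_{>0})$. The only noteworthy differences are cosmetic: the paper organizes the composition check as a commutative diagram of edge sets and invokes Lemma~\ref{lem: map of graphs is contraction} (rather than Lemma~\ref{lem: contraction of two edges}) to identify $G''/K_{fg}$ with the iterated quotient, and it dispatches the ``moreover'' clause in one line by appealing directly to the definition of $\sigma_G$, whereas you unpack this via the support of $\tau^o$.
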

\begin{proof}
To show the functoriality, it suffices to show that if $f\colon G^\prime\to G$ and $g\colon G^{\prime\prime}\to G^\prime$ are two contractions, then $\iota_{f\circ g} = \iota_g\circ\iota_f$. Let $K^\prime\subset G$ be the maximal subgraph contained in $f^{-1}(V(G))$ whose connected components are non-trivial trees. Analogously, let $K^{\prime\prime}\subset G$ denote the maximal subgraph contained in $g^{-1}(V(G^\prime))$ whose connected components are non-trivial trees. From Lemma \ref{lem: map of graphs is contraction} it follows that $f$ and $g$ induce respective bijective maps of graphs
\begin{align}
    &G^\prime/K^\prime\to G, &G^{\prime\prime}/K^{\prime\prime}\to G^\prime. \label{eq: induced bijective map of graphs 1}
\end{align}
Analogously, let $H^{\prime\prime}\subset G^{\prime\prime}$ be the maximal subgraph contained in $(f\circ g)^{-1}(V(G))$ whose connected components are non-trivial trees. Hence, $f\circ g$ induces a bijective map of graphs
\begin{equation}
    G^{\prime\prime}/H^{\prime\prime}\to G. \label{eq: induced bijective map of graphs 2}
\end{equation}
There are the following containment relations from above:
\begin{align}
    &E(G^\prime/K^\prime) \subset E(G^\prime), &E(G^{\prime\prime}/H^{\prime\prime})\subset E(G^{\prime\prime}/K^{\prime\prime})\subset E(G^{\prime\prime}). \label{eq: containment relations}
\end{align}
By definition of the face-embeddings $\iota_f$, $\iota_g$, and $\iota_{f\circ g}$, the equation $\iota_{f\circ g}=\iota_g\circ \iota_f$ follows from the following commutative diagram of sets induced by \eqref{eq: containment relations}, and the induced maps between edge sets of \eqref{eq: induced bijective map of graphs 1} and \eqref{eq: induced bijective map of graphs 2}
\begin{equation*}
    \begin{tikzcd}
        E(G) &\arrow{l}{\sim}[swap]{\eqref{eq: induced bijective map of graphs 1}}E(G^\prime/K^\prime)\arrow[hookrightarrow]{d}{\eqref{eq: containment relations}}\\
        E(G^{\prime\prime}/H^{\prime\prime})\arrow{u}{\eqref{eq: induced bijective map of graphs 2}}[swap]{\wr}\arrow[hookrightarrow]{dr}[swap]{\eqref{eq: containment relations}}&E(G^\prime)\\
        &E(G^{\prime\prime}/K^{\prime\prime})\arrow{u}{\wr}[swap]{\eqref{eq: induced bijective map of graphs 1}}
    \end{tikzcd}.
\end{equation*}
For the second statement of the lemma, consider an object $G$ of $\bbG_{g,A}$. A face $\tau\leq \Mtrop_{g,A}(G) = \sigma_G$ corresponds, by definition, to a unmarked subgraph $K$ of $G$ whose components are non-trivial trees. In particular $G/K$ is an object of $\bbG_{g,A}$, and by definition, $\tau\leq\Mtrop_{g,A}(G)$ is given by the contraction $G\to G/K$. The identification is up to isomorphism, as the only possible alternatives are given by a bijective map of graphs of $G$ or $G/K$.
\end{proof}

This section is closed with our definition of the moduli space of genus-$g$ $A$-marked tropical curves. We remark that since $\bbG_{g,A}^{\op}$ has finitely many isomorphism classes, it is necessarily equivalent to a finite category. More interestingly, as small colimits are representable in $\Top$, this implies that the colimit of any functor from $\bbG_{g,A}^{\op}$ to $\Top$ is representable.

\begin{defi}
    The \emph{moduli space of $A$-marked genus-$g$ tropical curves} is the topological space 
    \begin{equation*}
        \cM_{g,A}^\trop \colon = \colim_{\bbG_{g,A}^{\op}} \left|\Mtrop_{g,A}\right|.\end{equation*}    
\end{defi}

After making a choice for a full set of representatives for $\bbG_{g,A}^{\op}$, the above construction shows that $\cM_{g,A}^\trop$ consists of taking the cones of metrics and identifying points representing isometric curves. This presentation is useful for our purposes as it emphasizes the combinatorial relations between cones of metrics that lie at the core of our program: face relations by edge contractions and point identifications due to isometries. 

\section{Poic-complexes and their cycles}
In this section we describe poic-complexes, which are one of the main objects of our interest. As was previously stated, poic-complexes are a small generalization of partially open fans and defined in somewhat categorical terms. We adopt this perspective mainly for two reasons:
\begin{enumerate}
    \item Some objects of interest do not naturally lie in a finite category. Rather, they are naturally presented in a broad manner and hence cleaner to deal with when we do not have to make a choice of isomorphism classes or the like. 
    \item The subsequent section deals with poic-fibrations, which we found simpler and more pleasant to describe in categorical terms. 
\end{enumerate}
This categorical perspective comes at the expense of some notational complexity and technicalities, which, however subtle, are nevertheless relevant. Hence, we dedicate some space to establish these notions and make them explicit, leaving some proofs to the appendix in the interest of readability. We introduce poic-complexes to discuss tropical cycles on them, but this requires the introduction of more structure. More precisely, tropical cycles are not defined on a general poic-complex but rather on a linear poic-complex. In addition, we discuss the pushforward of (weights) cycles through (weakly) proper maps, the moduli space of rational tropical curves as a poic-complex, and we close the section relating these constructions with the classical theory.

\subsection{Poic-complexes.} We regard any finite poset as a category in the usual way. Recall that a category is said to be:
\begin{itemize}
    \item \emph{thin}, if between any two objects there is at most one morphism,
    \item \emph{essentially finite}, if it is equivalent to a category whose class of objects is a finite set.
\end{itemize}
Thus, a finite poset gives a finite and thin category, and any finite and thin category is equivalent to a finite poset.

\begin{defi}
A \emph{poic-complex} $\Phi$ is a functor
\begin{equation*}
    \Phi\colon C_\Phi\to \POIC,
\end{equation*} 
where $C_\Phi$ is an essentially finite thin category, and is subject to:
\begin{enumerate}
    \item The functor $\Phi$ maps morphisms of $C_\Phi$ to face-embeddings.
    \item If $q$ is an object of $C_\Phi$ and $\tau$ is a face of $\Phi(q)$, then there exists a unique up to isomorphism $p\to q$ in $C_\Phi$, such that $\Phi(p\to q)\cong \tau\leq \Phi(q)$. 
\end{enumerate}
\end{defi}

\begin{nota}
    Let $\Phi$ be a poic-complex and $q$ an object of $C_\Phi$. We will denote the dimension $d(\Phi(q))$ simply by $d(q)$. In addition, we avoid cumbersome notation and denote the lattice $N^{\Phi(q)}$ simply by $N^q$ (or by $N_\Phi^q$ to emphasize the domain of $q$). Furthermore, if $p\to q$ is a morphism of $C_\Phi$, then $\Phi(p\to q)$ is poic-morphism $(\Phi(p),N^p)\to(\Phi(q),N^q)$, which in particular carries an integral linear map $N^p\to N^q$. We abuse notation and denote this integral linear map by $\Phi(p\to q)$.
\end{nota}

When no confusion seems near, we will refer to objects of $C_\Phi$ as cones of $\Phi$, and to the morphisms of $C_\Phi$ as morphisms of $\Phi$. We denote the set of isomorphism classes of $C_\Phi$ by $[\Phi]$, and the isomorphism class of a cone $p$ of $\Phi$ by $[p]$. The set of isomorphism classes of $k$-dimensional cones of $\Phi$ is denoted $[\Phi](k)$, and these form a partition of $[\Phi]$. The poic-complex $\Phi$ is called \emph{pure of dimension $n$}, if every cone $p$ of $\Phi$ appears as the source of a morphism to an $n$-dimensional cone of $\Phi$.

\begin{example}\label{ex: poic-complexes coming from a poic}
    Consider a poic $\sigma=(\sigma,N^\sigma)$ and the following finite posets (where the order relation is face inclusion):
    \begin{itemize}
        \item The set $\underline{\sigma}$ of all faces of $\sigma$.
        \item The set $\partial \sigma$ of all proper faces of $\sigma$, namely, $\underline{\sigma}$ without $\sigma$.
        \item The one-point set $\underline{\sigma^o}$ consisting of the interior of $\sigma$.
    \end{itemize}
   Associating each cone to itself provides a poic-complex structure for each. These poic-complexes will be respectively denoted by $\underline{\sigma},\partial\sigma$ and $\underline{\sigma^o}$.
\end{example}
\begin{example}
A relevant and special case of the previous example is the following. Suppose $N$ is a free abelian group of finite rank. The pair $(N_\bbR,N)$ is naturally a poic, and $\{(N_\bbR,N)\}$ is vacuously a poic-complex (the functor being tautological in this case), because the cone $N_\bbR$ has no faces. This poic gives rise to the poic-complex $\underline{N_\bbR}$. If $M$ is an additional free abelian group of finite rank, then it is clear that any integral map $N\to M$ gives rise to a morphism of poic-complexes $\underline{N_\bbR}\to\underline{M_\bbR}$. Furthermore, any morphism of poic-complexes $\underline{N_\bbR}\to\underline{M_\bbR}$ comes from a unique integral map $N\to M$. In addition, by means of our previous identifications we have that $\underline{N_\bbR}\times \underline{M_\bbR} = \underline{(N\oplus M)_\bbR}$.
\end{example}
\begin{example} A natural source of examples are rational fans. Let $N$ be a finite rank lattice and suppose $\Sigma$ is a (partially open) rational polyhedral fan of the vector space $V=N\otimes \bbR$. By definition $\Sigma$ is a poset, and we can then regard this as a poic-complex, where to each cone $\sigma\in\Sigma$ we associate the poic $(\sigma,\textnormal{Lin}(\sigma)\cap N)$. We denote this poic-complex just by $\Sigma$.
\end{example}
\begin{example}\label{ex: m0n is a poic-complex}
    Let $A$ be an arbitrary finite set. We remark that Lemma \ref{lem: moduli space of n-marked genus g graphs} shows that the functor $\Mtrop_{0,A}$ is a poic-complex. 
\end{example}

\begin{const} Suppose $\Phi$ and $\Psi$ are poic-complexes. Since product of poics is a poic, the functors $\Phi$ and $\Psi$ give rise to the functor
\begin{equation*}
{\Phi\times\Psi}\colon C_\Phi\times C_\Psi\to \POIC, (q,s)\mapsto \Phi(q)\times\Psi(s).\end{equation*}
\end{const}

\begin{lem}
Following the notation of the above construction, the product $\Phi\times\Psi$ is a poic-complex.
\end{lem}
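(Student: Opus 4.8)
The plan is to verify directly that the functor $\Phi\times\Psi\colon C_\Phi\times C_\Psi\to\POIC$ satisfies the two defining axioms of a poic-complex. First I would note that the indexing category $C_\Phi\times C_\Psi$ is again essentially finite and thin: a product of thin categories is thin since a morphism $(q,s)\to(q',s')$ is a pair of morphisms, each unique when it exists, and a product of essentially finite categories is essentially finite (it is equivalent to the product of the finite skeleta). This dispenses with the ambient hypothesis on the domain category.

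For axiom (1), I would check that $(\Phi\times\Psi)$ sends every morphism to a face-embedding. A morphism in $C_\Phi\times C_\Psi$ has the form $(p\to q, r\to s)$, and its image is the poic-morphism $\Phi(p\to q)\times\Psi(r\to s)\colon \Phi(p)\times\Psi(r)\to\Phi(q)\times\Psi(s)$. Since $\Phi(p\to q)$ and $\Psi(r\to s)$ are face-embeddings by the axioms for $\Phi$ and $\Psi$, I would argue that a product of two face-embeddings is a face-embedding. Injectivity of the product linear map is immediate from injectivity of each factor, and I would show that if $\tau\leq\sigma$ is cut out by $f\in (N^\sigma)^\vee$ with $\sigma\subset H_{-f}$, and $\tau'\leq\sigma'$ is cut out by $f'$, then the image $\tau\times\tau'$ is a face of $\sigma\times\sigma'$: concretely, $\tau\times\sigma'$ is the face of $\sigma\times\sigma'$ cut out by $f\oplus 0$, and $\sigma\times\tau'$ the one cut out by $0\oplus f'$, so $\tau\times\tau' = (\tau\times\sigma')\cap(\sigma\times\tau')$ is obtained as a face of a face, hence a face. (A transitivity-of-faces remark for poics is the natural small auxiliary fact here, following directly from the definition of face via supporting functionals.)

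The substantive step is axiom (2): given an object $(q,s)$ and a face $\tau$ of $\Phi(q)\times\Psi(s)$, I must exhibit a morphism $(p,r)\to(q,s)$, unique up to isomorphism, with $(\Phi\times\Psi)\big((p,r)\to(q,s)\big)\cong\tau\leq\Phi(q)\times\Psi(s)$. The key structural input is that every face of a product poic $\sigma\times\sigma'$ is itself a product $\alpha\times\alpha'$ of a face $\alpha\leq\sigma$ and a face $\alpha'\leq\sigma'$. I expect this to be the main obstacle, and I would establish it by taking the supporting functional $f\in(N^\sigma\oplus N^{\sigma'})^\vee = (N^\sigma)^\vee\oplus(N^{\sigma'})^\vee$ defining $\tau$, writing $f=(f_1,f_2)$, and showing that $\tau=(\sigma\cap H_{f_1})\times(\sigma'\cap H_{f_2})$ by a direct argument on the defining inequalities, using that $f$ attains its minimum $0$ on $\sigma\times\sigma'$ componentwise. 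Once this product decomposition $\tau\cong\alpha\times\alpha'$ is in hand, axiom (2) for $\Phi$ yields a unique-up-to-isomorphism $p\to q$ with $\Phi(p\to q)\cong\alpha\leq\Phi(q)$, and axiom (2) for $\Psi$ yields $r\to s$ with $\Psi(r\to s)\cong\alpha'$.

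Finally I would assemble these into the morphism $(p\to q, r\to s)$ in the product category and observe that $(\Phi\times\Psi)\big((p,r)\to(q,s)\big)=\Phi(p\to q)\times\Psi(r\to s)\cong\alpha\times\alpha'\cong\tau$, as required. For the uniqueness clause, I would note that the product decomposition $\tau\cong\alpha\times\alpha'$ is itself unique (the factors are recovered as the images of $\tau$ under the two projections of the product poic), so any competing morphism $(p',r')\to(q,s)$ realizing $\tau$ must have $\Phi(p'\to q)\cong\alpha$ and $\Psi(r'\to s)\cong\alpha'$; the separate uniqueness statements for $\Phi$ and $\Psi$ then force $(p',r')\cong(p,r)$ compatibly. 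This completes the verification of both axioms, so $\Phi\times\Psi$ is a poic-complex.
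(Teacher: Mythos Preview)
Your proposal is correct and follows essentially the same approach as the paper's proof: both reduce the verification of the two poic-complex axioms to the single structural fact that the faces of a product poic $\sigma\times\xi$ are exactly the products $\alpha\times\alpha'$ of faces $\alpha\leq\sigma$ and $\alpha'\leq\xi$. The paper states this fact without proof and says the axioms follow directly; you spell out the supporting-functional argument for the face decomposition, the assembly of the morphism in the product category, and the uniqueness via projections, which is exactly the detail the paper omits.
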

\begin{proof}
Observe that $C_{\Phi}\times C_{\Psi}$ is essentially finite, because $C_\Phi$ and $C_\Psi$ are essentially finite. Additionally, the definition of the product category implies that the set of morphisms between any two objects $(q_1,s_1)$ and $(q_2,s_2)$ of $C_\Phi\times C_\Psi$ corresponds to $\Hom_\Phi(q_1,s_2)\times\Hom_\Psi(q_1,s_2)$. Therefore, either it is  empty, or consists of a single element. In particular $C_\Phi\times C_\Psi$ is also thin. Observe that if $\sigma$ and $\xi$ are two poics, then the faces of $\sigma\times\xi$ consist of products of faces of both $\sigma$ and $\xi$. Moreover, the product of any two faces of $\sigma$ and $\xi$ gives a face of $\sigma\times\xi$. From this, both conditions on poic-complexes follow directly. 
\end{proof}

Let $\Phi$ denote a poic-complex. There are the following full subcategories of $C_\Phi$:
\begin{enumerate}
    \item For a cone $p$ of $\Phi$, the category $C_\Phi(p,)$ consisting of morphisms $p\to q$ of $C_\Phi$ where maps are commuting triangles. Since $C_\Phi$ is thin, it follows that this is a full subcategory.
    \item For a non-negative integer $k$, the full subcategory $C_\Phi(\leq k)$ of objects with cones of dimension at most $k$.
\end{enumerate}

Let $\Phi(\leq k)$ denote the restriction of $\Phi$ to $C_\Phi(\leq k)$.
\begin{remark}
    In general $\Phi |_{C_\Phi(p,)}$ is not a poic-complex.
\end{remark}
\begin{lem}\label{lem: natural poic subcomplexes}
    Let $\Phi$ be a poic-complex, and let $k\geq0$ be an integer. The functor ${\Phi(\leq k)}$ is a poic-complex. 
\end{lem}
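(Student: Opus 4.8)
The plan is to verify the two axioms of a poic-complex for the functor $\Phi(\leq k)$, using that $\Phi$ already satisfies them. First I would address the underlying category $C_\Phi(\leq k)$: since it is a \emph{full} subcategory of the essentially finite thin category $C_\Phi$, it is automatically thin (there is still at most one morphism between any two objects), and it is essentially finite because it has at most as many isomorphism classes as $C_\Phi$. So $C_{\Phi(\leq k)}$ is again an essentially finite thin category, which is the background requirement for the definition.

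Next I would check the first axiom, that $\Phi(\leq k)$ sends morphisms to face-embeddings. This is immediate: the morphisms of $C_\Phi(\leq k)$ are a subset of those of $C_\Phi$, and $\Phi(\leq k)$ is by definition the restriction of $\Phi$, which already maps every morphism to a face-embedding. Hence nothing new needs to be proved here.

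The substantive point is the second axiom, the face-lifting property, and this is where I expect the only real content to lie. Let $q$ be an object of $C_\Phi(\leq k)$ and let $\tau$ be a face of $\Phi(q) = \Phi(\leq k)(q)$. Since $q$ is also an object of $C_\Phi$ and $\Phi$ is a poic-complex, there exists a morphism $p\to q$ in $C_\Phi$, unique up to isomorphism, with $\Phi(p\to q)\cong \tau\leq\Phi(q)$. The key observation is that $p$ actually lies in $C_\Phi(\leq k)$: because $\Phi(p\to q)$ is a face-embedding identifying $\Phi(p)$ with the face $\tau$ of $\Phi(q)$, the Remark following the definition of faces gives $d(p) = d(\tau)\leq d(q)\leq k$, so $d(p)\leq k$ and $p$ is an object of the full subcategory $C_\Phi(\leq k)$. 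Since $C_\Phi(\leq k)$ is full, the morphism $p\to q$ belongs to it as well, and its image under $\Phi(\leq k)$ is the same face-embedding $\Phi(p\to q)\cong\tau\leq\Phi(q)$.

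Finally I would note that uniqueness up to isomorphism transfers without further work: any two such lifts in $C_\Phi(\leq k)$ are in particular lifts in $C_\Phi$, hence isomorphic there, and since $C_\Phi(\leq k)$ is a full subcategory that isomorphism lives in $C_\Phi(\leq k)$. This completes the verification of both axioms. The main (and essentially only) obstacle is the dimension bookkeeping that guarantees the lifted cone $p$ stays within dimension $k$, which follows directly from the Remark that faces do not increase dimension.
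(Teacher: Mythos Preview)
Your proposal is correct and follows essentially the same approach as the paper. The paper's proof is terser---it simply asserts that the additional poic-complex conditions ``follow directly from those of $\Phi$''---whereas you spell out the dimension bookkeeping (that $d(p)=d(\tau)\leq d(q)\leq k$) which is precisely the content behind that claim.
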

\begin{proof}
    Since $C_\Phi(\leq k)$ is a full subcategory of an essentially finite thin category, then it must also be essentially finite and thin. The additional conditions of a poic-complex follow directly from those of $\Phi$. 
\end{proof}

\begin{defi}
    A poic-complex $\Upsilon$ is a \emph{poic-subcomplex} of $\Phi$, if $C_\Upsilon$ is a full subcategory of $C_\Phi$ and $\Upsilon=\Phi|_{C_\Upsilon}$.
\end{defi}

\begin{remark}
    By definition, in the situation of Lemma \ref{lem: natural poic subcomplexes}, the poic-complex $\Phi(\leq k)$ is a subcomplex of $\Phi$.
\end{remark}

\begin{defi}
Suppose $\Phi$ and $\Psi$ are poic-complexes. A \emph{morphism of poic-complexes} $F\colon \Phi\to\Psi$ consists of the following data:
\begin{itemize}
    \item A functor $F\colon C_\Phi\to C_\Psi$.
    \item A natural transformation $\eta_F\colon \Phi\implies \Psi\circ F$.
\end{itemize}
These are subject to the additional condition: for a cone $p$ of $\Phi$, the image $\eta_{F,p}\left(\Phi(p)\right)$ does not lie in a proper face of $\Psi(F(p))$.\\
Composition of morphisms of poic-complexes is defined as follows: let $\Omega$ denote an additional poic-complex and $G\colon \Psi\to\Omega$ a morphism. Then $G\circ F\colon \Phi\to \Omega$ is given by:
\begin{itemize}
    \item The functor $G\circ F\colon C_\Phi\to C_\Omega$.
    \item The natural transformation $\eta_{G\circ F}\colon \Phi\implies {\Omega}\circ (G\circ F)$, which is defined at a cone $s$ of $\Phi$ by
\begin{equation*}
    \eta_{G\circ F,s} = \eta_{G,F(s)}\circ \eta_{F,s}.
\end{equation*}
\end{itemize}
\end{defi}
\begin{nota}
    Let $F\colon\Phi\to\Psi$ be a morphism of poic-complexes and let $s$ be a cone of $\Phi$. To simplify notation, we will omit $\eta_F$ and denote the poic-morphism $\eta_{F,s}$ simply by $F_s$.
\end{nota}
The data of poic-complexes with their morphisms and the compositions thereof forms a category. We denote this category by $\POICCmplxs$. The realization functor \eqref{eqdefi: realization of poics} on poics can be extended naturally to poic-complexes. Namely, let $\Phi$ be a poic-complex. Since $C_\Phi$ is essentially finite, the colimit of $|\bullet|\circ \Phi$ is representable in $\Top$, and hence gives rise to a space $|\Phi|$. Properties of colimits show that from a morphism of poics $F\colon \Phi\to\Psi$, we obtain a continuous map $|F|\colon |\Phi|\to |\Psi|$ and this association gives rise to a functor
\begin{equation}
    |\bullet|\colon  \POICCmplxs\to\Top. \label{eqdefi: realization of poiccomplexes}
\end{equation}

\subsection{Linear poic-complexes and Minkowski weights.}\label{ssec: lin pcmplxs mkwsk wghts} 
\begin{defi}\label{defi: general weights}
Let $\Phi$ be a poic-complex and $k\geq0$ an integer. A \emph{$k$-dimensional weight $\omega$ on $\Phi$} is a function
\begin{equation*}
    \omega\colon  [\Phi](k)\to \bbZ.
\end{equation*}
Since $[\Phi](k)$ is a finite set, the $k$-dimensional weights on $\Phi$ form a free abelian group of finite rank, which we denote by $W_k(\Phi)$.  
\end{defi}

\begin{const}
    Suppose $\Upsilon$ is a subcomplex of a poic-complex $\Phi$. The natural inclusion of $C_\Upsilon$ into $C_\Phi$ and the identity morphisms at cones of $\Upsilon$ give rise to a poic-morphism $I\colon \Upsilon\to\Phi$. Since $\Upsilon$ is a poic-subcompex of $\Phi$, any $\omega\in W_k(\Upsilon)$ can be extended by zero to a weight on $\Phi$, namely  
    \begin{equation*} I_!\omega\colon  [\Phi](k)\to\bbZ, \sigma\mapsto \begin{cases}\omega([s]),&\textnormal{ if }[s]\in[\Upsilon](k),\\
        0, &\text{otherwise}.
    \end{cases}\end{equation*}
\end{const}

\begin{lem}\label{lem: extension by zero weights}
    Following the notation from the construction above, extension by zero gives an injective linear map
    \begin{equation*} I_!\colon W_k(\Upsilon)\to W_k(\Phi).\end{equation*}
\end{lem}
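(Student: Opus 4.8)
The plan is to unwind the definitions and check the two asserted properties directly. The map $I_!$ is defined componentwise, sending a weight $\omega\in W_k(\Upsilon)$ to the weight $I_!\omega$ on $\Phi$ that agrees with $\omega$ on classes coming from $\Upsilon$ and vanishes elsewhere. Since $W_k(\Upsilon)$ and $W_k(\Phi)$ are the free abelian groups $\bbZ^{[\Upsilon](k)}$ and $\bbZ^{[\Phi](k)}$ respectively (by Definition \ref{defi: general weights}), and since $\Upsilon$ is a poic-subcomplex of $\Phi$, the inclusion of full subcategories $C_\Upsilon\hookrightarrow C_\Phi$ induces an injection $[\Upsilon](k)\hookrightarrow[\Phi](k)$ on isomorphism classes of $k$-dimensional cones. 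The whole statement is then essentially the standard fact that extension by zero along an inclusion of index sets is an injective homomorphism of free abelian groups.

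First I would verify linearity. Given $\omega,\omega'\in W_k(\Upsilon)$ and integers $a,b$, the value of $I_!(a\omega+b\omega')$ at a class $[s]\in[\Phi](k)$ is, by definition, $(a\omega+b\omega')([s])=a\,\omega([s])+b\,\omega'([s])$ when $[s]\in[\Upsilon](k)$, and $0$ otherwise; in both cases this equals $a\,(I_!\omega)([s])+b\,(I_!\omega')([s])$, so $I_!$ is $\bbZ$-linear. Here one uses that the case distinction in the definition of $I_!$ depends only on whether $[s]$ lies in $[\Upsilon](k)$, which is independent of the weight.

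For injectivity, suppose $I_!\omega=0$. Then for every $[s]\in[\Upsilon](k)$ we have $0=(I_!\omega)([s])=\omega([s])$, since such a class certainly lies in $[\Upsilon](k)$, so $\omega$ vanishes on all of $[\Upsilon](k)$ and hence $\omega=0$. The one point that genuinely needs the hypothesis that $\Upsilon$ is a poic-subcomplex (rather than merely a poic-complex receiving a morphism to $\Phi$) is that a $k$-dimensional cone of $\Upsilon$ remains a $k$-dimensional cone of $\Phi$ and that distinct isomorphism classes in $C_\Upsilon$ stay distinct in $C_\Phi$; since $C_\Upsilon$ is by definition a \emph{full} subcategory of $C_\Phi$ and $\Upsilon=\Phi|_{C_\Upsilon}$, two cones of $\Upsilon$ are isomorphic in $C_\Upsilon$ if and only if they are isomorphic in $C_\Phi$, so the map $[\Upsilon](k)\to[\Phi](k)$ is well-defined and injective. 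This is the only real subtlety, and it is immediate once fullness is invoked; the remainder is the routine verification above.
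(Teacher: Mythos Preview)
Your proposal is correct and follows essentially the same approach as the paper's proof, which simply states that linearity is clear and that injectivity follows directly from the containment $[\Upsilon](k)\subset[\Phi](k)$. You have merely spelled out in more detail the point about fullness of $C_\Upsilon\subset C_\Phi$ guaranteeing this containment, which the paper leaves implicit.
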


\begin{proof}
    Linearity of the map is clear. Since $[\Upsilon](k)\subset [\Phi](k)$, the injectivity follows directly.
\end{proof}

\begin{defi}
A \emph{linear poic-complex} $\Phi_X$ is a tuple $(\Phi,X)$ such that
\begin{itemize}
    \item $\Phi$ is a poic-complex and 
    \item $X\colon \Phi\to \underline{(N_X)_\bbR}$ is a morphism of poic-complexes, where $N_X$ is a free abelian group of finite rank.
\end{itemize}
\end{defi}

Clearly, if $\Phi_X$ is a linear poic-complex and $\Upsilon$ is a poic-subcomplex of $\Phi$, then $\Upsilon_{X|_\Upsilon}=(\Upsilon,X|_\Upsilon)$ is also a linear poic-complex. In this case, we say that $\Upsilon_{X|_\Upsilon}$ is a \emph{linear poic-subcomplex} of $\Phi_X$.

\begin{const}
Suppose $\Phi_X$ and $\Psi_Y$ are linear poic-complexes. Both morphisms $X\colon \Phi\to \underline{(N_X)_\bbR}$ and $Y\colon \Psi\to \underline{(N_Y)_\bbR}$ induce a morphism
\begin{equation*}
X\times Y\colon \Phi\times\Psi \to \underline{(N_X)_\bbR}\times \underline{(N_Y)_\bbR}=\underline{(N_X\oplus N_Y)_\bbR}.     
\end{equation*}
In particular, $\left(\Phi\times\Psi\right)_{X\times Y}$ is a linear poic-complex, where $N_{X\times Y} := N_X\oplus N_Y$.
\end{const}

\begin{defi}
A \emph{morphism of linear poic-complexes} $\phi\colon\Phi_X\to \Psi_Y$ consists of the following data:
\begin{itemize}
    \item A morphism of poic-complexes $\phi\colon \Phi\to\Psi$.
    \item An integral linear map $\phi_\tint \colon N_X\to N_Y$.
\end{itemize}
These are subject to the condition of yielding a commutative square. The composition of morphisms of linear poic-complexes is defined in the obvious way. Linear poic-complexes and their morphisms give rise to a category, which we denote by $\LinPOICCmplxs$.
\end{defi}

\begin{nota}
    If $\Phi$ is a poic-complex and $s$ is a cone of $\Phi$ with $d(s)=k-1$, where $k>0$ is an integer. We denote the set $[\Phi(s,)](k)$ by $\Star^1_\Phi(s)$. If $s\to t$ is an object of $[\Phi(s,)](k)$, then we will denote its isomorphism class in $\Star^1_\Phi(s)$ by $[s\to t]$.
\end{nota}

\begin{const}[Relative linear maps]\label{const: rel lin maps}

Suppose $\Phi_X$ is a linear poic-complex and consider a morphism $s\to t$ of $\Phi$, with $\Phi(s)\leq \Phi(t)$ a codimension $1$ face. 
The quotient $N^t/N^s$ is a rank $1$ lattice and the morphism $X$ induces a linear map
\begin{equation} 
X_{s\to t}\colon  N^t/N^s\to N_X/X_s(N^{s}).\label{eq: relative linear maps}
\end{equation}
\end{const}

\begin{lem}
   Let $\Phi_X$ and $s\to t$ be as in Construction \ref{const: rel lin maps}. If $t^\prime$ is an object of $\Phi$ isomorphic to $t$, then clearly there is a unique morphism $s\to t^\prime$. Under the maps $X_{s\to t}$ and $X_{t^\prime/s}$, the images of the generators of $N^t/N^s$ and $N^{t^\prime}/N^s$ lying correspondingly in the projections of $N^t\cap \Phi(t)$ and $N^{t^\prime}\cap\Phi(t^\prime)$ coincide.
\end{lem}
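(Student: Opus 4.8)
The plan is to reduce everything to the naturality of the transformation underlying $X$, using that $C_\Phi$ is thin. First I would produce the comparison isomorphism between the two situations. Since $t'\cong t$ and $C_\Phi$ is thin, there is a unique isomorphism $\phi\colon t\to t'$, and the composite $s\to t\xrightarrow{\phi}t'$ is the unique morphism $s\to t'$. Applying $\Phi$ yields a lattice isomorphism $\Phi(\phi)\colon N^t\to N^{t'}$ that carries the cone $\Phi(t)$ onto $\Phi(t')$; by commutativity of the triangle $s\to t\to t'$ it also carries the image of $N^s$ under $\Phi(s\to t)$ onto its image under $\Phi(s\to t')$. Consequently $\Phi(\phi)$ descends to an isomorphism $\overline{\Phi(\phi)}\colon N^t/N^s\to N^{t'}/N^s$ of the two rank-$1$ quotients, and in particular $\Phi(s)\leq\Phi(t')$ is again a codimension-$1$ face, so that $X_{s\to t'}$ is defined.

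Next I would check that $\overline{\Phi(\phi)}$ matches the two distinguished generators. Because $\Phi(s)\leq\Phi(t)$ has codimension $1$, the image of $\Phi(t)\cap N^t$ in $N^t/N^s$ spans a half-line whose primitive generator is exactly the one singled out in the statement, and similarly for $t'$. As $\Phi(\phi)$ sends $\Phi(t)$ to $\Phi(t')$, the descended map $\overline{\Phi(\phi)}$ sends this half-line to the corresponding half-line for $t'$, hence takes the distinguished generator of $N^t/N^s$ to that of $N^{t'}/N^s$.

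The final and essential point is naturality. Viewing $X$ as a morphism of poic-complexes into the one-object complex $\underline{(N_X)_\bbR}$, whose only morphism is the identity, the defining natural transformation forces the lattice triangles to commute: $X_t\circ\Phi(s\to t)=X_s=X_{t'}\circ\Phi(s\to t')$ and, crucially, $X_{t'}\circ\Phi(\phi)=X_t$. By Construction \ref{const: rel lin maps} the relative maps $X_{s\to t}$ and $X_{s\to t'}$ are precisely the maps induced on the quotients by $X_t$ and $X_{t'}$, and both land in the \emph{same} target $N_X/X_s(N^s)$, since $X_s$ depends only on $s$. Choosing a lift $v\in N^t$ of the distinguished generator, $\Phi(\phi)(v)$ lifts the distinguished generator of $N^{t'}/N^s$, and $X_{t'}(\Phi(\phi)(v))=X_t(v)$; reducing modulo $X_s(N^s)$ gives that $X_{s\to t'}$ and $X_{s\to t}$ send their respective generators to the same element, as claimed.

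I expect the only delicate part to be the careful bookkeeping of the two copies of $N^s$ sitting inside $N^t$ and $N^{t'}$ through their respective face-embeddings, and the verification that it is exactly the cone-side (distinguished) generator that is preserved by $\overline{\Phi(\phi)}$. Once the naturality identity $X_{t'}\circ\Phi(\phi)=X_t$ is in place, the coincidence of images is immediate, so there is no substantial obstacle beyond tracking these identifications.
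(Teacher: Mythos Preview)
Your proof is correct and follows essentially the same approach as the paper: both use thinness to produce the unique composite $s\to t\to t'$, observe that $\Phi(t\to t')$ descends to an isomorphism of the rank-$1$ quotients sending the cone-side generator to the cone-side generator, and then invoke naturality of $X$ (the commutative triangle with apex $N_X$) to conclude that the images in $N_X/X_s(N^s)$ coincide. Your write-up is slightly more explicit about why $X_{t'}\circ\Phi(\phi)=X_t$ holds---namely because the target poic-complex has only the identity morphism---which is a nice clarification.
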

\begin{proof}
    Let $t\to t^\prime$ denote the isomorphism. The unique morphism $s\to t^\prime$ is then given by the composition $s\to t\to t^\prime$.
    This gives rise to the commutative diagram of abelian groups
    \begin{equation*}
    \begin{tikzcd}
        N^t\arrow{rr}{\Phi(t\to t^\prime)}[swap]{\sim} \arrow[hookleftarrow]{dr}[swap]{\Phi(s\to t)}&&N^{t^\prime}\\
        &N^s\arrow[hookrightarrow]{ur}[swap]{\Phi(s\to t^{\prime})}&
    \end{tikzcd},
    \end{equation*}
    and therefore the isomorphism $\Phi(t\to t^\prime)$ induces an isomorphism $N^t/N^s\to N^{t^\prime}/N^s$. Observe that under $\Phi(t\to t^\prime)$ the underlying cone of $\Phi(t)\subset \left(N^t\right)_\bbR$ is mapped bijectively onto the underlying cone of $\Phi(t^\prime)\subset \left(N^{t^\prime}\right)_\bbR$. In particular, the generator of $N^t/N^s$ pointing towards $\Phi(t)$ is mapped to the generator of $N^{t^\prime}/N^s$ pointing towards $\Phi(t^\prime)$. The following commutative diagram 
    \begin{equation*}
    \begin{tikzcd}
        N^t/N^s\arrow{dr}[swap]{X_{s\to t}} \arrow{rr}{\sim}&&N^{t^\prime}/N^s\arrow{dl}{X_{t^\prime/s}}\\
        &N/X_s(N^s)&
    \end{tikzcd}
    \end{equation*}
    shows that the images of these generators coincide in $N_X/X_s(N^s)$.
\end{proof}

\begin{defi}
Following the notation of the previous construction and the result of the previous lemma, for an isomorphism class $[s\to t]\in \Star^1_\Phi(s)$ the \emph{normal vector $u^X_{[s\to t]}$} is defined as the image of the generator of $N^t/N^s$ contained in (the projection of) $N^t\cap \Phi(t)$ under the linear map $X_{s\to t}$, where $s\to t$ is any object representing this isomorphism class. 
\end{defi}

\begin{defi}\label{defi: balanced weight}
Suppose $\Phi_X$ is a linear poic-complex and let $s$ be a cone of $\Phi$ with $d(s)=k-1$. A $k$-dimensional weight $\omega\in W_k(\Phi)$ is \emph{$X$-balanced at $s$}, if 
\begin{equation*}
    \sum_{[s\to t]\in\Star^1_\Phi(s)} \omega([t]) u_{[s\to t]}^X =0.
\end{equation*}
It is said to be \emph{$X$-balanced}, if it is $X$-balanced at every object of $\Phi$ of dimension $k-1$.
\end{defi}

\begin{lem}\label{lem: balancing respects isomorphisms}
   Let $\Phi_X$ be a linear poic-complex, with $s^\prime\to s$ an isomorphism of $\Phi$ and $d(s)=k-1$. A weight $\omega\in W_k(\Phi)$ is $X$-balanced at $s$ if and only if $\omega$ is $X$-balanced at $s^\prime$. 
\end{lem}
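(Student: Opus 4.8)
The plan is to transport the balancing condition across the isomorphism $\psi:=(s'\to s)$ by producing a bijection $\Star^1_\Phi(s)\to\Star^1_\Phi(s')$ that preserves both the weight factor $\omega([t])$ and the normal vector $u^X_{[s\to t]}$; the two balancing sums will then agree term by term, so one vanishes precisely when the other does.

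First I would record the bijection on stars. Since $C_\Phi$ is thin, precomposition with $\psi$ defines an isomorphism of categories $C_\Phi(s,)\to C_\Phi(s',)$ sending an object $s\to t$ to the composite $s'\xrightarrow{\psi}s\to t$, with inverse given by precomposition with $\psi^{-1}$. This functor fixes the target $t$, hence preserves $d(t)$ and the isomorphism class $[t]$, and therefore restricts to a bijection $\Star^1_\Phi(s)\to\Star^1_\Phi(s')$, $[s\to t]\mapsto[s'\to t]$. In particular $\omega([t])$ is unchanged along this bijection for any $\omega\in W_k(\Phi)$.

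Next I would identify the normal vectors. Writing $\psi$ also for the induced lattice isomorphism $N^{s'}\to N^{s}$, naturality of $X$ (whose target poic-complex $\underline{(N_X)_\bbR}$ carries only identity morphisms) gives $X_{s'}=X_s\circ\psi$, so $X_{s'}(N^{s'})=X_s(N^{s})$ and the quotients $N_X/X_{s'}(N^{s'})$ and $N_X/X_s(N^{s})$ coincide. Likewise, since the morphism $s'\to t$ is by construction the composite $s'\xrightarrow{\psi}s\to t$, functoriality yields $\Phi(s'\to t)=\Phi(s\to t)\circ\psi$, so the sublattices $\Phi(s'\to t)(N^{s'})$ and $\Phi(s\to t)(N^{s})$ have the same image in $N^{t}$; hence $N^{t}/N^{s'}=N^{t}/N^{s}$ as quotients of $N^{t}$. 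The generator pointing towards $\Phi(t)$ depends only on $t$ and this common sublattice, so it is the same in both quotients, and since the relative maps $X_{s\to t}$ and $X_{s'\to t}$ of Construction \ref{const: rel lin maps} are both induced by the single map $X_t\colon N^{t}\to N_X$, they coincide. Therefore $u^X_{[s\to t]}=u^X_{[s'\to t]}$ inside the common quotient.

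Combining the two steps, the sum defining $X$-balancedness at $s$ is carried term by term onto the sum at $s'$: the index sets are matched by the bijection, the weights $\omega([t])$ agree, and the normal vectors agree. Hence the two sums are equal as elements of $N_X/X_s(N^{s})=N_X/X_{s'}(N^{s'})$, and $\omega$ is $X$-balanced at $s$ if and only if it is $X$-balanced at $s'$. The only delicate point is the bookkeeping of identifications, namely checking that the precomposition bijection is compatible with the lattice-level identities coming from naturality of $X$; once these are aligned there is no computation left to perform.
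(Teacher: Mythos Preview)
Your proof is correct and follows essentially the same approach as the paper: both produce the bijection $\Star^1_\Phi(s)\to\Star^1_\Phi(s')$ via precomposition with the isomorphism, use naturality of $X$ to identify $N_X/X_s(N^s)$ with $N_X/X_{s'}(N^{s'})$, and then check that the normal vectors agree because the images of $N^s$ and $N^{s'}$ in $N^t$ coincide. Your version is slightly more explicit about why the quotients $N^t/N^s$ and $N^t/N^{s'}$ are literally equal (same sublattice of $N^t$), whereas the paper phrases this as an isomorphism induced by $\psi$, but the content is identical.
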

\begin{proof}
    The isomorphism $s^\prime\to s$ gives rise to an isomorphism of categories $\Phi(s^\prime,)\to\Phi(s,)$, and in particular to a bijection $\Star^1_\Phi(s^\prime)\to \Star^1_\Phi(s)$. More interestingly, it also gives rise to the commutative diagram
    \begin{equation*}
        \begin{tikzcd}
            N^{s^\prime} \arrow{rr}{\Phi(s^\prime\to s)}[swap]{\sim}\arrow{dr}[swap]{X_{s^\prime}}&& N^{s}\arrow{dl}{X_s}\\
            &N_X&
        \end{tikzcd}.
    \end{equation*}
    This diagram shows that $N/X_s(N^s)= N/X_{s^\prime}(N^{s^\prime})$. Furthermore, if $s\to t$ is a morphism of $\Phi$ with $d(t)=k$, then $\Phi(s\to s^\prime)$ gives an isomorphism between $N^t/N^{s^\prime}$ and $N^t/N^{s}$. In addition, this isomorphism commutes with the maps induced by $X_t$:
    \begin{align*}
       N^t/N^{s^\prime}&\to  N/X_{s^\prime}(N^{s^\prime})=N^t/N^{s},\\
        N^t/N^{s}&\to N/X_s(N^s)= N/X_{s^\prime}(N^{s^\prime}).
    \end{align*}
    Hence, $u^X_{[s\to t]} = u^X_{[s^\prime\to t]}$, and it is then immediate that a weight $\omega\in W_k(\Phi)$ is $X$-balanced at $s$ if and only if it is $X$-balanced at $s^\prime$.
\end{proof}
\begin{defi}
    For a linear poic-complex $\Phi_X$, the set of \emph{$k$-dimensional Minkowski weights} is the subset $M_k(\Phi_X)$ of $W_k(\Phi)$ consisting of the $X$-balanced weights $\omega\in W_k(\Phi)$. 
\end{defi}

\begin{remark}
    Let $k\geq 1$ be an integer. From Lemma \ref{lem: balancing respects isomorphisms}, it follows that a weight is $X$-balanced if and only if it is $X$-balanced at a set of representatives of $[\Phi](k-1)$.
\end{remark}

\begin{lem}\label{lem: ext by zero minkowski weights}
    Suppose $\Phi_X$ is a linear poic-complex. The set $M_k(\Phi_X)$ is an abelian subgroup of $W_k(\Phi)$. Furthermore, if $\Upsilon$ is a poic-subcomplex of $\Phi$, with $I\colon \Upsilon\to \Phi$ the natural inclusion, then the map $I_!$ of Lemma \ref{lem: extension by zero weights} restricts to an injective linear map
    \begin{equation*}
        I_!\colon M_k(\Upsilon_{X|_\Upsilon})\to M_k(\Phi_X),
    \end{equation*}
    for every non-negative integer $k$.
\end{lem}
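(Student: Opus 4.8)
The plan is to establish the two claims in sequence: first that $M_k(\Phi_X)$ is a subgroup of $W_k(\Phi)$, and then that $I_!$ restricts to an injective linear map on Minkowski weights. For the subgroup claim, I would observe that the balancing condition at a fixed cone $s$ of dimension $k-1$ is, by Definition \ref{defi: balanced weight}, the vanishing of a $\bbZ$-linear expression in the values $\omega([t])$, namely $\sum_{[s\to t]\in\Star^1_\Phi(s)}\omega([t])\,u^X_{[s\to t]}=0$. The normal vectors $u^X_{[s\to t]}$ are fixed elements of $N_X/X_s(N^s)$ independent of $\omega$, so for each such $s$ the map $\omega\mapsto\sum_{[s\to t]}\omega([t])\,u^X_{[s\to t]}$ is a group homomorphism from $W_k(\Phi)$ to the finitely generated abelian group $N_X/X_s(N^s)$. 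Its kernel is exactly the set of weights that are $X$-balanced at $s$, hence a subgroup. Taking the intersection over a set of representatives of $[\Phi](k-1)$ — which by the remark following the definition of $M_k$ suffices — exhibits $M_k(\Phi_X)$ as an intersection of kernels of homomorphisms, and therefore as a subgroup.

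For the second claim, linearity and injectivity of the restriction are inherited from Lemma \ref{lem: extension by zero weights}, since $I_!\colon W_k(\Upsilon)\to W_k(\Phi)$ is already shown there to be injective and linear; the only genuine content is that $I_!$ actually lands inside $M_k(\Phi_X)$, i.e.\ that the extension by zero of a balanced weight on $\Upsilon$ is balanced on $\Phi$. So I would fix $\omega\in M_k(\Upsilon_{X|_\Upsilon})$ and a cone $s$ of $\Phi$ with $d(s)=k-1$, and check balancing of $I_!\omega$ at $s$, distinguishing two cases according to whether $[s]$ lies in $[\Upsilon](k-1)$ or not.

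The heart of the argument is the case analysis. If $s$ is not (isomorphic to a cone) in $\Upsilon$, then I would argue that no codimension-one coface $t$ of $s$ can lie in $\Upsilon$ either: since $\Upsilon$ is a full poic-subcomplex and hence closed under passing to faces (a subcomplex must contain the faces of its cones to satisfy the poic-complex axioms), the presence of $t$ in $\Upsilon$ would force its face $s$ to lie in $\Upsilon$ as well. Consequently $(I_!\omega)([t])=0$ for every $[s\to t]\in\Star^1_\Phi(s)$, and the balancing sum is trivially zero. If instead $[s]\in[\Upsilon](k-1)$, the key point is that $\Star^1_\Upsilon(s)$ and $\Star^1_\Phi(s)$ coincide on the cofaces that matter: the cofaces $t$ lying in $\Upsilon$ contribute $\omega([t])$ with the same normal vector $u^{X|_\Upsilon}_{[s\to t]}=u^X_{[s\to t]}$ (the normal vector depends only on the morphism $s\to t$ and on $X$, not on the ambient complex), while the cofaces not in $\Upsilon$ contribute weight $0$. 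Hence the $X$-balancing sum for $I_!\omega$ at $s$ in $\Phi$ equals the $X|_\Upsilon$-balancing sum for $\omega$ at $s$ in $\Upsilon$, which vanishes by hypothesis.

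I expect the main obstacle to be the bookkeeping in the second case, specifically verifying cleanly that the normal vectors and the star data restrict compatibly: one must confirm that $N_X/X_s(N^s)$ is the same target in both complexes (it is, since $N_{X|_\Upsilon}=N_X$ and $X_s$ is unchanged) and that every codimension-one coface of $s$ in $\Phi$ that belongs to $\Upsilon$ is already a codimension-one coface of $s$ in $\Upsilon$, which follows from fullness of $C_\Upsilon$ together with the second poic-complex axiom. Once these identifications are in place the two balancing sums are literally equal term by term, and the lemma follows.
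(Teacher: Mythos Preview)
Your proposal is correct and follows essentially the same approach as the paper: both dispatch the subgroup claim as an immediate consequence of the linearity of the balancing condition, reduce the second claim via Lemma~\ref{lem: extension by zero weights} to checking that $I_!\omega$ is balanced, and then split into the same two cases according to whether the codimension-one cone $s$ lies in $\Upsilon$. Your treatment simply spells out in more detail the reasons (closure of $\Upsilon$ under faces, agreement of the normal vectors) that the paper leaves implicit.
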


\begin{proof}
    The first statement is immediate. For the second, let $k\geq0$ be an integer. It suffices to show that the image of a Minkowski weight of $\Upsilon_{X|_\Upsilon}$ is a Minkowski weight of $\Phi_X$, because then linearity and injectivity follow from Lemma \ref{lem: extension by zero weights}. Let $\omega\in M_k(\Upsilon_{X|_\Upsilon})$ and consider a cone $s$ of $\Phi$ with $d(s)=k-1$.  Observe that if $s$ is not a cone of $\Upsilon$, then there is nothing to check as $I_!\omega$ would be trivial in $\Star^1_\Phi(s)$. So, suppose that $s$ is also a cone of $\Upsilon$. Clearly, if $[s\to t]\in \Star^1_\Upsilon(s)$, then $u^{X|_\Upsilon}_{[s\to t]}=u^X_{[s\to t]}$. As $I_!$ is extension by zero, it follows that
    \begin{align*}
        \sum_{[s\to t]\in\Star^1_\Phi(s)} I_!\omega([t]) u^X_{[s\to t]} &=\sum_{[s\to t]\in\Star^1_\Upsilon(s)}\omega([t]) u^X_{[s\to t]}.
    \end{align*}
    This sum vanishes, because $\omega\in M_k(\Upsilon_{X|_\Upsilon})$, and hence $I_!\omega\in M_k(\Phi_X)$.
\end{proof}

\begin{const}[Cross product]\label{const: cross products}
    Suppose $\Phi_X$ and $\Psi_Y$ are linear poic-complexes. Observe that for any $k\geq0$, the following decomposition holds
    \begin{equation*}
        [\Phi\times \Psi] (k)= \bigcup_{i+j=k}[\Phi](i)\times [\Psi](j).
    \end{equation*}
    Let $i$ and $j$ be such that $k=i+j$. Any $\omega\in M_i(\Phi_X)$ and $\eta\in M_j(\Psi_Y)$ give rise to a map 
    \begin{equation*}
        \omega\times\eta\colon  [\Phi](i)\times[\Psi](j)\to \bbZ, (s,t)\mapsto \omega(s)\cdot\eta(t),
    \end{equation*}
    which after extending by zero gives a map
    \begin{equation*}
        \omega\times\eta\colon  [\Phi\times\Psi ](k)\to \bbZ.
    \end{equation*}
\end{const}
\begin{lem}\label{lem: cross products}
    Following the notation of Construction \ref{const: cross products}: If $\omega\in M_i(\Phi_X)$ and $\eta\in M_j(\Psi_Y)$, then $\omega\times\eta \in M_{i+j}((\Phi\times\Psi)_{X\times Y})$. In addition, the cross product is a bilinear map
    \begin{equation*}
    M_i(\Phi_X)\otimes_\bbZ M_j(\Psi_Y) \to M_{k}((\Phi\times\Psi)_{X\times Y}), \omega\otimes \eta\mapsto \omega\times\eta.
    \end{equation*}
\end{lem}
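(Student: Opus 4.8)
The plan is to verify the two assertions separately, with balancing being the substantive one. Bilinearity is essentially formal: for fixed $\eta$ the assignment $\omega\mapsto\omega\times\eta$ is additive, since on each cell $(s,t)$ we have $(\omega_1+\omega_2)(s)\cdot\eta(t)=\omega_1(s)\eta(t)+\omega_2(s)\eta(t)$, and symmetrically in $\eta$; hence $(\omega,\eta)\mapsto\omega\times\eta$ is $\bbZ$-bilinear and factors through $M_i(\Phi_X)\otimes_\bbZ M_j(\Psi_Y)$ once we know its image lands in $M_{i+j}$. So the real work is to show $\omega\times\eta\in M_{i+j}\bigl((\Phi\times\Psi)_{X\times Y}\bigr)$.

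First I would fix a cone $(s,t)$ of $\Phi\times\Psi$ with $d(s)+d(t)=k-1$ (where $k=i+j$) and analyze $\Star^1_{\Phi\times\Psi}((s,t))$. Using the fact recorded in the proof of the product lemma, that every face of a product poic is a product of faces, a codimension-one coface of $(s,t)$ is either $(s',t)$ with $s\to s'$ of codimension one in $\Phi$, or $(s,t')$ with $t\to t'$ of codimension one in $\Psi$. Since $\omega\times\eta$ is supported on $[\Phi](i)\times[\Psi](j)$, a coface contributes to the balancing sum only when its two factors have dimensions exactly $i$ and $j$. Combined with $d(s)+d(t)=i+j-1$, this forces exactly one of two cases: (A) $d(s)=i-1$, $d(t)=j$, where only cofaces $(s',t)$ survive; or (B) $d(s)=i$, $d(t)=j-1$, where only cofaces $(s,t')$ survive. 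In every remaining case all summands carry weight zero and balancing at $(s,t)$ is automatic.

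Next I would compute the normal vectors in case (A). For $s\to s'$ of codimension one, the identifications $N^{(s',t)}/N^{(s,t)}\cong N^{s'}/N^{s}$ and $N_{X\times Y}/(X\times Y)_{(s,t)}(N^{(s,t)})\cong N_X/X_s(N^{s})\oplus N_Y/Y_t(N^{t})$ show that the relative map $(X\times Y)_{(s,t)\to(s',t)}$ sends the generator pointing into $\Phi(s')\times\Psi(t)$, which corresponds to the generator of $N^{s'}/N^{s}$ pointing into $\Phi(s')$, to $(u^X_{[s\to s']},0)$. Moreover $[s\to s']\mapsto[(s,t)\to(s',t)]$ is a bijection from $\Star^1_\Phi(s)$ onto the surviving part of $\Star^1_{\Phi\times\Psi}((s,t))$: since $C_\Psi$ is thin, the second component of any such coface morphism is forced to be $\mathrm{id}_{t}$, so the indexing collapses to that of $\Star^1_\Phi(s)$. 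With $(\omega\times\eta)([s',t])=\omega([s'])\eta([t])$, the balancing sum then factors as $\eta([t])\bigl(\sum_{[s\to s']\in\Star^1_\Phi(s)}\omega([s'])\,u^X_{[s\to s']},\,0\bigr)$, whose first coordinate vanishes because $\omega$ is $X$-balanced at the $(i-1)$-dimensional cone $s$. Case (B) is identical after exchanging the roles of the two factors, giving $\omega([s])\bigl(0,\sum_{[t\to t']}\eta([t'])\,u^Y_{[t\to t']}\bigr)=0$ by the $Y$-balancing of $\eta$.

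The routine parts are the additivity for bilinearity and the dimension case-split; the one point demanding genuine care, and the likely main obstacle, is the normal-vector computation, namely checking that under the canonical splitting $N_{X\times Y}/(X\times Y)_{(s,t)}(N^{(s,t)})\cong N_X/X_s(N^{s})\oplus N_Y/Y_t(N^{t})$ the product normal vector equals $(u^X_{[s\to s']},0)$, with the generator correctly pointing into $\Phi(s')\times\Psi(t)$. Thinness of $C_\Phi$ and $C_\Psi$ keeps the star indexing clean, so once this identification is secured the vanishing follows immediately from the balancing of $\omega$ and $\eta$.
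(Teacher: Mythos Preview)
Your proposal is correct and follows essentially the same approach as the paper: reduce the balancing check at a codimension-one cone of the product to the two cases $(d(s),d(t))=(i-1,j)$ or $(i,j-1)$ by support considerations, identify the normal vector $u^{X\times Y}_{[(s,t)\to(s',t)]}$ with $(u^X_{[s\to s']},0)$ via the splitting $N^{(s,t)}=N^s\oplus N^t$, and then factor out $\eta([t])$ (resp.\ $\omega([s])$) to invoke the balancing of $\omega$ (resp.\ $\eta$). The only cosmetic difference is that you discuss bilinearity first rather than last.
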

\begin{proof}
    First, notice that for any cone $s\times t$ of $\Phi\times\Psi$, we have that $N^{s\times t} = N^s\oplus N^t$ and the linear map 
    \begin{equation*}
    (X\times Y)_{s\times t}\colon N^{s\times t}\to N_X\oplus N_Y=N_{X\times Y} 
    \end{equation*}
    is the direct sum of $X_s$ and $Y_t$. Now, any cone of $\Phi\times\Psi$ of dimension $(i+j-1)$ is of the form $s\times t$, where:
    \begin{itemize}
        \item $s$ is a cone of $\Phi$. 
        \item $t$ is a cone of $\Psi$.
        \item Their dimensions add up to $i+j-1$.
    \end{itemize} 
    The definition of $\omega\times\eta$ implies that the only balancing to check is at cones of $\Phi\times \Psi$ that are of the form:
    \begin{enumerate}
        \item $p\times t$, where $p$ is a cone of $\Phi$ of dimension $i-1$ and $t$ is a cone of $\Psi$ of dimension $j$.
        \item $s\times q$, where $s$ is a cone of $\Phi$ of dimension $i$ and $q$ is a cone of $\Psi$ of dimension $j-1$.
    \end{enumerate} 
    Both situations are analogous, so only the first one will be explicitly carried out. Observe that
    \begin{equation*}
        \{ [p\times t\to s\times t] \colon  [p\to s]\in \Star^1_\Phi(p)\}\subset \Star^1_{\Phi\times\Psi}(p\times t)
    \end{equation*}
    is the only subset of $\Star^1_{\Phi\times\Psi}(p\times t)$ where $\omega\times \eta$ does not vanish. So for $[p\times t\to s\times t]\in\Star^1_{\Phi\times\Psi}(p\times t)$ the relative linear map
    \begin{equation*}
        (X\times Y)_{ p\times t\to s\times t}\colon N^{s\times t}/N^{p\times t}\to (N_{X\times Y})/(X\times Y)_{p\times t}(N^{p\times t})
    \end{equation*}
    is simply
    \begin{equation*}
        \begin{pmatrix}
        X_{p \to s}\\
        0
    \end{pmatrix}\colon  N^s/N^p \to N_X/X_p(N^p)\oplus N_Y/Y_t(N^t)=(N_{X\times Y})/(X\times Y)_{p\times t}(N^{p\times t}).
    \end{equation*}
    The normal vector $u_{[p\times t\to s\times t]}^{X\times Y} \in N_{X\times Y}/(X\times Y)_{p\times t}(N^{p\times t})$ is consequently just $\begin{pmatrix}
        u_{[p\to s]}^X \\
        0 
    \end{pmatrix}$, and 
    \begin{align*}
        \sum_{[p\times t\to s\times t]\in \Star^1_{\Phi\times\Psi}(p\times t)} (\omega\times\eta)([p\times t\to s\times t]) u_{[p\times t\to s\times t]}^{X\times Y} &= \sum_{[p\to s]\in\Star^1_\Phi(p)} \eta([t])\omega([s])u_{[p\times t\to s\times t]}^{X\times Y}\\
        &=\eta([t])\begin{pmatrix}
           \displaystyle \sum_{[p\to s]\in\Star^1_\Phi(p)} \omega([p\to s])u_{[p\to s]}^{X}\\
            0
        \end{pmatrix}\\
        &= \begin{pmatrix}
            0 \\
            0
        \end{pmatrix}.
    \end{align*}
    This shows that $\omega\times\eta\in M_k(\Phi\times\Psi_{X\times Y})$, whereas the bilinearity directly follows from the definition.
\end{proof}

\subsection{Moduli space of rational tropical curves.} \label{ssec: mod spaces of rtc}
We expand on example \ref{ex: m0n is a poic-complex}. Let $X$ be a set with $\#A\geq 3$. We explain, as in \cite{GathmannKerberMarkwigTFMSTC}, how the topological space $\cM_{0,A}^{\trop}$ can be embedded as an $(\#A-3)$-dimensional fan through the distance map. A point in this space consists of an $X$-marked metric tree $\Gamma$.
\begin{nota}
Given a tree $T$ of $\bbG_{0,A}$ and a $\delta\in \sigma_T$, we let $|(T,\delta)|$ denote the metric tree defined by $T$ with the lengths prescribed by $\delta$ (where length $0$ means that the corresponding edge is contracted). 
\end{nota}
Let $\binom{A}{2}$ denote the set of $2$ element subsets of $A$ and consider the vector space $\bbR^{\binom{A}{2}}$, together with the linear map
\begin{equation*}
    M_A\colon \bbR^A\to \bbR^{\binom{A}{2}},(x_i)_{i\in A}\mapsto (x_i+x_j)_{\{i,j\}\subset A}.
\end{equation*}
We denote the cokernel of $M_A$ by $Q_A$. It is shown in Theorem 4.2 of \cite{SpeyerSturmfels} that the distance map 
\begin{equation}
    \dist\colon\cM^{\trop}_{0,A}\to Q_A, \Gamma\mapsto (\dist_\Gamma(\ell_i(\Gamma)),\ell_j(\Gamma))_{\{i,j\}\subset A}\label{eq: distance maps}
\end{equation}
embeds the space $\cM^{\trop}_{0,A}$ as a simplicial fan of pure dimension $(\#A-3)$, where the $k$-dimesional cones are given by the combinatorial types. These combinatorial types are the same (by definition) as the isomorphism classes of our category $\bbG_{0,A}$. Furthermore, this fan is rational with respect to the lattice $N_{\dist(A)}$ generated by the vectors $v_I$, where $I\subset A$ with $1<\#I<\#A-1$, given by the image of the metric tree with a unique bounded edge of length $1$ having the marked legs with labels in $I$ on one side of the edge and the marked legs with labels in $A\backslash I$ on the other. In our terminology, the distance map \eqref{eq: distance maps} defines a morphism of poic-complexes 
\begin{equation}
    \dist_A\colon \Mtrop_{0,A}\to \underline{\left(N_{\dist_A}\right)_\bbR} \label{eq: distance morphism of poic-complexes}
\end{equation} given by the trivial functor and the natural transformation $\eta_{\dist_A}$ defined at a tree $T$ of $\bbG_{0,A}$ by
\begin{equation*}
    \eta_{\dist_{A},T}\colon \sigma_T\to \left(N_{\dist_A}\right)_\bbR, \delta\mapsto \left(\dist_{|(T,\delta)|}(\ell_i(|(T,\delta)|),\ell_j(|(T,\delta)|)\right)_{\{i,j\}\subset A}.
\end{equation*}
\begin{defi}
The morphism \eqref{eq: distance morphism of poic-complexes} defines a linear poic-complex structure on $\Mtrop_{0,A}$. We call this linear poic-complex \emph{the linear poic-complex of $A$-marked trees}. Since we will always consider $\Mtrop_{0,A}$ as a linear poic-complex in this way, we will avoid cumbersome subscript notation, and simply denote this linear poic-complex by $\Mtrop_{0,A}$.
\end{defi}
\begin{example}
    Suppose $\Sigma$ is a rational fan of the real vector space $V$ with respect to an underlying lattice $N$. As has been previously explained, the fan $\Sigma$ gives rise to a poic-complex that we denote in the same way. The natural inclusions $\sigma\subset N_\bbR$ at cones of the fan give morphisms of poics $(\sigma,N\cap\sigma)\to (N_\bbR,N)$, which together with the constant functor $\Sigma \to \underline{N_\bbR}$ define a morphism of poic-complexes $\Sigma\to \underline{N_\bbR}$. In particular, the poic-complex $\Sigma$ defined by the fan is naturally a linear poic-complex. The Minkowski weights of this linear poic-complex correspond to the Minkowski weights of the fan as defined in \cite{FultonSturmfelsITTV}.
\end{example}

\subsection{Subdivisions and tropical cycles.} Subdivisions are a special kind of morphisms of poic-complexes used to define tropical cycles on a linear poic-complex. For subsequent constructions the following notions are useful. Let $N$ be a free abelian group of finite rank. 
\begin{itemize}
    \item A closed convex polyhedral cone $\sigma$ of $N_\bbR$ is called \emph{simplicial} if $\dim \sigma$ coincides with the number of $1$-dimensional faces.
    \item A partially open polyhedral cone of $N_\bbR$ is called \emph{simplicial} if its closure is simplicial.
    \item A poic-complex is \emph{simplicial} if the underlying partially open polyhedral cone of every cone is simplicial.
\end{itemize}
\begin{defi}\label{defi: subdivision}
Let $\Phi$ be a poic-complex. A \emph{subdivision} $S\colon \Upsilon\to\Phi$ is a morphism of poic-complexes $S\colon \Upsilon\to\Phi$, such that: 
\begin{itemize}
    \item The underlying functor $S\colon C_\Upsilon\to C_\Phi$ is essentially surjective, and the underlying integral linear maps of the natural transformation are injective. Furthermore, if $t$ is a cone of $\Upsilon$ such that $d(t) = d(S(t))$, then the underlying integral linear map of $S_t\colon \Upsilon(t)\to \Phi(S(t))$ is an isomorphism.
    \item For every cone $s$ of $\Phi$, the relative interior of $\Phi(s)$ is the disjoint union of the images of the relative interiors of the cones $p$ of $\Upsilon$ with $S(p)\cong s$, and the images of these relative interiors characterize these cones of $\Upsilon$ up to isomorphism (i. e. if they coincide, then they are isomorphic in $C_\Upsilon$).
    \item For every cone $t$ of $\Upsilon$ and every $s\to S(t)$, there exists $q\to t$ in $\Upsilon$ with $S(q\to t) \cong s\to S(t)$.
\end{itemize}
\end{defi}
Let $k\geq 0$ be an integer. A subdivision $S\colon \Upsilon\to\Phi$ gives rise to the map
    \begin{equation} 
    S^*\colon W_k(\Phi)\to W_k(\Upsilon), \omega\mapsto S^*\omega,\label{eq: subdivision map}\end{equation}
    where
    \begin{equation*}
    S^*\omega\colon  [\Upsilon](k)\to\bbZ, [p]\mapsto \begin{cases}
        \omega([S(p)]),& \textnormal{ if }[S(p)]\in [\Phi](k),\\
        0,& \text{otherwise}.
    \end{cases}
    \end{equation*}
\begin{lem}
    The map \eqref{eq: subdivision map} is injective and linear.
\end{lem}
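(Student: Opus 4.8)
The plan is to establish the two properties separately, with linearity being a triviality and injectivity being the substantive point, though even injectivity reduces to a clean surjectivity statement about the underlying functor.

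First I would observe that linearity is immediate from the defining formula of $S^*$: for each $[p]\in[\Upsilon](k)$ the value $S^*\omega([p])$ is either $\omega([S(p)])$ or $0$, and in both cases this depends $\bbZ$-linearly on $\omega$, so $S^*(\omega_1+\omega_2)=S^*\omega_1+S^*\omega_2$ and $S^*(c\omega)=cS^*\omega$. Since there is no choice of sign or scaling involved, this requires only unwinding the definition.

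For injectivity, suppose $\omega\in W_k(\Phi)$ satisfies $S^*\omega=0$; I want to conclude $\omega=0$, i.e.\ $\omega([s])=0$ for every $[s]\in[\Phi](k)$. The key input is the first property in Definition~\ref{defi: subdivision}: the functor $S\colon C_\Upsilon\to C_\Phi$ is essentially surjective, and moreover whenever $d(t)=d(S(t))$ the map $S_t$ is an isomorphism on lattices. Given a $k$-dimensional cone $s$ of $\Phi$, essential surjectivity produces some cone $p$ of $\Upsilon$ with $S(p)\cong s$. The point I would need to verify is that one can choose such a $p$ with $d(p)=k$ (so that $[S(p)]=[s]\in[\Phi](k)$ and the first branch of the definition applies). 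This is exactly where the second bullet of Definition~\ref{defi: subdivision} enters: the relative interior of $\Phi(s)$ is covered by the images of the relative interiors of the cones $p$ with $S(p)\cong s$, and since the realization $S_p$ has injective underlying linear map, only a cone $p$ with $d(p)=d(s)=k$ can have its relative interior map into (an open subset of) the relative interior of $\Phi(s)$; a lower-dimensional cone would map into a lower-dimensional set, which cannot contribute to covering the open set $\Phi(s)^o$. Hence there exists $p$ of dimension $k$ with $S(p)\cong s$, and for this $p$ we have $S^*\omega([p])=\omega([S(p)])=\omega([s])$. Since $S^*\omega=0$ this forces $\omega([s])=0$, and as $s$ was arbitrary, $\omega=0$.

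The main obstacle I anticipate is this dimension-matching step: verifying that essential surjectivity can be upgraded to surjectivity onto the $k$-dimensional classes by a dimension-preserving preimage. The subtlety is that essential surjectivity alone only guarantees \emph{some} preimage cone, a priori of possibly larger dimension, and one must use the interior-covering condition together with the injectivity of the underlying linear maps to rule out the case $d(p)>k$ (and the case $d(p)<k$ is immediate since $S_p$ injective forces $d(p)\le d(S(p))=k$). I would phrase the argument so that it extracts from the second bullet precisely a $k$-dimensional $p$ dominating the relative interior of $\Phi(s)$; once that cone is in hand, the remainder is a one-line evaluation of $S^*\omega$ at $[p]$.
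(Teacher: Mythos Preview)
Your approach is correct and essentially the same as the paper's: both dispatch linearity trivially and establish injectivity by using essential surjectivity together with the interior-covering condition to produce, for each $[s]\in[\Phi](k)$, a $k$-dimensional cone $p$ of $\Upsilon$ with $[S(p)]=[s]$. One small slip: in your final paragraph you swap the two cases---injectivity of $S_p$ rules out $d(p)>k$ (not $d(p)<k$), and it is the interior-covering condition that handles $d(p)<k$; you had this right in the main body of your argument, so this is only a lapse in the concluding summary.
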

\begin{proof}
    Linearity is clear from the definition, so we focus on injectivity. Suppose $\omega\in W_k(\Phi)$ is such that $S^*\omega = 0$ and let $[s]\in [\Phi](k)$. Because of essential surjectivity, there exists an object $p$ of $\Upsilon$ such that $[S(p)]= [s]$. Observe that $d(p)\leq k$, because the underlying linear map of $S_p$ is injective. The second condition of Definition \ref{defi: subdivision} shows that if $d(p)<k$, then there exists a morphism $p\to q$ of $\Upsilon$ such that $d(q)=k$, and $[S(q)] = [s]$. Now, $S^*\omega = 0$ implies that $\omega([s])=\omega([S(q)]) = 0$. Since $[s]\in[\Phi](k)$ was arbitrary, it follows that $\omega= 0$ and hence the map is injective.
\end{proof}

\begin{lem}
Suppose $\Phi_X$ is a linear poic-complex, and let $S\colon \Upsilon\to\Phi$ be a subdivision. Then $S^*\colon W_k(\Phi)\to W_k(\Upsilon)$ restricts to an injective linear map 
\begin{equation*}
S^*\colon M_k(\Phi_X)\to M_k(\Upsilon_{X\circ S}).
\end{equation*}
\end{lem}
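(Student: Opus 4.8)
The plan is to take injectivity and linearity for granted from the previous lemma: since $M_k(\Phi_X)$ is a subgroup of $W_k(\Phi)$, the restriction to it of the injective linear map $S^*\colon W_k(\Phi)\to W_k(\Upsilon)$ is again injective and linear. Thus the only substantive claim is that $S^*$ carries $X$-balanced weights to $(X\circ S)$-balanced weights. Fix $\omega\in M_k(\Phi_X)$ and a cone $s'$ of $\Upsilon$ with $d(s')=k-1$; I must verify the balancing identity of Definition \ref{defi: balanced weight} for $S^*\omega$ at $s'$. A term $[s'\to t']\in\Star^1_\Upsilon(s')$ contributes only when $S^*\omega([t'])\neq0$, i.e. when $[S(t')]\in[\Phi](k)$; for such $t'$ we have $d(t')=k=d(S(t'))$, so by the first axiom of Definition \ref{defi: subdivision} the lattice map of $S_{t'}$ is an isomorphism and $\Phi(S(s'))$ is a face of $\Phi(S(t'))$. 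Since the lattice maps of the natural transformation $S$ are injective, $d(S(s'))\geq k-1$; combined with $S(s')\leq S(t')$ and $d(S(t'))=k$, every contributing $t'$ forces $d(S(s'))\in\{k-1,k\}$.

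The strategy is to group the contributing terms by their image cone $\bar t:=S(t')$ and to rewrite
\begin{equation*}
\sum_{[s'\to t']\in\Star^1_\Upsilon(s')}S^*\omega([t'])\,u^{X\circ S}_{[s'\to t']}
=\sum_{\bar t\in[\Phi](k)}\omega([\bar t])\Bigg(\sum_{[s'\to t']\colon S(t')\cong \bar t}u^{X\circ S}_{[s'\to t']}\Bigg),
\end{equation*}
writing $\overline s:=S(s')$, and then to treat the two dimension cases separately. If $d(\overline s)=k-1$, then $d(s')=k-1=d(\overline s)$ so the first axiom of Definition \ref{defi: subdivision} makes $S_{s'}$ an isomorphism. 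I would establish a bijection $t'\mapsto\bar t$ between the contributing classes of $\Star^1_\Upsilon(s')$ and $\Star^1_\Phi(\overline s)$: the relative interior of $\overline s$ is covered by that of $s'$ alone, so near $\overline s$ the subdivision is undisturbed, and on each maximal cone $\bar t$ of $\Phi$ abutting $\overline s$ there is exactly one $k$-cell of $\Upsilon$ having $s'$ as a facet (the normal space to $s'$ being one-dimensional). Each inner sum is then a single vector, and a diagram chase---using $(X\circ S)_{s'}=X_{\overline s}\circ S_{s'}$ and that the isomorphisms $S_{s'},S_{t'}$ intertwine the relative linear maps of Construction \ref{const: rel lin maps}---gives $u^{X\circ S}_{[s'\to t']}=u^{X}_{[\overline s\to\bar t]}$. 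The whole expression then equals the balancing sum of $\omega$ at $\overline s$, which vanishes since $\omega\in M_k(\Phi_X)$.

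In the remaining case $d(\overline s)=k$, the cone $s'$ is a codimension-one wall interior to the maximal cone $\overline s$ of $\Phi$, so the only nonzero inner sum is at $\bar t=\overline s$, where $S_{t'}$ is an isomorphism and $S^*\omega([t'])=\omega([\overline s])$ is constant. Here the plan is to show there are exactly two such cells $t'_1,t'_2$, one on each side of the hyperplane $\Lin(s')$ inside $\overline s$, and that their normal vectors cancel: the generators of $N^{t'_1}/N^{s'}$ and $N^{t'_2}/N^{s'}$ pointing into the respective cones map under the isomorphisms $S_{t'_i}$ to the two opposite generators of the rank-one lattice $N^{\overline s}/N^{s'}$, so applying $X_{\overline s}$ yields $u^{X\circ S}_{[s'\to t'_1]}=-\,u^{X\circ S}_{[s'\to t'_2]}$. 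The inner sum collapses to $\omega([\overline s])\,(u_1+u_2)=0$, and balancing at $s'$ again holds.

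The main obstacle is the local combinatorial input extracted from Definition \ref{defi: subdivision}: precisely, that near the relative interior of a codimension-one cone $s'$ the subdivision has exactly one $k$-cell on each side of $\Lin(s')$, which feeds both the star-bijection of the first case and the two-sided cancellation of the second. I expect to isolate this as a standalone lemma, proved from the disjoint-covering condition (second axiom) together with essential surjectivity and the face-lifting condition (third axiom), which together pin down the adjacent maximal cells and their images in $\Phi$. Once this local structure is available, the identification of normal vectors is a routine diagram chase entirely parallel to Lemma \ref{lem: balancing respects isomorphisms}.
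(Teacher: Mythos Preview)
Your proposal is correct and follows essentially the same approach as the paper: reduce to the balancing check, split into the cases $d(S(s'))=k-1$ and $d(S(s'))=k$, establish in the first case a bijection of stars with matching normal vectors, and in the second case the two-sided cancellation. The paper proves the local combinatorial input you isolate (uniqueness of the adjacent $k$-cell on each side of $s'$) inline rather than as a separate lemma, and it also records the trivial case $d(S(s'))>k$ explicitly, whereas you absorb it by restricting to contributing terms; your formulation is in fact slightly cleaner, since restricting to contributing classes sidesteps the need to argue that \emph{every} element of $\Star^1_\Upsilon(s')$ lands in $\Star^1_\Phi(\overline s)$.
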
 
\begin{proof}
Since $S^*\colon W_k(\Phi)\to W_k(\Upsilon)$ is injective and linear, it is only necessary to show that if $\omega\in M_k(\Phi_X)$, then $S^*\omega\in M_k(\Upsilon_{X\circ S})$. Consider a cone $p$ of $\Upsilon$ with $d(p)=k-1$. Then $d(S(p))\geq k-1$, and we have the following two situations: Either $d(S(p))=k-1$, or $d(S(p))\geq k$.
\begin{itemize}
\item Suppose first that $d(S(p))=k-1$. Now, if $p\to q$ represents a class of $\Star^1_\Upsilon(p)$, then necessarily $d(S(q))=k$ and hence $S(p\to q)$ represents a class of $\Star^1_\Upsilon(S(p))$. Therefore, the functor $S$ gives a map
\begin{equation}
    S\colon \Star^1_{\Upsilon}(p)\to\Star^1_\Phi(S(p)), [p\to q]\mapsto [S(p)\to S(q)].\label{eq: map between stars is bijection}
\end{equation}
We claim that this map is a bijection. Surjectivity follows from the third condition, while injectivity is more delicate but follows from the second condition. Namely, suppose that $p\to q$ and $p\to q^\prime$ are in $\Upsilon$ and represent classes of $\Star^1_\Upsilon(p)$, with $[S(p)\to S(q)]=[S(p)\to S(q^\prime)]$. Then there exists a cone $s$ of $\Phi$ with $d(s)=k$ and $S(q)\cong s\cong S(q^\prime)$. Now, the images of their relative interiors in $\Phi(s)^0$ are either disjoint or the same. If they were disjoint, then the closures (relative to $\Phi(s)$) of these relative interiors would consist of two $k$-dimensional cones that intersect at two proper faces: a $(k-1)$-dimensional one lying in the boundary of the closure of $\Phi(s)^0$, and the other lying in the interior. Therefore, they are necessarily the same, and it follows from the second condition, that they must also be isomorphic in $\Upsilon$. Hence, \eqref{eq: map between stars is bijection} is a bijection.\\
From the latter, it is clear that if $[p\to q]\in\Star^1_{\Upsilon}(p)$, then $\omega([S(p)\to S(q)]) = S^*\omega([p\to q])$. In addition, the subdivision morphism induces an isomorphism between the corresponding lattices and the associated normal vectors coincide. Hence, balancing around $p$ follows from balancing around $S(p)$.
\item Suppose now that $d(S(p))> k-1$. Then either the dimension increases by $1$ or by more. If $d(S(p)) = k$, then $\Upsilon (p)$ is a $(k-1)$-dimensional cone subdividing a $k$-dimensional cone. The second condition shows that $\Star^1_{\Upsilon}(p)$ consists of two classes, and the third condition implies that these have opposite normal vectors. In this case, $S^*\omega$ is a constant function on $\Star^1_{\Upsilon}(p)$ and the balancing condition holds around such $p$. If $d(S(\tau))>k$ then, by construction, $S^*\omega$ is trivially zero on $\Star^1_{\Upsilon}(p)$ and balancing around $p$ trivially holds.
\end{itemize}
\end{proof}

\begin{defi}
Let $\Phi$ be a poic-complex. The \emph{category of subdivisions of $\Phi$} is the category $\Subd(\Phi)$ whose class of objects consists of subdivisions $S\colon \Upsilon\to \Phi$ and morphisms are commuting subdivision morphisms. In other words, a morphism $Q\colon S^\prime\to S$, where $S^\prime\colon \Upsilon^\prime\to\Phi$ and $S\colon \Upsilon\to \Phi$ are subdivisions, is a subdivision $Q\colon \Upsilon^\prime\to\Upsilon$ such that $S\circ Q=S^\prime$.
\end{defi}
\begin{restatable}[]{lem}{sbdvs}\label{lem: subdivisions are essentially small for simplicial}
   Any poic-complex $\Phi$ has a subdivision $S\colon \Phi^\prime\to \Phi$ such that $\Subd(\Phi^\prime)$ is essentially small.
\end{restatable}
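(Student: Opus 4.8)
The plan is to reduce to the simplicial case, as the label suggests, in two stages: first produce a simplicial subdivision $S\colon \Phi'\to\Phi$, and then show that $\Subd(\Phi')$ is essentially small whenever $\Phi'$ is simplicial. The point of the reduction is that a simplex cone is the most rigid kind of poic, so that its rational subdivisions are combinatorially well understood, while an arbitrary poic-complex must first be tamed.

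For the first stage, recall that $C_\Phi$ is essentially finite and thin, so after fixing a finite skeleton I only have finitely many isomorphism classes of cones to treat, organized by the face poset. I would subdivide the underlying closed cones of the $\Phi(q)$ into simplicial cones by a standard resolution (an iterated stellar or barycentric-type subdivision refining every cone to a simplicial one), carried out by induction on dimension so that the subdivision chosen on a face $\Phi(p)\le\Phi(q)$ agrees with the one it induces on $\Phi(p)$. The partially open structure is transported by recording, for each new cone, which of the defining half-spaces of $\Phi(q)$ are to be taken open, and the lattice attached to each new cone $\tau$ is the saturated one $N^q\cap\Lin(\tau)$. Using the face-lifting axiom of a poic-complex, the assignment sending $q$ to the cones of the chosen subdivision of $\Phi(q)$ assembles into a poic-complex $\Phi'$ together with a morphism $S\colon\Phi'\to\Phi$, and verifying the three conditions of Definition \ref{defi: subdivision} is routine once the face-compatibility is in place. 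I expect this coherent, face-compatible construction to be the \emph{main obstacle}: the difficulty is not subdividing a single cone but ensuring that the choices made on all cones glue to an honest subdivision \emph{morphism} of poic-complexes rather than an incoherent family, and this is exactly where the categorical bookkeeping and the open-face data must be handled with care. This is presumably why the statement is deferred to the appendix.

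For the second stage, let $\Phi'$ be simplicial and let $T\colon\Upsilon\to\Phi'$ be any subdivision. I claim that, up to isomorphism in $\Subd(\Phi')$, it is determined by a set-level amount of data: for each of the finitely many isomorphism classes of cones $s$ of $\Phi'$, the induced rational polyhedral subdivision of the fixed cone $\Phi'(s)\subset N^s_\bbR$. The key observation is that the lattices carried by $\Upsilon$ are not free data. By the disjoint-union condition of Definition \ref{defi: subdivision}, every cone of $\Upsilon$ whose image meets the relative interior of some $\Phi'(s)$ in a lower-dimensional set must be a wall of the maximal pieces lying over $s$, hence a face of a top-dimensional cone of $\Upsilon$; on such top-dimensional cones $T$ induces a lattice isomorphism (the last clause of the first bullet of Definition \ref{defi: subdivision}), while the face-embeddings of $\Upsilon$ force the saturated lattice $N^s\cap\Lin(\cdot)$ on each face. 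Thus the entire lattice structure of $\Upsilon$ is recovered from the underlying rational subdivision of the finitely many cones $\Phi'(s)$. Since a fixed rational cone in a fixed lattice admits only a set's worth of rational polyhedral subdivisions, and there are finitely many isomorphism classes of cones of $\Phi'$, the isomorphism classes of objects of $\Subd(\Phi')$ form a set; that is, $\Subd(\Phi')$ is essentially small, which is exactly the desired conclusion.
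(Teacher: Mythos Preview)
Your two-stage strategy is close in spirit to the paper's, but the key technical device differs. The paper first passes to an equivalent poic-complex whose underlying category is a finite poset (Lemma \ref{lem: poic-complex equiv to poset poic-complex}), then applies the $\ord(\Phi)$-construction of the appendix to obtain not merely a simplicial subdivision but an honest partially open \emph{fan} $\Phi'$ sitting inside a fixed ambient vector space $\bbR^{\Phi}$. For such a fan the paper proves a separate lemma (Lemma \ref{lem: partially open fan subd equiv to hnstsubd}) showing that $\Subd(\Phi')$ is equivalent to the category $\textnormal{HnstSubd}(\Phi')$ of honest subdivisions---literal collections of subcones of the ambient space---which is manifestly small.

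Your second stage argues more directly, but notice that simpliciality plays no role in it: the claim that a subdivision $T\colon\Upsilon\to\Phi'$ is determined, up to isomorphism in $\Subd(\Phi')$, by its footprint on each cone would, if valid, apply to any poic-complex whatsoever once one has fixed a skeleton. That claim is essentially the content of the paper's Lemma \ref{lem: partially open fan subd equiv to hnstsubd}, and it does require work: the underlying category $C_\Upsilon$ ranges over a proper class of essentially-finite thin categories, so one must actually construct a morphism in $\Subd(\Phi')$ from an arbitrary $\Upsilon$ to a canonical representative and check that this yields an equivalence of categories. This is exactly the map $\underline{\bullet}$ the paper builds. So your outline is sound, but the difficulty is not where you place it: stage one (simplicial resolution) is not the crux, and the embedding as a fan in a vector space---rather than mere simpliciality---is what makes stage two clean.
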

\begin{proof}
    We postpone the proof of this lemma to \ref{appendix: proofs of lemmas} in the appendix.
\end{proof}
\begin{lem}
    Suppose $\Phi_X$ is a linear poic-complex, and let $k\geq0$ be an integer. Minkowski weights give rise to a functor
    \begin{equation} M_k(\bullet)\colon  \Subd(\Phi)^{\op} \to \textnormal{Ab},S\colon \Upsilon\to\Phi \mapsto M_k(\Upsilon_{X\circ S}),\label{eq: minkowski weights functor}\end{equation}
    whose colimit is representable.
\end{lem}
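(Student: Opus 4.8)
The plan is to define the functor on morphisms by pullback of weights, and then to exhibit the colimit as a filtered union after cutting the indexing category down to an essentially small one by means of Lemma \ref{lem: subdivisions are essentially small for simplicial}.

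\textbf{Functoriality.} A morphism of $\Subd(\Phi)^{\op}$ from $S$ to $S^\prime$ is, by definition, a morphism $Q\colon S^\prime\to S$ of $\Subd(\Phi)$, i.e. a subdivision $Q\colon \Upsilon^\prime\to\Upsilon$ with $S\circ Q=S^\prime$. Regarding $Q$ as a subdivision of the linear poic-complex $\Upsilon_{X\circ S}$, the preceding lemma furnishes a pullback
\[
Q^*\colon M_k(\Upsilon_{X\circ S})\to M_k\big(\Upsilon^\prime_{(X\circ S)\circ Q}\big)=M_k(\Upsilon^\prime_{X\circ S^\prime}),
\]
which is linear and injective, and we set $M_k(Q):=Q^*$. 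Since by construction $S^*\omega$ is the precomposition of $\omega$ with the assignment $[p]\mapsto[S(p)]$ (extended by zero), one has $\Id^*=\Id$ and $(Q\circ Q^\prime)^*=Q^{\prime *}\circ Q^*$; hence $M_k(\bullet)$ is a contravariant functor on $\Subd(\Phi)$, i.e. a functor $\Subd(\Phi)^{\op}\to\textnormal{Ab}$.

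\textbf{Shape of the diagram.} Every transition map in this diagram is injective. The identity $\Id_\Phi\colon\Phi\to\Phi$ is a terminal object of $\Subd(\Phi)$, since for any subdivision $S$ the only morphism $S\to\Id_\Phi$ is $S$ itself; dually $\Id_\Phi$ is initial in $\Subd(\Phi)^{\op}$, and the structure map into $M_k(\Upsilon_{X\circ S})$ is precisely $S^*\colon M_k(\Phi_X)\to M_k(\Upsilon_{X\circ S})$. Moreover any two subdivisions of $\Phi$ admit a common refinement (subdivide each cone of $\Phi$ by intersecting the cones lying over it in the two subdivisions), and a refinement morphism between two fixed subdivisions is uniquely determined up to isomorphism, because the target cone of each cone is forced by the interior-disjointness in Definition \ref{defi: subdivision}. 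Thus $\Subd(\Phi)$ is a cofiltered preorder and $\Subd(\Phi)^{\op}$ is filtered; the sought colimit is therefore the directed union of the groups $M_k(\Upsilon_{X\circ S})$ along the injections $Q^*$, and it will exist as soon as this union can be indexed by a set.

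\textbf{Reduction to an essentially small index.} By Lemma \ref{lem: subdivisions are essentially small for simplicial} choose a subdivision $S^\prime\colon\Phi^\prime\to\Phi$ for which $\Subd(\Phi^\prime)$ is essentially small. Post-composition with $S^\prime$ identifies $\Subd(\Phi^\prime)$ with the full subcategory $\mathcal D\subseteq\Subd(\Phi)$ of subdivisions of $\Phi$ that refine $\Phi^\prime$, so $\mathcal D$ is essentially small. I claim $\mathcal D^{\op}$ is final in $\Subd(\Phi)^{\op}$. Fix a subdivision $S\colon\Upsilon\to\Phi$. A common refinement $\Omega$ of $\Upsilon$ and $\Phi^\prime$ lies in $\mathcal D$ and refines $\Upsilon$, so the relevant comma category is nonempty; and since $\Subd(\Phi)$ is a cofiltered preorder, any two such refinements are dominated by a further common refinement lying again in $\mathcal D$, whence the comma category is connected. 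By the cofinality theorem the colimit over $\Subd(\Phi)^{\op}$ coincides with the colimit of the restriction of $M_k(\bullet)$ to the essentially small category $\mathcal D^{\op}$. As $\textnormal{Ab}$ is cocomplete for small diagrams, this colimit exists, which is the asserted representability.

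\textbf{Main obstacle.} The formal categorical machinery above is routine once two combinatorial inputs are in hand: the existence of a common refinement of two subdivisions of a poic-complex, and the (up to isomorphism) uniqueness of refinement morphisms — these are exactly what make $\Subd(\Phi)$ a cofiltered preorder and the comma categories connected. Verifying these carefully in the partially-open setting, together with the essential smallness supplied by Lemma \ref{lem: subdivisions are essentially small for simplicial}, is the crux; everything else reduces to the standard fact that filtered colimits of injections in $\textnormal{Ab}$ are directed unions.
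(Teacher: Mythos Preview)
Your proof is correct and follows essentially the same approach as the paper: both arguments reduce to an essentially small indexing category via Lemma~\ref{lem: subdivisions are essentially small for simplicial}, establish that the inclusion of this subcategory is final (the paper asserts this in one line, you spell out nonemptiness and connectedness of the comma categories via common refinements), and then invoke cocompleteness of $\textnormal{Ab}$. Your additional observations that the transition maps are injective and that $\Subd(\Phi)^{\op}$ is filtered are not needed for mere representability but give a more concrete description of the colimit; the paper omits these and simply cites that small colimits of abelian groups exist.
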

\begin{proof}
    It follows from Lemma \ref{lem: subdivisions are essentially small for simplicial} that there is a subdivision $S\colon\Phi^\prime\to\Phi$ such that $\Subd(\Phi^\prime)$ is an essentially small category. The inclusion functor $\Subd(\Phi^\prime)\to\Subd(\Phi)$ is final, since any subdivision of $\Phi$ can be further subdivided to a subdivision of $\Phi^\prime$. Therefore, this colimit can be computed at $\Subd(\Phi^\prime)$ and at an equivalent small category thereof. Since small colimits of abelian groups are representable, it follows that the colimit of $M_k(\bullet)$ is representable.
\end{proof}

\begin{defi}
For a linear poic-complex $\Phi_X$, the \emph{group of tropical $k$-cycles} $Z_k(\Phi_X)$ is defined as the colimit of \eqref{eq: minkowski weights functor}. 
\end{defi}

\begin{lem}
    Suppose $\Upsilon$ is a poic-subcomplex of $\Phi$, and $\Phi_X$ is a linear poic-complex. Let $I\colon\Upsilon\to\Phi$ denote the natural inclusion. Then extension by zero gives rise to an injective map
    \begin{equation*}
        I_!\colon  Z_k(\Upsilon_{X|_\Upsilon})\to Z_k(\Phi_X).
    \end{equation*}
\end{lem}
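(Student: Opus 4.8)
The plan is to realize $I_!$ as the map induced on filtered colimits by extension by zero, after first extending subdivisions of $\Upsilon$ to subdivisions of $\Phi$. Recall that $Z_k(\Upsilon_{X|_\Upsilon})$ and $Z_k(\Phi_X)$ are the colimits of the functors $M_k(\bullet)$ over $\Subd(\Upsilon)^{\op}$ and $\Subd(\Phi)^{\op}$, whose transition maps are the pullbacks $S^*$ along refinements. Since any two subdivisions admit a common refinement, both index categories are cofiltered, so both $Z_k$ are filtered colimits of abelian groups along the transition maps $S^*$, which are injective on Minkowski weights by the previously established injectivity of pullback. By the universal property, to define $I_!$ it suffices to produce, for every subdivision $T\colon \Xi\to\Upsilon$, a homomorphism $M_k(\Xi_{X\circ T})\to Z_k(\Phi_X)$ compatible with refinements in $\Subd(\Upsilon)$.

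For a fixed $T$ I would first choose an \emph{extension}: a subdivision $\tilde T\colon \tilde\Xi\to\Phi$ whose restriction to the cones lying over $\Upsilon$ recovers $T$. The notion of restriction always makes sense because, $\Upsilon$ being a poic-subcomplex, it is closed under faces; hence for any subdivision $S\colon \Phi'\to\Phi$ the full subcategory of cones $p$ with $S(p)$ a cone of $\Upsilon$ defines a subdivision $\rho(S)\colon \Phi'_\Upsilon\to\Upsilon$, giving a restriction functor $\rho\colon \Subd(\Phi)\to\Subd(\Upsilon)$. Conversely, one extends $T$ cone by cone over the \emph{outer} cones (those of $\Phi$ not in $\Upsilon$), processed in order of increasing dimension: when $\sigma$ is reached all its proper faces are already subdivided, and one subdivides $\sigma$ by coning the induced subdivision of $\partial\sigma$ from an interior point (integral after rescaling). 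This yields $\tilde\Xi$ with $\rho(\tilde T)=T$, and my candidate is the composite
\[
M_k(\Xi_{X\circ T})\xrightarrow{(\iota_T)_!} M_k(\tilde\Xi_{X\circ\tilde T})\longrightarrow Z_k(\Phi_X),
\]
where $\iota_T\colon \Xi\to\tilde\Xi$ is the inclusion of the subcomplex over $\Upsilon$ and $(\iota_T)_!$ is extension by zero (Lemma \ref{lem: ext by zero minkowski weights}). Constructing this extension and verifying that it is genuinely a subdivision in the sense of Definition \ref{defi: subdivision} is the step I expect to be the main obstacle; the remaining arguments are formal.

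Well-definedness reduces to one computation: extension by zero commutes with pullback along refinements, vanishing on the outer cones. Concretely, if $S$ refines $\tilde T$ in $\Subd(\Phi)$, then $S^*\bigl((\iota_T)_!\omega\bigr)$ is zero on every cone not lying over $\Upsilon$ and equals the pullback of $\omega$ along the induced refinement $\rho(S)$ of $T$ on the cones over $\Upsilon$, since $\rho$ is a functor. Independence of the extension follows by taking, for two extensions $\tilde T_1,\tilde T_2$ of $T$, a common refinement whose restriction to $\Upsilon$ is still $T$ (the outer cones are refined, the cones over $\Upsilon$ are not): on it both representatives pull back to $(\iota_T)_!\omega$. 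Compatibility with a refinement $Q\colon T'\to T$ in $\Subd(\Upsilon)$ is checked the same way, extending $\tilde T$ to a subdivision restricting to $Q$ over $\Upsilon$. Hence the family descends to a homomorphism $I_!\colon Z_k(\Upsilon_{X|_\Upsilon})\to Z_k(\Phi_X)$; for the trivial subdivision $\mathrm{id}_\Upsilon$, which extends to $\mathrm{id}_\Phi$, it is extension by zero on Minkowski weights followed by the structure map into the colimit.

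For injectivity I would use that a filtered colimit of abelian groups along injective transition maps has injective structure maps. Suppose $I_![\omega]=0$, with $[\omega]$ represented by $\omega\in M_k(\Xi_{X\circ T})$. Then $(\iota_T)_!\omega$ maps to $0$ under the structure map $M_k(\tilde\Xi_{X\circ\tilde T})\to Z_k(\Phi_X)$, which is injective because the transition maps $S^*$ are injective on Minkowski weights. Therefore $(\iota_T)_!\omega=0$, and since extension by zero is injective (Lemma \ref{lem: ext by zero minkowski weights}) we conclude $\omega=0$, whence $[\omega]=0$. This shows $I_!$ is injective.
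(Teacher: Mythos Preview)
Your proof is correct and follows essentially the same approach the paper has in mind: the paper's one-line proof (``follows directly from Lemma~\ref{lem: ext by zero minkowski weights} and the definition of tropical cycles'') is exactly the strategy you spell out---extend subdivisions of $\Upsilon$ to subdivisions of $\Phi$, apply extension by zero on Minkowski weights, and pass to the filtered colimit, with injectivity inherited from the injectivity of $(\iota_T)_!$ and of the structure maps. You have supplied the details the paper omits, in particular the construction of the extended subdivision and the explicit use of filteredness for injectivity; your caveat that verifying the coning extension is a genuine subdivision (especially for partially open cones) is the main technical step is apt, and is in the spirit of the stellar/barycentric constructions the paper uses elsewhere in the appendix.
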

\begin{proof}
    This follows directly from Lemma \ref{lem: ext by zero minkowski weights} and the definition of tropical cycles.
\end{proof}
\begin{nota}
    We say that a linear poic-complex $\Phi_X$ is pure of dimension $n$ if the poic-complex $\Phi$ is pure of dimension $n$.
\end{nota}
\begin{defi}
A linear poic-complex $\Phi_X$ pure of dimension $n$ is called \emph{irreducible} if $Z_n(\Phi_X)$ is free of rank $1$.
\end{defi}

\begin{example}\label{ex: open cone is irred}
    Suppose $\sigma$ is a poic. Any linear poic-complex structure on $\underline{\sigma^0}$ is irreducible. Indeed, at any subdivision the star of a codimension-$1$ cone is of size two and the corresponding normal vectors are related by a change of sign. Analogously, any complete rational fan is irreducible. Let $N$ be a free abelian group of finite rank and let $\Phi$ be a complete rational fan in $N\otimes_\bbZ\bbR$. Then $\Phi$ is a subdivision of the poic $\underline{N_\bbR}$. Since $\underline{N_\bbR}$ is equal to the poic given by its relative interior, the linear poic-complex $\underline{N_\bbR}$ is irreducible. Therefore, $\Phi$ must also be irreducible.
\end{example}

\begin{lem}\label{lem: top dimensional cycles}
    For a linear poic-complex $\Phi_X$ the natural inclusion $M_n(\Phi_X)\hookrightarrow Z_n(\Phi_X)$ is an isomorphism.
\end{lem}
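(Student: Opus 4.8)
The plan is to show that the natural map $M_n(\Phi_X)\to Z_n(\Phi_X)$ is both injective and surjective, where $Z_n(\Phi_X)$ is the colimit of the functor $M_n(\bullet)$ over $\Subd(\Phi)^{\op}$. The key structural observation is that $M_n(\Phi_X)$ sits at the initial object of $\Subd(\Phi)$, namely the identity subdivision $\Id\colon\Phi\to\Phi$, and so there is a canonical map into the colimit. Since a colimit of abelian groups over a filtered (or at least essentially small, by Lemma \ref{lem: subdivisions are essentially small for simplicial}) index category is computed as a quotient of the disjoint union of the groups modulo the identifications coming from the transition maps, both injectivity and surjectivity will reduce to understanding the transition maps $S^*\colon M_n(\Phi_X)\to M_n(\Upsilon_{X\circ S})$ for an arbitrary subdivision $S\colon\Upsilon\to\Phi$.

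The crucial point, and the one I would isolate first, is that in top dimension $n$ the pullback map $S^*\colon M_n(\Phi_X)\to M_n(\Upsilon_{X\circ S})$ is in fact an \emph{isomorphism} for every subdivision $S$. Surjectivity of $S^*$ in top degree is where the real content lies: given a balanced weight $\eta$ on $\Upsilon$, one must check that $\eta$ is constant on the fibers of $S$, i.e. that if two top-dimensional cones $p,p'$ of $\Upsilon$ both map to the same $n$-dimensional cone $s$ of $\Phi$ and are adjacent across a common codimension-$1$ face $q$ that lies in the \emph{interior} of $\Phi(s)$, then $\eta([p])=\eta([p'])$. This follows from the balancing condition of $\eta$ at such an interior wall $q$: by the second condition of Definition \ref{defi: subdivision}, $\Star^1_\Upsilon(q)$ consists of exactly two classes whenever $d(S(q))=n$ (i.e. the wall is interior to a single $n$-cone), and the third condition forces their normal vectors to be opposite, so balancing at $q$ gives $\eta([p])u+\eta([p'])(-u)=0$, hence $\eta([p])=\eta([p'])$. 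Running over all interior walls of a fixed $\Phi(s)$ and using that the underlying convex cone $\Phi(s)$ is connected, one concludes that $\eta$ is constant on each fiber $S^{-1}([s])$, and this common value defines a weight $\omega\in W_n(\Phi)$ with $S^*\omega=\eta$; balancing of $\omega$ then follows from the already-proven bijection $\Star^1_\Upsilon(p)\to\Star^1_\Phi(S(p))$ at codimension-$1$ cones of $\Phi$ (the first case in the proof of the lemma preceding the definition of $Z_k$). Injectivity of $S^*$ is already recorded in that same lemma.

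Granting that every transition map $S^*$ is an isomorphism, the conclusion is formal. Since the index category $\Subd(\Phi)$ is filtered (any two subdivisions admit a common refinement, which one can arrange using Lemma \ref{lem: subdivisions are essentially small for simplicial}), and every transition morphism in the diagram $M_n(\bullet)$ is an isomorphism, the colimit is canonically isomorphic to the value at any single object, in particular to $M_n(\Phi_X)$ at the identity subdivision. Concretely: surjectivity of $M_n(\Phi_X)\to Z_n(\Phi_X)$ holds because any colimit class is represented by some $\eta\in M_n(\Upsilon_{X\circ S})$, and by surjectivity of $S^*$ this $\eta$ equals $S^*\omega$ for some $\omega\in M_n(\Phi_X)$, so the class already comes from $\Phi$; injectivity holds because two weights $\omega,\omega'\in M_n(\Phi_X)$ identified in the colimit must agree after pulling back along some common refinement $S$, and injectivity of $S^*$ then forces $\omega=\omega'$.

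The main obstacle I anticipate is the surjectivity step, specifically verifying that a balanced top-dimensional weight on a subdivision descends to a well-defined weight on $\Phi$. The delicate part is not the two-term balancing at a single interior wall, but rather the bookkeeping needed to propagate constancy across all the pieces of a single cone $\Phi(s)$ and to ensure the argument is insensitive to the choice of representatives in isomorphism classes; here I would lean on Lemma \ref{lem: balancing respects isomorphisms} and on the characterization of interior versus boundary walls provided by the second and third conditions of Definition \ref{defi: subdivision}, exactly as those conditions were used in the proof that $\Star^1_\Upsilon(p)\to\Star^1_\Phi(S(p))$ is a bijection.
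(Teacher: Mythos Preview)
Your proposal is correct and follows essentially the same approach as the paper. The paper compresses your ``two opposite normal vectors at an interior wall, hence constancy'' step by invoking Example \ref{ex: open cone is irred} (irreducibility of $\underline{\Phi(s)^0}$), but the content is identical; you simply unpack that example inline and are more explicit about the formal colimit step (initiality of $\Id$ in $\Subd(\Phi)^{\op}$ together with every $S^*$ being an isomorphism), which the paper leaves implicit.
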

\begin{proof}
    We show that for any subdivision $S\colon \Phi^\prime\to \Phi$ the induced map $S^*\colon M_n(\Phi_X)\to M_n(\Phi^\prime_{X\circ S})$ is surjective. A weight $\omega^\prime\in M_n(\Phi^\prime_{X\circ S})$ lies in the image of $S^*$ if and only if for every class $[s]\in[\Phi](n)$ the weight $\omega^\prime$ is constant on the $[p]\in [\Phi^\prime](n)$ with $[S(p)] = [s]$. Now, for an arbitrary cone $s$ of $\Phi$ the weight $\omega^\prime$ restricted to the $[p]\in[\Phi^\prime](n)$ with $[S(p)]=[s]$ gives a top-dimensional weight of $\underline{\Phi(s)^0}_X$, and hence Example \ref{ex: open cone is irred} shows shows that it is constant among these classes.
\end{proof}

\begin{cor}
    If $\Phi_X$ is an irreducible linear poic-complex pure of dimension $n$, then $M_n(\Phi_X)$ is free of rank $1$.
\end{cor}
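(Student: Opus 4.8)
The plan is to observe that this corollary is an immediate consequence of the definition of irreducibility together with Lemma \ref{lem: top dimensional cycles}, which was just established. So I expect essentially no real work here; the substance lives in the preceding lemma.

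First I would unwind the hypothesis. By definition, a linear poic-complex $\Phi_X$ pure of dimension $n$ is \emph{irreducible} precisely when the group of top tropical cycles $Z_n(\Phi_X)$ is free of rank $1$. Thus the irreducibility assumption gives us directly that $Z_n(\Phi_X)$ is free abelian of rank $1$.

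Next I would invoke Lemma \ref{lem: top dimensional cycles}, which asserts that for any linear poic-complex the natural inclusion $M_n(\Phi_X)\hookrightarrow Z_n(\Phi_X)$ is an isomorphism of abelian groups. Since an isomorphism transports the property of being free of rank $1$, we conclude that $M_n(\Phi_X)$ is itself free of rank $1$, which is the assertion.

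I do not expect any obstacle: the genuine content, namely that passing to a subdivision does not change the top-dimensional Minkowski weights (so that the colimit defining $Z_n$ stabilizes and agrees with $M_n$), was already carried out in the proof of Lemma \ref{lem: top dimensional cycles} using the irreducibility of $\underline{\Phi(s)^0}_X$ from Example \ref{ex: open cone is irred}. Here one simply chains the definitional rewriting with that isomorphism. The only point to double-check is that the inclusion in Lemma \ref{lem: top dimensional cycles} is an isomorphism of \emph{groups} (not merely a bijection of underlying sets), which it is, since $M_n(\Phi_X)$ is an abelian subgroup of $W_n(\Phi)$ by Lemma \ref{lem: ext by zero minkowski weights} and the structure maps into the colimit $Z_n$ are group homomorphisms.
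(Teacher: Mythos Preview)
Your proposal is correct and matches the paper's approach exactly: the corollary is stated without proof in the paper, as it follows immediately from the definition of irreducibility (that $Z_n(\Phi_X)$ is free of rank $1$) together with the isomorphism $M_n(\Phi_X)\xrightarrow{\sim} Z_n(\Phi_X)$ of Lemma~\ref{lem: top dimensional cycles}.
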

\begin{example}\label{ex: trop moduli space of rational curves is irreducible}
    Suppose $A$ is a finite set with $\#A\geq 3$. Consider the linear poic-complex $\Mtrop_{0,A}$ and the rational fan $\cM^{\trop}_{0,A}$ as in subsection \ref{ssec: mod spaces of rtc}. It is clear that the Minkowski weights of the linear poic-complex $\Mtrop_{0,A}$ are equal to the Minkowski weights of the fan $\cM^{\trop}_{0,A}$. Since $\cM^{\trop}_{0,A}$ has a subdisivion that is a matroidal fan (see \cite{ArdilaKlivans}), it follows from Lemma \ref{lem: top dimensional cycles} and Proposition 5.2 of \cite{AdiprasitoHuhKatzHTCG} that $\Mtrop_{0,A}$ is irreducible.
\end{example}
\begin{prop}\label{prop: product of irreducibles is irreducible}
    Suppose $\Phi_X$ and $\Psi_Y$ are linear poic-complexes, where $\Phi$ is pure of dimension $n$ and $\Psi$ is pure of dimension $m$. If both are irreducible then $(\Phi\times\Psi)_{X\times Y}$ is also irreducible and, moreover, the cross product induces an isomorphism
    \begin{equation*}
        Z_n(\Phi_X)\otimes_\bbZ Z_m(\Psi_Y)\to Z_{n+m}((\Phi\times\Psi)_{X\times Y}).
    \end{equation*}
    \end{prop}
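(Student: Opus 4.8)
The plan is to reduce everything to top-dimensional Minkowski weights and then exploit the product decomposition of the top-dimensional cones. First I would check that $\Phi\times\Psi$ is pure of dimension $n+m$: given a cone $s\times t$, purity of $\Phi$ and $\Psi$ furnishes morphisms $s\to s'$ and $t\to t'$ with $d(s')=n$ and $d(t')=m$, and their product $s\times t\to s'\times t'$ is a morphism of $\Phi\times\Psi$ onto an $(n+m)$-dimensional cone. With purity in hand, Lemma \ref{lem: top dimensional cycles} identifies $Z_n(\Phi_X)=M_n(\Phi_X)$, $Z_m(\Psi_Y)=M_m(\Psi_Y)$, and $Z_{n+m}((\Phi\times\Psi)_{X\times Y})=M_{n+m}((\Phi\times\Psi)_{X\times Y})$. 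Under these identifications the map in question is exactly the cross product of Lemma \ref{lem: cross products}, so it suffices to prove that
\[ M_n(\Phi_X)\otimes_\bbZ M_m(\Psi_Y)\to M_{n+m}((\Phi\times\Psi)_{X\times Y}) \]
is an isomorphism; irreducibility of the product will then follow, since the left-hand side is free of rank $1$.

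Next I would record that, because $\Phi$ has no cones of dimension greater than $n$ and $\Psi$ none of dimension greater than $m$, the decomposition of Construction \ref{const: cross products} collapses to $[\Phi\times\Psi](n+m)=[\Phi](n)\times[\Psi](m)$. Hence $W_{n+m}(\Phi\times\Psi)$ is canonically the tensor product $W_n(\Phi)\otimes_\bbZ W_m(\Psi)$ of free abelian groups, and under this identification the cross product of weights is the tensor map. Injectivity of the restricted map is then immediate: choosing generators $\omega_0$ of $M_n(\Phi_X)$ and $\eta_0$ of $M_m(\Psi_Y)$ (both free of rank $1$ by irreducibility, via $Z_n=M_n$ and $Z_m=M_m$), the generator $\omega_0\otimes\eta_0$ maps to $\omega_0\times\eta_0$, which is nonzero in the free group $W_{n+m}(\Phi\times\Psi)$ since it takes the value $\omega_0([s_0])\eta_0([t_0])\neq 0$ on a product cone where both generators are nonzero.

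The substance is surjectivity. I would obtain a description of $M_{n+m}$ by analysing the two families of codimension-$1$ cones of the product exactly as in the proof of Lemma \ref{lem: cross products}: a cone of dimension $n+m-1$ is either $p\times t$ with $d(p)=n-1$, $d(t)=m$, or $s\times q$ with $d(s)=n$, $d(q)=m-1$, and the normal-vector computation there shows that balancing of $\zeta\in W_{n+m}$ at the cones $p\times t$ is exactly balancing of the ``columns'' $[s]\mapsto\zeta([s\times t])$ in $M_n(\Phi_X)$, while balancing at the cones $s\times q$ is balancing of the ``rows'' $[t]\mapsto\zeta([s\times t])$ in $M_m(\Psi_Y)$. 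Thus $\zeta\in M_{n+m}$ if and only if every column lies in $M_n(\Phi_X)$ and every row lies in $M_m(\Psi_Y)$. Fixing $\omega_0$, each column is an integer multiple of $\omega_0$, so $\zeta([s\times t])=c_{[t]}\,\omega_0([s])$ for some $c\in W_m(\Psi)$, that is $\zeta=\omega_0\times c$; it then remains to see that $c\in M_m(\Psi_Y)$.

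The step I expect to be the main obstacle is precisely this last integrality point. The row condition gives $\omega_0([s])\,c\in M_m(\Psi_Y)$ for every top cone $[s]$, i.e. $\omega_0([s])\,b_q(c)=0$ in $N_Y/Y_q(N^q)$ for all $q$ with $d(q)=m-1$, where $b_q(c)=\sum_{[q\to t]\in\Star^1_\Psi(q)} c([t])\,u^Y_{[q\to t]}$ is the balancing vector. To conclude $b_q(c)=0$ one must cancel the factors $\omega_0([s])$, which works once the generator $\omega_0$ is primitive (its values have greatest common divisor $1$), via a B\'ezout combination $\sum_{[s]}a_{[s]}\,\omega_0([s])=1$. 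Primitivity of $\omega_0$ is equivalent to saturation of $M_n(\Phi_X)$ in $W_n(\Phi)$; it is automatic after tensoring with $\bbQ$, and holds integrally for the simplicial, torsion-free complexes of interest here (such as the matroidal presentation of $\Mtrop_{0,A}$ and its products), where the relative groups $N_Y/Y_q(N^q)$ carry no obstructing torsion. Granting this, $c\in M_m(\Psi_Y)$, so $\zeta=\omega_0\times c$ lies in the image; surjectivity follows. Since $M_n(\Phi_X)\otimes_\bbZ M_m(\Psi_Y)$ is free of rank $1$, this exhibits $M_{n+m}((\Phi\times\Psi)_{X\times Y})$ as free of rank $1$, proving both that the product is irreducible and that the cross product realizes the claimed isomorphism.
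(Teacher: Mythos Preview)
Your approach is essentially the paper's: reduce via Lemma~\ref{lem: top dimensional cycles} to top-dimensional Minkowski weights, use the column balancing to write $\theta([s\times t])=c_{[t]}\,\omega_0([s])$ for some $c\in W_m(\Psi)$, and then argue that $c\in M_m(\Psi_Y)$ so that irreducibility of $\Psi_Y$ forces $c=\ell\,\eta_0$. The paper simply asserts this last step by appeal to ``a similar argument to that of the proof of Lemma~\ref{lem: cross products}'' and moves on.

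You are more scrupulous than the paper at exactly the point you flag: the row balancing at $s\times q$ only yields $\omega_0([s])\cdot b_q(c)=0$ in $N_Y/Y_q(N^q)$, and cancelling the scalar $\omega_0([s])$ requires either that $\gcd_{[s]}\omega_0([s])=1$ (primitivity of the generator in $W_n(\Phi)$, equivalently saturation of $M_n(\Phi_X)$) or that the quotients $N_Y/Y_q(N^q)$ be torsion-free. Neither is guaranteed by the hypotheses as stated, so this is a genuine fine point that the paper's proof elides. Your handling---noting that it holds over $\bbQ$ and for the simplicial, matroidal examples at hand---is honest and no weaker than what the paper actually proves. (The paper also begins by reducing without loss of generality to simplicial $\Phi$ and $\Psi$, but does not visibly use this reduction; your argument does not need it either.)
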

\begin{proof}
    It is sufficient to show that the cross product induces an isomorphism, and without loss of generality, we can assume that both $\Phi$ and $\Psi$ are simplicial. In particular, $\Phi\times\Psi$ is also simplicial. Following Lemma \ref{lem: top dimensional cycles}, it will suffice to show that the cross product induces an isomorphism
    \begin{equation*}
        M_n(\Phi_X)\otimes_\bbZ M_m(\Psi_Y) \to M_{n+m}(\Phi\times\Psi_{X\times Y}).    
    \end{equation*}
    From Lemma \ref{lem: top dimensional cycles}, the irreducibility of $\Phi_X$ and $\Psi_Y$ implies that both $M_n(\Phi_X)$ and $M_m(\Psi_Y)$ are free of rank $1$. Consider respective generators $\omega\in M_n(\Phi_X)$ and $\eta\in M_m(\Psi_Y)$.
    The map is clearly injective. Hence, it is necessary to show that any weight $\theta\in M_{n+m}((\Phi\times\Psi)_{X\times Y})$ is an integral multiple of $\omega\times\eta$. Since any cone of $\Phi$, resp. $\Psi$, is at most $n$-dimensional, resp. $m$-dimensional, any class of $[\Phi\times\Psi](n+m)$ must be of the form $[s\times t]$ with $[s]\in [\Phi](n)$ and $[t]\in [\Psi](m)$.
    
    Consider an arbitrary $\theta\in M_{n+m}((\Phi\times\Psi)_{X\times Y})$ and let $[t]\in[\Psi](m)$. An argument analogous to the proof to Lemma \ref{lem: cross products} shows that
    \begin{align*}
    \omega_{\theta,t} \colon  [\Phi](n)\to \bbZ, [s]\mapsto \theta([s\times t])
    \end{align*}
    is an $n$-dimensional Minkowski weight on $\Phi_X$. From irreducibility, it follows that $\omega_{\theta,t} = \lambda_t \cdot\omega$, for some integer $\lambda_t$. Again, as $\theta\in M_{n+m}((\Phi\times\Psi)_{X\times Y})$, a similar argument to that of the proof of Lemma \ref{lem: cross products} shows that
    \begin{equation*}
        \lambda\colon [\Psi](m)\to \bbZ, [t]\mapsto \lambda_t
    \end{equation*}
    is an $m$-dimensional Minkowski weight on $\Psi_Y$. As above, irreducibility implies that there is an integer $\ell\in\bbZ$, such that $\lambda_t = \ell \cdot \eta([t])$. Thus, the above shows that $\theta = \ell \cdot \omega\times \eta$.
\end{proof}

\subsection{Pushforwards.}\label{ssec: pushforwards}
Consider two poic-complexes $\Phi$ and $\Psi$, together with a morphism $P\colon \Phi\to\Psi$. Sadly, it is not always possible to pushforward weights on $\Phi$ to weights on $\Psi$, since $\Psi$ may be too coarse. However, if $\Psi$ is subdivided enough, this can be achieved. For this description we introduce the notion of a $P$-fine subdivision of $\Psi$, which makes rigorous what is meant by $\Psi$ being subdivided enough.
\begin{defi}
    Suppose $P\colon \Phi\to\Psi$ is a poic-morphism. A subdivision $S\colon \Psi^\prime\to \Psi$ is called \emph{$P$-fine (or fine for the morphism $P$)}, if for every cone $s$ of $\Phi$ for which the linear part of $P_s\colon \Phi(s)\to\Psi(P(s))$ is injective, there exists a cone $t$ of $\Psi^\prime$ such that: 
   \begin{itemize}
       \item There is an isomorphism $f\colon P(s) \xrightarrow{\sim} S(t)$ of $\Psi$.
       \item The images of ${\Psi^\prime}(t)^0$ and $\Phi(s)^0$ under the linear parts of $S_t$ and $\Psi(f)\circ P_s$ coincide.
   \end{itemize}    
   In this case, it is clear that $d(t)=d(s)$ and such a $t$ is unique up to isomorphism. Following the notation, we write $(P_S)_*([s])= [t]$, for the classes of $s$ and $t$ as in the above definition.
\end{defi}

\begin{const}\label{const: pushforward general weights}
    Suppose $P\colon \Phi\to\Psi$ is a poic-morphism, $S\colon \Psi^\prime\to\Psi$ is a $P$-fine subdivision, and let $k$ be a non-negative integer. The \emph{pushforward of} $\omega\in W_k(\Phi)$ along $P$ is the weight
    \begin{equation*}
    (P_S)_*\omega\colon  [\Psi^\prime](k) \to \bbZ,[t]\mapsto \sum_{\substack{[p]\in [\Phi](k),\\
    (P_S)_*([p]) = [t]}}\omega([p])[S_t N^{t}\colon  P_p N^p].
    \end{equation*}
\end{const}

\begin{lem}
    In the notation of Construction \ref{const: pushforward general weights}, the pushforward gives a linear map:
    \begin{equation*}
        (P_S)_*\colon  W_k(\Phi)\to W_k(\Psi^\prime).
    \end{equation*}
    
\end{lem}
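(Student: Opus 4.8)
The plan is to verify two things about the formula in Construction \ref{const: pushforward general weights}: first, that $(P_S)_*\omega$ is a well-defined element of $W_k(\Psi^\prime)$, i.e.\ an honest $\bbZ$-valued function on the finite set $[\Psi^\prime](k)$; and second, that $\omega\mapsto (P_S)_*\omega$ is linear. Linearity will be essentially free: for a fixed cone $t$ of $\Psi^\prime$ with $d(t)=k$, the coefficients $[S_tN^t\colon P_pN^p]$ attached to each $[p]$ with $(P_S)_*([p])=[t]$ depend only on $p$ and $t$ and not on $\omega$, so $(P_S)_*$ is termwise additive in $\omega$ and commutes with integer scaling. Thus the content lies entirely in the well-definedness of the formula, which I would split into (i) each index is a well-defined positive integer, and (ii) the whole expression is independent of the choices of representatives.

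For (i), fix $[t]\in[\Psi^\prime](k)$ and a class $[p]\in[\Phi](k)$ with $(P_S)_*([p])=[t]$. By definition of a $P$-fine subdivision the linear part of $P_p$ is injective and there is an isomorphism $f\colon P(p)\xrightarrow{\sim} S(t)$ in $\Psi$ such that, after transporting by the induced lattice isomorphism $N(f)\colon N^{P(p)}\to N^{S(t)}$, the cones $S_t(\Psi^\prime(t))$ and $N(f)P_p(\Phi(p))$ coincide. Both $S_tN^t$ and $N(f)P_pN^p$ are then full-rank lattices inside the common $k$-dimensional subspace $W=\Lin(S_t\Psi^\prime(t))$, and both are contained in $N^{S(t)}$. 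The key point is that, $S$ being a subdivision, the image lattice is saturated, $S_tN^t=N^{S(t)}\cap W$; this gives the containment $N(f)P_pN^p\subseteq S_tN^t$, so the lattice index $[S_tN^t\colon P_pN^p]$ is a finite positive integer and each summand $\omega([p])[S_tN^t\colon P_pN^p]$ is a well-defined integer. The sum itself is finite because $[\Phi](k)$ is finite.

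For (ii), I would check that the value of the sum is unchanged under the three available choices. First, replacing the representative $t$ of $[t]$ by an isomorphic cone $t^\prime$ induces, in the spirit of Lemma \ref{lem: balancing respects isomorphisms}, a bijection on the relevant data and a compatible lattice isomorphism, so all indices are preserved. Second, replacing a representative $p$ of $[p]$ by an isomorphic one likewise transports $P_pN^p$ by a lattice isomorphism and leaves the index fixed; here one uses that $(P_S)_*([p])$ is well-defined on isomorphism classes, which is exactly the uniqueness-up-to-isomorphism of $t$ recorded in the definition of a $P$-fine subdivision. Third, two isomorphisms $f,f^\prime\colon P(p)\to S(t)$ differ by $f^\prime\circ f^{-1}$, whose linear realization maps the cone $S_t\Psi^\prime(t)$ to itself and hence restricts to a lattice automorphism of the saturated lattice $W\cap N^{S(t)}=S_tN^t$; such an automorphism carries $N(f)P_pN^p$ to $N(f^\prime)P_pN^p$ without changing the index.

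I expect the main obstacle to be part (i), and specifically the assertion that $S_tN^t$ is saturated in $W$ so that $N(f)P_pN^p$ genuinely sits inside it as a finite-index sublattice; this is what turns the a priori covolume ratio into an integer, and it is also what makes the independence-of-$f$ argument in (ii) go through. Once this lattice-theoretic comparison is in place, everything else is bookkeeping that follows formally from Definition \ref{defi: subdivision}, the definition of a $P$-fine subdivision, and the definition of a morphism of poic-complexes.
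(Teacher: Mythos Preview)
Your proposal is correct. The paper's own proof is the single sentence ``This is clear from the construction,'' so you have not taken a different route but have simply made explicit what the paper leaves as routine: linearity is immediate since the lattice indices $[S_tN^t\colon P_pN^p]$ do not depend on $\omega$, and well-definedness reduces to checking that each index is a positive integer and that all choices of representatives are harmless. Your identification of the saturation $S_tN^t=N^{S(t)}\cap W$ as the one substantive point is well placed; this containment is used tacitly later in the paper, for instance in the commutative diagram in the proof of Lemma~\ref{lem: pushforward}, where $P_sN^s\hookrightarrow S_t(N^t)$ appears as an inclusion without further comment.
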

\begin{proof}
    This is clear from the construction.
\end{proof}

We now study the pushforward for Minkowski weights. For this, the notion of a weakly proper morphism has to be introduced, because a general morphism does not automatically reflect the face relations of the target.
\begin{defi}\label{defi: proper poic-complexes}
   We say that a morphism $P\colon \Phi\to\Psi$ is \emph{weakly proper} if it satisfies the following lifting property: For every cone $s$ of $\Phi$ and every face $\tau$ in the closure of $P_s(\Phi(s))$ in $\Psi(P(s))$, there exists $q\to s$ of $\Phi$ with the same codimension which under the morphism $P$ gives this face $\tau$. We say that $P$ is \emph{proper}, if for every subdivision $S\colon \Phi^\prime\to \Phi$, the composition $P\circ S$ is weakly proper. In other words, the lifting property holds for every sub-poic of $\Phi(s)$.
\end{defi}

\begin{example}
    Suppose $\sigma$ is an arbitrary poic. The inclusion $\underline{\sigma^0}\hookrightarrow\underline{\sigma}$ is a weakly peroper morphisms. This inclusion is proper if and only if $\sigma^0 = \sigma$.
\end{example}

\begin{example}
Consider $\bbR^2$ with the standard lattice $\bbZ^2$ and let $\pr_2\colon\bbR^2\to\bbR$ denote the integral linear map given by projection onto the second coordinate. We have the (linear) poic-complex $\underline{H_{\pr_2>0}}$, and the map $\pr_2$ induces a morphism of (linear) poic-complexes $\underline{H_{\pr_2>0}}\to\underline{\bbR}$. This morphism is (vacuosly) weakly proper but it is definitely not proper. Indeed, any $1$-dimensional subcone of $\underline{H_{\pr_2>0}}$ different from the coordinate axis shows that this morphism cannot be proper.  
\end{example}

\begin{lem}\label{lem: pushforward}
   Suppose $\Psi_Y$ is a linear poic-complex and $P\colon \Phi\to\Psi$ is a weakly proper morphism of poic-complexes. If $S:\Psi^\prime\to\Psi$ is a $P$-fine subdivision, then the pushforward map $(P_S)_*\colon  W_k(\Phi)\to W_k(\Psi^\prime)$ restricts to a linear map
    \begin{equation*}
        (P_S)_*\colon M_k(\Phi_{Y\circ P})\to M_k(\Psi^\prime_{Y\circ S}),
    \end{equation*}
    for any integer $k\geq0$.
\end{lem}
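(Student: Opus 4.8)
```latex
\textbf{Proof plan.}
The plan is to show that the pushforward of a balanced weight is again balanced. By the remark following the definition of Minkowski weights, it suffices to verify the balancing condition of $(P_S)_*\omega$ at a set of representatives of the $(k-1)$-dimensional cones of $\Psi^\prime$. So I would fix a cone $u$ of $\Psi^\prime$ with $d(u)=k-1$ and compute
\begin{equation*}
\sum_{[u\to t]\in \Star^1_{\Psi^\prime}(u)} (P_S)_*\omega([t])\, u^{Y\circ S}_{[u\to t]},
\end{equation*}
expanding each coefficient $(P_S)_*\omega([t])$ by its definition as a sum over classes $[p]\in[\Phi](k)$ with $(P_S)_*([p])=[t]$, weighted by the lattice indices $[S_tN^t\colon P_pN^p]$. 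The strategy is then to reorganize this double sum as a sum over cones of $\Phi$, grouping the $k$-dimensional cones $p$ of $\Phi$ according to which codimension-$1$ cone $s$ of $\Phi$ with $d(s)=k-1$ they dominate, and relate the resulting inner sums to the balancing of $\omega$ at the various $s$ mapping appropriately to $u$.

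First I would set up the correspondence between $\Star^1_{\Psi^\prime}(u)$ and the relevant stars upstairs. Using weak properness of $P$: given a $k$-dimensional $t$ of $\Psi^\prime$ with $u\to t$ and a $p$ upstairs with $(P_S)_*([p])=[t]$, weak properness applied to the face of the image $P_p(\Phi(p))$ corresponding to $u$ produces a codimension-$1$ cone $s\to p$ of $\Phi$ covering the appropriate face of $P(s)$. I would argue that the cones $s$ arising this way are exactly the $(k-1)$-dimensional cones of $\Phi$ on whose image $P$ is injective and for which $(P_S)_*([s])=[u]$, and that for each such $s$ the map $[s\to p]\mapsto [u\to (P_S)_*(p)]$ gives a well-defined map $\Star^1_\Phi(s)\to \Star^1_{\Psi^\prime}(u)$. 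The key computation is a compatibility of normal vectors: under the relative linear map, the normal vector $u^{Y\circ P}_{[s\to p]}$ downstairs is carried, up to the lattice index $[S_tN^t\colon P_pN^p]$ and the index governing the inclusion $N^s\hookrightarrow$ its image, to the normal vector $u^{Y\circ S}_{[u\to t]}$ upstairs. This is the $P$-fine analogue of the projection-formula identity relating normal vectors across a map, and verifying it is the heart of the argument.

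With that compatibility in hand, I would rewrite the target sum as
\begin{equation*}
\sum_{[u\to t]} (P_S)_*\omega([t])\, u^{Y\circ S}_{[u\to t]}
= \sum_{[s]} \;\sum_{[s\to p]\in\Star^1_\Phi(s)} \omega([p])\, u^{Y\circ P}_{[s\to p]},
\end{equation*}
where the outer sum runs over the (finitely many, up to isomorphism) codimension-$1$ cones $s$ of $\Phi$ with $(P_S)_*([s])=[u]$ on which $P$ is injective; the indices $[S_tN^t\colon P_pN^p]$ built into $(P_S)_*\omega$ are precisely absorbed by the change of normal vectors. Each inner sum is the balancing expression for $\omega$ at $s$, which vanishes since $\omega\in M_k(\Phi_{Y\circ P})$; hence the total vanishes and $(P_S)_*\omega$ is balanced at $u$. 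I would also note the degenerate cases separately: contributions from $p$ on which $P_p$ fails to be injective do not meet the face over $u$ in the right dimension and drop out, and cones $s$ with $(P_S)_*([s])\neq [u]$ contribute nothing. The main obstacle I anticipate is the normal-vector compatibility in the second paragraph: keeping the three lattice indices (the pushforward index $[S_tN^t\colon P_pN^p]$, the one from $s\to p$, and the one hidden in $X_{s\to p}$ versus $Y_{u\to t}$) bookkept correctly so that they cancel exactly, especially accounting for the possibility that several non-isomorphic $s$ upstairs feed into the same $u$, and ensuring the grouping over $s$ is a genuine partition rather than an overcount. Linearity of $(P_S)_*$ is already established, so no further work is needed there.
```
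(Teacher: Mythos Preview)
Your plan is correct and follows the same route as the paper's proof (which itself adapts Proposition~2.25 of \cite{GathmannKerberMarkwigTFMSTC}). The index bookkeeping you anticipate as the obstacle is resolved in the paper via a $3\times 3$ commutative diagram of lattices yielding the factorization $[S_tN^t:P_pN^p]=[S_uN^u:P_sN^s]\cdot[S_tN^t:S_uN^u+P_pN^p]$; under the quotient map $N_Y/(Y\circ P)_s(N^s)\to N_Y/(Y\circ S)_u(N^u)$ the normal vector $u^{Y\circ P}_{[s\to p]}$ picks up only the second factor, so your displayed equality is literally off by the first factor on each inner sum---but since that factor depends only on $s$ and each inner sum already vanishes by balancing of $\omega$, the conclusion stands.
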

\begin{proof}
    The proof of this lemma follows that of Proposition 2.25 of \cite{GathmannKerberMarkwigTFMSTC}. Let $\omega\in M_k(\Phi_{Y\circ P})$ and consider a cone $q$ of $\Psi^\prime$ of dimension $k-1$. If there is a cone $s$ of $\Phi$ and $q\to t$ of $\Psi^\prime$, such that $(P_S)_*([s])=[t]$ and $[q\to t]\in \Star^1_{\Psi^\prime}(q)$, then weakly properness implies that there exists a morphism $p\to s$ of $\Phi$ with $[p]\in [\Phi](k-1)$, $[P(p)]=[S(q)]$, and further $(P_S)_*([p])=[q]$. As a consequence of this, it is only necessary to check the balancing around classes $[q]\in[\Psi^\prime](k-1)$ of the form $(P_S)_*([s])$ for some $[s]\in[\Phi](k-1)$.\\
    Consider $q\to t$ in $\Psi^\prime$ representing an object $[q\to t]\in\Star^1_{\Psi^\prime}(q)$, and let $p\to s$ in $\Phi$ be a morphism with $(P_S)_*([p])=[q]$, $(P_S)_*([s])=[t]$, and representing a class $[p\to s]\in\Star^1_\Phi([p])$. We get the following commutative diagram with exact rows and columns:
    \begin{equation*}
        \begin{tikzcd}
            N^p\arrow[hookrightarrow]{r}\arrow[hookrightarrow]{d}{P_p}&N^s\arrow[twoheadrightarrow]{r}\arrow[hookrightarrow]{d}{P_s}&N^s/N^p\arrow{d}\\
            S_q(N^q)\arrow[twoheadrightarrow]{d}\arrow[hookrightarrow]{r}&S_t(N^t)\arrow[twoheadrightarrow]{d}\arrow[twoheadrightarrow]{r}&S_t(N^t)/S_q(N^q)\arrow[twoheadrightarrow]{d}\\
            S_q(N^q)/P_p(N^p)\arrow[hookrightarrow]{r}&S_t(N^t)/P_s(N^s)\arrow[twoheadrightarrow]{r}&S_t(N^t)/(S_q(N^q)+P_s(N^s)).
        \end{tikzcd}
    \end{equation*}
    The bottom row shows that
    \begin{equation*} [S_t(N^t)\colon P_s(N^s)] = [S_q(N^q)\colon P_p(N^p)]\cdot [S_t(N^t)\colon S_q(N^q)+P_s(N^s)].\end{equation*}
    After composition with $Y$, the previous diagram shows that $P$ induces a map $N_Y/(Y\circ P)_p(N^p) \to N_Y/(Y\circ S)_q(N^q)$, and the normal vector $u_{[p\to s]}^{Y\circ P}$ is mapped to $[S_t(N^t)\colon S_q(N^q)+P_s(N^s)] u_{[q\to t]}^{Y\circ S}$.\\
    Since $\omega\in M_k(\Phi_{Y\circ P})$, necessarily
    \begin{equation*}
        \sum_{[p\to s]\in \Star^1_\Phi(p)}\omega([s])u_{[p\to s]}^{Y\circ P} = 0,
    \end{equation*}
    in the quotient $N_Y/(Y\circ P)_p (N^p)$. Now, it follows from the previous discussion that after multiplying by a factor of $[S_q(N^q)\colon P_p(N^p)]$ the morphism $P$ gives rise to the following equation in the quotient $N_Y/(Y\circ S)_q (N^q)$:
    \begin{equation*}
        \sum_{[p\to s]\in \Star^1_\Phi(p)}\omega([s])[S_{(P_S)_*(s)}(N^{(P_S)_*(s)})\colon  P_s(N^s)]u_{[q\to (P_S)_*(s)]}^{Y\circ S} = 0.
    \end{equation*}
    To finalize, we sum all these equations with respect to all the $[p]\in [\Phi](k-1)$ such that $(P_S)_*([p]) = [t]$, and obtain:
    \begin{multline*}
        0=\sum_{\substack{[p]\in [\Phi](k-1),\\ (P_S)_*([p])=[q]}}\sum_{[p\to s]\in \Star^1_\Phi(p)}\omega([s])[S_{(P_S)_*(s)}(N^{(P_S)_*(s)})\colon  P_s(N^s)]u_{[q\to (P_S)_*(s)]}^{Y\circ S}=\\\sum_{[q\to t]\in\Star^1_{\Psi^\prime}(q)}\sum_{\substack{[s]\in \Phi(k),\\
    (P_S)_*([s]) = [t]}}\omega([s])[S_t(N^t)\colon P_s(N^s)] u_{[q\to t]}^{Y\circ S}=\sum_{[q\to t]\in\Star^1_{\Psi^\prime}(q)}(P_S)_*\omega([t])u_{[q\to t]}^{Y\circ S}.
    \end{multline*}
    This shows balancing for $(P_S)_*\omega$ at the cone $q$ of $\Psi^\prime$. Since the latter was arbitrary, it follows that $(P_S)_*\omega\in M_{k}(\Psi^\prime_{Y\circ S})$.
\end{proof}
The notion of weakly properness (resp. properness) and the above constructions can easily be extended to morphisms of linear poic-complexes. Namely, a morphism $\phi\colon \Phi_X\to \Psi_Y$ is called \emph{weakly proper} (resp. \emph{proper}) if the morphism of poic-complexes $\phi\colon \Phi\to\Psi$ is weakly proper (resp. proper). In this situation, the map $\phi_\tint$ induces a linear map 
\begin{equation*}
    (\phi_\tint)_*\colon  M_k(\Phi_X)\to M_k(\Phi_{Y\circ \phi}),
\end{equation*}
and if $S\colon \Psi^\prime\to \Psi$ is $\phi$-fine, then $\phi$ induces the pushforward map
\begin{equation*}
    ({\phi}_S)_*\colon M_k(\Phi_{Y\circ \phi})\to M_k(\Psi^\prime_{Y\circ S}).
\end{equation*}
Collectively, $\phi$ and $S$ give rise to the map
\begin{equation}
    (\phi_S)_*\colon M_k(\Phi_X)\to M_k(\Psi^\prime_{Y\circ S})\label{eq: pforward minkowski weights P-fine subdiv}
    \end{equation}
defined as the composition $({\phi}_S)_*\circ (\phi_\tint)_*$. In order to extend these constructions to tropical cycles, the following lemma, whose proof we postpone to \ref{appendix: proofs of lemmas} in the appendix, is necessary.
\begin{restatable}[]{lem}{pfnsbdvs}\label{lem: P-fine subdivisions are final}
    If $P\colon \Phi\to\Psi$ is a proper morphism, then there exists a $P$-fine subdivision $\Psi^\prime$ of $\Psi$.
\end{restatable}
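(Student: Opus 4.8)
The statement asserts that a proper morphism $P\colon \Phi\to\Psi$ admits a $P$-fine subdivision. Since $P$-fineness is a condition about the images of the interiors of those cones $s$ of $\Phi$ on which $P_s$ is injective, the core task is to produce a subdivision $S\colon \Psi'\to\Psi$ of the target that is simultaneously compatible with all these finitely many image-cones. The plan is therefore to collect, cone-by-cone of $\Psi$, the relevant \emph{image data} coming from $\Phi$, and then to exhibit a common refinement of $\Psi$ that has each such image as (the interior of) one of its faces. Because $\Phi$ has essentially finitely many cones (its indexing category is essentially finite), the amount of data to be accommodated is finite, which is what makes a common refinement possible in the first place.

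\medskip

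\noindent\textbf{Key steps.}\quad First I would fix a cone $t$ of $\Psi$ and look at all cones $s$ of $\Phi$ with $P(s)\cong t$ and $P_s$ injective. For each such $s$, the linear part of $P_s$ carries $\Phi(s)$ to a subcone of $\Psi(t)$; applying the realization functor gives a finite collection of (partially open) subcones $\{P_s(|\Phi(s)|)\}$ sitting inside $|\Psi(t)|$. I would then take the subdivision of the closed cone $\overline{\Psi(t)}$ generated by the hyperplanes spanned by the facets of all these $\overline{P_s(\Phi(s))}$, i.e. the coarsest fan refining $\overline{\Psi(t)}$ in which each $\overline{P_s(\Phi(s))}$ is a union of faces. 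This is a standard existence statement for common refinements of finitely many rational polyhedral cones inside a fixed cone, and here is where properness enters crucially: properness guarantees (via the lifting property of Definition~\ref{defi: proper poic-complexes} applied to \emph{every} subcone, hence to every piece of any refinement) that the face relations needed to assemble these local refinements into an honest subdivision are actually realized by morphisms in $\Phi$, so the compatibility conditions of Definition~\ref{defi: subdivision} can be checked. Second, I would upgrade these local refinements of individual $\Psi(t)$ into a single poic-complex $\Psi'$ together with a subdivision morphism $S\colon\Psi'\to\Psi$; the essential-finiteness and thinness of $C_{\Psi'}$ follow from those of $C_\Psi$ and the finiteness of the refining data, exactly as in Lemma~\ref{lem: subdivisions are essentially small for simplicial}. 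Third, I would verify directly that this $S$ is $P$-fine: for each $s$ with $P_s$ injective, the image $P_s(|\Phi(s)|)^0$ is by construction the interior of some face of the refinement over $P(s)$, which furnishes the required cone $t$ of $\Psi'$ with $P(s)\cong S(t)$ and coinciding interior images.

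\medskip

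\noindent\textbf{Main obstacle.}\quad The routine part is the existence of a common refinement of finitely many rational cones; the delicate part is checking that the resulting $S$ satisfies \emph{all three} bullet points of Definition~\ref{defi: subdivision} and that the local refinements glue along the face structure of $\Psi$ in a way compatible with the morphisms of $C_\Psi$. Concretely, when $t'\leq t$ is a face of $\Psi$, the refinement induced on $\Psi(t')$ from the morphism $t'\to t$ must agree with the refinement constructed directly over $t'$; this is where I expect the real work, and it is precisely properness (as opposed to mere weak properness) that supplies the lifting needed to match faces across the refinement and to guarantee that the third condition of Definition~\ref{defi: subdivision} holds. For this reason the full verification is relegated to the appendix, and I would structure the proof so that the geometric input (common refinement of rational cones) is cited and the categorical bookkeeping (gluing, face compatibility, checking $P$-fineness) is carried out by hand.
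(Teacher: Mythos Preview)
Your approach is essentially the paper's: construct a common refinement of $\Psi$ by the images $P_s(\Phi(s))$. The paper's proof is far terser than yours---it first reduces (via Lemma~\ref{lem: poic-complex equiv to poset poic-complex}) to the case where $C_\Psi$ is a finite poset, and then simply invokes Construction~2.24 of \cite{GathmannKerberMarkwigTFMSTC}, which is exactly the common-refinement construction you describe. So strategically you are aligned with the paper.

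That said, your account of where properness enters is muddled. You claim properness is needed so that ``the face relations needed to assemble these local refinements into an honest subdivision are actually realized by morphisms in $\Phi$'', and later that properness supplies the lifts required to match the refinement over a face $t'\leq t$ with the one constructed directly over $t'$. Neither of these is correct as stated: the poic-complex $\Psi'$ and the verification that $S\colon\Psi'\to\Psi$ satisfies Definition~\ref{defi: subdivision} involve only $\Psi$ and the finite collection of image subcones inside the various $\Psi(t)$; no morphisms of $\Phi$ are required to build or certify $\Psi'$. The compatibility of the induced refinements along a face $t'\to t$ follows from naturality of $P$ (images over $t'$ are the traces on $\Psi(t')$ of images over $t$), not from any lifting property. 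The GKM construction, which is carried out for closed fans, does not invoke properness at all---in that setting properness is automatic. If properness plays any role in the partially open setting, it is in ensuring that the partially open cones one obtains by intersecting image-closures and then restoring openness actually assemble into a partially open fan without pathologies at missing facets; but this is not the mechanism you describe. In short: the skeleton of your argument is right and matches the paper, but you should excise the passages attributing the gluing and subdivision axioms to properness, since those claims are incorrect and obscure an otherwise sound plan.
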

\begin{proof}
    The proof of this lemma is postponed to Section \ref{appendix: proofs of lemmas} in the appendix.
\end{proof}

Suppose $\phi\colon \Phi_X\to \Psi_Y$ is a proper morphism between poic-complexes. By definition, if $S\colon \Phi^\prime \to\Phi$ is a subdivision, then $\phi\circ S\colon \Phi^\prime\to\Phi$ is weakly proper. Now, Lemma \ref{lem: P-fine subdivisions are final}  shows that there exists a $\phi\circ S$-fine subdivision $Q\colon \Psi^\prime\to\Psi$ and Lemma \ref{lem: pushforward} shows that we obtain the pushforward map
\begin{equation*}
    (\phi_S)_*\colon M_k(\Phi^\prime_{X\circ S}) \to M_k(\Psi^\prime_{Y\circ Q}).
\end{equation*}
Moreover, taking multiple subdivisions gives rise to commutative diagrams, and composition with the standard map $M_k(\Psi^\prime_{Y\circ Q})\to Z_k(\Psi_Y)$ shows then that these maps give rise to a linear map
\begin{equation}
    \phi_*\colon Z_k(\Phi_X)\to Z_k(\Psi_Y). \label{eq: pushforward map}
\end{equation}
We call this map \eqref{eq: pushforward map} the \emph{pushforward map of $\phi$}. If the morphism $\phi\colon\Phi_X\to\Psi_Y$ is just weakly proper, then we just obtain the \emph{weak pushforward map of $\phi$} 
\begin{equation}
    \phi_*\colon M_k(\Phi_X)\to Z_k(\Psi_Y). \label{eq: weak pushforward map}
\end{equation}

\subsection{Relation to tropical intersection theory.} We will briefly explain how the usual tropical intersection theory of polyhedral complexes in a tropical vector space (see \cite{AllermannRau} and \cite{MikhalkinRau}) can be obtained through our framework, as well as the tropical intersection theory of weakly embedded cone-complexes (see \cite{GrossITTTE}). As before, let $N$ be a free abelian group of finite rank and consider $N$ as a full rank lattice of the vector space $N_\bbR$. The pair $(N_\bbR,N)$ is then a tropical vector space. 
\begin{const}\label{const: polyhedral cycles}
    Suppose $\cX$ is a rational polyhedral complex of this tropical vector space, such that the collection of recession cones of $\cX$ is a rational fan in $N_\bbR$. Let $N_\cX= N\oplus \bbZ$ and $(N_\cX)_\bbR = N_\bbR\oplus \bbR$, with last coordinate denoted by $z$. For a cell $\xi\in \cX$, we consider the partially open integral cone $\sigma_\xi$ of $(N_\cX)_\bbR$ given by adjoining the origin to the intersection of: the cone generated by $\xi\times{1}\subset M_\bbR$ and $\rc(\xi)\times\{0\}\subset M_\bbR$, and the half space $H_z^o$. It is clear that the collection $\Phi_\cX=\{\sigma_\xi\colon \xi\in\cX\}$ is a partially open rational fan (see \cite{GathmannOchseMSCTV}) in $(N_\cX)_\bbR$, and we regard $\Phi(\cX)$ as a linear poic-complex in the standard way. 
\end{const}
\begin{restatable}{lem}{classicaltint}\label{lem: classical cycles and our cycles}
    The tropical cycles of the linear poic-complex $\Phi_\cX$ correspond with the classical tropical cycles of $\cX$.
\end{restatable}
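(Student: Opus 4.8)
The plan is to exhibit a dimension-shifting bijection between the cells of $\cX$ and the cones of $\Phi_\cX$ that matches the classical balancing condition on $\cX$ with the $X$-balancing condition on $\Phi_\cX$, and then to pass to the colimit over subdivisions on both sides. Throughout, the linear structure $X$ on $\Phi_\cX$ is simply the inclusion of the fan into $\underline{(N_\cX)_\bbR}$, so $X$-balancing is ordinary fan balancing in $N_\cX=N\oplus\bbZ$.

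First I would record the combinatorics of the homogenization $\xi\mapsto\sigma_\xi$. A $k$-dimensional cell $\xi$ of $\cX$ produces a $(k+1)$-dimensional cone $\sigma_\xi$, and I would check that this assignment is an isomorphism of posets: $\tau\leq\xi$ in $\cX$ if and only if $\sigma_\tau\leq\sigma_\xi$ in $\Phi_\cX$. The only point needing attention is that $\sigma_\xi$ a priori acquires one extra facet, the ``recession facet'' spanned by $\rc(\xi)\times\{0\}$; here the role of the open half-space $H_z^o$ in Construction \ref{const: polyhedral cycles} is precisely to delete this facet at infinity, so that the proper faces present in the partially open cone $\sigma_\xi$ are exactly the $\sigma_\tau$ with $\tau\leq\xi$. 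Via this poset isomorphism the weight groups are identified, $W_k(\cX)\cong W_{k+1}(\Phi_\cX)$ by $\omega(\xi)=\bar\omega(\sigma_\xi)$, and the star $\Star^1_{\Phi_\cX}(\sigma_\tau)$ of the $k$-cone attached to a $(k-1)$-cell $\tau$ is put in bijection with the set of $k$-cells of $\cX$ having $\tau$ as a facet.

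The heart of the argument, and the step I expect to be the main obstacle, is the comparison of normal vectors. Fixing a $(k-1)$-cell $\tau$ and a (rational) point $v_0\in\tau$, I would describe the linear spans as $\Lin(\sigma_\tau)=(\Lin(\tau)\times\{0\})\oplus\bbR\cdot(v_0,1)$, and similarly for $\sigma_\xi$ with $\Lin(\xi)$, and then produce a canonical isomorphism
\begin{equation*}
\Theta\colon N_\cX/N^{\sigma_\tau}\isomoto N/\Lin_N(\tau)
\end{equation*}
carrying each fan normal vector $u^X_{[\sigma_\tau\to\sigma_\xi]}$ to the classical lattice normal vector $u_{\xi/\tau}$. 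The subtlety is entirely lattice-theoretic, stemming from the vertices of $\cX$ being merely rational. Writing $m_\tau$ for the least positive integer with $m_\tau v_0\in N+\Lin(\tau)$, one finds $N^{\sigma_\tau}\cong\Lin_N(\tau)\oplus\bbZ$ and an identification $N_\cX/N^{\sigma_\tau}\cong((N/\Lin_N(\tau))\oplus\bbZ)/\bbZ\cdot(\overline{m_\tau v_0},m_\tau)$. The key point is that the vector $(\overline{m_\tau v_0},m_\tau)$ is primitive: if it were divisible by some $d>1$, then one could produce $s=m_\tau/d<m_\tau$ with $s v_0\in N+\Lin(\tau)$, contradicting minimality of $m_\tau$. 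Primitivity forces the quotient to be free of the correct rank and yields $\Theta$; tracing the generator of $N^{\sigma_\xi}/N^{\sigma_\tau}$ pointing towards $\sigma_\xi$ through $\Theta$ shows it lands on the primitive generator of $\Lin_N(\xi)/\Lin_N(\tau)$ pointing towards $\xi$, namely on $u_{\xi/\tau}$. With $\Theta$ in hand the balancing equation at $\sigma_\tau$ and the classical balancing at $\tau$ become literally the same equation, so $\bar\omega$ is $X$-balanced at $\sigma_\tau$ exactly when $\omega$ is balanced at $\tau$. This identifies $M_{k+1}(\Phi_\cX)$ with the group of classical balanced $k$-weights on the fixed complex $\cX$.

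Finally I would pass to cycles. Both sides are defined only up to refinement: the classical $k$-cycles are equivalence classes of balanced weighted complexes under common refinement, whereas $Z_{k+1}(\Phi_\cX)$ is the colimit of $M_{k+1}(\bullet)$ over $\Subd(\Phi_\cX)$. Homogenization sends a polyhedral subdivision $\cX'$ of $\cX$ to a subdivision $\Phi_{\cX'}\to\Phi_\cX$, and since the open condition $H_z^o$ confines all subdividing to the slice $\{z>0\}$, every subdivision of the fan $\Phi_\cX$ is refined by one arising from a sufficiently fine polyhedral subdivision of $\cX$; hence the homogenized subdivisions are cofinal in $\Subd(\Phi_\cX)$. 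Because the fixed-structure isomorphism of the previous paragraph is compatible with the pullback maps $S^*$ (the identification $\Theta$ being natural in the cells involved), it passes to the colimit and delivers the asserted correspondence between the tropical cycles of $\Phi_\cX$ and the classical tropical cycles of $\cX$, with the grading shifted by one.
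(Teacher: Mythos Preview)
Your proposal is correct and follows the same homogenization/dehomogenization idea as the paper. The paper's own proof is a short sketch: it invokes the appendix result that subdivisions of the partially open fan $\Phi_\cX$ may be computed via honest subdivisions (Lemma~\ref{lem: partially open fan subd equiv to hnstsubd}), then passes between honest subdivisions of $\Phi_\cX$ and polyhedral subdivisions of $\cX$ by slicing at $z=1$ in one direction and by coning up (after arranging the recession cones to form a fan via \cite{GilSombra}) in the other, asserting without further detail that the weights transfer. Your write-up supplies exactly the piece the paper suppresses, namely the explicit lattice comparison of normal vectors via the primitivity of $(\overline{m_\tau v_0},m_\tau)$, and packages the subdivision step as a cofinality statement rather than an explicit back-and-forth. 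One small point you should tighten: for the map $\cX'\mapsto\Phi_{\cX'}$ to land in subdivisions of $\Phi_\cX$ you need the recession cones of $\cX'$ to form a fan, which is precisely where \cite{GilSombra} enters; you use this implicitly in your cofinality claim but do not name it.
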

\begin{proof}
    The proof makes use of some notation and constructions from the Appendix, and hence is postponed to the end of \ref{appendix: poic-complexes coming from partially open fans}.
\end{proof}
Recall that classical tropical cycles are stable under subdivisions, and it is always possible to subdivide a polyhedral complex into one whose recession cones form a fan (see \cite{GilSombra}). Therefore, following the above construction, we can compute the classical tropical cycles of any polyhedral complex $\cX$ in our framework.

The rehashing of the case of weakly embedded cone complexes is as follows. We briefly borrow notation and terminology of \cite{GrossITTTE} and \cite{AbramovichCaporasoPayneTTMSC}. Out of an integral cone $(\sigma,M)$ (abusing notation slightly) we produce a poic $\sigma$, where $N^\sigma$ is simply the intersection $\Lin(\sigma)\cap (M^\vee)$. It is clear that a morphism of integral cones induces directly a poic-morphism (the condition on pullbacks immediately implies that the function is integral linear with respect to the lattices). From a cone complex $\Sigma$ we produce a poic-complex by taking $\Sigma$ as a poset (with respect to inclusions), and the tautological associations of cones. We differentiate the two by denoting the poic-complex by $\mathrm{p}\left(\Sigma\right)$. The realization of $\mathrm{p}\left(\Sigma\right)$ is homeomorphic to $|\Sigma|$. Now, if $\Sigma$ is a weakly embedded cone complex, then the map $\phi^\Sigma$ defines a linear poic-complex structure on $\mathrm{p}\left(\Sigma\right)$. Naturally, the Minkowski weights of a weakly embedded cone complex $\Sigma$ (Section 3.1 of \cite{GrossITTTE}) correspond to our Minkowski weights on $\mathrm{p}\left(\Sigma\right)$. Such comparison actually extends to the notion of tropical cycles, which are just the corresponding colimit with respect to proper subdivisions of cone complexes (Section 2.1 of loc. cit.). We record this as a lemma and  postpone its proof to \ref{appendix: poic-complexes coming from partially open fans} in the appendix. 

\begin{restatable}{lem}{moderntint}\label{lem: modern cycles and our cycles}
    The tropical cycles of a weakly embedded cone complex $\Sigma$ correspond with the tropical cycles of the linear poic-complex $\mathrm{p}\left(\Sigma\right)$.
\end{restatable}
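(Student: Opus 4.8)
The plan is to exploit that, in both frameworks, tropical $k$-cycles are computed as a colimit of groups of Minkowski weights indexed by a category of subdivisions, and that the discussion preceding this lemma already identifies the two notions of Minkowski weight at each fixed level. Thus the task reduces to promoting that level-wise identification to an isomorphism of colimits. Writing $\Subd_{\mathrm{cc}}(\Sigma)$ for Gross's category of proper subdivisions of the cone complex $\Sigma$ and $\Subd(\mathrm{p}(\Sigma))$ for the category of Definition \ref{defi: subdivision}, I would produce a functor $\mathrm{p}(-)\colon\Subd_{\mathrm{cc}}(\Sigma)\to\Subd(\mathrm{p}(\Sigma))$ together with a natural isomorphism of the two Minkowski-weight diagrams, and then show this functor is final, so that the two colimits agree.

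First I would check functoriality on subdivisions. Given a proper subdivision $\Sigma'\to\Sigma$ of cone complexes, the associated map $\mathrm{p}(\Sigma')\to\mathrm{p}(\Sigma)$ is a subdivision in the sense of Definition \ref{defi: subdivision}: essential surjectivity and the injectivity/isomorphism conditions on the underlying lattice maps are immediate from the fact that a cone-complex subdivision refines each closed cone by closed subcones with matching linear spans, while the relative-interior decomposition and the face-lifting condition translate directly from the corresponding geometric properties of a polyhedral subdivision, using that $|\mathrm{p}(\Sigma)|$ is homeomorphic to $|\Sigma|$ and that the weak embedding $\phi^\Sigma$ pulls back compatibly. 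The level-wise identification of Minkowski weights recalled above is visibly compatible with the pullback maps $S^{*}$ on both sides, so we obtain the desired natural isomorphism of diagrams, and in particular a canonical comparison map on colimits.

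The main step is to show that $\mathrm{p}(-)$ is final, in the same sense in which the inclusion $\Subd(\Phi')\to\Subd(\Phi)$ was final in the construction of $Z_k$: namely, that every object $\Upsilon\to\mathrm{p}(\Sigma)$ of $\Subd(\mathrm{p}(\Sigma))$ admits a refinement $\Upsilon'\to\Upsilon$ for which the composite $\Upsilon'\to\mathrm{p}(\Sigma)$ is isomorphic to $\mathrm{p}(\Sigma'')\to\mathrm{p}(\Sigma)$ for some cone-complex subdivision $\Sigma''\to\Sigma$. Since each cone of $\mathrm{p}(\Sigma)$ is a genuine closed cone and $|\mathrm{p}(\Sigma)|\cong|\Sigma|$, the underlying polyhedral subdivision of $\Upsilon$ can be further refined to a rational closed simplicial subdivision of $\Sigma$ as a cone complex; passing through $\mathrm{p}(-)$ then refines $\Upsilon$. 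The one subtlety is that a general poic-subdivision may be partially open where a cone-complex subdivision is closed, but on each top-dimensional stratum the two differ only along proper faces, which affects neither the $k$-dimensional weights nor the balancing, by Lemma \ref{lem: top dimensional cycles} and Example \ref{ex: open cone is irred}.

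Finality of $\mathrm{p}(-)$ then identifies the colimit of the Gross Minkowski-weight diagram with the colimit of the poic-complex Minkowski-weight diagram, that is, the tropical cycles of $\Sigma$ with those of the linear poic-complex $\mathrm{p}(\Sigma)$, the isomorphism being the canonical map built above. I expect the finality argument of the previous paragraph to be the main obstacle: it is exactly the point at which one must control poic-subdivisions that are genuinely more general, namely partially open, than those coming from cone complexes and argue that they never contribute extra cycles, as opposed to the essentially bookkeeping verification that $\mathrm{p}(-)$ respects the defining axioms of a subdivision and intertwines the two Minkowski-weight constructions.
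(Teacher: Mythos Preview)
Your overall strategy is sound and different from the paper's: you argue directly that the functor $\mathrm{p}(-)$ from Gross's proper subdivisions into $\Subd(\mathrm{p}(\Sigma))$ is final, whereas the paper instead passes through the barycentric $\ord(\mathrm{p}(\Sigma))$ construction from the appendix. That construction produces an honest closed simplicial \emph{fan} in a fixed vector space which is simultaneously a proper subdivision of $\Sigma$ (as a weakly embedded cone complex) and a subdivision of $\mathrm{p}(\Sigma)$ (as a poic-complex). On the cone-complex side, proper subdivisions of this fan are exactly honest fan subdivisions; on the poic side, Lemma~\ref{lem: partially open fan subd equiv to hnstsubd} says $\Subd$ of a partially open fan is equivalent to its honest subdivisions. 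So both colimits are computed over the \emph{same} small category, and the level-wise identification of Minkowski weights finishes the argument. The paper's route buys you a concrete ambient vector space in which ``refine to a closed subdivision'' is literally intersection of fans, so no separate finality argument is needed.

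Your route is viable, but the step you flag as the obstacle is not handled by the references you cite. You need: every poic-subdivision $\Upsilon\to\mathrm{p}(\Sigma)$ is dominated by one of the form $\mathrm{p}(\Sigma'')$. Lemma~\ref{lem: top dimensional cycles} and Example~\ref{ex: open cone is irred} concern top-dimensional Minkowski weights on a single open cone; they say nothing about refining an abstract, possibly partially-open $\Upsilon$ to a closed cone-complex subdivision, nor about $k$-dimensional weights for $k$ below the top. The phrase ``on each top-dimensional stratum the two differ only along proper faces'' does not address the actual difficulty, which is that the cones of $\Upsilon$ need not be closed and taking closures cone-by-cone can create overlaps. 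What you really want here is precisely the content of the appendix machinery the paper uses: replace $\Upsilon$ by an equivalent poset poic-complex (Lemma~\ref{lem: poic-complex equiv to poset poic-complex}), then apply the $\ord$-construction, or alternatively invoke Lemma~\ref{lem: partially open fan subd equiv to hnstsubd} after first embedding via $\ord(\mathrm{p}(\Sigma))$. Once you plug that in, your finality argument goes through; without it, the step is a genuine gap.
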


\section{Poic-fibrations and their cycles}
This section begins with the introduction of poic-spaces, which are closely related to poic-complexes but come with some non-trivial isomorphisms. A natural family of examples of these spaces are the moduli spaces of tropical curves of positive genus. We seek to describe tropical cycles in poic-spaces, and for this, we shift our attention to poic-fibrations and describe cycles therein. A poic-fibration is a morphism between a poic-complex and a poic-space with some nice local properties and a lifting property concerning face relations and isomorphisms. More interestingly, when the source is a linear poic-complex, we are able to introduce the tropical cycles of the poic-fibration as a subgroup of cycles of the source. We follow an analogous path as before so that in order to define them, we introduce subdivisions and weights in a subdivision (that exhibit compatibility with the poic-fibration). We also discuss the spanning tree fibrations, which are our poic-fibrations of interest to describe cycles of the moduli spaces of tropical curves. Regarding these poic-fibrations and these spaces, we describe the ``forgetful'' and ``clutching'' morphisms. Finally, this section is closed with a description of the relationship between our constructions and those of \cite{CavalieriGrossMarkwig} concerning these moduli spaces.

\subsection{Poic-spaces.} Our leading examples for a poic-spaces come from $\cM_{g,n}^\trop$. These are spaces obtained by glueing finitely many cells given as quotients of cones by actions of finite groups.
\begin{defi}
    A \emph{poic-space} $\cX$ is a functor $\cX\colon C_\cX\to \POIC$, where $C_\cX$ is an essentially finite category with finite hom-sets, subject to the following conditions:
    \begin{enumerate}
        \item The functor $\cX$ maps morphisms of $C_\cX$ to face-embeddings.
        \item If $x$ is an object of $C_\cX$ and $\tau$ is a face of $\cX(x)$, then there exists a unique up to isomorphism $w\to x$ in $C_\cX$ with $\cX(w\to x) \cong \tau\leq \cX(x)$.
    \end{enumerate}
    If $\cY$ is an additional poic-space, a \emph{morphism of poic-spaces} $F\colon \cX\to\cY$ consists of the following data:
    \begin{itemize}
        \item A functor $F\colon C_\cX\to C_\cY$. 
        \item A natural transformation $\eta_F\colon \cX\implies \cY\circ F$.
    \end{itemize}
    These are subject to the following condition: for an object $x$ of $C_\cX$, the cone $\eta_{F,x}\left(\cX(x)\right)$ does not lie in a proper face of $\Psi(F(x))$. Composition of morphisms is defined analogously to that of poic-complexes.
\end{defi}

Both conditions on poic-spaces are analogous to those of poic-complexes. We point out that the main difference between these two notions lies in the groups of automorphisms of objects in the category. Namely, cones of poic-complexes do not carry non-trivial automorphisms.

\begin{nota}
    We follow the same notational conventions as with poic-complexes. Namely, if $\cX$ is a poic-space, then:
    \begin{itemize}
        \item We will refer to the objects of $C_\cX$ as the cones of $\cX$.
        \item We will denote the set of isomorphism classes of $C_\cX$ by $[\cX]$. This set is partitioned by the dimensions of its cones, and, for an integer $k\geq0$, $[\cX](k)$ denotes the set of isomorphism classes of $k$-dimensional cones.
        \item We say that $\cX$ is pure of dimension $n$, if every cone $x$ of $\cX$ appears as the source of a morphism to an $n$-dimensional cone of $\cX$.
        \item For an integer $k\geq0$, the \emph{group $W_k(\cX)$ of $k$-dimensional weights on $\cX$} is the abelian group of functions $[\cX](k)\to\bbZ$.
    \end{itemize}
\end{nota}

Analogously to $\POICCmplxs$, poic-spaces and their morphisms form a category. We denote this category by $\POICSpcs$. We remark that $\POICCmplxs$ is a full subcategory of $\POICSpcs$. As in the case \eqref{eqdefi: realization of poiccomplexes} of poic-complexes, the realization functor \ref{eqdefi: realization of poics} can be extended naturally to poic-spaces, and hence the \emph{realization functor} is obtained:
\begin{equation}
    |\bullet|\colon \POICSpcs\to\Top,\cX\mapsto \colim |\cX|.
\end{equation}
\subsection{Fibrations and their Minkowski weights.}
\begin{defi}
    Suppose $\Phi$ is a poic-complex and $\cX$ is a poic-space. A morphism of poic-spaces $\pi\colon \Phi\to \cX$ is a \emph{poic-fibration}, if:
\begin{enumerate}
    \item $\pi$ is essentially surjective.
    \item For any cone $p$ of $\Phi$ the map $\eta_{\pi,p}\colon  \Phi(p)\to \cX\left( \pi(p)\right)$ induces an isomorphism between their relative interiors.
    \item For any object $p$ of $\Phi$ and any morphism $f\colon \pi(p)\to x$ in $\cX$, there exists a morphism $h\colon p\to q$ in $\Phi$ and an isomorphism $g\colon  \pi(q)\to x$ such that $f=g\circ \pi(h)$, where $h$ is unique up to isomorphism under $p$, and $g$ is unique up to isomorphism over $x$.\footnote{This is the same as saying that $\pi^{\op}\colon C_{\Phi}^{\op}\to C_{\cX}^{\op}$ is a \emph{Street} fibration. We strain ourselves from this notation and simply state the corresponding property with the aim of emphasizing its combinatorial content in our framework.}
\end{enumerate}
A morphism of poic-fibrations is simply a morphism of pairs yielding a commutative square. If $\Phi_X$ is additionally a linear poic-complex, then we say that $\pi\colon\Phi_X\to\cX$ is a linear poic-fibration.
\end{defi}
\begin{example}
    The reader is exhorted to look in subsection \ref{ssec: spanning tree cover.} for a family of examples. Nonetheless, we carry out explicitly the case of $g=1$ and $n=2$. In the spirit of illustration and simplicity, we reduce multiple considerations to finite categories. Let $C_\Phi$ denote the poset given by $4$ cones $\rho$, $\sigma_1$, $\sigma_2$, and $\sigma_2^*$, where $\rho$ is isomorphic to $\bbR^1_{>0}$ and the rest are all isomorphic to the subcone of $\bbR^2_{\geq0}$ given by removing one ray. We let $\leq$ denote the partial order on $C_\Phi$ and assume that in $C_\Phi$ the order relation $\rho\leq \sigma_1,\sigma_2,\sigma_2^*$ holds, and the latter three are incomporable. The tautological association makes $C_\Phi$ into a poic-complex $\Phi$. Now, we let $C_\cX$ denote the finite category with $3$ objects $\{\fish,\tailcirc,\eyetwolashes\}$, where there is a unique map $\fish\to\tailcirc$, a unique non-trivial automorphism of $\eyetwolashes$, and two maps $\fish\to\eyetwolashes$. We let $\cX$ denote the functor defined 
    \begin{itemize}
        \item at objects by $\cX(\fish) = \rho$, $\cX(\tailcirc) = \sigma_1$, and $\cX(\eyetwolashes) = \sigma^\prime$, where $\sigma^\prime$ is the subcone of $\bbR^2_{\geq0}$ defined by $\bbR^2_{\geq0}\backslash\{(0,0)\}$,
        \item at morphism by $\cX(\fish\to\tailcirc)$ the facet inclusion, $\cX(\eyetwolashes)\to\cX(\eyetwolashes)$ the linear map switching both coordinates, and the maps $\cX(\fish\to\eyetwolashes)$ the two facet inclusions. 
    \end{itemize}  
    In this case, we get a poic-fibration $\pi\colon\Phi\to\cX$ by setting $\pi(\sigma_1) = \tailcirc$, $\pi(\rho)=\fish$, and $\pi(\sigma_2)=\pi(\sigma_2^*) = \eyetwolashes$, where natural transformations is given by the identity maps, and the natural inclusions. We depict the realization of $\Phi$, the realia $\cX$ and the poic-fibration in Figure \ref{fig: example of fibration}.

    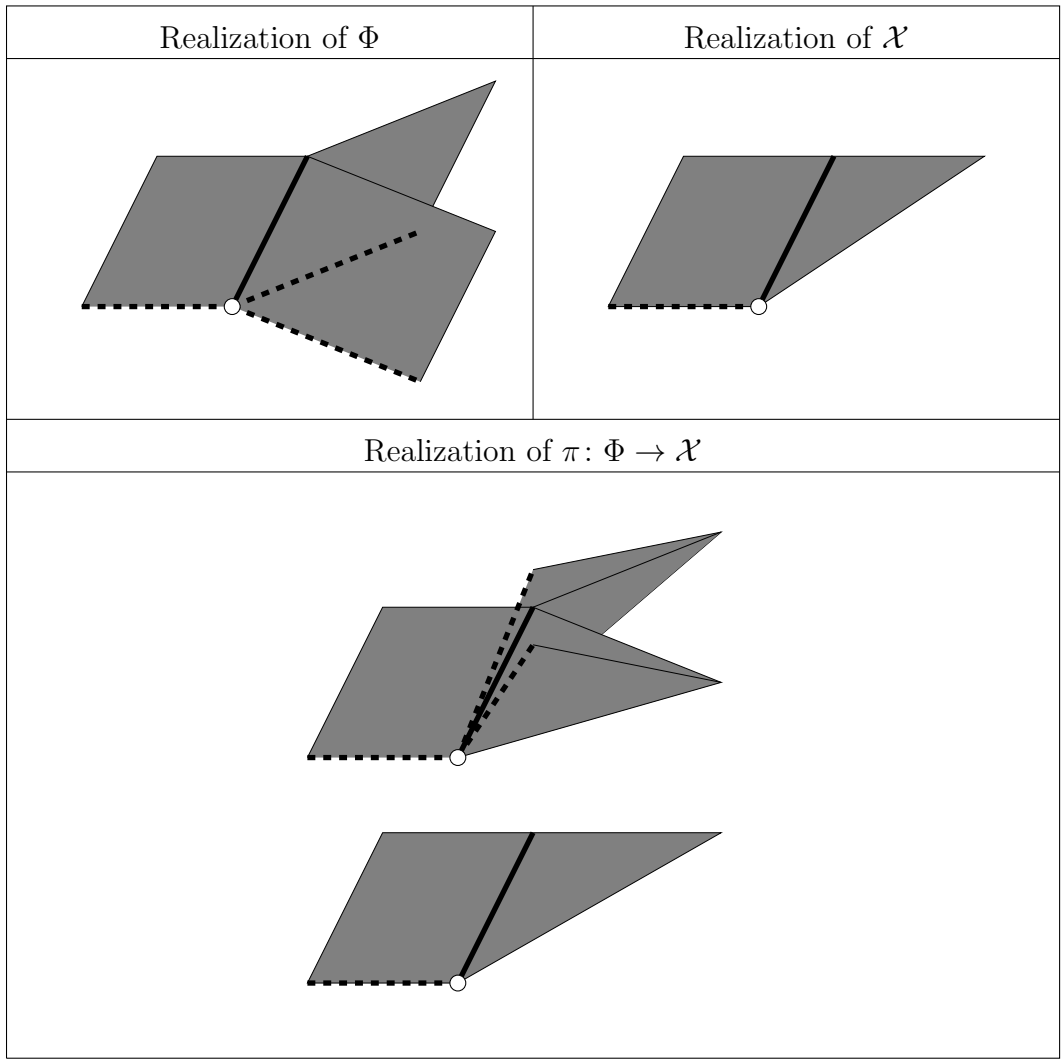
\begin{figure}[!ht]
        \centering
        \begin{tikzpicture}
            \draw[fill=gray,opacity=0.3,line width=0pt] (-3,0)--(-2,2)--(0.5,3)--(-0.5,1);
            \draw[fill=gray,opacity=0.3,line width=0pt] (-3,0)--(-2,2)--(0.5,1)--(-0.5,-1);
            \draw[fill=gray,opacity=0.3,line width=0pt] (-3,0)--(-2,2)--(-4,2)--(-5,0);
            \draw[line width = 2pt] (-3,0)--(-2,2);
            \draw[dashed, line width = 2pt] (-5,0)--(-3,0);
            \draw[dashed, line width = 2pt] (-3,0)--(-0.5,1);
            \draw[dashed, line width = 2pt] (-3,0)--(-0.5,-1);
            \draw[fill=white] (-3,0) circle (3pt);

            \draw[fill=gray,opacity=0.3,line width=0pt] (4,0)--(2,0)--(3,2)--(5,2);
            \draw[fill=gray,opacity=0.3,line width=0pt] (4,0)--(7,2)--(5,2);
            \draw[line width = 2pt] (4,0)--(5,2);
            \draw[dashed, line width = 2pt] (2,0)--(4,0);
            \draw[fill=white] (4,0) circle (3pt);

            \draw[fill=gray,opacity=0.3,line width=0pt] (0,-6)--(3.5,-3)--(1,-3.5);
            \draw[fill=gray,opacity=0.3,line width=0pt] (0,-6)--(1,-4)--(3.5,-3);
            \draw[fill=gray,opacity=0.3,line width=0pt] (0,-6)--(1,-4)--(3.5,-5);
            \draw[fill=gray,opacity=0.3,line width=0pt] (0,-6)--(3.5,-5)--(1,-4.5);
            \draw[fill=gray,opacity=0.3,line width=0pt] (0,-6)--(1,-4)--(-1,-4)--(-2,-6);
            \draw[line width = 2pt] (0,-6)--(1,-4);
            \draw[dashed, line width = 2pt] (-2,-6)--(0,-6);
            \draw[dashed, line width = 2pt] (0,-6)--(1,-3.5);
            \draw[dashed, line width = 2pt] (0,-6)--(1,-4.5);
            \draw[fill=white] (0,-6) circle (3pt);
            
            \draw[fill=gray,opacity=0.3,line width=0pt] (0,-9)--(-2,-9)--(-1,-7)--(1,-7);
            \draw[fill=gray,opacity=0.3,line width=0pt] (0,-9)--(3.5,-7)--(1,-7);
            \draw[line width = 2pt] (0,-9)--(1,-7);
            \draw[dashed, line width = 2pt] (-2,-9)--(0,-9);
            \draw[fill=white] (0,-9) circle (3pt);

            \draw (-6,-10)--(8,-10)--(8,4)--(-6,4)--(-6,-10);
            \draw (-6,3.3)--(1,3.3) node[midway, above] {Realization of $\Phi$};
            \draw (1,3.3)--(8,3.3) node[midway, above] {Realization of $\cX$};
            \draw (-6,-1.5)--(8,-1.5);
            \draw (-6,-2.2)--(8,-2.2) node [midway, above] {Realization of $\pi\colon\Phi\to\cX$};
            \draw (1,-1.5)--(1,4);
            
        \end{tikzpicture}
        \caption{Example of a poic-fibration}
        \label{fig: example of fibration}
    \end{figure}
\end{example}
\begin{example}\label{ex: working example}
    The following will be our working example throughout the section. As a poic-space $\cX$ we consider $\bbR^3_{>0}$ with set of automorphisms: 
    \begin{equation*}
        \left\{
        \begin{pmatrix}
            1&0&0\\
            0&1&0\\
            0&0&1
        \end{pmatrix},
        \begin{pmatrix}
            0&0&1\\
            1&0&0\\
            0&1&0
        \end{pmatrix},
        \begin{pmatrix}
            0&1&0\\
            0&0&1\\
            1&0&0
        \end{pmatrix}
        \right\}.
    \end{equation*}
    As poic-complex $\Phi$ we just take two disjoint copies of $\bbR^3_{>0}$, and for the poic-fibration we just consider the identity maps. It can immediately be checked that this, trivially, defines a poic-fibration. We depict both $\Phi$ and $\cX$ in Figure \ref{fig: working example} by looking at the corresponding slices given by $x+y+z=1$.
    
    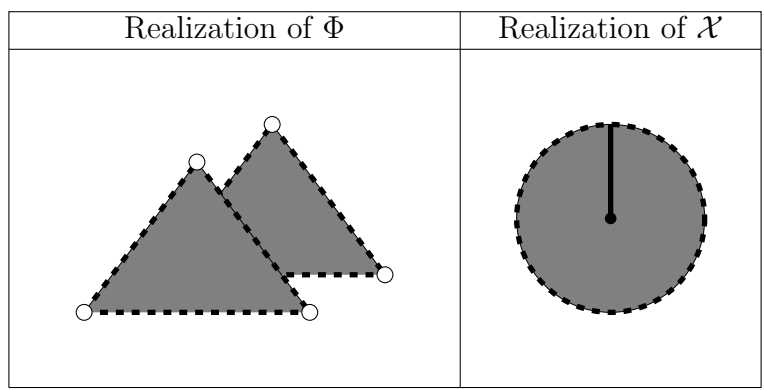
\begin{figure}[!ht]
        \centering
        \begin{tikzpicture}
            \draw[fill=gray,opacity=0.3,line width=0pt] (-3,0.5)--(-1.5,2.5)--(0,0.5); 
            \draw[dashed, line width = 2pt] (-3,0.5)--(-1.5,2.5);
            \draw[dashed, line width = 2pt] (-1.5,2.5)--(0,0.5);
            \draw[dashed, line width = 2pt] (-3,0.5)--(0,0.5);
            \draw[fill = white] (-3,0.5) circle (3pt);
            \draw[fill = white] (0,0.5) circle (3pt);
            \draw[fill = white] (-1.5,2.5) circle (3pt);
            
            \draw[fill=gray,opacity=0.3,line width=0pt] (-4,0)--(-2.5,2)--(-1,0); 
            \draw[dashed, line width = 2pt] (-4,0)--(-2.5,2);
            \draw[dashed, line width = 2pt] (-2.5,2)--(-1,0);
            \draw[dashed, line width = 2pt] (-4,0)--(-1,0);
            \draw[fill = white] (-4,0) circle (3pt);
            \draw[fill = white] (-1,0) circle (3pt);
            \draw[fill = white] (-2.5,2) circle (3pt);

            \draw[fill=gray,opacity=0.3,line width=0pt] (3,1.25) circle (1.25);
            \draw[line width = 2pt] (3,1.25)--(3,2.5);
            \draw[dashed, line width = 2pt] (4.25,1.25) arc(360:0:1.25);
            \draw[fill = black] (3,1.25) circle (2pt);

            \draw (-5,-1)--(5,-1)--(5,4)--(-5,4)--(-5,-1);
            \draw (1,-1)--(1,4);
            \draw (-5,3.5)--(1,3.5) node [midway, above] {Realization of $\Phi$};
            \draw (1,3.5)--(5,3.5) node [midway, above] {Realization of $\cX$};
        \end{tikzpicture}
        \caption{Depiction of poic-spaces of Example \ref{ex: working example}.}
        \label{fig: working example}
    \end{figure}
\end{example}
Suppose now that $\pi\colon \Phi_X\to \cX$ is a linear poic-fibration. The first and second condition imply that $\pi$ induces a surjective map of sets
\begin{equation*}
    [\pi]\colon [\Phi](k)\to [\cX](k).
\end{equation*} 
This map, in turn, induces an injective linear map $[\pi]^*\colon W_k(\cX)\to W_k(\Phi)$.
\begin{defi}
    If $\pi\colon\Phi_X\to \cX$ is a linear poic-fibration. A \emph{$k$-dimensional $\pi$-equivariant Minkowski weight $\omega$} is a weight $\omega\in M_k(\Phi_X)$ such that $\omega\in [\pi]^*\left(W_k(\cX)\right)$. The set of $\pi$-equivariant Minkowski weights is denoted by $M_k(\cX_{\pi,X})$, and clearly is an abelian subgroup of $M_k(\Phi_X)$.
\end{defi}

\subsection{Subdivisions and tropical cycles of a poic-fibration.} To define tropical cycles for a poic-fibration, we proceed as in the case of poic-complexes. Namely, we introduce subdivisions of the source. Naturally, an arbitrary subdivision will not work as it will not reflect any compatibility with the poic-fibration (more precisely, the isomorphisms of the target). For this we introduce the notion of subdivisions compatible with the poic-fibration. This forces us to begin with some constructions and notation.\\
Suppose that $\Phi$ is a poic-complex, $S\colon \Phi^\prime\to\Phi$ is a subdivision, and let $s$ be a cone of $\Phi$. 
\begin{const}
    Let $t$ be a cone of $\Phi^\prime$ such that $S(t)\cong s$. The induced linear map
    \begin{equation*}
    S_t\colon {\Phi^\prime}(t)\to \Phi(s)
    \end{equation*}
    is injective, and $S_t({\Phi^\prime}(t))$ does not lie in a proper face of $\Phi(s)$. Therefore, the intersection $S^0(t)\colon =S_t\left({\Phi^\prime}(t)\right)\cap \Phi(s)^0$ is a (non-empty!) partially open subcone of ${\Phi}(s)^0$. Let $S^{-1}(s)$ denote the following set of partially open subcones of $\Phi(s)^0$
    \begin{equation*}
        S^{-1}(s) :=\{ S^0(t) : t \textnormal{ is a cone of }\Phi^\prime \textnormal{ with } S(t) \cong s\}.
    \end{equation*}
    
\end{const}
\begin{lem}
    Following the above notation, the set $S^{-1}(s)$ is closed under face relations, and hence the tautological association gives rise to a poic-complex (which we denote in the same way). This poic-complex is a subdivision of $\Phi(s)^0$.
\end{lem}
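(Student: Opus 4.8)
The plan is to reduce the lemma to a single geometric identity about relative interiors and then read off both assertions from it. Throughout, fix a cone $t$ of $\Phi'$ with $S(t)\cong s$ and set $C_t:=\overline{S_t(\Phi'(t))}$, the closure taken in $N^s_\bbR$. Since $\Phi(s)$ is full-dimensional, $\Phi(s)^0$ is an \emph{open} subset of $N^s_\bbR$ (every defining functional cuts out a strict half-space), so $S^0(t)=S_t(\Phi'(t))\cap\Phi(s)^0$ is a convex partially open cone. By axiom (2) of the subdivision $S$ applied to $s$, the image $S_t(\Phi'(t)^0)$ of the relative interior lands in $\Phi(s)^0$, hence $S_t(\Phi'(t)^0)\subseteq S^0(t)\subseteq C_t$; as $S_t(\Phi'(t)^0)$ is dense in $C_t$, this squeezes $\overline{S^0(t)}=C_t$, and since $S^0(t)$ is convex its relative interior agrees with that of $C_t$. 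First I would record the resulting identity: the relative interior of $S^0(t)$ equals $S_t(\Phi'(t)^0)$.

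The main step is closedness under faces. Because $S_t$ is injective and $\Phi'(t)$ is full-dimensional, $S_t$ identifies $\Phi'(t)$ together with its face lattice with the partially open cone $S_t(\Phi'(t))$; by axiom (2) for the poic-complex $\Phi'$ the faces of $\Phi'(t)$ are, up to isomorphism, exactly the sources of morphisms $t'\to t$ in $C_{\Phi'}$. A partially open cone is the disjoint union of the relative interiors of those faces of its closure that are present, and on the relative interior of such a face every functional cutting out $\Phi(s)^0$ has constant strict sign or vanishes identically; hence each of these relative interiors is either contained in or disjoint from the open set $\Phi(s)^0$. Consequently the faces of $S^0(t)$ are exactly the sets $S_{t'}(\Phi'(t'))\cap\Phi(s)^0$ for the face-embeddings $t'\to t$ whose relative interior meets $\Phi(s)^0$. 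Since $S_{t'}(\Phi'(t')^0)\subseteq\Phi(S(t'))^0$ and $S(t')$ is a face of $S(t)\cong s$, this happens precisely when $S(t')\cong s$: if instead $S(t')$ were a proper face of $s$, then $S_{t'}(\Phi'(t'))$ would lie in the proper face $\overline{\Phi(S(t'))}$ of $\overline{\Phi(s)}$, which is disjoint from the interior $\Phi(s)^0$. In the surviving case the face is $S^0(t')\in S^{-1}(s)$. Thus $S^{-1}(s)$ is a finite poset of partially open subcones of $\Phi(s)^0$ closed under faces; equipping each $S^0(t')$ with the lattice $N^s\cap\Lin(S^0(t'))$, the tautological association verifies both poic-complex axioms and so defines a poic-complex.

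It then remains to verify that the tautological morphism $S^{-1}(s)\to\underline{\Phi(s)^0}$ — sending every cone to the unique cone $\Phi(s)^0$, with natural transformation the inclusions $S^0(t')\hookrightarrow\Phi(s)^0$ — is a subdivision in the sense of Definition \ref{defi: subdivision}. Axiom (1) is routine: the source is nonempty by essential surjectivity of $S$, the lattice inclusions are injective, and when $d(S^0(t))=d(s)$ one has $\Lin(S^0(t))=N^s_\bbR$, so the lattice map is the identity of $N^s$. Axiom (3) is vacuous because $\underline{\Phi(s)^0}$ carries only identity morphisms. Axiom (2) is precisely the content of the first paragraph: the identity for the relative interior of $S^0(t)$ turns axiom (2) of $S$ at the cone $s$ into the decomposition
\begin{equation*}
\Phi(s)^0=\bigsqcup_{[t]\,:\,S(t)\cong s}S_t(\Phi'(t)^0)=\bigsqcup_{\tau\in S^{-1}(s)}\tau^0,
\end{equation*}
and the clause that these relative interiors determine the cones up to isomorphism transfers verbatim from axiom (2) of $S$.

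The step I expect to be the main obstacle is closedness under faces: it hinges on correctly combining the combinatorial face correspondence of the poic-complex $\Phi'$ with the effect of intersecting a partially open cone by the open set $\Phi(s)^0$, that is, on identifying exactly which faces of $S_t(\Phi'(t))$ survive the intersection. This is precisely where one must separate the case $S(t')\cong s$ from the case where $S(t')$ is a proper face of $s$, and everything else is bookkeeping against Definition \ref{defi: subdivision}.
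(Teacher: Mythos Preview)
Your proof is correct and follows essentially the same approach as the paper's: a face of $S^0(t)$ arises from a face of $\Phi'(t)$, i.e.\ from some $t'\to t$, and one then argues $S(t')\cong s$ so that the face is $S^0(t')\in S^{-1}(s)$. The paper compresses this into two sentences, while you supply the justifications the paper omits---most notably, you exploit that $\Phi(s)^0$ is open in $N^s_\bbR$ to pin down exactly which faces of $S_t(\Phi'(t))$ survive the intersection, and you verify the subdivision axioms for $S^{-1}(s)\to\underline{\Phi(s)^0}$ via the relative-interior identity $(S^0(t))^0=S_t(\Phi'(t)^0)$, which the paper leaves implicit.
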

\begin{proof}
    We remark that a face of $S^0(t)$ must come from a face of ${\Phi^\prime}(t)$, and hence comes from a morphism $t^\prime\to t$ of $\Phi^\prime$. Since $S$ is a subdivision, necessarily $S(t^\prime)\cong s$, and therefore $S^0(t^\prime)\in S^{-1}(s)$. In particular, this set is closed under face relations and must also be a subdivision of $\Phi(s)^0$, because $S$ is a subdivision of $\Phi$.
\end{proof}
\begin{defi}
    Let $\pi\colon \Phi\to\cX$ be a poic-fibration. A subdivision $S\colon \Phi^\prime\to\Phi$ is called \emph{$\pi$-compatible} if: For any cones $p$ and $q$ of $\Phi$ together with an isomorphism $f\colon \pi(p) \to \pi(q)$ of $\cX$, the subdivisions $\cX(f)\left(\pi_p \left(S^{-1}(p)\right)\right)$ and $\pi_q\left(S^{-1}(q)\right)$ of $\cX(\pi(q))^0$ coincide.
\end{defi}
\begin{example}
    Following example \ref{ex: working example}, we depict in Figure \ref{fig: working example subdivisions} several $\pi$-stable subdivisions and not $\pi$-stable subdivisions. The new ray introduced in each cone, depicted by the black dot, is the ray given by $x=y=z>0$ at the corresponding cone.
    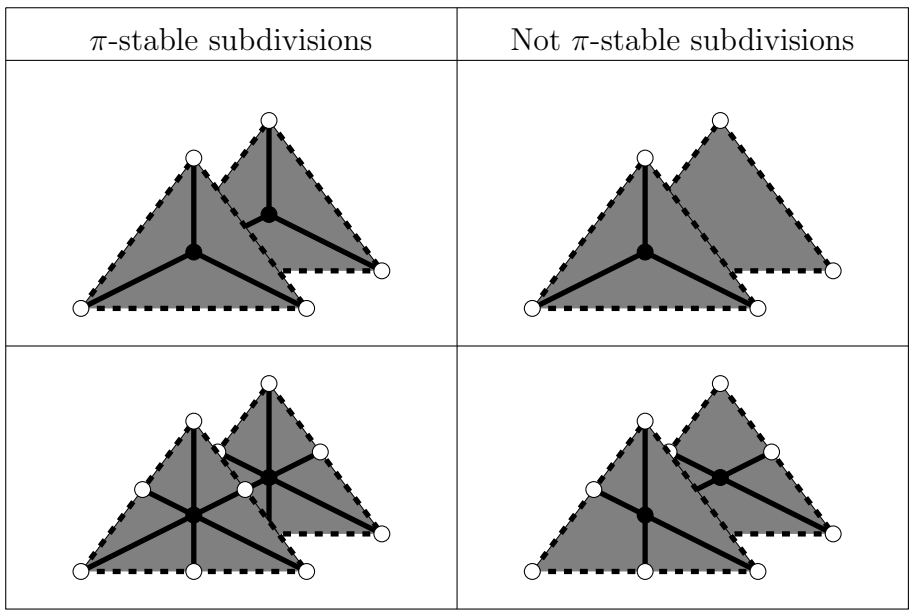
\begin{figure}[!ht]
        \centering
        \begin{tikzpicture}
            \draw[fill=gray,opacity=0.3,line width=0pt] (-3,7.5)--(-1.5,9.5)--(0,7.5); 
            \draw[dashed, line width = 2pt] (-3,7.5)--(-1.5,9.5);
            \draw[dashed, line width = 2pt] (-1.5,9.5)--(0,7.5);
            \draw[dashed, line width = 2pt] (-3,7.5)--(0,7.5);
            \draw[line width = 2pt](-3,7.5)--(-1.5,8.25);
            \draw[line width = 2pt](-1.5,9.5)--(-1.5,8.25);
            \draw[line width = 2pt](0,7.5)--(-1.5,8.25);
            \draw[fill = black] (-1.5,8.25) circle (3pt);
            \draw[fill = white] (-3,7.5) circle (3pt);
            \draw[fill = white] (0,7.5) circle (3pt);
            \draw[fill = white] (-1.5,9.5) circle (3pt);
            
            \draw[fill=gray,opacity=0.3,line width=0pt] (-4,7)--(-2.5,9)--(-1,7); 
            \draw[dashed, line width = 2pt] (-4,7)--(-2.5,9);
            \draw[dashed, line width = 2pt] (-2.5,9)--(-1,7);
            \draw[dashed, line width = 2pt] (-4,7)--(-1,7);
            \draw[line width = 2pt](-4,7)--(-2.5,7.75);
            \draw[line width = 2pt](-2.5,9)--(-2.5,7.75);
            \draw[line width = 2pt](-1,7)--(-2.5,7.75);
            \draw[fill = black] (-2.5,7.75) circle (3pt);
            \draw[fill = white] (-4,7) circle (3pt);
            \draw[fill = white] (-1,7) circle (3pt);
            \draw[fill = white] (-2.5,9) circle (3pt);

            \draw[fill=gray,opacity=0.3,line width=0pt] (-3,4)--(-1.5,6)--(0,4); 
            \draw[dashed, line width = 2pt] (-3,4)--(-1.5,6);
            \draw[dashed, line width = 2pt] (-1.5,6)--(0,4);
            \draw[dashed, line width = 2pt] (-3,4)--(0,4);
            \draw[line width = 2pt](-3,4)--(-0.818,5.09);
            \draw[line width = 2pt](-1.5,6)--(-1.5,4);
            \draw[line width = 2pt](0,4)--(-2.181,5.09);
            \draw[fill = black] (-1.5,4.75) circle (3pt);
            \draw[fill = white] (-0.818,5.09) circle (3pt);
            \draw[fill = white] (-1.5,4) circle (3pt);
            \draw[fill = white] (-2.181,5.09) circle (3pt);
            \draw[fill = white] (-3,4) circle (3pt);
            \draw[fill = white] (0,4) circle (3pt);
            \draw[fill = white] (-1.5,6) circle (3pt);
            
            \draw[fill=gray,opacity=0.3,line width=0pt] (-4,3.5)--(-2.5,5.5)--(-1,3.5); 
            \draw[dashed, line width = 2pt] (-4,3.5)--(-2.5,5.5);
            \draw[dashed, line width = 2pt] (-2.5,5.5)--(-1,3.5);
            \draw[dashed, line width = 2pt] (-4,3.5)--(-1,3.5);
            \draw[line width = 2pt](-4,3.5)--(-1.818,4.59);
            \draw[line width = 2pt](-2.5,5.5)--(-2.5,3.5);
            \draw[line width = 2pt](-1,3.5)--(-3.181,4.59);
            \draw[fill = black] (-2.5,4.25) circle (3pt);
            \draw[fill = white] (-1.818,4.59) circle (3pt);
            \draw[fill = white] (-2.5,3.5) circle (3pt);
            \draw[fill = white] (-3.181,4.59) circle (3pt);
            \draw[fill = white] (-4,3.5) circle (3pt);
            \draw[fill = white] (-1,3.5) circle (3pt);
            \draw[fill = white] (-2.5,5.5) circle (3pt);

            \draw (-5,11)--(-5,3)--(7,3)--(7,11)--(-5,11);
            \draw (-5,10.3)--(1,10.3) node [midway,above]{$\pi$-stable subdivisions};
            \draw (1,10.3)--(7,10.3) node [midway,above]{Not $\pi$-stable subdivisions};
            \draw (-5,6.5)--(7,6.5);
            \draw (1,11)--(1,3);

            \draw[fill=gray,opacity=0.3,line width=0pt] (3,7.5)--(4.5,9.5)--(6,7.5); 
            \draw[dashed, line width = 2pt] (3,7.5)--(4.5,9.5);
            \draw[dashed, line width = 2pt] (4.5,9.5)--(6,7.5);
            \draw[dashed, line width = 2pt] (3,7.5)--(6,7.5);
            \draw[fill = white] (3,7.5) circle (3pt);
            \draw[fill = white] (6,7.5) circle (3pt);
            \draw[fill = white] (4.5,9.5) circle (3pt);
            
            \draw[fill=gray,opacity=0.3,line width=0pt] (2,7)--(3.5,9)--(5,7); 
            \draw[dashed, line width = 2pt] (2,7)--(3.5,9);
            \draw[dashed, line width = 2pt] (3.5,9)--(5,7);
            \draw[dashed, line width = 2pt] (2,7)--(5,7);
            \draw[line width = 2pt] (2,7)--(3.5,7.75);
            \draw[line width = 2pt] (5,7)--(3.5,7.75);
            \draw[line width = 2pt] (3.5,9)--(3.5,7.75);
            \draw[fill= black] (3.5,7.75) circle (3pt);
            \draw[fill = white] (2,7) circle (3pt);
            \draw[fill = white] (5,7) circle (3pt);
            \draw[fill = white] (3.5,9) circle (3pt);

            \draw[fill=gray,opacity=0.3,line width=0pt] (3,4)--(4.5,6)--(6,4); 
            \draw[dashed, line width = 2pt] (3,4)--(4.5,6);
            \draw[dashed, line width = 2pt] (4.5,6)--(6,4);
            \draw[dashed, line width = 2pt] (3,4)--(6,4);
            \draw[line width = 2pt] (3,4)--(5.181,5.09);
            \draw[line width = 2pt] (6,4)--(3.818,5.09);
            \draw[fill= black] (4.5,4.75) circle (3pt);
            \draw[fill = white] (3.818,5.09) circle (3pt);
            \draw[fill = white] (5.181,5.09) circle (3pt);
            \draw[fill = white] (3,4) circle (3pt);
            \draw[fill = white] (6,4) circle (3pt);
            \draw[fill = white] (4.5,6) circle (3pt);
            
            \draw[fill=gray,opacity=0.3,line width=0pt] (2,3.5)--(3.5,5.5)--(5,3.5); 
            \draw[dashed, line width = 2pt] (2,3.5)--(3.5,5.5);
            \draw[dashed, line width = 2pt] (3.5,5.5)--(5,3.5);
            \draw[dashed, line width = 2pt] (2,3.5)--(5,3.5);
            \draw[line width = 2pt] (3.5,5.5)--(3.5,3.5);
            \draw[line width = 2pt] (5,3.5)--(2.818,4.59);
            \draw[fill= black] (3.5,4.25) circle (3pt);
            \draw[fill = white] (2.818,4.59) circle (3pt);
            \draw[fill = white] (3.5,3.5) circle (3pt);
            \draw[fill = white] (2,3.5) circle (3pt);
            \draw[fill = white] (5,3.5) circle (3pt);
            \draw[fill = white] (3.5,5.5) circle (3pt);
        \end{tikzpicture}
        \caption{Examples and non-exmaples of $\pi$-stable subdivisions.}
        \label{fig: working example subdivisions}
    \end{figure}

\end{example}
\begin{restatable}{lem}{pfnpicmptblsbdvsns}\label{lem: refining to pi-compatible subdivisions}
    Suppose $\pi\colon\Phi\to\cX$ is a poic-fibration, where $\Phi$ is pure of dimension $n$. If $S\colon \Phi^\prime\to\Phi$ is a subdivision, then there exists a subdivision $S^{\prime}\colon \Phi^{\prime\prime}\to\Phi^\prime$ such that $S\circ S^\prime\colon \Phi^{\prime\prime}\to\Phi$ is $\pi$-compatible. In other words, $\pi$-compatible subdivison are final as subdivisions of $\Phi$.
\end{restatable}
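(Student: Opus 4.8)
The plan is to construct the required refinement by descending to $\cX$, taking an automorphism-invariant common refinement of the subdivisions that the various fibers induce there, and pulling it back. Fix a set of representatives for the isomorphism classes of cones of $\cX$. For a representative $x$, let $P_x$ be the set (finite, since $C_\cX$ has finite hom-sets and essentially finitely many objects) of pairs $(s,\phi)$ where $s$ is a cone of $\Phi$ and $\phi\colon\pi(s)\xrightarrow{\sim}x$ is an isomorphism of $\cX$; to each such pair associate the subdivision $\cX(\phi)\bigl(\pi_s(S^{-1}(s))\bigr)$ of $\cX(x)^0$. Define $\Sigma_x$ to be the common refinement over all of $P_x$ of these partially open fan subdivisions of $\cX(x)^0$ (the common refinement of finitely many such subdivisions is again one, since intersections of poics are poics). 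Because we range over \emph{all} isomorphisms $\phi$, any $\alpha\in\Aut_{\cX}(x)$ merely permutes the indexing family (sending $(s,\phi)$ to $(s,\alpha\circ\phi)$), so $\cX(\alpha)$ preserves the common refinement; that is, $\Sigma_x$ is $\Aut_{\cX}(x)$-invariant. This invariance is the feature that will ultimately yield $\pi$-compatibility.

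Next I would assemble these into a subdivision $\Phi''$ of $\Phi'$. For each cone $s$ of $\Phi$, the invariance of $\Sigma_{[\pi(s)]}$ lets me transport it along any isomorphism $\pi(s)\xrightarrow{\sim}x$ and pull it back through the relative-interior isomorphism $\pi_s$ of the fibration to obtain a well-defined (choice-independent) refinement $\Theta_s$ of $\Phi(s)^0$; since $\pi_s(S^{-1}(s))$ is one of the pieces whose common refinement is $\Sigma_{[\pi(s)]}$, the refinement $\Theta_s$ refines $S^{-1}(s)$. Declaring the cones of $\Phi''$ lying over $s$ to be the members of $\Theta_s$, with face relations inherited from cone faces, produces an essentially finite thin category carrying a tautological functor to $\POIC$; the subdivision axioms of Definition \ref{defi: subdivision} for $S'\colon\Phi''\to\Phi'$ (and hence for $S\circ S'\colon\Phi''\to\Phi$) follow once one checks that the $\Theta_s$ are coherent along face relations.

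The main obstacle is exactly this coherence: a priori the $\Sigma_x$ are chosen independently on each interior $\cX(x)^0$, and one must verify that for a face embedding $w\to x$ of $\cX$ the subdivision $\Sigma_x$ restricts on the boundary face $\cX(w)^0$ to $\Sigma_w$. Here the fibration hypothesis does the work. Restriction to a face commutes with common refinement, so it suffices to identify the restricted family with the one defining $\Sigma_w$. On one hand, coherence of the given subdivision $\Phi'$ (together with the fact that $\pi$ is a morphism of poic-spaces, so $\pi_s$ intertwines the face inclusions) shows that for $(s,\phi)\in P_x$ the piece $\cX(\phi)(\pi_s(S^{-1}(s)))$ restricts on $\cX(w)^0$ to $\cX(\phi')(\pi_{s''}(S^{-1}(s'')))$ for a suitable face $s''\to s$ with $(s'',\phi')\in P_w$. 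On the other hand, given any $(r,\psi)\in P_w$, applying the lifting property (condition (3) of a poic-fibration) to $r$ and the composite $\pi(r)\xrightarrow{\psi}w\to x$ produces $h\colon r\to s$ in $\Phi$ and an isomorphism $g\colon\pi(s)\xrightarrow{\sim}x$ with $(s,g)\in P_x$ whose restriction to $\cX(w)^0$ is precisely the $(r,\psi)$-piece. Hence the two families coincide as sets, so $\Sigma_x|_{\cX(w)^0}=\Sigma_w$, giving the desired coherence and completing the construction of $\Phi''$.

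Finally, $S\circ S'$ is $\pi$-compatible: for cones $p,q$ of $\Phi$ and an isomorphism $f\colon\pi(p)\to\pi(q)$, one has $(S\circ S')^{-1}(p)=\Theta_p$ and $(S\circ S')^{-1}(q)=\Theta_q$, and both $\pi_p(\Theta_p)$ and $\pi_q(\Theta_q)$ are transports of the single invariant subdivision $\Sigma_{x_0}$ attached to the common class $x_0=[\pi(p)]=[\pi(q)]$. Writing $f$ through $x_0$ then reduces the identity $\cX(f)(\pi_p(\Theta_p))=\pi_q(\Theta_q)$ to the $\Aut_{\cX}(x_0)$-invariance of $\Sigma_{x_0}$ established in the first step. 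Since $S$ was an arbitrary subdivision of $\Phi$, this shows that $\pi$-compatible subdivisions are final among subdivisions of $\Phi$.
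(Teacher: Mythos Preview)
Your approach—building an $\Aut_{\cX}(x)$-invariant common refinement $\Sigma_x$ on each $\cX(x)^0$ and pulling back—is different from the paper's dimension-by-dimension induction, and the invariance argument in your first paragraph is fine. The gap is in the coherence step.

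You assert that for every $(s,\phi)\in P_x$ the piece $\cX(\phi)(\pi_s(S^{-1}(s)))$ restricts on $\cX(w)^0$ to a piece indexed by some face $s''\to s$ in $\Phi$. But condition~(2) of a poic-fibration only says $\eta_{\pi,s}$ is an isomorphism on \emph{relative interiors}; nothing forces $\Phi(s)$ to contain every face that $\cX(\pi(s))$ has. When $\Phi(s)$ is missing the face corresponding to $w\to x$ there is no such $s''$, and the closure-restriction of $S^{-1}(s)$ to $\cX(w)^0$ can be an arbitrary subdivision not indexed by $P_w$. Then $\Sigma_x|_{\cX(w)^0}$ is strictly finer than $\Sigma_w$, and for those $s$ that \emph{do} carry the face the transported $\Theta_s$ traces a finer subdivision on $\Phi(s'')^0$ than $\Theta_{s''}$; the $\Theta$'s do not assemble into a poic-complex.

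Concretely: take $\cX$ with one $3$-cone $x=\{x_1,x_2>0,\,x_3\ge 0\}$ and its unique face $w=\{x_3=0\}$, trivial automorphisms; let $\Phi$ have $s_1$ with $\Phi(s_1)=\cX(x)$, $s_2$ with $\Phi(s_2)=\cX(x)^0$, and $r\to s_1$ with $\Phi(r)=\cX(w)$, with the obvious $\pi$. One checks this is a poic-fibration with $\Phi$ pure of dimension $3$. Let $S$ leave $s_1,r$ untouched and cut $s_2$ by the wall $x_1=x_2$. Your $\Sigma_x$ is the $x_1=x_2$ subdivision (coming from $(s_2,\Id)$), while $\Sigma_w$ is trivial. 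Then $\Theta_{s_1}$ cuts the face $\Phi(r)$ by $x_1=x_2$ but $\Theta_r$ does not, so coherence along $r\to s_1$ fails.

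The paper avoids exactly this by working inductively on dimension: at stage $i{+}1$ the boundary subdivision on $\Phi(\le i)$ is already fixed and $\pi$-compatible, one extends into the interiors via stellar subdivisions (which leave boundaries untouched), and only then intersects over automorphisms. Because the lower-dimensional pieces are frozen before the higher-dimensional common refinements are formed, the new interior walls cannot propagate incoherently downward. Your construction can be repaired along these lines, but it genuinely requires the induction (or an equivalent iterative refinement of the $\Sigma_w$'s), not a single pass.
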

\begin{proof}
    The proof of the lemma is postponed to Section \ref{appendix: proofs of lemmas} in the appendix.
\end{proof}
\begin{nota}\label{nota: notacion linear pf}
    In what follows, we will assume that $\pi\colon \Phi_X\to\cX$ is a linear poic-fibration, and $S\colon \Phi^\prime\to\Phi$ is a $\pi$-compatible subdivision.
\end{nota}
\begin{const}\label{const: bijection}
    If $p$ and $q$ are cones of $\Phi$, then an isomorphism $f\colon\pi(p)\to \pi(q)$ of $\cX$ induces a bijective map $[S^{-1}(p)]\to [S^{-1}(q)]$ that preserves the respective dimensions. Observe that we can regard the sets $[S^{-1}(p)]$ and $[S^{-1}(q)]$ as subsets of $[\Phi^\prime]$, because $S$ is a subdivision map. Therefore, by trivially extending the  previous bijection, we obtain a permutation of $[\Phi^\prime]$ which we denote by $b_f$. By construction $b_f$ preserves the dimension of the isomorphism classes of the cones.
\end{const}
\begin{defi}
    Following Notation \ref{nota: notacion linear pf}, let $p$ and $q$ be cones of $\Phi$ with $f\colon \pi(p)\to \pi(q)$ an isomorphism of $\cX$. A weight $\omega\in M_k(\Phi^\prime_{X\circ S})$, where $k\geq 0$ is an integer, is called \emph{$f$-stable} if $\omega= \omega\circ w_f$ (here $w_f$ is the bijection of Construction \ref{const: bijection}). The weight $\omega$ is called a \emph{$\pi$-equivariant Minkowski weight} if it is $f$-stable for every triple $(p,q,f)$ as above. We denote the set of $\pi$-equivariant Minkowski weights of $S\colon \Phi^\prime\to\Phi$ by $M_k(\cX_{\pi,X\circ S})$.
\end{defi}
\begin{example}
    In Figure \ref{fig: working example weights} we use the poic-complex $\Phi$ of Example \ref{ex: working example} and the $\pi$-stable subdivisions thereof from Figure \ref{fig: working example subdivisions} to depict some $2$-dimensional $\pi$-equivariant Minkowski weights and some non-$\pi$-equivariant Minkowski weights. In these depictions of $\Phi$ we separate the disjoint cones to make the weight assignment most explicit.

    \begin{figure}
        \centering
        \begin{tikzpicture}
            \draw[fill=gray,opacity=0.3,line width=0pt] (-7,0)--(-5.5,2)--(-4,0);
            \draw[dashed, line width=2pt] (-7,0)--(-5.5,2);
            \draw[dashed, line width=2pt] (-4,0)--(-5.5,2);
            \draw[dashed, line width=2pt] (-7,0)--(-4,0);
            \draw[color = red,line width=2pt](-7,0)--(-5.5,0.75)node [midway, above] {$1$};
            \draw[line width=2pt](-5.5,0.75)--(-4.818,1.09);
            \draw[color = red,line width=2pt](-5.5,2)--(-5.5,0.75)node [pos= 0.6,right] {$1$};
            \draw[line width=2pt](-5.5,0.75)--(-5.5,0);
            \draw[color = red,line width=2pt](-4,0)--(-5.5,0.75)node [pos= 0.7,below] {$1$};
            \draw[line width=2pt](-5.5,0.75)--(-6.181,1.09);
            \draw[fill=black] (-5.5,0.75) circle (3pt);
            \draw[fill=white] (-6.181,1.09) circle (3pt);
            \draw[fill=white] (-5.5,0) circle (3pt);
            \draw[fill=white] (-4.818,1.09) circle (3pt);
            \draw[fill=white] (-7,0) circle (3pt);
            \draw[fill=white] (-5.5,2) circle (3pt);
            \draw[fill=white] (-4,0) circle (3pt);
            
            \draw[fill=gray,opacity=0.3,line width=0pt] (-3,0)--(-1.5,2)--(0,0);
            \draw[dashed, line width=2pt] (-3,0)--(-1.5,2);
            \draw[dashed, line width=2pt] (0,0)--(-1.5,2);
            \draw[dashed, line width=2pt] (-3,0)--(0,0);
            \draw[color = red,line width=2pt](-3,0)--(-1.5,0.75)node [midway, above] {$1$};
            \draw[line width=2pt](-1.5,0.75)--(-0.818,1.09);
            \draw[color = red,line width=2pt](-1.5,2)--(-1.5,0.75)node [pos = 0.6, right] {$1$};
            \draw[line width=2pt](-1.5,0.75)--(-1.5,0);
            \draw[color = red,line width=2pt](-0,0)--(-1.5,0.75)node [pos=0.7, below] {$1$};
            \draw[line width=2pt](-1.5,0.75)--(-2.181,1.09);
            \draw[fill=black] (-1.5,0.75) circle (3pt);
            \draw[fill=white] (-2.181,1.09) circle (3pt);
            \draw[fill=white] (-1.5,0) circle (3pt);
            \draw[fill=white] (-0.818,1.09) circle (3pt);
            \draw[fill=white] (-3,0) circle (3pt);
            \draw[fill=white] (-1.5,2) circle (3pt);
            \draw[fill=white] (0,0) circle (3pt);

            \draw[fill=gray,opacity=0.3,line width=0pt] (2,0)--(3.5,2)--(5,0);
            \draw[dashed, line width=2pt] (2,0)--(3.5,2);
            \draw[dashed, line width=2pt] (5,0)--(3.5,2);
            \draw[dashed, line width=2pt] (2,0)--(5,0);
            \draw[color = red,line width=2pt](2,0)--(3.5,0.75)node [midway, above] {$1$};
            \draw[color = red,line width=2pt](3.5,0.75)--(4.181,1.09) node [pos=0.8,below]{$1$};
            \draw[line width=2pt](3.5,2)--(3.5,0.75);
            \draw[line width=2pt](3.5,0.75)--(3.5,0);
            \draw[color = red,line width=2pt](5,0)--(3.5,0.75)node [pos =0.7,below]{$1$};
            \draw[color = red,line width=2pt](3.5,0.75)--(2.818,1.09)node [pos=0.5, above] {$1$};
            \draw[fill=black] (3.5,0.75) circle (3pt);
            \draw[fill=white] (2.818,1.09) circle (3pt);
            \draw[fill=white] (3.5,0) circle (3pt);
            \draw[fill=white] (4.181,1.09) circle (3pt);
            \draw[fill=white] (2,0) circle (3pt);
            \draw[fill=white] (3.5,2) circle (3pt);
            \draw[fill=white] (5,0) circle (3pt);
            
            \draw[fill=gray,opacity=0.3,line width=0pt] (6,0)--(7.5,2)--(9,0);
            \draw[dashed, line width=2pt] (6,0)--(7.5,2);
            \draw[dashed, line width=2pt] (9,0)--(7.5,2);
            \draw[dashed, line width=2pt] (6,0)--(9,0);
            \draw[color = red,line width=2pt](6,0)--(7.5,0.75)node [midway, above] {$1$};
            \draw[color = red,line width=2pt](7.5,0.75)--(8.181,1.09)node [pos=0.8,below]{$1$};
            \draw[line width=2pt](7.5,2)--(7.5,0.75);
            \draw[line width=2pt](7.5,0.75)--(7.5,0);
            \draw[color = red,line width=2pt](9,0)--(7.5,0.75)node [pos =0.7,below]{$1$};
            \draw[color = red,line width=2pt](7.5,0.75)--(6.818,1.09)node [pos =0.5,above]{$1$};
            \draw[fill=black] (7.5,0.75) circle (3pt);
            \draw[fill=white] (6.818,1.09) circle (3pt);
            \draw[fill=white] (7.5,0) circle (3pt);
            \draw[fill=white] (8.181,1.09) circle (3pt);
            \draw[fill=white] (6,0) circle (3pt);
            \draw[fill=white] (7.5,2) circle (3pt);
            \draw[fill=white] (9,0) circle (3pt);


        \draw[fill=gray,opacity=0.3,line width=0pt] (-7,-4)--(-5.5,-2)--(-4,-4);
            \draw[dashed, line width=2pt] (-7,-4)--(-5.5,-2);
            \draw[dashed, line width=2pt] (-4,-4)--(-5.5,-2);
            \draw[dashed, line width=2pt] (-7,-4)--(-4,-4);
            \draw[color = red,line width=2pt](-7,-4)--(-5.5,-3.25)node [midway, above] {$1$};
            \draw[color = blue,line width=2pt](-5.5,-3.25)--(-4.818,-2.91)node [pos=0.8, below] {$2$};
            \draw[color = red,line width=2pt](-5.5,-2)--(-5.5,-3.25)node [pos=0.6, right] {$1$};
            \draw[color = blue,line width=2pt](-5.5,-3.25)--(-5.5,-4)node [pos=0.6, left] {$2$};
            \draw[color = red,line width=2pt](-4,-4)--(-5.5,-3.25)node [pos=0.7, below] {$1$};
            \draw[color = blue,line width=2pt](-5.5,-3.25)--(-6.181,-2.91)node [pos=0.5, above] {$2$};
            \draw[fill=black] (-5.5,-3.25) circle (3pt);
            \draw[fill=white] (-6.181,-2.91) circle (3pt);
            \draw[fill=white] (-5.5,-4) circle (3pt);
            \draw[fill=white] (-4.818,-2.91) circle (3pt);
            \draw[fill=white] (-7,-4) circle (3pt);
            \draw[fill=white] (-5.5,-2) circle (3pt);
            \draw[fill=white] (-4,-4) circle (3pt);
            
            \draw[fill=gray,opacity=0.3,line width=0pt] (-3,-4)--(-1.5,-2)--(0,-4);
            \draw[dashed, line width=2pt] (-3,-4)--(-1.5,-2);
            \draw[dashed, line width=2pt] (0,-4)--(-1.5,-2);
            \draw[dashed, line width=2pt] (-3,-4)--(0,-4);
            \draw[color = red,line width=2pt](-3,-4)--(-1.5,-3.25)node [midway, above] {$1$};
            \draw[color = blue,line width=2pt](-1.5,-3.25)--(-0.818,-2.91)node [pos=0.8, below] {$2$};
            \draw[color = red,line width=2pt](-1.5,-2)--(-1.5,-3.25)node [pos=0.6, right] {$1$};
            \draw[color = blue,line width=2pt](-1.5,-3.25)--(-1.5,-4)node [pos=0.6, left] {$2$};
            \draw[color = red,line width=2pt](-0,-4)--(-1.5,-3.25)node [pos=0.7, below] {$1$};
            \draw[color = blue,line width=2pt](-1.5,-3.25)--(-2.181,-2.91)node [pos=0.5, above] {$2$};
            \draw[fill=black] (-1.5,-3.25) circle (3pt);
            \draw[fill=white] (-2.181,-2.91) circle (3pt);
            \draw[fill=white] (-1.5,-4) circle (3pt);
            \draw[fill=white] (-0.818,-2.91) circle (3pt);
            \draw[fill=white] (-3,-4) circle (3pt);
            \draw[fill=white] (-1.5,-2) circle (3pt);
            \draw[fill=white] (0,-4) circle (3pt);

            \draw[fill=gray,opacity=0.3,line width=0pt] (2,-4)--(3.5,-2)--(5,-4);
            \draw[dashed, line width=2pt] (2,-4)--(3.5,-2);
            \draw[dashed, line width=2pt] (5,-4)--(3.5,-2);
            \draw[dashed, line width=2pt] (2,-4)--(5,-4);
            \draw[color = red,line width=2pt](2,-4)--(3.5,-3.25)node [midway, above] {$1$};
            \draw[line width=2pt](3.5,-3.25)--(4.181,-2.91);
            \draw[color = red,line width=2pt](3.5,-2)--(3.5,-3.25)node [pos=0.6,right] {$1$};
            \draw[line width=2pt](3.5,-3.25)--(3.5,-4);
            \draw[color = red,line width=2pt](5,-4)--(3.5,-3.25)node [pos=0.7, below] {$1$};
            \draw[line width=2pt](3.5,-3.25)--(2.818,-2.91);
            \draw[fill=black] (3.5,-3.25) circle (3pt);
            \draw[fill=white] (2.818,-2.91) circle (3pt);
            \draw[fill=white] (3.5,-4) circle (3pt);
            \draw[fill=white] (4.181,-2.91) circle (3pt);
            \draw[fill=white] (2,-4) circle (3pt);
            \draw[fill=white] (3.5,-2) circle (3pt);
            \draw[fill=white] (5,-4) circle (3pt);
            
            \draw[fill=gray,opacity=0.3,line width=0pt] (6,-4)--(7.5,-2)--(9,-4);
            \draw[dashed, line width=2pt] (6,-4)--(7.5,-2);
            \draw[dashed, line width=2pt] (9,-4)--(7.5,-2);
            \draw[dashed, line width=2pt] (6,-4)--(9,-4);
            \draw[line width=2pt](6,-4)--(7.5,-3.25);
            \draw[color = blue,line width=2pt](7.5,-3.25)--(8.181,-2.91)node [pos=0.8, below] {$1$};
            \draw[line width=2pt](7.5,-2)--(7.5,-3.25);
            \draw[color = blue,line width=2pt](7.5,-3.25)--(7.5,-4)node [pos=0.6, left] {$1$};
            \draw[line width=2pt](9,-4)--(7.5,-3.25);
            \draw[color = blue,line width=2pt](7.5,-3.25)--(6.818,-2.91)node [pos=0.5, above] {$1$};
            \draw[fill=black] (7.5,-3.25) circle (3pt);
            \draw[fill=white] (6.818,-2.91) circle (3pt);
            \draw[fill=white] (7.5,-4) circle (3pt);
            \draw[fill=white] (8.181,-2.91) circle (3pt);
            \draw[fill=white] (6,-4) circle (3pt);
            \draw[fill=white] (7.5,-2) circle (3pt);
            \draw[fill=white] (9,-4) circle (3pt);


            \draw(-7.5,3.2)--(9.5,3.2)--(9.5,-4.5)--(-7.5,-4.5)--(-7.5,3.2);
            \draw(-7.5,2.5)--(1,2.5) node [midway, above] {$\pi$-equivariant Minkowski weights};
            \draw(1,2.5)--(9.5,2.5) node [midway, above] {non-$\pi$-equivariant Minkowski weights};
            \draw(-7.5,-1)--(9.5,-1);
            \draw(1,-4.5)--(1,3.2);
        \end{tikzpicture}
        \caption{Various examples and non-examples of $\pi$-equivariant Minkowski weights.}
        \label{fig: working example weights}
    \end{figure}
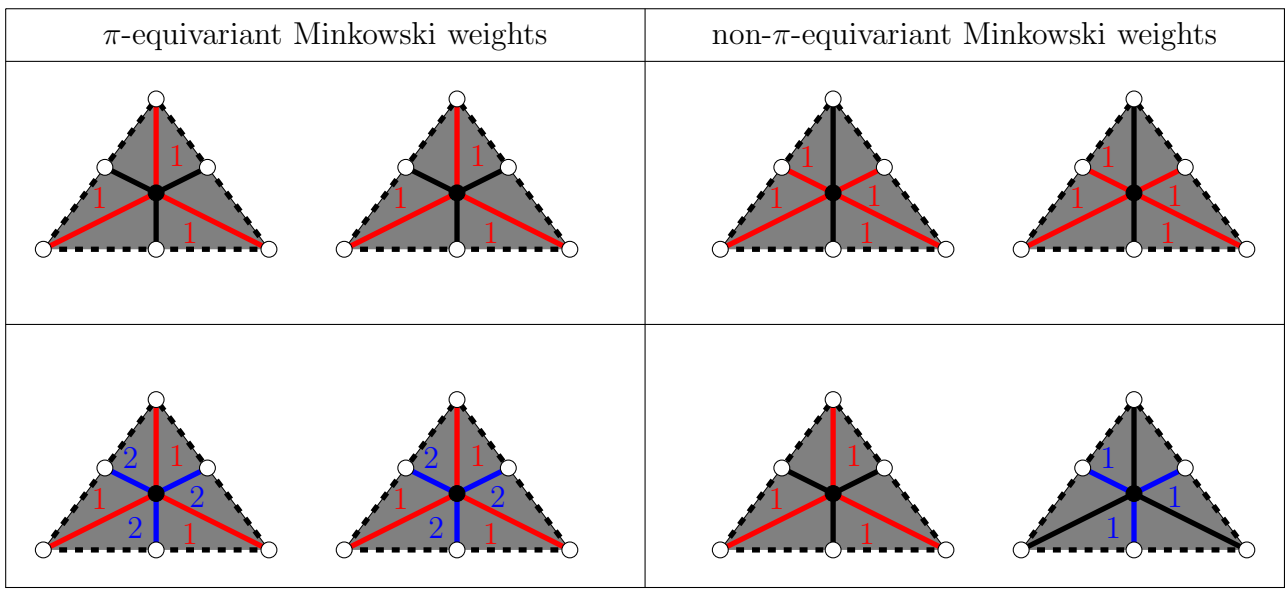
    
\end{example}
\begin{lem}
    Following our previous notation. Being a $\pi$-equivariant Minkowski weight of $S\colon \Phi^\prime\to\Phi$ is equivalent to being invariant under a finite number of permutations of $[\Phi^\prime](k)$. In particular, the set $M_k(\cX_{\pi,X\circ S})$ is an abelian subgroup of $M_k(\Phi^{\prime}_{X\circ S})$.
\end{lem}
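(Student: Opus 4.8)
The plan is to leverage the essential finiteness built into the notion of a poic-complex in order to collapse the a priori infinite family of stability conditions into a finite one. Since $S\colon\Phi^\prime\to\Phi$ is a subdivision, $\Phi^\prime$ is a poic-complex and hence $C_{\Phi^\prime}$ is essentially finite; consequently the set $[\Phi^\prime]$ of isomorphism classes is finite, and so is its symmetric group $\Sym([\Phi^\prime])$. Every triple $(p,q,f)$ appearing in the definition yields, via Construction \ref{const: bijection}, a permutation $b_f\in\Sym([\Phi^\prime])$, and by definition a weight $\omega$ is a $\pi$-equivariant Minkowski weight exactly when it is fixed by $b_f$ for all such triples. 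First I would record the elementary but decisive observation that the collection $\mathcal P:=\{\,b_f : (p,q,f)\text{ a valid triple}\,\}$, although indexed by a possibly infinite set of triples, is a \emph{subset} of the finite group $\Sym([\Phi^\prime])$ and is therefore itself finite.

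Next I would check that invariance under the family indexed by triples coincides with invariance under the finite set $\mathcal P$: two triples producing the same permutation impose literally the same condition $\omega=\omega\circ b_f$, so passing to the finite set of distinct permutations loses nothing. Each $b_f$ preserves the dimension of isomorphism classes (as recorded in Construction \ref{const: bijection}), so it restricts to a permutation of the finite set $[\Phi^\prime](k)$, and the stability condition is genuinely a condition on the weight function $[\Phi^\prime](k)\to\bbZ$. This establishes the first assertion of the lemma.

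For the ``in particular'' clause I would argue as follows. Recall from Lemma \ref{lem: ext by zero minkowski weights} that $M_k(\Phi^\prime_{X\circ S})$ is an abelian subgroup of $W_k(\Phi^\prime)$. For each $b\in\mathcal P$ the assignment $\omega\mapsto\omega-\omega\circ b$ is a homomorphism of abelian groups $W_k(\Phi^\prime)\to W_k(\Phi^\prime)$, whose kernel is precisely the subgroup of $b$-invariant weights. The set $M_k(\cX_{\pi,X\circ S})$ is then the intersection of $M_k(\Phi^\prime_{X\circ S})$ with the finitely many kernels indexed by $b\in\mathcal P$, hence a finite intersection of subgroups of $W_k(\Phi^\prime)$, and therefore a subgroup; being contained in $M_k(\Phi^\prime_{X\circ S})$ by construction, it is a subgroup of the latter.

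I do not expect a serious obstacle. The only point that deserves emphasis is the set-theoretic reduction in the first paragraph: the finiteness of the relevant permutations is not a consequence of any count of triples (there may well be infinitely many), but simply of the fact that they all live in the finite symmetric group on $[\Phi^\prime]$, which in turn rests on the essential finiteness of $\Phi^\prime$. Everything else is formal.
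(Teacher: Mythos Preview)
Your proof is correct, but it follows a genuinely different route from the paper's.

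The paper does not exploit the finiteness of $\Sym([\Phi^\prime])$. Instead it reduces the (possibly infinite) family of triples $(p,q,f)$ to a finite one by choosing representatives: it uses that for objects $s,t$ of $\cX$ the set of isomorphisms $s\to t$ is an $\Aut_\cX(t)$-torsor, picks for each class $[x]\in[\cX]$ a distinguished cone $s_{[x]}$ of $\Phi$ with $[\pi(s_{[x]})]=[x]$, extends to a full set of representatives $R$ of $[\Phi]$, and fixes one isomorphism $f_r\colon\pi(r)\to\pi(s_{[\pi(r)]})$ for each $r\in R$. It then asserts that $\pi$-equivariance is equivalent to $f_r$-stability for each $r\in R$ together with $f$-stability for each $f\in\Aut_\cX(\pi(s_{[x]}))$ and each $[x]\in[\cX]$; both collections are finite because $[\Phi]$ is finite and $\cX$ has finite hom-sets.

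Your argument is shorter and more conceptual: you never touch the triples, only their images in $\Sym([\Phi^\prime])$, and observe that this ambient group is already finite. The paper's argument, by contrast, produces an \emph{explicit} finite list of triples (hence of permutations) that suffices, which is potentially useful if one later wants to verify $\pi$-equivariance in practice or to control the size of the generating set in terms of $[\Phi]$ and the automorphism groups in $\cX$, rather than in terms of the typically much larger $[\Phi^\prime]$. Both approaches yield the subgroup claim by the same formal kernel-intersection argument you give.
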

\begin{proof}
    The second statement follows from the first. To show the first we proceed with the following observation: If $s$ and $t$ are objects of $\cX$, then the set of isomorphisms $s\to t$ is a torsor under the automorphisms of $t$. The poic-fibration $\pi\colon \Phi\to\cX$ induces a surjective map (that preserves the dimensions of the cones)
    \begin{equation*}
        \pi_*\colon [\Phi]\to [\cX],
    \end{equation*}
    so the fibers of $\pi_*$ partition $[\Phi]$. For each $[x]\in [\cX]$ let $s_{[x]}$ denote a cone of $\Phi$ with $\pi(s_{[x]})$ representing the class $[x] \in [\cX]$. Consider a full set of representatives $R$ of $[\Phi]$ that includes the previously chosen $s_{[x]}$. For each $r\in R$ fix an isomorphism $f_r\colon \pi(r)\to \pi(s_{[\pi(r)]})$ of $\cX$. Then a weight $\omega\in M_k(\Phi^\prime_{X\circ S})$ is a $\pi$-equivariant Minkowski weight if and only if 
    \begin{itemize}
        \item $\omega$ is $f_r$-stable, for every $r\in R$,
        \item for every $[x]\in [\cX]$ the weight $\omega$ is $f$-stable, for every $f\in \Aut_\cX(\pi(s_{[x]}))$.
    \end{itemize}
    From this the result follows.
\end{proof}

\begin{lem}\label{lem: compatibility of subdivisions}
Following Notation \ref{nota: notacion linear pf}, let $S^\prime\colon \Phi^{\prime\prime}\to \Phi^\prime$ be a subdivision such that the composition $\Phi^{\prime\prime}\to\Phi$ is $\pi$-compatible. The map $(S^\prime)^*\colon M_k(\Phi^\prime_{X\circ S})\hookrightarrow M_k(\Phi^{\prime\prime}_{X\circ S\circ S^\prime})$ maps $\pi$-equivariant Minkowski weights to $\pi$-equivariant Minkowski weights.

\end{lem}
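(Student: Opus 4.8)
The plan is to observe that membership of $(S^\prime)^*\omega$ in $M_k(\Phi^{\prime\prime}_{X\circ S\circ S^\prime})$ is already guaranteed by the earlier lemma that subdivision pullback restricts to Minkowski weights, so the only new content is $\pi$-equivariance. Since both $S$ and $S\circ S^\prime$ are $\pi$-compatible, Construction \ref{const: bijection} is available for each: for every triple $(p,q,f)$ with $p,q$ cones of $\Phi$ and $f\colon\pi(p)\to\pi(q)$ an isomorphism of $\cX$, it produces the permutation $b_f$ of $[\Phi^\prime]$ (extending a bijection $\beta_f\colon[S^{-1}(p)]\to[S^{-1}(q)]$) and the permutation $b_f^{\prime\prime}$ of $[\Phi^{\prime\prime}]$ (extending a bijection $\beta_f^{\prime\prime}\colon[(S\circ S^\prime)^{-1}(p)]\to[(S\circ S^\prime)^{-1}(q)]$). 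Recall both $b_f$ and $b_f^{\prime\prime}$ preserve dimension. My goal is to show that for every such triple the pulled back weight is $b_f^{\prime\prime}$-stable, which is exactly $\pi$-equivariance.

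The crux, and the main obstacle, is the commuting square
\begin{equation*}
\begin{tikzcd}
{[(S\circ S^\prime)^{-1}(p)]}\arrow{r}{\beta_f^{\prime\prime}}\arrow{d}{S^\prime}& {[(S\circ S^\prime)^{-1}(q)]}\arrow{d}{S^\prime}\\
{[S^{-1}(p)]}\arrow{r}{\beta_f}& {[S^{-1}(q)],}
\end{tikzcd}
\end{equation*}
where the vertical maps are induced by $S^\prime$, sending a cell of the fine subdivision $(S\circ S^\prime)^{-1}(p)$ of $\Phi(p)^0$ to the unique cell of the coarser subdivision $S^{-1}(p)$ containing it (this containment is just the fact that $S^\prime$ restricts to a subdivision of each $S^{-1}(p)$). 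To prove commutativity I would unwind both horizontal bijections through the poic-fibration: by axiom (2) of a poic-fibration the maps $\pi_p,\pi_q$ are isomorphisms on relative interiors, and $\beta_f,\beta_f^{\prime\prime}$ are by definition the bijections induced by the \emph{linear} isomorphism $\cX(f)$ acting on the subdivisions $\pi_p(S^{-1}(p))$ and its refinement $\pi_p((S\circ S^\prime)^{-1}(p))$ of $\cX(\pi(p))^0$. Since $\cX(f)$ is linear it carries the refinement pair over $p$ to the refinement pair over $q$ preserving the relation ``fine cell inside coarse cell''; transporting this back through $\pi_p,\pi_q$ yields precisely the asserted commutativity.

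Finally I would deduce $b_f^{\prime\prime}$-stability. As $b_f^{\prime\prime}$ is the identity off $[(S\circ S^\prime)^{-1}(p)]\cup[(S\circ S^\prime)^{-1}(q)]$ and preserves dimension, it suffices to check the equality $(S^\prime)^*\omega=(S^\prime)^*\omega\circ b_f^{\prime\prime}$ on $k$-dimensional classes in these two sets, and by replacing $f$ with $f^{-1}$ it suffices to treat a $k$-dimensional $[t^{\prime\prime}]\in[(S\circ S^\prime)^{-1}(p)](k)$. By definition $(S^\prime)^*\omega([t^{\prime\prime}])=\omega([S^\prime(t^{\prime\prime})])$ when $d(S^\prime(t^{\prime\prime}))=k$ and is $0$ otherwise, while the commuting square gives $[S^\prime(b_f^{\prime\prime}(t^{\prime\prime}))]=\beta_f([S^\prime(t^{\prime\prime})])$. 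If $d(S^\prime(t^{\prime\prime}))=k$, then $[S^\prime(t^{\prime\prime})]\in[S^{-1}(p)](k)$ and, $\beta_f$ being dimension-preserving, both values are nonzero and equal by the $f$-stability $\omega=\omega\circ b_f$ of $\omega$; if $d(S^\prime(t^{\prime\prime}))>k$, then dimension is preserved across the square and both values are $0$. Hence $(S^\prime)^*\omega$ is $b_f^{\prime\prime}$-stable for every triple $(p,q,f)$, so $(S^\prime)^*\omega\in M_k(\cX_{\pi,X\circ S\circ S^\prime})$, as desired.
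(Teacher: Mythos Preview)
Your proof is correct and follows essentially the same route as the paper's: both establish the commutative square relating the bijections induced by $\cX(f)$ at the two levels of subdivision via the refinement map $S^\prime$, and deduce the equivariance of the pullback from it. The paper compresses your case analysis into the single identity $(S^\prime)^*(\omega\circ b_f)=((S^\prime)^*\omega)\circ b_f^{\prime\prime}$, which is exactly what your square and dimension bookkeeping verify.
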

\begin{proof}
    Consider a $\pi$-equivariant Minkowski weight $\omega\in M_k(\cX_{\pi,X\circ S})$. We want to show that $(S^\prime)^*\omega\in M_k(\cX_{\pi,X\circ S^\prime\circ S})$. Let $p$ and $q$ be objects of $\Phi$ with $f\colon \pi(p)\to \pi(q)$ an isomorphism of $\cX$. Observe first that $(S^\prime\circ S)^{-1}(p)$ is a subdivision of $S^{-1}(p)$ (resp. $(S^\prime\circ S)^{-1}(q)$ is a subdivision of $S^{-1}(q)$), and furthermore we have the following commutative diagram
    \begin{equation*}
        \begin{tikzcd}
            \pi_p((S^\prime\circ S)^{-1}(p))\arrow{d}[swap]{S^\prime}\arrow{r}{\cX(f)}&\pi_q((S^\prime\circ S)^{-1}(q))\arrow{d}{S^\prime}\\
            \pi_p(S^{-1}(p))\arrow{r}[swap]{\cX(f)}&\pi_q(S^{-1}(q))
        \end{tikzcd}.
    \end{equation*}
    From this it follows that $(S^\prime)^*(\omega\circ w_f) = ((S^\prime)^*\omega)\circ w_f$, and hence $(S^\prime)^*\omega$ is also $\pi$-compatible.
\end{proof}
\begin{const}
    Let $\pi\textnormal{-}\Subd(\Phi)$ denote the full subcategory of $\Subd(\Phi)$ given by $\pi$-compatible subdivisions of $\Phi$. Then the $\pi$-equivariant Minkowski weights give rise to the functor
    \begin{equation*} M_k(\cX_{\pi,\bullet})\colon  \pi\textnormal{-}\Subd(\Phi)^{\op}\to \text{Ab}.\end{equation*}
    As in the case of tropical cycles, the colimit of this functor is representable.
\end{const}
\begin{defi}
The \emph{group of tropical $k$-cycles of the poic-fibration}  $\pi\colon \Phi_X\to \cX$ is defined as  
\begin{equation*}
Z_k(\cX_{\pi,X}) \colon = \varinjlim\limits M_k(\cX_{\pi,\bullet}).
\end{equation*}
\end{defi}

\begin{lem}
    Suppose $\pi\colon\Phi_X\to\cX$ is a linear poic-fibration. The natural inclusions $M_k(\cX_{\pi,X\circ S})\subset M_k(\Phi^\prime_{X\circ S})$, where $S\colon \Phi^\prime\to\Phi$ is a $\pi$-compatible subdivision, give rise to an injective linear map
        \begin{equation*}
            \pi_k^*\colon Z_k(\cX_{\pi,X})\to Z_k(\Phi_X).
        \end{equation*}
\end{lem}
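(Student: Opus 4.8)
The plan is to build $\pi_k^*$ directly from the defining cocone of $Z_k(\Phi_X)$ and then read off injectivity from the fact that the relevant colimits are filtered with injective transition maps. First I would fix a $\pi$-compatible subdivision $S\colon\Phi^\prime\to\Phi$ and compose the given inclusion $\iota_S\colon M_k(\cX_{\pi,X\circ S})\hookrightarrow M_k(\Phi^\prime_{X\circ S})$ with the canonical structure map $\lambda_S\colon M_k(\Phi^\prime_{X\circ S})\to Z_k(\Phi_X)$ of the colimit over $\Subd(\Phi)^{\op}$. I claim these composites $\lambda_S\circ\iota_S$ form a cocone on the diagram $M_k(\cX_{\pi,\bullet})$ over $\pi\textnormal{-}\Subd(\Phi)^{\op}$. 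For a refinement $Q\colon\Phi^{\prime\prime}\to\Phi^\prime$ with $S\circ Q=S^\prime$ again $\pi$-compatible, Lemma \ref{lem: compatibility of subdivisions} gives $\iota_{S^\prime}\circ Q^*=Q^*\circ\iota_S$ (the inclusions are natural with respect to the pullbacks), while the cocone property of $Z_k(\Phi_X)$ gives $\lambda_{S^\prime}\circ Q^*=\lambda_S$; combining, $\lambda_{S^\prime}\circ\iota_{S^\prime}\circ Q^*=\lambda_S\circ\iota_S$, which is exactly cocone compatibility. The universal property of $Z_k(\cX_{\pi,X})=\varinjlim M_k(\cX_{\pi,\bullet})$ then produces a unique linear map $\pi_k^*\colon Z_k(\cX_{\pi,X})\to Z_k(\Phi_X)$, with linearity automatic since the whole diagram lives in $\textnormal{Ab}$.

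Conceptually it is worth recording that the two groups are colimits over the same indexing category once one invokes Lemma \ref{lem: refining to pi-compatible subdivisions}: since every subdivision of $\Phi$ admits a further $\pi$-compatible refinement, the inclusion $\pi\textnormal{-}\Subd(\Phi)^{\op}\hookrightarrow\Subd(\Phi)^{\op}$ is final (each comma category is nonempty by the lemma, and connected because any two $\pi$-compatible refinements are dominated by a common refinement, which may itself be taken $\pi$-compatible, yielding a connecting zigzag). Thus $Z_k(\Phi_X)$ may be computed as $\varinjlim_{\pi\textnormal{-}\Subd(\Phi)^{\op}}M_k(\bullet)$, and $\pi_k^*$ is precisely the map induced on colimits by the levelwise injective natural transformation $\iota\colon M_k(\cX_{\pi,\bullet})\Rightarrow M_k(\bullet)$. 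This identification is the cleanest way to see that $\pi_k^*$ is well defined, even though the construction above does not formally require it.

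For injectivity I would argue through the structure maps. The indexing system $\pi\textnormal{-}\Subd(\Phi)^{\op}$ is directed by common refinements, and each transition map $Q^*$ is injective by the injectivity of the subdivision pullback on Minkowski weights established earlier in the section; since filtered colimits in $\textnormal{Ab}$ are exact, each structure map $\lambda_S\colon M_k(\Phi^\prime_{X\circ S})\to Z_k(\Phi_X)$ is injective. Now take $\xi\in Z_k(\cX_{\pi,X})$ with $\pi_k^*(\xi)=0$. Because the colimit is filtered (and $\pi$-compatible subdivisions are cofinal), $\xi$ is represented by some $\omega\in M_k(\cX_{\pi,X\circ S})$ at a $\pi$-compatible $S$, and by construction $\pi_k^*(\xi)=\lambda_S(\iota_S(\omega))$. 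Injectivity of $\lambda_S$ forces $\iota_S(\omega)=0$ in $M_k(\Phi^\prime_{X\circ S})$, and injectivity of the inclusion $\iota_S$ then gives $\omega=0$, whence $\xi=0$.

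I expect the injectivity step to be the main obstacle, since it is the only place where one must genuinely use the structure of the colimit rather than formal naturality. The delicate point is ensuring that the system computing $Z_k(\Phi_X)$ is filtered with injective transition maps, so that its structure maps are injective; this rests on directedness of subdivisions via common refinements together with the already-available injectivity of subdivision pullbacks on Minkowski weights. Once these two inputs are in place, injectivity of $\pi_k^*$ reduces to a routine diagram chase through $\lambda_S$ and $\iota_S$.
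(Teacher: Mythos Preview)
Your proposal is correct and follows essentially the same route as the paper: both invoke Lemma \ref{lem: refining to pi-compatible subdivisions} to compute $Z_k(\Phi_X)$ over the $\pi$-compatible subdivisions, use Lemma \ref{lem: compatibility of subdivisions} for compatibility of the levelwise inclusions with refinement, and deduce injectivity from the fact that a filtered colimit of injective maps in $\textnormal{Ab}$ is injective. Your write-up is considerably more explicit about the categorical mechanics (cocone compatibility, finality of the inclusion of indexing categories, injectivity of the structure maps $\lambda_S$) than the paper, which compresses all of this into two sentences; in particular, your observation that injectivity of each $\lambda_S$ is what makes the final diagram chase work is a point the paper leaves entirely implicit.
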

\begin{proof}
   We remark that any subdivision of $\Phi$ can be refined into a $\pi$-compatible subdivision, so that $Z_k(\Phi_X)$ can be computed as the direct limit over the $\pi$-compatible subdivisions. Hence, Lemma \ref{lem: compatibility of subdivisions} shows that there is a map
    \begin{equation}
        \pi^*\colon Z_k(\cX_{\pi,X})\to Z_k(\Phi_X), \label{eq: tropical pi cycles are subgroup of tropical cycles}
    \end{equation}
    and since the maps
    \begin{equation*}
        M_k(\cX_{\pi,X\circ S})\hookrightarrow M_k(\Phi^\prime_{X\circ S}) 
    \end{equation*}
    are always injective, it follows that \eqref{eq: tropical pi cycles are subgroup of tropical cycles} is also injective.
\end{proof}

The notion of irreducibility from linear poic-complexes can be naturally extended to linear poic-fibrations.
\begin{defi}
    A linear poic-fibration $\pi\colon \Phi_X\to \cX$, with $\Phi$ pure of dimension $n$, is said to be \emph{irreducible} if $Z_n(\cX_{\pi,X})$ is free of rank $1$.
\end{defi}
\begin{remark}
    If $\Phi_X$ is pure of dimension $n$ and irreducible, then a linear poic-fibration $\pi\colon \Phi_X\to \cX$ is either irreducible or has trivial $n$-cycles.
\end{remark}

\subsection{Pushforwards for poic-fibrations.} Similar to subsection \ref{ssec: pushforwards}, we want to pushforward weights of linear poic-fibrations through morphisms. As is expected, in order to do so we need to impose additional conditions on the morphism. Recall that a morphism of poic-fibrations was defined simply as a morphism of pairs yielding a commutative square. However, up to this point, we have not dealt with any morphism of poic-fibrations. Hence, in the interest of readability and clarity, we first dedicate some space to fix notations in this situation.

Consider first two poic-fibrations $\pi\colon\Phi\to\cX$ and $\rho\colon\Psi\to\cY$. For simplicity of notation, we say thus that a morphism of poic-fibrations $\mathfrak{f}\colon \pi\to\rho$ consists of: 
\begin{itemize}
    \item A morphism of poic-complexes $\mathfrak{f}^\pcomplexes\colon \Phi\to\Psi$, where the superscript c stands for complexes (as in poic-complexes).
    \item A morphism of poic-spaces $\mathfrak{f}^\pspaces\colon\cX\to \cY$, where the superscript s stands for spaces (as in poic-spaces).
    \item These are subject to the condition $\rho\circ \mathfrak{f}^\pcomplexes = \mathfrak{f}^\pspaces\circ \pi$.
\end{itemize}
If, in addition, $\pi\colon \Phi_X\to\cX$ and $\rho\colon\Psi_Y\to\cY$ are linear poic-fibrations, then we require $\mathfrak{f}^\pcomplexes$ to also be a morphism linear poic-complexes $\mathfrak{f}\colon \Phi_X\to\Psi_Y$. 

\begin{defi}\label{defi: proper morphism poic-fibrations}
    Suppose $\pi\colon\Phi\to\cX$ and $\rho\colon\Psi\to\cY$ are poic-fibrations. A morphism of poic-fibrations $\mathfrak{f}\colon \pi\to\rho$ is called \emph{weakly proper} (resp. \emph{proper}) if:
    \begin{enumerate}
        \item $\mathfrak{f}^\pcomplexes$ is a weakly proper (resp. proper) morphism of poic-complexes.
        \item $\mathfrak{f}^\pspaces$ satisfies the following lifting property: If $f\colon s \to t$ is an isomorphism of $\cY$, then for every cone $s^\prime$ of $\cX$ with $\mathfrak{f}^\pspaces(s^\prime)=s$ there exists a unique isomorphism $f^\prime\colon s^\prime\to t^\prime$ of $\cX$ with $\mathfrak{f}^\pspaces(f^\prime)=f$ (in particular $\mathfrak{f}^\pspaces(t^\prime)=t$).
    \end{enumerate}
\end{defi}

The following proposition is quite technical, but is the basis for the definition of the pushforward of tropical cycles of a linear poic-fibration through a proper morphism. We briefly explain the underlying situation. Suppose $\pi\colon\Phi_X\to\cX$ and $\rho\colon\Psi_Y\to \cY$ are linear poic-fibrations, $\mathfrak{f}\colon\pi\to\rho$ is a morphism of linear poic-fibrations, and $k\geq0$ is an integer. Our general goal is to pushforward $\pi$-equivariant tropical cycles to $\rho$-equivariant tropical cycles. Of course, this depends on the properness of $\mathfrak{f}^{\pcomplexes}$, so we focus on a very special case. Consider $\omega\in M_k(\cX_{\pi,X\circ S})$, where $S\colon\Phi^\prime\to\Phi$ is a $\pi$-compatible subdivision such that $S\circ \mathfrak{f}^\pcomplexes$ is weakly proper. This subdivision gives the weakly proper morphism of linear poic-complexes $\mathfrak{f}\circ S\colon \Phi^\prime_{X\circ S}\to\Psi_Y$, and for the pushforward of $\omega$ through this morphism we need to consider a $(S\circ \mathfrak{f}^\pcomplexes)$-fine subdivision of $\Psi$, say $T\colon \Psi^\prime\to \Psi$. In this case, from \eqref{eq: pforward minkowski weights P-fine subdiv} we will have $((S\circ \mathfrak{f}^\pcomplexes)_T)_*\omega \in M_k(\Psi^\prime_{Y\circ T})$. However, since the subdivision $T\colon\Psi^\prime\to\Psi$ is not necessarily compatible with the poic-fibration, we cannot say (and it is not even defined) that $((S\circ \mathfrak{f}^\pcomplexes)_T)_*\omega$ is $\rho$-equivariant. For this reason, we have to further consider a subdivision $T^\prime\colon\Psi^{\prime\prime}\to\Psi^\prime$ such that $\Psi^{\prime\prime}\to\Psi$ is $\rho$-compatible, and show that $(T^\prime)^*\left(((S\circ \mathfrak{f}^\pcomplexes)_T)_*\omega\right)$ is $\rho$-equivariant.

\begin{prop}\label{prop: pushforward minkowkski weights of fibration}
    Suppose that $\pi\colon\Phi_X\to\cX$ and $\rho\colon\Psi_Y\to\cY$ are linear poic-fibrations, $\mathfrak{f}\colon\pi\to\rho$ is a morphism of poic-fibrations, $S\colon \Phi^\prime\to\Phi$ is a $\pi$-stable subdivision, and $T\colon\Psi^\prime\to\Psi$ is a $(S\circ \mathfrak{f}^{\pcomplexes})$-fine subdivision. Suppose that $\mathfrak{f}^\pcomplexes\circ S$ is weakly proper and $T^\prime\colon \Psi^{\prime\prime}\to\Psi^\prime$ is such that $T^\prime\circ T$ is $\rho$-stable. For any integer $k\geq0$, if $\omega\in M_k(\cX_{\pi, X\circ S})$, then $(T^\prime)^*((S\circ \mathfrak{f}^{\pcomplexes})_T)_*\omega$ is $\rho$-equivariant.
\end{prop}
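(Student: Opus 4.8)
Write $P=\mathfrak{f}^{\pcomplexes}\circ S\colon\Phi^\prime\to\Psi$ for the weakly proper composite, so that Lemma~\ref{lem: pushforward} together with \eqref{eq: pforward minkowski weights P-fine subdiv} produces $\widetilde\omega:=((S\circ\mathfrak{f}^{\pcomplexes})_T)_*\omega\in M_k(\Psi^\prime_{Y\circ T})$, and put $\eta:=(T^\prime)^*\widetilde\omega$. By the definition of a $\rho$-equivariant Minkowski weight it suffices to fix cones $p,q$ of $\Psi$ and an isomorphism $g\colon\rho(p)\to\rho(q)$ of $\cY$, and to show that $\eta$ is $g$-stable, that is $\eta=\eta\circ w_g$ for the permutation $w_g$ of $[\Psi^{\prime\prime}]$ attached to $g$ by Construction~\ref{const: bijection} via the $\rho$-compatible subdivision $T^\prime\circ T$. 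Unwinding \eqref{eq: subdivision map}, for a $k$-dimensional cone $r$ of $\Psi^{\prime\prime}$ lying over $p$ one has $\eta([r])=\widetilde\omega([T^\prime(r)])$, and since $w_g$ acts as the identity away from the cones lying over $p$ and $q$, the whole statement reduces to the equality of the two pushforward sums of Construction~\ref{const: pushforward general weights} computed at $T^\prime(r)$ and at $T^\prime(w_g(r))$.

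The plan is to transport $g$ from $\cY$ up to $\cX$ and then invoke the $\pi$-equivariance of $\omega$. First I would observe that any cone $p'$ of $\Phi^\prime$ contributing to $\widetilde\omega([T^\prime(r)])$, i.e. with $(P_T)_*([p'])=[T^\prime(r)]$, satisfies $P(p')\cong T(T^\prime(r))\cong p$ in $\Psi$ by the $P$-fine condition, whence $\mathfrak{f}^{\pspaces}(\pi(S(p')))=\rho(P(p'))\cong\rho(p)$ by the relation $\rho\circ\mathfrak{f}^{\pcomplexes}=\mathfrak{f}^{\pspaces}\circ\pi$. The lifting property of $\mathfrak{f}^{\pspaces}$, Definition~\ref{defi: proper morphism poic-fibrations}(2), then supplies a unique isomorphism $g'\colon\pi(S(p'))\to\pi(\overline{p'})$ of $\cX$ with $\mathfrak{f}^{\pspaces}(g')=g$, for a suitable cone $\overline{p'}$ of $\Phi$. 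Because $S$ is $\pi$-compatible, the $\pi$-equivariance of $\omega$ applies to the triple $(S(p'),\overline{p'},g')$ and shows that $\omega$ is $g'$-stable; in particular the cell represented by $p'$ in $S^{-1}(S(p'))$ is carried by the bijection $w_{g'}$ of Construction~\ref{const: bijection} to a cell represented by some $\widehat{p'}$ in $S^{-1}(\overline{p'})$ with $\omega([p'])=\omega([\widehat{p'}])$.

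It then remains to verify that $p'\mapsto\widehat{p'}$ is a bijection between the two summation ranges that preserves every summand. Since $g$ and $g'$ are isomorphisms of poics they induce isomorphisms of the relevant lattices, and chasing the commuting relation $\rho\circ\mathfrak{f}^{\pcomplexes}=\mathfrak{f}^{\pspaces}\circ\pi$ through the exact-sequence computation used in the proof of Lemma~\ref{lem: pushforward} shows both that $(P_T)_*([\widehat{p'}])=[T^\prime(w_g(r))]$ and that the lattice index $[\,T_{T^\prime(r)}N^{T^\prime(r)}\colon P_{p'}N^{p'}\,]$ equals $[\,T_{T^\prime(w_g(r))}N^{T^\prime(w_g(r))}\colon P_{\widehat{p'}}N^{\widehat{p'}}\,]$. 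Combined with the weight equality $\omega([p'])=\omega([\widehat{p'}])$ of the previous paragraph, the two sums agree term by term, giving $\eta([r])=\eta([w_g(r)])$; as $r$, and the triple $(p,q,g)$, were arbitrary, this is exactly $\rho$-equivariance of $\eta$.

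The step I expect to be the main obstacle is the identification in the last paragraph of the two \emph{a priori} distinct bijections on fine cells: the one produced by the lifted isomorphism $g'$ together with the fibration and pushforward correspondences, and the one defining $w_g$ purely through the $\rho$-compatibility of $T^\prime\circ T$. Matching them amounts to collating the relative-interior clauses in the definitions of $P$-fineness, of $\pi$- and $\rho$-compatibility, and of $(P_T)_*$, and checking that the unique liftings furnished by the Street-fibration property of $\pi$ and $\rho$ are compatible with the cell-matching prescribed by $\cX(g')$ and $\cY(g)$. Once this single compatibility is established, the weight equality coming from $\pi$-equivariance and the index equality coming from lattice naturality assemble immediately into the desired $g$-stability.
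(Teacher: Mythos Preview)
Your proposal is correct and follows essentially the same strategy as the paper: lift the isomorphism $g$ of $\cY$ through the property of $\mathfrak{f}^{\pspaces}$ in Definition~\ref{defi: proper morphism poic-fibrations}(2) to isomorphisms in $\cX$, invoke the $\pi$-equivariance of $\omega$ for the weight equality, and observe that integral linear isomorphisms preserve the lattice indices appearing in Construction~\ref{const: pushforward general weights}. The paper eases the step you flag as the main obstacle by first reducing to the case $T^\prime=\Id$ (noting that $(T^\prime)^*$ introduces no genuinely new $g$-stability equations, only constant weights on opposite normal vectors), and then organizing the bijection between the two summation ranges by partitioning $(S\circ\mathfrak{f}^{\pcomplexes})^{-1}([p^\prime])$ and $(S\circ\mathfrak{f}^{\pcomplexes})^{-1}([q^\prime])$ according to the fibers of $\mathfrak{f}^{\pspaces}$ over $[\rho(p)]$; after this reduction both bijections live at the level of $\Psi^\prime$, so the compatibility you anticipate having to chase through relative interiors becomes immediate.
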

\begin{proof}
    it is clear that $((S\circ \mathfrak{f}^{\pcomplexes})_T)_*\omega\in M_k(\Psi^\prime_{Y\circ T})$, and therefore $(T^\prime)^*((S\circ \mathfrak{f}^{\pcomplexes})_T)_*\omega\in M_k(\Psi^\prime_{Y\circ T\circ T^\prime})$. By definition of the map $(T^\prime)^*$ we can assume for the sake of simplicity, and without loss of generality, that the subdivision $T\colon \Psi^\prime\to \Psi$ is itself $\rho$-compatible (the new equations to check either correspond to previous ones or are just constant weights for opposite direction vectors). Therefore, we must only check that $((S\circ \mathfrak{f}^{\pcomplexes})_T)_*\omega$ is $\rho$-equivariant.\\ 
    Suppose $p$ and $q$ are objects of $\Psi$ and $g\colon \rho(p)\to \rho(q)$ is an isomorphism of $\cY$, we must check then that $((S\circ \mathfrak{f}^{\pcomplexes})_T)_*\omega\circ b_g = ((S\circ \mathfrak{f}^{\pcomplexes})_T)_*\omega$. As before, we regard $[T^{-1}(p)]$ and $[T^{-1}(q)]$ as subsets of $[\Psi^\prime]$, and let $p^\prime$ and $q^\prime$ be $k$-dimensional cones of $\Psi^\prime$ such that $[p^\prime]\in [T^{-1}(p)]$ and $ [q^\prime]\in [T^{-1}(q)]$ with $b_g([p^\prime]) = [q^\prime]$. Consider the following subsets of $[\Phi](k)$:
    \begin{align*}
        (S\circ \mathfrak{f}^{\pcomplexes})^{-1}([p^\prime]):=\{[s] : ((S\circ \mathfrak{f}^{\pcomplexes})_T)_*[s] = [p^\prime]\},\\
        (S\circ \mathfrak{f}^{\pcomplexes})^{-1}([q^\prime]):=\{[s] : ((S\circ \mathfrak{f}^{\pcomplexes})_T)_*[s] = [q^\prime]\}.
    \end{align*}
    Both of these sets are partitioned by the fibers of $(\mathfrak{f}^{\pspaces})^{-1}([\rho(T(p^\prime))])=(\mathfrak{f}^{\pspaces})^{-1}([\rho(T(p^\prime))])$. More precisely, let $y$ be an object of $\cY$ with $[y]=[\rho(p)]=[\rho(q)]$, then following the notation,
    \begin{equation*}
        [y]=[\rho(T(p^\prime))] = [\rho(T(q^\prime))],
    \end{equation*}
    and we obtain the partitions:
    \begin{align}
        (S\circ \mathfrak{f}^{\pcomplexes})^{-1}([p^\prime])=\bigsqcup_{[x]\in(\mathfrak{f}^{\pspaces})^{-1}([y])} \{[s] : ((S\circ \mathfrak{f}^{\pcomplexes})_T)_*[s] = [p^\prime] \text{ and }\pi([s])=[x]\},\label{eq: part 1}\\
        (S\circ \mathfrak{f}^{\pcomplexes})^{-1}([q^\prime])=\bigsqcup_{[x]\in(\mathfrak{f}^{\pspaces})^{-1}([y])} \{[s] : ((S\circ \mathfrak{f}^{\pcomplexes})_T)_*[s] = [q^\prime] \text{ and }\pi([s])=[x]\}.\label{eq: part 2}
    \end{align}
    The condition on lifting isomorphisms of $\mathfrak{f}^\pspaces$ implies that $b_f$ gives rise to a bijection
    \begin{equation*}
        \widehat{b_f}\colon (S\circ \mathfrak{f}^{\pcomplexes})^{-1}([p^\prime])\to(S\circ \mathfrak{f}^{\pcomplexes})^{-1}([q^\prime]),
    \end{equation*}
    such that:
    \begin{itemize}
        \item $\widehat{b_f}$ preserves the partitions induced by $(\mathfrak{f}^{\pspaces})^{-1}([y])$.
        \item If $[s]\in (S\circ \mathfrak{f}^{\pcomplexes})^{-1}([p^\prime])$, then:
        \begin{itemize}
            \item We have equality between the weights $\omega([s]) = \omega(\widehat{b_f}([s]))$, because $\omega$ is a $\pi$-equivariant Minkowski weight. 
            \item We have equality between the indices $\left[T_{p^\prime}N^{p^\prime} : \mathfrak{f}^{\pcomplexes}_sN^s\right] = \left[T_{q^\prime}N^{q^\prime} : \mathfrak{f}^{\pcomplexes}_{\widehat{b}_f(s)}N^{\widehat{b}_f(s)}\right]$, because these are related by integral linear isomorphisms.
        \end{itemize}
    \end{itemize}
    It follows then that $((S\circ \mathfrak{f}^{\pcomplexes})_T)_*\omega\in M_k(\cY_{\rho, Y\circ T})$, and hence the proposition.
\end{proof}

In general, suppose that $\pi\colon \Phi_X\to \cX$ and $\rho\colon \Psi_Y\to\cY$ are poic-fibrations, and $\mathfrak{f}\colon \pi\to\rho$ is a proper morphism of linear poic-fibrations. Observe that if $S\colon \Phi^\prime\to\Phi$ is a $\pi$-compatible subdivision, then Lemma \ref{lem: P-fine subdivisions are final} shows that there is a $(S\circ \mathfrak{f}^\pcomplexes)$-fine subdivision and Lemma \ref{lem: refining to pi-compatible subdivisions} shows that there is $T^\prime\colon \Psi^{\prime\prime}\to \Psi^\prime$ such that $T\circ T^\prime$ is $\rho$-compatible. Therefore, starting from a $\pi$-compatible subdivision, it is always possible to place ourselves in the situation of Proposition \ref{prop: pushforward minkowkski weights of fibration}. Now, since $\mathfrak{f}^\pcomplexes\colon \Phi_X\to \Psi_Y$ is proper, it follows that the map $\mathfrak{f}$ induces a linear map
\begin{equation*}
    ((S\circ \mathfrak{f}^\pcomplexes)_T)_*\colon M_k(\cX_{\pi,X\circ S})\to M_k(\cX_{\pi,Y\circ \mathfrak{f}^\pcomplexes\circ S}), 
\end{equation*}
which together with $(T^\prime)^*$ and the natural inclusion $M_k(\cY_{\rho, Y\circ T\circ T^\prime})\subset Z_k(\cY_{\rho,Y})$ gives rise to a linear map
\begin{equation*}
    (\mathfrak{f}_S)_*\colon M_k(\cX_{\pi,X\circ S})\to Z_k(\cY_{\rho,Y}).
\end{equation*}
Just as in the previous situations, taking subsequent subdivisions will give rise to commutative maps, so that these maps give rise to a linear map
\begin{equation}
    \mathfrak{f}_*\colon Z_k(\cX_{\pi,X})\to Z_k(\cY_{\rho,Y}).\label{eq: pushforward of fibrations}
\end{equation}
We call \eqref{eq: pushforward of fibrations} the \emph{pushforward map of $\mathfrak{f}$}. This whole situation also gives rise to the following commutative diagram
\begin{equation*}
    \begin{tikzcd}
         Z_k(\cX_{\pi,X})\arrow[hookrightarrow]{d}\arrow{rr}{\mathfrak{f}_*}&&Z_k(\cY_{\rho,Y})\arrow[hookrightarrow]{d}\\
        Z_k(\Phi_X) \arrow{rr}[swap]{\mathfrak{f}^\pcomplexes_*}&& Z_k(\Psi_Y).
    \end{tikzcd}
\end{equation*}
Separately, if $\mathfrak{f}$ is just weakly proper in dimension $k$, then from Proposition \ref{prop: pushforward minkowkski weights of fibration} we obtain a linear map
\begin{equation}
    \mathfrak{f}_*\colon M_k(\cX_{\pi,X})\to Z_k(\cY_{\rho,Y}), \label{eq: weak pushforward tropcyc}
\end{equation}
which we call the \emph{weak pushforward of $\mathfrak{f}$}. In this case, we obtain the analogous commutative diagram
\begin{equation*}
    \begin{tikzcd}
         M_k(\cX_{\pi,X})\arrow[hookrightarrow]{d}\arrow{rr}{\mathfrak{f}_*}&&Z_k(\cY_{\rho,Y})\arrow[hookrightarrow]{d}\\
        M_k(\Phi_X) \arrow{rr}[swap]{\mathfrak{f}^\pcomplexes_*}&& Z_k(\Psi_Y).
    \end{tikzcd}
\end{equation*}
\subsection{The Spanning Tree cover.} \label{ssec: spanning tree cover.} Suppose $A$ is a finite set and $g\geq 0$ with $2g+\#A-2>0$. We will describe a poic-fibration over the poic-space $\Mtrop_{g,A}$. For this we let $\mathbbm{g}$ denote the set $\{1,\dots,g,1^*,\dots,g^*\}$, where $i^*$ is just a symbol for $1\leq i\leq g$. We seek to produce a genus-$g$ $A$-marked discrete graph out of a $(A\sqcup \mathbbm{g})$-marked tree by joining the $i$- and $i^*$-legs into a new edge.

\begin{const}
    Consider an object $T$ of $\bbG_{0,A\sqcup \mathbbm{g}}$. The graph $T$ has its own involution map $\iota_T$, which among its fixed points contains the legs $L(T)$ of $T$ (here for the sake of simplicity, we regard $L(T)\subset F(T)$). Consider the following map
    \begin{equation*} \iota_{\st_{g,A}(T)}\colon  F(T)\to F(T), h\mapsto\begin{cases}
        \iota_T(h),& h\not\in L(T),\\
        \ell_i,& h=\ell_i \,(i\in A),\\
        \ell_{i^*},& h=\ell_{i} \,(1\leq i\leq g),\\
        \ell_{i},& h=\ell_{i^*} \,(1\leq i\leq g).
    \end{cases}\end{equation*}
    This is an involution of $F(T)$, which together with the root map $r_T$ and the marking on the $A$-legs gives rise to a connected $A$-marked graph $\st_{g,A}(T)$ of genus-$g$ (here the $\st$ stands for \emph{spanning tree}). 
\end{const}

\begin{prop}\label{prop: st is ess surj}
    The above construction gives rise to an essentially surjective functor
    \begin{equation*}
    {\st_{g,A}}\colon  \bbG_{0,A\sqcup \mathbbm{g}}\to \bbG_{g,A}.
    \end{equation*}
\end{prop}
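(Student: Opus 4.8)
The plan is to verify three things in turn: that $\st_{g,A}(T)$ is genuinely an object of $\bbG_{g,A}$, that $\st_{g,A}$ is defined on morphisms and is functorial, and finally that it meets every isomorphism class of $\bbG_{g,A}$. At the object level, the key observation is that $\st_{g,A}(T)$ has the same vertex set and the same root map $r_T$ as $T$, so the valency $\val(V)=\#r_T^{-1}(V)-1$ is unchanged; hence every vertex remains at least $3$-valent and $\st_{g,A}(T)$ really lies in $\bbG_{g,A}$. Connectedness and genus $g$ are already recorded in the construction, the genus following from $\#E(\st_{g,A}(T))=\#E(T)+g$ together with $g(T)=0$.

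For functoriality I would send a contraction $f\colon T_1\to T_2$ to the very same underlying set map, now viewed as a map $\st_{g,A}(T_1)\to\st_{g,A}(T_2)$. Since a morphism of $\bbG_{0,A\sqcup\mathbbm{g}}$ preserves the full $(A\sqcup\mathbbm{g})$-marking, $f$ carries $\ell_i\mapsto\ell_i$ and $\ell_{i^*}\mapsto\ell_{i^*}$, so it commutes with the rewired involutions on the $\mathbbm{g}$-legs; on every other element $\iota_{\st_{g,A}(T_i)}$ agrees with $\iota_{T_i}$, and there $f$ already commutes with the involution, using that a non-leg of $T_1$ is either a vertex (mapped to a vertex) or a genuine half-edge (whose image under a contraction is again a half-edge or a vertex, never a leg), so no clash with the rewiring arises. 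To see $f$ is again a contraction, note that for $V\in V(T_2)$ the fibre $f^{-1}(V)$ is a legless tree in $T_1$; being legless it contains none of the $\mathbbm{g}$-legs, so it involves only elements on which $\iota_{\st_{g,A}(T_1)}=\iota_{T_1}$, and its subgraph structure is unchanged in $\st_{g,A}(T_1)$. Thus $f^{-1}(V)$ is still a legless tree, $f$ is a contraction in $\bbG_{g,A}$, and preservation of identities and composition is automatic because the underlying set maps are literally the same.

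The essentially surjective part is the main content. Given $G$ in $\bbG_{g,A}$, I would choose a spanning tree $T_0\subseteq G$; by $g=\#E(G)-\#V(G)+1$ there are exactly $g$ edges $e_1,\dots,e_g$ of $G$ outside $T_0$. Cutting each $e_i=\{E_i,E_i'\}$ into two legs and marking $E_i$ by $i$ and $E_i'$ by $i^*$ produces a graph $T$: it is connected with $\#V(G)-1$ edges, hence a tree; its valencies (again depending only on the unchanged root map) are unaffected, so all vertices remain at least $3$-valent; and it carries an $(A\sqcup\mathbbm{g})$-marking, with $\#(A\sqcup\mathbbm{g})=\#A+2g$ satisfying the standing inequality $2\cdot 0+\#(A\sqcup\mathbbm{g})-2>0$. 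Therefore $T$ is an object of $\bbG_{0,A\sqcup\mathbbm{g}}$. Rewiring the legs back via $\st_{g,A}$ exactly undoes the cuts: the new involution swaps $E_i\leftrightarrow E_i'$ and agrees with $\iota_G$ on every other element, while the root map and the $A$-marking are those of $G$, so in fact $\st_{g,A}(T)=G$ on the nose.

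The one point deserving care throughout is the interaction between edge contractions and the glued legs. The observation that makes both the functoriality and the surjectivity go through cleanly is that a contraction only collapses legless subtrees and hence never touches the $\mathbbm{g}$-legs that are being joined; once this is isolated, the remaining verifications are the routine bookkeeping sketched above. I therefore expect the genuine obstacle to be organizational rather than deep: namely, keeping the distinction between the leg structure of $T$ and the edge structure of $\st_{g,A}(T)$ straight while checking compatibility with morphisms.
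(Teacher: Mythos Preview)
Your proposal is correct and follows essentially the same approach as the paper. The paper's proof is considerably terser---it dispatches functoriality in a single sentence (``Contraction of an edge of a tree $T$ becomes contraction of an edge of the graph $\st_{g,A}(T)$'') and handles essential surjectivity via the same spanning-tree-plus-cut-edges construction you describe---so your version simply fills in the details the paper leaves implicit, including the verification that $\st_{g,A}(T)$ lands in $\bbG_{g,A}$ and the careful check that the fibres of a contraction avoid the $\mathbbm{g}$-legs.
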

\begin{proof}
    Contraction of an edge of a tree $T$ becomes contraction of an edge of the graph $\st_{g,A}(T)$, therefore the functoriality readily follows. To show essential surjectivity, let $G$ be any connected genus-$g$ $A$-marked graph, and let $T$ denote a spanning tree of $G$. For $1\leq i \leq g$ let $e_1,\dots,e_g$ denote edges of $G$ that do not lie in $T$. Let $T_G$ denote the tree obtained from $T$ by grafting at the incident vertices of $e_i$ two marked legs $\ell_{i}$ and $\ell_{i^*}$ in any order. The tree $T_G$ is connected, has $(A\sqcup \mathbbm{g})$-marked legs, and by construction satisfies $\st_{g,A}(T_G)\cong G$.
\end{proof}
\begin{example}
    In the case of $g=2$ and $n=0$ the functor ${\st_{2,0}}$ can be represented vertically between isomorphism classes, excluding the morphisms, as depicted in Figure \ref{fig:fig 3}.
    \begin{figure}[htbp]
        \begin{tikzpicture}
            \draw[fill =black](0,6) circle (3pt);
            \draw[dashed, line width = 1pt](0,6)--(1,7);
            \node[] at (1.1,7.1) {$1$};
            \draw[dashed, line width = 1pt](0,6)--(1,5);
            \node[] at (1.1,4.9) {$3$};
            \draw[dashed, line width = 1pt](0,6)--(-1,7);
            \node[] at (-1.1,7.1) {$2$};
            \draw[dashed, line width = 1pt](0,6)--(-1,5);
            \node[] at (-1.1,4.9) {$4$};

            \draw[fill =black](-5.5,8) circle (3pt);
            \draw[fill =black](-4.5,8) circle (3pt);
            \draw[] (-5.5,8)--(-4.5,8);
            \draw[dashed, line width = 1pt](-4.5,8)--(-3.5,9);
            \node[] at (-3.4,9.1) {$1$};
            \draw[dashed, line width = 1pt](-4.5,8)--(-3.5,7);
            \node[] at (-3.4,6.9) {$2$};
            \draw[dashed, line width = 1pt](-5.5,8)--(-6.5,9);
            \node[] at (-6.6,9.1) {$3$};
            \draw[dashed, line width = 1pt](-5.5,8)--(-6.5,7);
            \node[] at (-6.6,6.9) {$4$};

            \draw[fill =black](-5.5,4) circle (3pt);
            \draw[fill =black](-4.5,4) circle (3pt);
            \draw[] (-5.5,4)--(-4.5,4);
            \draw[dashed, line width = 1pt](-4.5,4)--(-3.5,5);
            \node[] at (-3.4,5.1) {$1$};
            \draw[dashed, line width = 1pt](-4.5,4)--(-3.5,3);
            \node[] at (-3.4,2.9) {$4$};
            \draw[dashed, line width = 1pt](-5.5,4)--(-6.5,5);
            \node[] at (-6.6,5.1) {$2$};
            \draw[dashed, line width = 1pt](-5.5,4)--(-6.5,3);
            \node[] at (-6.6,2.9) {$3$};

            \draw[fill =black](5.5,6) circle (3pt);
            \draw[fill =black](4.5,6) circle (3pt);
            \draw[dashed, line width = 1pt](5.5,6)--(6.5,7);
            \node[] at (6.6,7.1) {$1$};
            \draw[dashed, line width = 1pt](5.5,6)--(6.5,5);
            \node[] at (6.6,4.9) {$3$};
            \draw[dashed, line width = 1pt](4.5,6)--(3.5,7);
            \node[] at (3.4,7.1) {$2$};
            \draw[dashed, line width = 1pt](4.5,6)--(3.5,5);
            \node[] at (3.4,4.9) {$4$};
            \draw[] (4.5,6)--(5.5,6);

            \draw[fill=none](-5,-1.5) circle (1);
            \draw[fill=black](-5,-0.5) circle (3pt);
            \draw[fill=black](-5,-2.5) circle (3pt);
            \draw[](-5,-2.5)--(-5,-0.5);

            \draw[fill=none](-1,-1.5) circle (1);
            \draw[fill=none](1,-1.5) circle (1);
            \draw[fill=black](0,-1.5) circle (3pt);

            \draw[fill=black] (5.25,-1.5) circle (3pt);
            \draw[fill=black] (4.75,-1.5) circle (3pt);
            \draw[fill=none] (4,-1.5) circle (0.75);
            \draw[fill=none] (6,-1.5) circle (0.75);
            \draw[](5.25,-1.5)--(4.75,-1.5);

            \draw (-7,9.5)--(7,9.5) node [midway,below] {Isomorphism classes of $\bbG_{0,4}$.};
            \draw (-7,9.5)--(-7,2);
            \draw (-3,9.5)--(-3,2);
            \draw (-7,6)--(-3,6);
            \draw (3,9.5)--(3,2);
            \draw (7,9.5)--(7,2);
            \draw (-7,2)--(7,2);
            
            \draw (-7,0)--(7,0);
            \draw (-7,0)--(-7,-3.5);
            \draw (-3,0)--(-3,-3.5);
            \draw (3,0)--(3,-3.5);
            \draw (7,0)--(7,-3.5);
            \draw (-7,-3.5)--(7,-3.5) node [midway, above] {Isomorphism classes of $\bbG_{2,0}$.};

            \draw[-to, line width = 2pt] (-5,1.5)--(-5,0.5);
            \draw[-to, line width = 2pt] (0,1.5)--(0,0.5);
            \draw[-to, line width = 2pt] (5,1.5)--(5,0.5);
            
        \end{tikzpicture}
        \caption{The functor $\st_{2,0}$ at work between the corresponding isomorphism classes.}
        \label{fig:fig 3}
    \end{figure}
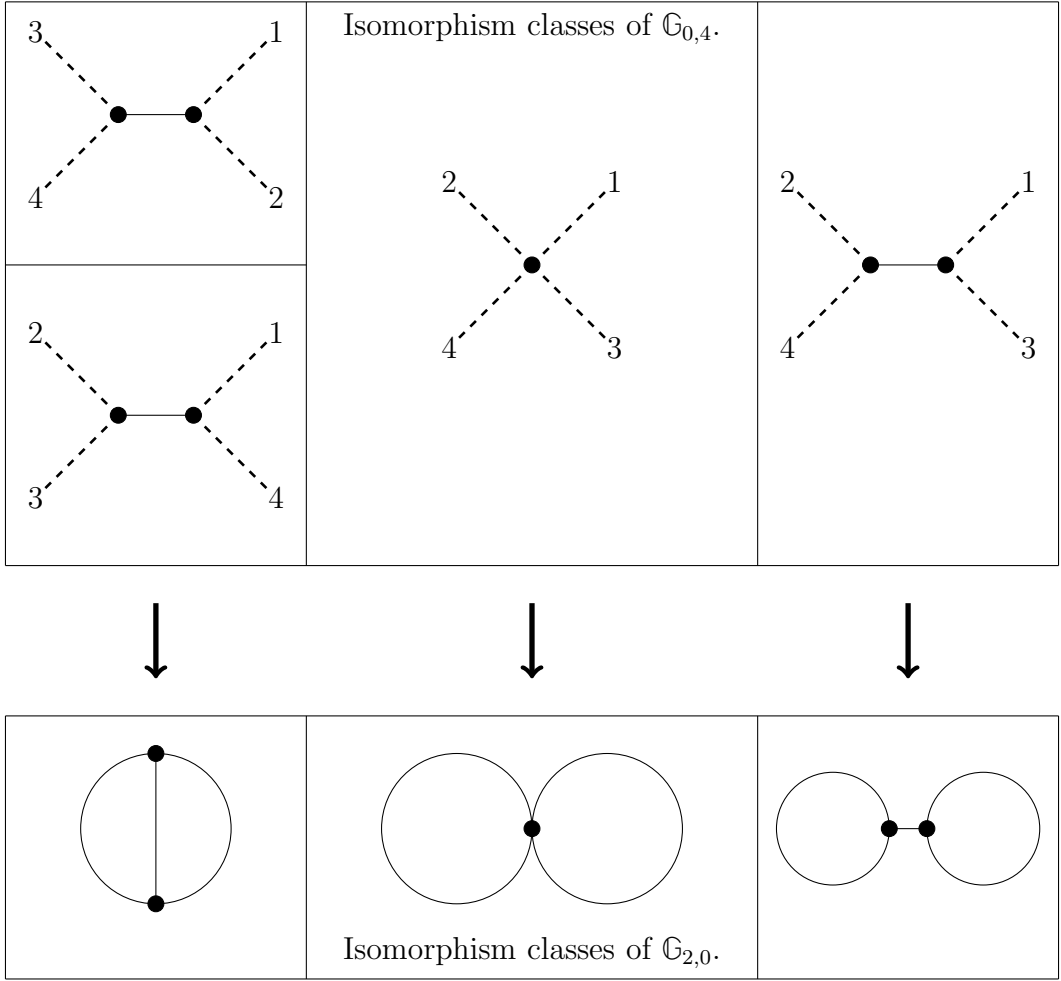
\end{example}

\begin{const}
    Let $T$ be any graph of $\bbG_{0,A\sqcup\mathbbm{g}}$. We denote by $\ST_T$ the poic $\sigma_T\times \bbR_{>0}^g$, and observe that this defines a functor 
    \begin{equation}
        \ST_{g,A}\colon \bbG_{0,A\sqcup\mathbbm{g}}^\op\to \POIC,
    \end{equation}
    which is, in particular, a poic-complex. Since colimits commute with finite products, it is clear that $\colim \left|\ST_{g,A}\right|  \cong \cM_{0,A\sqcup \mathbbm{g}}^\trop\times\bbR_{>0}^g$. 
    In addition, the $\dist_{A\sqcup\mathbbm{g}}$ morphism and the natural projection give a morphism of poic-complexes 
    \begin{equation}
        D_{g,A}\colon \ST_{g,A}\to \underline{\left(N_{\dist_{A\sqcup\mathbbm{g}}}\right)_\bbR}.\label{eq: distance morphism general}
    \end{equation}
\end{const}

\begin{defi}
    The morphism of poic-complexes \eqref{eq: distance morphism general} makes $\ST_{g,A}$ into a linear poic-complex. We call this the \emph{linear poic-complex of spanning trees of genus-$g$ $A$-marked graphs} and will just denote it by $\ST_{g,A}$, since it will always be considered as a linear poic-complex in this way.
\end{defi}

\begin{const}
    Consider an arbitrary discrete graph $T$ of $\bbG_{0,A\sqcup\mathbbm{g}}$. For $1\leq i\leq g$ let $e_i$ denote the edge of $\st_{g,A}(T)$ given by the orbit $\{\ell_{i},\ell_{i^*}\}$. Then, the discrete graph $\st_{g,A}(T)$ has the set of edges
    \begin{equation*}
        E(\st_{g,A}(T)) = E(T) \cup \{e_1,\dots,e_g\},
    \end{equation*} 
    and the cone $\sigma_T$ can therefore be identified with a face of $\sigma_{\st_{g,A}(T)}$. Furthermore, the poic $\bbR_{>0}^g$ corresponds to the relative interior of a face of $\sigma_{\st_{g,A}(T)}$ by identifying the $i$th-coordinate with the $e_i$-coordinate (corresponding to the edge $e_i$). Let
    \begin{equation}
        \eta_{\st_{g,A},T }\colon \ST_T\to \sigma_{\st_{g,A}(T)} \label{eq: poic morphisms from spanning tree cover}
    \end{equation}
    denote the poic-morphism given by the product of the previosuly described morphisms.
\end{const}
\begin{prop}\label{prop: spanning tree fibration}
    The poic-morphisms \eqref{eq: poic morphisms from spanning tree cover} give rise to a natural transformation
    \begin{equation*}
    \eta_{\st_{g,A}}\colon  \ST_{g,A} \to \Mtrop_{g,A}\circ \st_{g,A},
    \end{equation*}
    which together with $\st_{g,A}$ gives a linear poic-fibration $\st_{g,A}\colon \Mtrop_{0,A\sqcup\mathbbm{g}}\to\Mtrop_{g,A}$.
\end{prop}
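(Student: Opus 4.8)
The plan is to check, in order, that the pair $(\st_{g,A}^{\op},\eta_{\st_{g,A}})$ is a morphism of poic-spaces with underlying functor $\st_{g,A}^{\op}\colon\bbG_{0,A\sqcup\mathbbm{g}}^{\op}\to\bbG_{g,A}^{\op}$, and then that it satisfies the three axioms of a poic-fibration; linearity is then automatic since $\ST_{g,A}$ is already a linear poic-complex (and $\Mtrop_{g,A}$ is a poic-space by Lemma \ref{lem: moduli space of n-marked genus g graphs} and Proposition \ref{prop: G0n iso classes}). Throughout I write $e_i=\{\ell_i,\ell_{i^*}\}$ for the edges produced by the construction, so that $E(\st_{g,A}(T))=E(T)\sqcup\{e_1,\dots,e_g\}$ and the tree $T$ sits inside $\st_{g,A}(T)$ as a spanning tree with cotree $\{e_1,\dots,e_g\}$. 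First I would verify naturality of $\eta_{\st_{g,A}}$ and the non-degeneracy condition. Naturality is a bookkeeping statement on edge sets: a contraction $\phi\colon T_2\to T_1$ is sent by $\ST_{g,A}$ to the face-embedding $\iota_\phi\times\Id$ coming from $E(T_1)\hookrightarrow E(T_2)$, while $\st_{g,A}(\phi)$ contracts exactly the same edges of the tree part and fixes the $e_i$, so the two induced inclusions $E(\st_{g,A}(T_1))\hookrightarrow E(\st_{g,A}(T_2))$ agree and the square commutes. For the poic-space condition, note that for a tree every genus-preserving contraction is admissible, so $\sigma_T=\bbR_{\geq0}^{E(T)}$ is the full closed orthant; hence $\eta_{\st_{g,A},T}$ is the coordinatewise inclusion $\bbR_{\geq0}^{E(T)}\times\bbR_{>0}^g\hookrightarrow\bbR_{\geq0}^{E(\st_{g,A}(T))}$, whose image is full-dimensional and therefore lies in no proper face of $\sigma_{\st_{g,A}(T)}$.

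Conditions (1) and (2) then come out quickly. Essential surjectivity of $\st_{g,A}^{\op}$ is exactly Proposition \ref{prop: st is ess surj}. For (2), the identification above gives $(\ST_T)^o=\bbR_{>0}^{E(T)}\times\bbR_{>0}^g=\bbR_{>0}^{E(\st_{g,A}(T))}$, which is precisely the relative interior of the full-dimensional cone $\sigma_{\st_{g,A}(T)}$; since $\eta_{\st_{g,A},T}$ is the identity on these coordinates, it restricts to an isomorphism of relative interiors.

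The heart of the argument is the lifting axiom (3). Unwinding the variance, a morphism $f\colon\pi(T)\to x$ in $\bbG_{g,A}^{\op}$ is the opposite of a contraction $c\colon G\to\st_{g,A}(T)$ with $x=G$, and a lift amounts to a tree $T'$ with a contraction $T'\to T$ and an isomorphism $\st_{g,A}(T')\cong G$ through which $c$ factors. I would construct $T'$ by \emph{cutting} $G$ along the edges $\tilde e_i:=c^{-1}(e_i)$, which are single edges because $c$, being a contraction between genus-$g$ graphs, restricts to a bijection on uncontracted edges (cf.\ Lemma \ref{lem: map of graphs is contraction}): split each $\tilde e_i$ into two legs, assigning $\ell_i$ (resp.\ $\ell_{i^*}$) to the half-edge sent by $c$ to $\ell_i$ (resp.\ $\ell_{i^*}$). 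Since $c$ is a genus-preserving contraction bijective on uncontracted edges, $\{\tilde e_1,\dots,\tilde e_g\}$ is a cotree of $G$; hence cutting them yields a connected genus-$0$ graph, i.e.\ a tree $T'\in\bbG_{0,A\sqcup\mathbbm{g}}$ (valencies are unchanged, so vertices remain at least $3$-valent), and regluing recovers $\st_{g,A}(T')\cong G$. The forest contracted by $c$ lies entirely in the tree part, which furnishes the contraction $T'\to T$ and the factorization $f=g\circ\pi(h)$.

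Finally I would settle uniqueness, which is also where I expect the main obstacle to lie. The crucial observation is that there is \emph{no free choice} in the construction: the two half-edges of $e_i$ in $\st_{g,A}(T)$ are canonically named $\ell_i$ and $\ell_{i^*}$, so $c$ completely dictates the labels of the cut legs of $T'$. Consequently $T'$, the contraction $T'\to T$, and the isomorphism $\st_{g,A}(T')\cong G$ are all determined by $c$, and any second lift differs only by transport along an isomorphism of $T$ or of $G$; this is precisely the uniqueness of $h$ up to isomorphism under $T$ and of $g$ up to isomorphism over $G$ demanded by the Street-fibration condition. The genuinely substantive points to write carefully are the cotree claim and the fact that the leg labels are forced by $c$, since these are exactly what make both existence and uniqueness of lifts work; everything else is routine. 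With all three axioms verified and $\ST_{g,A}$ a linear poic-complex, $(\st_{g,A}^{\op},\eta_{\st_{g,A}})$ is a linear poic-fibration.
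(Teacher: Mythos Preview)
Your proof is correct and, in several respects, more thorough than the paper's own argument: you explicitly verify naturality of $\eta_{\st_{g,A}}$ and the non-degeneracy condition (that the image lies in no proper face), both of which the paper leaves implicit, and you identify the uniqueness verification as the delicate point and sketch the key reason (the half-edges of $e_i$ carry canonical labels $\ell_i,\ell_{i^*}$, so $c$ forces the leg-labels on $T'$).

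The main methodological difference is in the lifting axiom. The paper reduces to the case where $c\colon G\to\st_{g,A}(T)$ contracts a \emph{single} edge $e$, and then ``resolves'' the vertex $V\in V(T)$ lying over $V_e$ to produce $T'$ with one new edge $e'$; the general case is obtained by iterating. You instead handle an arbitrary contraction in one stroke, by observing that the edges $\tilde e_i=c^{-1}(e_i)$ form a cotree of $G$ and cutting them all at once to obtain $T'$ directly. Your approach is more conceptual---it makes transparent that $T'$ is exactly the spanning tree of $G$ complementary to $\{\tilde e_1,\dots,\tilde e_g\}$---and it packages the uniqueness argument more cleanly, since the cotree and its leg-labels are visibly determined by $c$. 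The paper's inductive approach has the minor advantage that the single-edge step is very concrete, but it leaves uniqueness entirely implicit. The cotree claim you flag is indeed the only point requiring a short justification: the complement of $\{\tilde e_i\}$ consists of the contracted forest $K$ together with $c^{-1}(E(T))$, which is connected (since $T$ spans $G/K$) and has $|E(G)|-g=|V(G)|-1$ edges, hence is a spanning tree.
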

\begin{proof}
    The essential surjectivity of the functor has already been stablished in Proposition \ref{prop: st is ess surj}. For a tree $T$ of $\bbG_{0,A\sqcup \mathbbm{g}}$, the map $\eta_{\st_{g,A},T}$ identifies the relative interiors of the poics $\ST_T$ and $\sigma_{\st_{g,A},T}$. Therefore, the third condition is also fulfilled. It remains to show the second condition of a poic-fibration. By definition of the structure of the morphisms of $\bbG_{g,A}$, it is sufficient to show the lifting condition in the following set-up: Let $T$ be an object of $\bbG_{0,A\sqcup\mathbbm{g}}$, $G$ an object of $\bbG_{g,A}$, and consider a morphism $\ST_{g,A}(T)\to G$ given by: the contraction of an edge $e\in E(G)$ and a bijective map of graphs $f\colon \st_{g,A}(T)\to G/e$. Let $V\in V(T)$ denote the vertex of $T$ such that $f(V) = V_e$. Consider the object $T^\prime$ of $\bbG_{0,A\sqcup\mathbbm{g}}$ obtained by resolving $V$ in the same way as $V_e\in V(G/e)$ to $G$ and let $e^\prime$ denote the resolving edge. So that there is a natural bijective map of graphs $g\colon T\to T^\prime/e^\prime$, and the map $f$ can be extended to a bijective map of graphs
    \begin{equation*}
        f^\prime\colon   \st_{g,A}(T^\prime)\to G,
    \end{equation*}
    by additionally specifying $e^\prime\to e$. Then $e^\prime\in E(G)$ and the bijective map of graphs $g\colon T\to T^\prime$ define a morphism $T\to T^\prime/e^\prime$ that satisfies the required condition.
\end{proof}

\begin{defi}
    We call the linear poic-fibration $\st_{g,A}\colon \ST_{g,A}\to\cM^\trop_{g,A}$ of Proposition \ref{prop: spanning tree fibration} the \emph{spanning tree fibration of genus-$g$ $A$-marked graphs}.
\end{defi}
\begin{prop}
    The poic-complex $\ST_{g,A}$ is pure of rank $3g+\#A-3$ and the poic-fibration $\st_{g,A}$ is irreducible. 
\end{prop}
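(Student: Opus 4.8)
Throughout I write $n := 3g+\#A-3$, $\pi := \st_{g,A}$, $\cX := \Mtrop_{g,A}$, and regard $\Phi := \ST_{g,A}$ as a linear poic-complex via $X := D_{g,A}$. The plan is to separate the two assertions: purity is a direct dimension count, while irreducibility of the fibration follows by combining the product structure of $\ST_{g,A}$ with the rank-$1$ freeness of its top cycles.

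For purity, recall that a cone of $\ST_{g,A}$ is a tree $T$ of $\bbG_{0,A\sqcup\mathbbm{g}}$ with associated poic $\ST_T=\sigma_T\times\bbR^g_{>0}$, so $d(T)=\#E(T)+g$. As $\#(A\sqcup\mathbbm{g})=\#A+2g$ and a trivalent tree with $m$ legs has exactly $m-3$ bounded edges, the quantity $\#E(T)$ is maximized, with value $\#A+2g-3$, precisely on the trivalent trees; hence the top dimension is $(\#A+2g-3)+g=n$. To conclude purity I would observe that an arbitrary tree $T$ is a contraction of some trivalent tree $T'$ (resolve each vertex of valency $>3$ arbitrarily into trivalent vertices). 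This contraction $T'\to T$ is a morphism of $\bbG_{0,A\sqcup\mathbbm{g}}$, hence gives a morphism $T\to T'$ in $C_{\ST_{g,A}}=\bbG_{0,A\sqcup\mathbbm{g}}^{\op}$ whose image under $\ST_{g,A}$ is a face-embedding $\ST_T\hookrightarrow\ST_{T'}$ into a top-dimensional cone. Thus every cone is the source of a morphism to an $n$-dimensional one.

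Next I would identify $\ST_{g,A}$ with the product of linear poic-complexes $\Mtrop_{0,A\sqcup\mathbbm{g}}\times\underline{(\bbR^g_{\geq0})^o}$, the linear structure on the first factor being the distance map and on the second the inclusion $\bbR^g_{>0}\hookrightarrow(\bbZ^g)_\bbR$; this is precisely $D_{g,A}=\dist_{A\sqcup\mathbbm{g}}\times\pr$. By Example \ref{ex: trop moduli space of rational curves is irreducible} the first factor is irreducible and by Example \ref{ex: open cone is irred} the single open cone $\bbR^g_{>0}$ is irreducible, so Proposition \ref{prop: product of irreducibles is irreducible} shows $\ST_{g,A}$ is irreducible; in particular $Z_n(\ST_{g,A})\cong M_n(\ST_{g,A})$ is free of rank $1$ by Lemma \ref{lem: top dimensional cycles}. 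Moreover Lemma \ref{lem: cross products} shows that the cross product of the fundamental (all-ones) weights of the two factors is the all-ones weight $\mathbf{1}$ on the set of trivalent trees, and this $\mathbf{1}$ generates $M_n(\ST_{g,A})$.

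For the fibration, recall the lemma providing the injective map $\pi_n^*\colon Z_n(\cX_{\pi,X})\hookrightarrow Z_n(\ST_{g,A})$; since the target is free of rank $1$, it suffices to produce a nonzero element of $Z_n(\cX_{\pi,X})$. The weight $\mathbf{1}$ does the job: being globally constant on $[\ST_{g,A}](n)$, it is a fortiori constant on the fibers of $[\pi]\colon[\ST_{g,A}](n)\to[\cX](n)$, so $\mathbf{1}\in[\pi]^*(W_n(\cX))$ and thus $\mathbf{1}\in M_n(\cX_{\pi,X})$. Under the composite $M_n(\cX_{\pi,X})\to Z_n(\cX_{\pi,X})\xrightarrow{\pi_n^*}Z_n(\ST_{g,A})$, which agrees with the inclusion $M_n(\cX_{\pi,X})\subseteq M_n(\ST_{g,A})=Z_n(\ST_{g,A})$, the weight $\mathbf{1}$ maps to the nonzero generator of $Z_n(\ST_{g,A})$. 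Hence its image in $Z_n(\cX_{\pi,X})$ is nonzero, so $Z_n(\cX_{\pi,X})$ is a nonzero subgroup of a rank-$1$ free group, i.e. free of rank $1$. This is exactly irreducibility of $\st_{g,A}$, and it also matches the remark that a fibration over a pure, irreducible source is either irreducible or has trivial $n$-cycles.

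The dimension bookkeeping, the identification of the product structure, and the applications of the product and top-cycle lemmas are all routine. The only point demanding genuine care is the equivariance of the generator; here it is immediate solely because $\mathbf{1}$ is globally constant. For any non-constant generator one would instead have to verify constancy along the typically nontrivial fibers of $[\pi]$, which conflate the several spanning-tree presentations of a single genus-$g$ graph and the automorphisms such graphs may carry — so the real structural input is the fact that the fundamental cycle of the matroidal fan $\cM^\trop_{0,A\sqcup\mathbbm{g}}$ has all weights equal to $1$.
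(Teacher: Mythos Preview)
Your proof is correct and follows essentially the same approach as the paper's (very terse) argument: a dimension count for purity, irreducibility of $\Mtrop_{0,A\sqcup\mathbbm{g}}$ plus the product structure for irreducibility of $\ST_{g,A}$, and the observation that the constant weight $\mathbf{1}$ is $\st_{g,A}$-equivariant. One small inaccuracy: the paper's $D_{g,A}$ lands in $\underline{(N_{\dist_{A\sqcup\mathbbm{g}}})_\bbR}$ (the ``natural projection'' is $\ST_T\to\sigma_T$, followed by $\dist$), not in the product with $(\bbZ^g)_\bbR$ as you wrote; but since $\bbR^g_{>0}$ has no proper faces, every codimension-$1$ cone of $\ST_{g,A}$ and its normal vectors arise entirely from the $\Mtrop_{0,A\sqcup\mathbbm{g}}$ factor, so the top-dimensional balancing conditions---and hence your argument---are unaffected.
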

\begin{proof}
    Observe that $\#\left(A\sqcup\mathbbm{g}\right) = 2g+\#A$, hence the linear poic-complex $\Mtrop_{0,A\sqcup\mathbbm{g}}$ is pure of dimension $2g+\#A-3$, and $\ST_{g,A}$ is pure of dimension $2g+\#A-3+g = 3g+\#A-3$. Irreducibility of the poic-fibration follows from the analogous fact for $\Mtrop_{0,A\sqcup \mathbbm{g}}$, and the fact that the constant weight $1$ is $\st_{g,A}$-equivariant.
\end{proof}
\subsection{Forgetting the marking.} Let $A$ denote a finite set and let $g\geq0$ be an integer with $2g+\#A-2>1$. Suppose $A$ is non-empty and let $a\in A$ be an element. This subsection deals with the construction of a weakly proper morphism of linear poic-fibrations $\st_{g,A}\to\st_{g,A\backslash a}$, given by forgetting the $a$-marked leg. We carry out this construction meticulously first for the poic-spaces $\Mtrop_{g,A}$ and $\Mtrop_{g,A\backslash a}$, then for the linear poic-complexes $\Mtrop_{0,A}$ and $\Mtrop_{0,A\backslash a}$, and finally for the linear poic-fibrations.
\begin{const}
    Let $G$ be an object of $\bbG_{g,A}$. We will describe the construction of an object $\ft_{a}G$ of $\bbG_{g,A\backslash a}$ given by forgetting the $a$-leg of $G$. Let $V_a\in V(G)$ denote the vertex incident to $\ell_a(G)$.
    \begin{enumerate}
        \item If $\val (V_a) >3$, then set $F\left(\ft_{a}G\right) := F(G)\backslash\ell_a(G)$. The root, involution, and marking maps of $G$ restrict to respective root, involution and marking maps of $F(\ft_{a}G)$, and hence define an $A\backslash a$-marked graph $\ft_{a}G$.
        \item If $\val (V_a) =3$ with $V_a$ incident to two edges, then $r_G^{-1}(V_a) = \ell_a\cup\{f_1,f_2,V_a\}$. In this case, we set $F(\ft_{a}G) = F(G)\backslash r_G^{-1}(V_a)$ and $r_{\ft_{a}G} = r_G$. To define $\iota_{\ft_{a}G}$ it is only necessary to specify where to map $\iota_G(f_1)$ and $\iota_G(f_2)$, for which we simply put
        \begin{align*}
            &\iota_{\ft_{a}G}\left(\iota_G(f_1)\right) = \iota_G(f_2),
            &\iota_{\ft_{a}G}\left(\iota_G(f_2)\right) = \iota_G(f_1).
        \end{align*}
        In this case, the triple $(F(\ft_{a}G),r_{\ft_{a}G},\iota_{\ft_{a}G})$ defines an $A\backslash a$-marked graph $\ft_{a}G$.
        \item If $\val V_a = 3$ with $V_a$ incident to an additional leg, then $r_G^{-1}(V_a) = \ell_a\cup \{l,f,V_a\}$ with $\{l\}\in L(G)$. In this situation, set $F(\ft_aG) =F(G)\backslash \left(\ell_a\cup \{f,\iota_G(f), V_a\}\right)$ and $\iota_{\ft_aG} = \iota_G$. To define $r_{\ft_aG}$ it is sufficient to specify where to map $l$, for which we set $r_{\ft_aG}(l) = r_G(\iota_G(f))$. Therefore, the triple $(F(\ft_a(G)),r_{\ft_a(G)},\iota_{\ft_aG})$ defines an $A\backslash a$-marked graph $\ft_aG$.
    \end{enumerate}
    Let $H$ denote an additional object of $\bbG_{g,A}$, and let $g\colon G\to H$ denote a morphism thereof. We define a morphism $\ft_{a} g\colon\ft_{a}G\to \ft_{a}H$ as follows. Let $W_a\in V(H)$ denote the vertex $\partial\ell_a(H)$. Observe first that we must only study the behavior of $g$ around $g^{-1}(W_a)$, and as such we can assume without loss of generality that the inverse image of any other vertex is a single vertex. Namely, any edge contraction goes to $W_a$ and nowhere else. We proceed as above and separate into the same three cases:
    \begin{enumerate}
        \item Suppose that $\val V_a>3$. Then $\val W_a \geq\val V_a>3$, and in this case $g$ directly induces $\ft_ag\colon\ft_aG\to\ft_aH$.
        \item Suppose that $\val V_a=3$ with $V_a$ incident to two edges, so that, as before, $r_G^{-1}(V_a) = \ell_a\cup\{f_1,f_2,V_a\}$. If $r^{-1}_G(V_a)\subset g^{-1}(W_a)$, then $g$ directly induces $\ft_{a}g\colon\ft_{a}G\to \ft_{a}H$. We make explicit how to proceed when $r^{-1}_G(V_a)\not\subset g^{-1}(W_a)$:
        \begin{itemize}
            \item If $f_1\not\in g^{-1}(W_a)$ but $f_2\in g^{-1}(W_a)$ (or equivalently $f_2\not\in g^{-1}(W_a)$ but $f_1\in g^{-1}(W_a)$), then $\iota_G(f_1)\not\in (W_a)$ and $\iota_G(f_2)\in g^{-1}(W_a)$ and we define $\ft_{a} g$ as follows. We only change $g$ at the elements $\iota_G(f_1),\iota_G(f_2)\in F(\ft_{a}G)$ and set
                \begin{align*}
                    &\ft_{a}g ( \iota_G(f_1) ) = \iota_H(g(f_1)), &\ft_{a}g(\iota_G(f_2)) = \iota_H(f_1).
                \end{align*}
            \item If both $f_1,f_2\not\in g^{-1}(W_a)$, then $\iota_G(f_1),\iota_G(f_2)\not\in g^{-1}(W_a)$. We set 
                \begin{align*}
                    &\ft_{a}g(\iota_G(f_i)) = \iota_H(g(f_i)),&\textnormal{for }i=1,2,
                \end{align*}
        and let $\ft_{a}g$ agree with $g$ everywhere else. 
    \end{itemize}
        \item Suppose that $\val V_a = 3$ with $V_a$ incident to an additional leg. As before, $r_G^{-1}(V_a) = \ell_a\cup\{l,f,V_a\}$, with $\{l\}\in L(G)$. Then $g$ directly induces $\ft_{a}g\colon \ft_aG\to \ft_aH$.
    \end{enumerate}
    We now introduce a poic-morphism $\eta_{\ft_a,G}\colon\sigma_G\to \sigma_{\ft_aG}$. There are the following possibilities for $E(\ft_{a}G)$:
    \begin{enumerate}
        \item If $\val(V_a)>3$, then $E(G)=E(\ft_{a}G)$. Here we set $\eta_{\ft_{a},G}= \Id_{\bbR_{\geq0}^{E(G)}}$.
        \item If $\val(V_a) =3$ with $V_a$ incident to two edges, then $r_G^{-1}(V_a) = \ell_a\cup\{f_1,f_2,V_a\}$, and $e=\{\iota_G(f_1),\iota_G(f_2)\}$ defines an edge of $E(\ft_{a}G)$. For $i=1,2$, let $e_i\in E(G)$ denote the edge given by $\{f_i,\iota_G(f_i)\}$. In this instance
            \begin{equation*}
                E(\ft_{a}G) = E(G)\backslash\{e_1,e_2\} \cup \{e\},
            \end{equation*}
        and we let $\eta_{\ft_{a},G}\colon\sigma_G\to\sigma_{\ft_{a}G}$ be the map given by 
            \begin{equation*}
                \eta_{\ft_{a},G}(\delta):=\begin{cases}
                \delta(h) = \delta(h), & \textnormal{ if }h\in E(\ft_{a}G)\backslash\{e\},\\
                \delta(h)=\delta(e_1)+\delta(e_2), &\textnormal{ if } h=e.\\
                \end{cases}
            \end{equation*}
        \item If $\val V_a = 3$ with $V_a$ incident to an additional leg, then $r_G^{-1}(V_a) = \ell_a\cup \{l,f,V_a\}$ with $\{l\}\in L(G)$. The set $\{f,\iota_G(f)\}$ defines an edge $e^\prime\in E(G)$ and $E(\ft_aG) = E(G)\backslash e^\prime$. In this case, we let $\eta_{\ft_a,G}\colon\sigma_G\to\sigma_{\ft_aG}$ denote the face-embedding given by $E(\ft_aG)\subset E(G)$. 
    \end{enumerate}
\end{const}
\begin{lem}\label{lem: forgetting the a-mark}
    The previous construction gives rise to a functor $\ft_a\colon \bbG_{g,A}^\op\to\bbG_{g,A\backslash a}^\op$ and a natural transformation $\eta_{\ft_a}$, that define a morphism of poic-spaces
    \begin{equation}
        {\ft_{a}}\colon \Mtrop_{g,A}\to\Mtrop_{g,A\backslash a}. \label{eq: forgetting the a-marking functor}
    \end{equation}
    Furthermore, if $b\in A\backslash a$ is an additional element, then ${\ft_{a}}\circ {\ft_{b}}={\ft_{b}}\circ {\ft_{a}}$. If $g=0$, then $\ft_a$ is a proper morphism of poic-complexes. If $g>0$, then $\ft_a$ satisfies the lifting property of Definition \ref{defi: proper morphism poic-fibrations}.
\end{lem}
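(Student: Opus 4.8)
The plan is to verify in turn that the stated data form a morphism of poic-spaces, that distinct forgetful maps commute, and finally the two separate assertions in genus $0$ and in genus $>0$. First I would check that $\ft_a$ is a functor and that $\eta_{\ft_a}$ is a natural transformation meeting the non-degeneracy condition of a morphism of poic-spaces. Since $\ft_a$ alters a graph only near the vertex $V_a$ incident to $\ell_a$, I would use the reduction built into the construction: an arbitrary contraction factors so that at most the vertex $W_a=\partial\ell_a$ has nontrivial fibre, and one may assume all contracted edges map there. In each of the three valency cases for $V_a$ a direct inspection shows that $\ft_a g$ respects the root and involution maps and the $(A\setminus a)$-marking, hence is a contraction, and that $\ft_a(g\circ h)=\ft_a g\circ\ft_a h$. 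Naturality of $\eta_{\ft_a}$, that is $\eta_{\ft_a,G}\circ\Mtrop_{g,A}(g)=\Mtrop_{g,A\setminus a}(\ft_a g)\circ\eta_{\ft_a,H}$, is an identity of edge-length maps; it is immediate when $\val(V_a)>3$, the substantive bookkeeping being the sum rule $\delta(e)\mapsto\delta(e_1)+\delta(e_2)$ of case~2 and the coordinate projection forgetting $e'$ of case~3, which I would match against the edge-by-edge description of $\ft_a g$. The non-degeneracy condition holds because $\eta_{\ft_a,G}$ is surjective onto $\sigma_{\ft_a G}$ in all three cases (an identity, the sum map, or a coordinate projection, each carrying relative interior to relative interior). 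This produces the morphism \eqref{eq: forgetting the a-marking functor}.

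For the commutativity $\ft_a\circ\ft_b=\ft_b\circ\ft_a$ I would again exploit locality: the two stabilisations are supported near $V_a$ and $V_b$. When $V_a\neq V_b$ they modify disjoint parts of the graph and commute on the nose; when $V_a=V_b$ I would run through the finitely many valency configurations at that common vertex, checking that both orders produce the same stabilised $(A\setminus\{a,b\})$-marked graph together with the same induced length map.

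The genus-$0$ properness I expect to be the main obstacle. Here Proposition \ref{prop: G0n iso classes} gives that $\Mtrop_{0,A}$ and $\Mtrop_{0,A\setminus a}$ carry no nontrivial automorphisms, so they are genuine poic-complexes and, by the first part, $\ft_a$ is a morphism of poic-complexes; moreover every edge contraction preserves genus, so each $\sigma_T$ is the full closed orthant $\bbR_{\geq0}^{E(T)}$. The plan is to identify $\ft_a$ with the classical forgetful morphism of the simplicial fan $\cM_{0,A}^{\trop}$ of subsection \ref{ssec: mod spaces of rtc} and to deduce properness, transporting through the correspondence of Lemma \ref{lem: classical cycles and our cycles}; alternatively one checks the lifting property of Definition \ref{defi: proper poic-complexes} directly for every subdivision. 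The delicate point, and where I expect the real work to lie, is case~3: there $\ft_a$ forgets a bounded edge and $\eta_{\ft_a,T}$ genuinely drops dimension, so one must verify carefully that every face in the closure of the image of an arbitrary subcone still lifts to a face of equal codimension. Controlling this dimension drop is the crux of the argument; it is precisely the phenomenon that is absent in the non-proper example $\underline{H_{\pr_2>0}}\to\underline{\bbR}$, where the partially open domain omits the kernel direction, and the closedness of the cones of metrics, special to $g=0$, is what one hopes rescues the situation.

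Finally, for $g>0$ I would verify condition~(2) of Definition \ref{defi: proper morphism poic-fibrations} for $\ft_a$ in its role as the poic-space component $\mathfrak{f}^{\pspaces}$, namely the unique lifting of isomorphisms. Given an isomorphism $\phi\colon\ft_a G\to H'$ of $(A\setminus a)$-marked graphs, I would reconstruct a unique isomorphism $\psi\colon G\to G'$ of $A$-marked graphs with $\ft_a\psi=\phi$ by reinstating the data erased by $\ft_a$: transporting $\ell_a$ along $\phi$ in case~1, subdividing the edge $\phi(e)$ by a new trivalent vertex carrying $\ell_a$ in case~2, and regrafting $\ell_a$ together with the suppressed edge at the appropriate image vertex in case~3. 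In each case the forgotten combinatorial data is determined by $G$ and $\phi$, which yields both existence and uniqueness of the lift, so that $\ft_a$ satisfies the required lifting property.
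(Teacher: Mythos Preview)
Your outline matches the paper's proof closely for the functoriality, the naturality of $\eta_{\ft_a}$, the commutativity $\ft_a\circ\ft_b=\ft_b\circ\ft_a$, and the $g>0$ lifting property; the paper treats all of these as routine and, for naturality, reduces (as you do) to single edge contractions and automorphisms, checking only the interesting case $\val V_a=3$ with two incident edges. Your case-by-case reconstruction of the isomorphism lift for $g>0$ is a more explicit version of the paper's one-line observation that the flags of $\ft_aG$ sit inside those of $G$, so an isomorphism on the target extends uniquely.

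Where you diverge is the $g=0$ properness, which you flag as ``the main obstacle'' and plan to attack via the classical fan description or a direct subdivision-by-subdivision check, with case~3 as the ``crux''. The paper dispatches this in a single sentence: in genus $0$ every $\sigma_T$ is the full closed orthant, so $\eta_{\ft_a,T}$ is a linear map between \emph{closed} convex polyhedral cones, and this alone forces properness. The mechanism you are circling around but not quite landing on is that closedness propagates to subdivisions: the third axiom in Definition~\ref{defi: subdivision} forces every cone of a subdivision of a closed poic-complex to admit a $0$-dimensional face, hence to contain the origin, hence to be closed. Once every cone in every subdivision is closed, its linear image is already closed (no closure to take), and faces of the image pull back to faces of the source of the same codimension by elementary polyhedral linear algebra. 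Your comparison with $\underline{H_{\pr_2>0}}\to\underline{\bbR}$ is exactly right, and you correctly isolate closedness as what distinguishes the two situations; you just stop short of observing that this single fact finishes the argument. Note also that your proposed route through Lemma~\ref{lem: classical cycles and our cycles} would not help here, as that lemma concerns tropical cycles rather than properness of morphisms.
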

\begin{proof}
    The functoriality of $\ft_{a}$, and the commutativity ${\ft_{a}}\circ {\ft_{b}}={\ft_{b}}\circ {\ft_{a}}$, follow from the construction, and the reader is entrusted with the corresponding details. We show that $\eta_{\ft_a}$ is a natural transformation. Let $g\colon G\to H$ be a morphism of $\bbG_{g,A}$ (that is $f\in\Hom_{\bbG_{g,A}^\op}(H,G)$), we want to show that the following diagram is commutative:
    \begin{equation}
        \begin{tikzcd}
            \sigma_{H}\arrow{d}[swap]{\eta_{\ft_{a},H}}\arrow{rr}{\sigma_{g,A}(g)}&&\sigma_G\arrow{d}{\eta_{\ft_{a},G}}\\
            \sigma_{\ft_a H}\arrow{rr}[swap]{\sigma_{g,A\backslash a}(\ft_{a} g)}&&\sigma_{\ft_{a} G}
        \end{tikzcd}\label{eq: forgetting a-mark nat transformation}
    \end{equation}
    Let $V_a\in V(G)$ be the incident vertex to $\ell_a$. The only interesting case is when $\val V_a=3$ and $V_a$ is incident to two edges. As before, we have that $r_G^{-1}(V_a) = \ell_a\cup\{f_1,f_2,V_a\}$ and the set $e=\{\iota_G(f_1),\iota_G(f_2)\}\subset F(\ft_aG)$ is an edge. Any morphism of $\bbG_{g,A}$ factors as a composition of multiple single edge contractions and automorphisms, so it suffices to study these particular cases.
    \begin{itemize}
        \item Suppose $g$ is an automorphism of $G$. Since it preserves the $A$-marking, it follows that $g\left(\{f_1,f_2\}\right) = \{f_1,f_2\}$. Commutativity with the involution $\iota_G$ implies that $\ft_a(g)$ must fix the new edge of $\ft_aG$, and therefore \eqref{eq: forgetting a-mark nat transformation} is commutative.
        \item Suppose $g$ is a single edge contraction $G\to G/h$, where $h\in E(G)$. If $V_a\not\in\partial h$, then $e$ is also an edge of $\ft_a G/h$. In this case, both maps $\sigma_{g,A}(g)$ and $\sigma_{g,A\backslash a}(\ft_ag)$ preserve this $e$-coordinate, and thus the commutativity of \eqref{eq: forgetting a-mark nat transformation}. If $V_a\in \partial h$, we can assume without loss of generality that $h=\{f_1,\iota_Gf_1\}$. In this case, $e^\prime =\{f_2,\iota_G f_2\}$ is an edge of both $\ft_aG/h$ and $G$. Additionally, $\ft_{a}g\colon \ft_a G\to \ft_aG/h$ is an isomorphism of $\bbG_{g,A\backslash a}$, where the edge $e$ is mapped to the edge $e^\prime$. The poic-morphism $\sigma_{g,A}(g)$ is the face-embedding where the $h$-coordinate is trivial, the map $\eta_{\ft_{a},H}$ is the identity, and the map $\sigma_G\xrightarrow{\eta_{\ft_{a},G}}\sigma_{\ft_{a},G}$ is given as follows at a $\delta\in \sigma_{G}$: 
        \begin{equation*}
            \left(\eta_{\ft_{a},G}(\delta)\right)_t:=\begin{cases}
                \delta(t) , & t\neq e^\prime,\\
                \delta(e^\prime)+\delta(h)& t=e^\prime,
            \end{cases}
        \end{equation*}
        where $t\in E(\ft_aG/h)$ is an edge. From the latter, the commutativity of \eqref{eq: forgetting a-mark nat transformation} follows.
    \end{itemize}
    Now, if $g=0$, then these are linear maps between closed convex polyhedral cones. Therefore, the morphism $\ft_a\colon \Mtrop_{0,A}\to\Mtrop_{0,A\backslash a}$ is proper in the sense of Definition \ref{defi: proper poic-complexes}. For $g>0$, the lifting property of $\ft_{a}$ readily follows, because an isomorphism between the target will define an isomorphism from the source (flags of the target are a subset of those of the source).
\end{proof}

\begin{const}\label{const: forgetting marking spanning tree}
   Let $T$ be an object of $\bbG_{0,\mathbbm{g}\sqcup A}$ and consider the morphism of poics 
   \begin{equation*}
   \eta_{\mathfrak{ft}_a^\pcomplexes,T}\colon \ST_{g,A}(T)\to\ST_{g,A\backslash a}(\ft_aT) \label{eq: morphism of poics}    
   \end{equation*}
   given by the linear map
    \begin{equation*}
        \begin{pmatrix}
        \eta_{\ft_a,T}&0\\
        M(T,a)&\Id_{g\times g}
        \end{pmatrix}\colon \sigma_T\times\bbR_{>0}^g\to \sigma_{\ft_aT}\times\bbR_{>0}^g,
    \end{equation*}
    where $M(T,a)$ is the $g\times\#E(T)$-matrix such that for $1\leq i\leq g$ and $e\in E(T)$ its $(i,e)$-entry is
    \begin{equation*}
        M(T,a)_{i,e} = \begin{cases}1,& \textnormal{ if }\val r_T\left(\ell_a(T)\right)=3 \textnormal{ and }\partial\ell_a(T)=\partial\ell_{i}(\ft_aT)\subset \partial e,\\
        1,& \textnormal{ if }\val r_T\left(\ell_a(T)\right)=3 \textnormal{ and }\partial\ell_a(T)=\partial\ell_{i^*}(\ft_aT)\subset \partial e,\\
        0,& \textnormal{ else}.
        \end{cases}
    \end{equation*}
\end{const}

\begin{const}\label{const: forgetting marking spanning tree2}
    We follow the notation of subsection \ref{ssec: mod spaces of rtc} and the notation established throughout this section. Let $X$ denote the finite set $A\sqcup \mathbbm{g}$. Let $p^\prime_a\colon \bbR^X\to\bbR^{X\backslash a}$ and $p_a\colon \bbR^{\binom{X}{2}}\to\bbR^{\binom{X\backslash a}{2}}$, denote the natural projection maps respectively given by forgetting the $a$-coordinate and forgetting any coordinate containing $a$. These fit into the commutative diagram of integral linear maps
    \begin{equation*}
        \begin{tikzcd}
            \bbR^X\arrow{d}[swap]{p^\prime_a}\arrow{rr}{M_X}&&\bbR^{\binom{X}{2}}\arrow{d}{p_a}\\
            \bbR^{X\backslash a}\arrow{rr}[swap]{M_{X\backslash a}}&&\bbR^{\binom{X\backslash a}{2}}.
        \end{tikzcd}
    \end{equation*}
    Hence, $p_a$ gives rise to a linear integral map $p_a\colon Q_X\to Q_{X\backslash a}$, and in particular to a morphism of poic-complexes $p_a\colon \underline{N_{\dist_X}}_\bbR\to \underline{N_{\dist_{X\backslash a}}}_\bbR$. Furthermore, $p_a\circ \dist_X = \dist_{X\backslash a}\circ \ft_a$, so that by letting $(\ft_{a})_\tint := p_a$, we obtain a morphism of linear poic-complexes $\ft_a\colon\Mtrop_{0,X}\to\Mtrop_{0,X\backslash a}$.
\end{const}

\begin{prop}
    Following the notations of Constructions \ref{const: forgetting marking spanning tree} and \ref{const: forgetting marking spanning tree2}, let $\mathfrak{ft}^\pcomplexes$ denote the morphism of linear poic-complexes given by the morphisms \eqref{eq: morphism of poics} and $(\mathfrak{ft}^\pcomplexes_a)_{\tint}:=(\ft_a)_{\tint}$ (following Construction \ref{const: forgetting marking spanning tree2}), and let $\mathfrak{ft}^\pspaces$ denote the morphism of poic-spaces $\ft_a\colon \Mtrop_{g,A}\to \Mtrop_{g,A\backslash a}$. These define a weakly proper morphism of linear poic-fibrations 
    \begin{equation}
        \mathfrak{ft}_{a}\colon \st_{g,A}\to\st_{g,A\backslash a}.\label{eq: forgetting the a-mark fibrations.}
    \end{equation}
\end{prop}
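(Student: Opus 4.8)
The plan is to verify two things in turn: first that the pair $(\mathfrak{ft}^\pcomplexes,\mathfrak{ft}^\pspaces)$ genuinely assembles into a morphism of linear poic-fibrations $\st_{g,A}\to\st_{g,A\backslash a}$, and then that this morphism is weakly proper in the sense of Definition \ref{defi: proper morphism poic-fibrations}. For the former I must exhibit the commuting relation $\st_{g,A\backslash a}\circ\mathfrak{ft}^\pcomplexes=\mathfrak{ft}^\pspaces\circ\st_{g,A}$ between the two fibrations, after confirming that $\mathfrak{ft}^\pcomplexes$ is a morphism of linear poic-complexes and that $\mathfrak{ft}^\pspaces=\ft_a$ is the morphism of poic-spaces supplied by Lemma \ref{lem: forgetting the a-mark}. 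For the latter I will check the two conditions of Definition \ref{defi: proper morphism poic-fibrations} separately.

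First I would check that the morphisms $\eta_{\mathfrak{ft}_a^\pcomplexes,T}$ of Construction \ref{const: forgetting marking spanning tree} define a morphism of linear poic-complexes $\mathfrak{ft}^\pcomplexes\colon\ST_{g,A}\to\ST_{g,A\backslash a}$ over the functor $\ft_a\colon\bbG_{0,A\sqcup\mathbbm{g}}^\op\to\bbG_{0,(A\backslash a)\sqcup\mathbbm{g}}^\op$. Each such map is integral linear and, since $M(T,a)$ has non-negative entries and $\eta_{\ft_a,T}$ is already a poic-morphism $\sigma_T\to\sigma_{\ft_aT}$, it carries $\sigma_T\times\bbR_{>0}^g$ into $\sigma_{\ft_aT}\times\bbR_{>0}^g$; surjectivity in the $\bbR_{>0}^g$-direction together with the non-degeneracy of $\eta_{\ft_a,T}$ shows the image does not lie in a proper face. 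Naturality of $\eta_{\mathfrak{ft}_a^\pcomplexes}$ reduces, as in the proof of Lemma \ref{lem: forgetting the a-mark}, to single edge contractions (automorphisms being trivial for genus-$0$ trees by Proposition \ref{prop: G0n iso classes}), the only delicate situation again being a trivalent $\partial\ell_a(T)$. Linearity, i.e. compatibility with the distance morphisms through $(\ft_a)_\tint=p_a$, reduces to the genus-$0$ identity $p_a\circ\dist_X=\dist_{X\backslash a}\circ\ft_a$ of Construction \ref{const: forgetting marking spanning tree2}, since $D_{g,A}$ factors through the projection forgetting the $\bbR_{>0}^g$-factor.

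Next I would establish the relation $\st_{g,A\backslash a}\circ\mathfrak{ft}^\pcomplexes=\ft_a\circ\st_{g,A}$. On objects this is the graph-theoretic fact that forgetting the $a$-leg commutes with the spanning-tree gluing of the $\mathbbm{g}$-legs, i.e. $\ft_a(\st_{g,A}(T))\cong\st_{g,A\backslash a}(\ft_aT)$; since $a\in A$ and the gluing only affects the $\mathbbm{g}$-legs, this is verified against the three valency cases for $\partial\ell_a(T)$ of Construction \ref{const: forgetting marking spanning tree}. The role of the matrix $M(T,a)$ is exactly to make the square commute on lengths: when $\partial\ell_a(T)$ is trivalent and carries a leg $\ell_i$ (or $\ell_{i^*}$) together with a tree edge $e$, gluing first and forgetting afterwards merges the new edge with $e$, producing an edge of length $\delta_i+\delta(e)$, which is precisely the $i$-th output coordinate $\delta_i+M(T,a)_{i,e}\delta(e)$ of $\eta_{\mathfrak{ft}_a^\pcomplexes,T}$. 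I expect this bookkeeping, reconciling ``glue-then-forget'' with ``forget-then-glue'' across the valency cases, to be the main obstacle, as it is where the combinatorics of the graph operations must be matched precisely with the length corrections encoded in $M(T,a)$.

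Finally I would verify weak properness. The second condition of Definition \ref{defi: proper morphism poic-fibrations}, that $\mathfrak{ft}^\pspaces=\ft_a$ lifts isomorphisms of $\Mtrop_{g,A\backslash a}$, is furnished directly by Lemma \ref{lem: forgetting the a-mark} (trivially for $g=0$, and by the stated lifting property for $g>0$). For the first condition, that $\mathfrak{ft}^\pcomplexes$ is a weakly proper morphism of poic-complexes in the sense of Definition \ref{defi: proper poic-complexes}, I observe that $\bbR_{>0}^g$, being fully open, has itself as its only face, so every face of $\sigma_{\ft_aT}\times\bbR_{>0}^g$ has the form $\tau'\times\bbR_{>0}^g$ with $\tau'$ a face of $\sigma_{\ft_aT}$; as $\eta_{\mathfrak{ft}_a^\pcomplexes,T}$ is the identity in the $\bbR_{>0}^g$-direction up to the shift by $M(T,a)$, the faces occurring in the closure of its image are governed entirely by the tree part $\eta_{\ft_a,T}$. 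The required lifts of such faces to face-morphisms out of $T$ of equal codimension then follow from the properness of the genus-$0$ forgetful morphism $\ft_a\colon\Mtrop_{0,X}\to\Mtrop_{0,X\backslash a}$ recorded in Lemma \ref{lem: forgetting the a-mark}. Combining the two conditions shows that $\mathfrak{ft}_a$ is weakly proper, as claimed.
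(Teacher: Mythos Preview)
Your proposal is correct and follows essentially the same route as the paper. The paper's proof is extremely terse: it declares the commuting square $\mathfrak{ft}^\pspaces\circ\st_{g,A}=\st_{g,A\backslash a}\circ\mathfrak{ft}^\pcomplexes$ a routine check, derives the isomorphism-lifting condition from Lemma \ref{lem: forgetting the a-mark}, and handles weak properness of $\mathfrak{ft}^\pcomplexes$ with the single remark that facets after forgetting a leg arise from facets before. Your argument unpacks this last point explicitly by observing that faces of $\sigma_{\ft_aT}\times\bbR_{>0}^g$ live entirely in the tree factor and that the shear $M(T,a)$ does not obstruct surjectivity onto the $\bbR_{>0}^g$ factor, thereby reducing to the genus-$0$ properness recorded in Lemma \ref{lem: forgetting the a-mark}; this is a cleaner justification than the paper's one-line sketch, but the underlying mechanism is identical.
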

\begin{proof}
    The equality $\mathfrak{ft}^\pspaces\circ\st_{g,A} = \st_{g,A\backslash a}\circ \mathfrak{ft}^\pcomplexes$ is a routine check left to the reader. This shows that $\mathfrak{ft}_a$ actually defines a morphism of poic-fibrations, and to show its weakly properness, we must show that it satisfies two properties:
    \begin{enumerate}
        \item The first property follows directly from the fact that an edge contraction after forgetting the marking actually comes from an edge contraction of the original graph (perhaps best understood through illustration), so that facets after forgetting a marked leg arise from facets before forgetting the marked leg.
        \item The second property has already been shown in Lemma \ref{lem: forgetting the a-mark},
    \end{enumerate}
\end{proof}

\begin{example}\label{ex: weak properness of forgetting the marking}
Suppose $A=\{a,b,c\}$, $B=\{b,c\}$, and let $g=1$. We consider the morphism $\ft_a\colon \st_{1,A}\to \st_{1,B}$. Consider the graphs $G$ of $\bbG_{g,A}$ and $\ft_aG$ of $\bbG_{g,B}$ depicted in Figure \ref{fig: wp forgetting the marking}, and consider the trees $T$ and $\ft_aT$, lying in $\st_{g,A}^{-1}(G)$ and $\st_{g,B}^{-1}(\ft_aG)$ correspondingly, depicted in Figure \ref{fig: wp forgetting the marking trees}. Then the morphism of poics $\ST_{1,A}(T)\to \ST_{1,B}(\ft_aT)$ is given explicitly as the morphism of poics:
\begin{equation*}
    F\colon\bbR^2_{\geq 0}\times \bbR_{>0}\to \bbR_{\geq 0}\times\bbR_{>0}, (x,y,\ell)\mapsto (x,y+\ell).
\end{equation*}
Consider the subcone $\sigma$ of $\bbR^2_{\geq0}\times\bbR_{>0}$ given by
\begin{equation*}
    \sigma:=\{(x,y,\ell) \in \bbR^2_{\geq0}\times\bbR_{>0} : x=\ell\}.
\end{equation*}
Then $F(\sigma)$ is the subcone of $\bbR_{\geq0}\times\bbR_{>0}$ given by
\begin{equation*}
    F(\sigma) = \{ (s,t)\in \bbR_{\geq0}\times\bbR_{>0} : t-s\geq 0, s>0\}.
\end{equation*}
However, the closure of $F(\sigma)$ relative to $\bbR_{\geq0}\times\bbR_{>0}$ contains the facet given by $s= 0$, which does not arise as a face of $\sigma$.
\begin{figure}
    \centering
    \begin{tikzpicture}
    \draw(-6,-3)--(6,-3)--(6,3.8)--(-6,3.8)--(-6,-3);
    \draw(-6,3)--(0,3) node [midway, above] {Graph $G$ of $\bbG_{1,A}$};
    \draw (0,3)--(6,3) node [midway, above] {Graph $\ft_aG$ of $\bbG_{1,B}$};
    \draw(0,3.8)--(0,-3);
    \draw[dashed] (-5,0)--(-4,0) node [pos = -0.2] {$b$};
    \draw[line width=2pt](-3,-1)--(-4,0) node [midway, below left] {$e_2$};
    \draw[line width = 2pt](-4,0)--(-3,1) node [midway, above left]{$e_1$};
    \draw[line width = 2pt] (-3,1) arc [start angle=90,end angle= -90, x radius = 1, y radius= 1]node[ midway, right] {$e_3$};
    \draw[dashed] (-4,-2)--(-3,-1) node [pos=-0.2] {$a$};
    \draw[dashed] (-4,2)--(-3,1) node [pos = -0.2] {$c$};
    \draw[fill=black] (-3,1) circle (3pt);
    \draw[fill=black] (-4,0) circle (3pt);
    \draw[fill=black] (-3,-1) circle (3pt);

    \draw[dashed] (1,0)--(2,0) node [pos = -0.2] {$b$};
    \draw[line width=2pt](2,0)--(3,1) node [midway, above left] {$e_1^\prime$};
    \draw[line width = 2pt] (3,1) arc [start angle=90,end angle= -180, x radius = 1, y radius= 1] node [midway, right] {$e^\prime_2$};
    \draw[dashed] (2,2)--(3,1) node [pos = -0.2] {$c$};
    \draw[fill=black] (3,1) circle (3pt);
    \draw[fill=black] (2,0) circle (3pt);
    \end{tikzpicture}
    \caption{Graphs of Example \ref{ex: weak properness of forgetting the marking}}
    \label{fig: wp forgetting the marking}
\end{figure}

\begin{figure}
    \centering
    \begin{tikzpicture}
    \draw(-6,-3)--(6,-3)--(6,3.8)--(-6,3.8)--(-6,-3);
    \draw(-6,3)--(0,3) node [midway, above] {Tree $T$ in $\st_{1,A}^{-1}(G)$};
    \draw (0,3)--(6,3) node [midway, above] {Tree $\ft_aT$ in $\st_{1,B}^{-1}(\ft_aG)$};
    \draw(0,3.8)--(0,-3);
    \draw[dashed] (-5,0)--(-4,0) node [pos = -0.2] {$b$};
    \draw[line width=2pt](-3,-1)--(-4,0) node [midway, below left] {$y$};
    \draw[line width = 2pt](-4,0)--(-3,1) node [midway, above left]{$x$};
    \draw[dashed] (-3,1)--(-2,1) node[pos = 1.2] {$1$};
    \draw[dashed] (-3,-1)--(-2,-1) node[pos = 1.2] {$1^*$};
    \draw[dashed] (-4,-2)--(-3,-1) node [pos=-0.2] {$a$};
    \draw[dashed] (-4,2)--(-3,1) node [pos = -0.2] {$c$};
    \draw[fill=black] (-3,1) circle (3pt);
    \draw[fill=black] (-4,0) circle (3pt);
    \draw[fill=black] (-3,-1) circle (3pt);
    
    \draw[dashed] (1,0)--(2,0) node [pos = -0.2] {$b$};
    \draw[line width=2pt](2,0)--(3,1) node [midway, above left] {$s$};
    \draw[dashed] (3,1) --(4,1) node [pos =1.2] {$1$};
    \draw[dashed] (2,0) --(3,-1) node [pos =1.2] {$1^*$};
    \draw[dashed] (2,2)--(3,1) node [pos = -0.2] {$c$};
    \draw[fill=black] (3,1) circle (3pt);
    \draw[fill=black] (2,0) circle (3pt);
    \end{tikzpicture}
    \caption{Trees of Example \ref{ex: weak properness of forgetting the marking}}
    \label{fig: wp forgetting the marking trees}
\end{figure}
\end{example}
\subsection{Clutching morphisms.} Let $g,h\geq 0$ be integers, and let $A$ and $B$ be finite sets with $2g+\#A-2>0$ and $2h+\#B-2>0$. Additionally, it is also assumed that $A\cap B= \{c\}$, so that $A\Delta B = (A\cup B)\backslash \{c\}$. We now construct a proper morphism of linear poic-fibrations $\st_{g,A}\times\st_{h,B}\to \st_{g+h,A\Delta B}$, given by joining two graphs at the $c$-marked leg. We proceed meticulously as in the previous subsection.
\begin{const}
    Suppose $G$ is an object of $\bbG_{g,A}$ and $H$ is an object of $\bbG_{h,B}$. We will describe an object $\kappa(G,H)$ of $\bbG_{g+h,A\Delta B}$ motivated by our description. Let $V_c\in V(G)$ denote the vertex incident to $\ell_c(G)$, and let $F(\kappa(G,H))$ denote the set obtained from $F(G)\sqcup F(H)$ by identifying the sets $\ell_c(G)\cup\partial\ell_c(G)\subset F(G)$ and $\ell_c(H)\cup\partial\ell_c(H)\subset F(H)$ into the single element $V_c$, that is:
    \begin{equation*}
        F(\kappa(G,H)) = \left(F(G)\backslash \ell_c(G)\right)\sqcup\left(F(H)\backslash(\ell_c(H)\cup \partial\ell_c(H))\right).
    \end{equation*}
    Observe that the following maps are well-defined
     \begin{align*}
        &r_{\kappa(G,H)}\colon  F(\kappa(G,H))\to F(\kappa(G,H)),& X\mapsto &\begin{cases}
            r_G(X), &\textnormal{ if }X\in F(G), \\
            r_H(X), &\textnormal{if }X\in F(H)\backslash r_H^{-1}(\partial\ell_c(H)),\\
            V_c, &\textnormal{if }X\in r_H^{-1}(\partial\ell_c(H)),
        \end{cases}\\
        &\iota_{\kappa(G,H)}\colon F(\kappa(G,H))\to F(\kappa(G,H)), &X \mapsto &\begin{cases}
            \iota_G(X), &\textnormal{ if }X\in F(G),\\
            \iota_H(X), &\textnormal{ if }X\in F(H),
        \end{cases}
    \end{align*}
    and, furthermore, satisfy $\iota_{\kappa(G,H)}\circ r_{\kappa(G,H)}=r_{\kappa(G,H)}$.\\
    This means then that the triple $(F(\kappa(G,H)),r_{\kappa(G,H)},\iota_{\kappa(G,H)})$ actually defines a graph $\kappa(G,H)$.The edges of this graph are just the union of those of $G$ and $H$ (that is $E(\kappa(G,H)) = E(G)\cup E(H)$). Furthermore, if we identify $\bbR^{E(\kappa(G,H))}$ with the product $\bbR^{E(G)}\times\bbR^{E(H)}$, then the cone of metrics $\sigma_{\kappa(G,H)}$ naturally coincides with the product $\sigma_G\times \sigma_H$, so we let $     \eta_{\kappa(G,H)}$ denote the identity map $\sigma_G\times\sigma_H\to \sigma_{\kappa(G,H)}$. In addition, we remark that the markings of $G$ and $H$ clearly give rise to a marking 
    \begin{equation*}
        \ell\colon A\Delta B\to L(\kappa(G,H)).
    \end{equation*}
    If we now turn our attention towards morphisms, let $f_1\colon G\to G^\prime$ and $f_2\colon H\to H^\prime$ be morphisms of their respective categories. Then the map
    \begin{equation*}
        F(\kappa(G,H))\to F(\kappa(G^\prime,H^\prime)), X \mapsto \begin{cases}
            f_1(X), &\textnormal{ if }X\in F(G), \\
            f_2(X), &\textnormal{ if }X\in F(H),
            \end{cases}
    \end{equation*}
    is well-defined and gives rise to a morphism $\kappa(f_1,f_2)\colon \kappa(G,H)\to \kappa(G^\prime,H^\prime)$.
\end{const}
\begin{defi}
    Following the notation of the above construction, we call the graph $\kappa(G,H)$ \emph{the clutching of $G$ and $H$ at $c$}.
\end{defi}
\begin{lem}
    The associations of the above construction (and following the notation thereof) give rise to a functor
    \begin{equation*}
        {\kappa}\colon\bbG_{g,A}^\op\times \bbG_{h,B}^\op\to \bbG_{g+h,A\Delta B}^\op, (G,H)\mapsto \kappa(G,H).
    \end{equation*}
    In addition, the maps $\eta_{\kappa,(\bullet,\bullet)}$ define a natural transformation 
    \begin{equation*}
        \eta_{\kappa}\colon \Mtrop_{g,A}\times\Mtrop_{h,B} \implies \sigma_{g+h,A\Delta B}\circ \kappa.
    \end{equation*}
\end{lem}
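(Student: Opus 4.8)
The plan is to split the verification into the two assertions of the statement: first that $\kappa$ is a well-defined functor, and then that the maps $\eta_{\kappa,(\bullet,\bullet)}$ assemble into a natural transformation. For the functoriality, the construction already furnishes the set $F(\kappa(G,H))$ together with well-defined maps $r_{\kappa(G,H)}$ and $\iota_{\kappa(G,H)}$ satisfying $\iota\circ r=r$, so $\kappa(G,H)$ is a graph; it remains to check that it is an object of $\bbG_{g+h,A\Delta B}$. Connectedness is immediate from gluing two connected graphs along the single vertex $V_c$. Since $E(\kappa(G,H))=E(G)\cup E(H)$ while $\#V(\kappa(G,H))=\#V(G)+\#V(H)-1$ (the two $c$-vertices are identified), the genus computes to $g(\kappa(G,H))=[\#E(G)-\#V(G)+1]+[\#E(H)-\#V(H)+1]=g+h$. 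The only vertex whose valency can differ from its value in $G$ or $H$ is $V_c$, where $\val(V_c)=\val_G(V_c)+\val_H(\partial\ell_c(H))-2\geq 4$, so the minimal-valency hypothesis is preserved; the markings of $G$ and $H$ glue to the announced marking by $A\Delta B$. Preservation of identities and of composition is immediate from the piecewise definition of $\kappa$ on the level of flag sets.

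The one point in the functoriality that requires care is that $\kappa(f_1,f_2)$ is genuinely a morphism of $\bbG_{g+h,A\Delta B}$, i.e.\ a contraction in the sense of Definition \ref{defi: morphs GgA}. For a vertex $W$ of $\kappa(G',H')$ lying in $V(G')$ or $V(H')$ but distinct from the glued vertex, the preimage under $\kappa(f_1,f_2)$ equals $f_1^{-1}(W)$ or $f_2^{-1}(W)$, which is a legless tree because $f_1$ and $f_2$ are contractions. For the glued vertex $V_c'$ one has $\kappa(f_1,f_2)^{-1}(V_c')=f_1^{-1}(\partial\ell_c(G'))\cup f_2^{-1}(\partial\ell_c(H'))$; since $f_1$ and $f_2$ preserve markings, the $c$-legs map to the $c$-legs and hence do not enter these vertex preimages, so each summand is a legless tree, and the two summands meet exactly at the identified vertex $V_c$. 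A legless tree glued to a legless tree along a single common vertex is again a legless tree, so the contraction condition holds at $V_c'$ as well.

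For the natural transformation I would verify commutativity of the naturality square associated to a morphism given by contractions $f_1\colon\tilde G\to G$ and $f_2\colon\tilde H\to H$, which induce the face-embeddings $\iota_{f_1}\colon\sigma_G\to\sigma_{\tilde G}$ and $\iota_{f_2}\colon\sigma_H\to\sigma_{\tilde H}$, as well as $\iota_{\kappa(f_1,f_2)}\colon\sigma_{\kappa(G,H)}\to\sigma_{\kappa(\tilde G,\tilde H)}$. Under the identifications $\sigma_{\kappa(G,H)}=\sigma_G\times\sigma_H$ and $\sigma_{\kappa(\tilde G,\tilde H)}=\sigma_{\tilde G}\times\sigma_{\tilde H}$ coming from $E(\kappa(G,H))=E(G)\sqcup E(H)$, the components $\eta_{\kappa,(\bullet,\bullet)}$ are the identity, so commutativity reduces to the identity of face-embeddings $\iota_{\kappa(f_1,f_2)}=\iota_{f_1}\times\iota_{f_2}$. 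This is transparent once one observes that clutching introduces no new edges and that the subgraph contracted by $\kappa(f_1,f_2)$ is precisely the disjoint union of the subgraphs contracted by $f_1$ and $f_2$; hence the inclusion $E(\kappa(G,H))\subset E(\kappa(\tilde G,\tilde H))$ defining $\iota_{\kappa(f_1,f_2)}$ is the disjoint union of the inclusions $E(G)\subset E(\tilde G)$ and $E(H)\subset E(\tilde H)$. As in the proof of Lemma \ref{lem: forgetting the a-mark}, it suffices to check this after factoring an arbitrary morphism into single edge contractions and automorphisms, where the claim is immediate.

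I expect the main obstacle to be the middle step: the verification that $\kappa(f_1,f_2)$ is a contraction at the glued vertex. There one must track carefully that marking-compatibility of $f_1$ and $f_2$ keeps the $c$-legs out of the relevant vertex preimages, so that the two preimage subgraphs are legless trees and their union along $V_c$ is again a legless tree. Everything else --- the genus and valency bookkeeping and the edge-set identification underlying naturality --- is routine.
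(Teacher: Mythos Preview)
Your proposal is correct and supplies precisely the routine verifications the paper alludes to; the paper itself omits the proof entirely, stating only that ``both statements of the lemma are routine checks, that we omit for the sake of brevity.'' Your careful treatment of the contraction condition at the glued vertex $V_c'$ (where the two legless tree preimages meet in a single point) and your reduction of the naturality square to the identity $\iota_{\kappa(f_1,f_2)}=\iota_{f_1}\times\iota_{f_2}$ via the edge-set decomposition are exactly the checks one would carry out.
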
   
 \begin{proof}
     Both statements of the lemma are routine checks, that we omit for the sake of brevity.
 \end{proof}
 
\begin{defi}
 The previous lemma shows that we obtain a morphism of poic-spaces
 \begin{equation}
     \kappa\colon \Mtrop_{g,A}\times\Mtrop_{h,B}\to  \Mtrop_{g+h,A\Delta B}.
 \end{equation}
We call this morphism of poic-spaces the \emph{clutching morphism}.
\end{defi}
We seek to improve this to a morphism of linear poic-fibrations. 
\begin{const}
Let $A$ and $B$ be as before. Consider the following integral linear map $K_{A,B}\colon \bbR^{\binom{A}{2}}\oplus\bbR^{\binom{B}{2}}\to \bbR^{\binom{A\Delta B}{2}}$ given at $(\vec{v},\vec{w})\in \bbR^{\binom{A}{2}}\oplus \bbR^{\binom{B}{2}}$ by
    \begin{equation*}
        K_{A,B}(\vec{v},\vec{w})_{\{a,b\}}:= \begin{cases}
            {v}_{\{a,b\}},&\{a,b\} \subset A,\\
            {w}_{\{a,b\}},&\{a,b\} \subset B,\\
            {v}_{\{a,c\}}+{w}_{\{c,b\}},&a\in  A, b\in B.
        \end{cases}
    \end{equation*}
    It is readily checked that this map descends to an integral linear map $K_{A,B}\colon Q_A\oplus Q_B\to Q_{A\Delta B}$, and in particular to a morphism of linear poic-complexes $K_{A,B}\colon\underline{\left(N_{\dist_A}\oplus N_{\dist_B}\right)_\bbR}\to \underline{\left(N_{\dist_{A\Delta B}}\right)_\bbR}$. Furthermore, $K_{A,B}\circ (\dist_A\times\dist_B)= \dist_{A\Delta B}\circ \kappa $, so that letting $\kappa_\tint:= K_{A,B}$, we obtain a morphism of linear poic-complexes
    \begin{equation}
        \kappa\colon \Mtrop_{0,A}\times \Mtrop_{0,B} \to \Mtrop_{0,A\Delta B}.\label{eq: clutching rational}
    \end{equation}
\end{const}

\begin{nota}
    Let $j = g+h$ and $C=A\Delta B$, we set 
    \begin{align*}
     &&\mathbbm{g}=\{1,\dots,g,1^*,\dots,g^*\},&&\mathbbm{h}=\{1,\dots,h,1^*,\dots,h^*\},&&\mathbbm{j}=\{1,\dots,j,1^*,\dots,j^*\},
    \end{align*}
    and regard $\mathbbm{g}$ and $\mathbbm{h}$ as disjoint. We identify $\mathbbm{j}$ as the disjoint union $\mathbbm{g}\sqcup \mathbbm{h}$, by viewing $\mathbbm{g}\subset \mathbbm{j}$ and identifying $\mathbbm{h}$ with the subset (by $n\mapsto g+n$ and $n^*\mapsto (g+n)^*$, for $1\leq n\leq h$)
    \begin{equation*}
        \{g+1,\dots,g+h,(g+1)^*,\dots,(g+h)^*\}\subset \mathbbm{j}.
    \end{equation*}
    With the above, we regard the morphism \eqref{eq: clutching rational} as 
    \begin{equation*}
        \kappa\colon \Mtrop_{0,A\sqcup \mathbbm{g}}\times \Mtrop_{0,B\sqcup\mathbbm{h}} \to \Mtrop_{0,C \sqcup \mathbbm{j}}.
    \end{equation*}
    Additionally, we denote the natural map $\bbR^g\times\bbR^h\to\bbR^{g+h}$ by $\Id_g\oplus\Id_h$.
\end{nota} 

\begin{lem}\label{lem: clutching morphism of fibrations}
    Let $\mathfrak{k}^\pcomplexes$ denote the morphism of linear poic-complexes 
    \begin{equation*}
        \kappa\times (\Id_g\oplus\Id_h)\colon \ST_{g,A}\times\ST_{h,B}\to \ST_{j,C},
    \end{equation*}
    and let $\mathfrak{k}^\pspaces$ denote the morphisms of poic-spaces $\kappa\colon \Mtrop_{g,A}\times\Mtrop_{h,B}\to\Mtrop_{j,C}$. These define a proper morphism of poic-fibrations 
    \begin{equation}
    \mathfrak{k}\colon\st_{g,A}\times\st_{h,B}\to \st_{j,C}.\label{eq: clutching morphism fibrations}
    \end{equation}
\end{lem}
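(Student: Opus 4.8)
The plan is to unwind the three requirements packaged in Definition \ref{defi: proper morphism poic-fibrations}: that $\mathfrak{k}$ is a morphism of poic-fibrations at all (the commuting square), that $\mathfrak{k}^\pcomplexes$ is a \emph{proper} morphism of poic-complexes, and that $\mathfrak{k}^\pspaces$ satisfies the isomorphism-lifting property. The commuting square $\st_{j,C}\circ\mathfrak{k}^\pcomplexes = \mathfrak{k}^\pspaces\circ(\st_{g,A}\times\st_{h,B})$ is, at the level of the underlying functors, the combinatorial identity that clutching two trees at $c$ and then adjoining the spanning-tree edges joining the $i$- and $i^*$-legs yields the same graph as first adjoining these edges and then clutching; on cones both routes give the identical reindexing $\sigma_{T_1}\times\bbR^g_{>0}\times\sigma_{T_2}\times\bbR^h_{>0}\cong\sigma_{\kappa(T_1,T_2)}\times\bbR^{g+h}_{>0}$ under the identification $\mathbbm{j}=\mathbbm{g}\sqcup\mathbbm{h}$. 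I would dispatch this as a bookkeeping check, exactly in the style of the naturality verifications in Lemma \ref{lem: forgetting the a-mark}.

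The heart of the argument is properness of $\mathfrak{k}^\pcomplexes$. The decisive structural observation is that for each pair of trees $(T_1,T_2)$ the component
\[
\eta_{\mathfrak{k}^\pcomplexes,(T_1,T_2)}\colon \ST_{T_1}\times\ST_{T_2}\longrightarrow \ST_{\kappa(T_1,T_2)}
\]
is an \emph{isomorphism} of poics onto the full target cone. Indeed, clutching creates no new edges, so $E(\kappa(T_1,T_2)) = E(T_1)\sqcup E(T_2)$ and the two spanning-tree blocks $\bbR^g_{>0}\times\bbR^h_{>0}$ match $\bbR^{g+h}_{>0}$; the map is thus the natural identification, which in particular preserves dimensions. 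I would then check that it respects the partially open structure by noting that a legless subtree of $\kappa(T_1,T_2)$ passing through the glueing vertex $V_c$ splits uniquely as $K_1\sqcup_{V_c}K_2$ with $K_i$ a legless subtree of $T_i$, so the present faces of $\sigma_{\kappa(T_1,T_2)}$ correspond bijectively to products of present faces of $\sigma_{T_1}$ and $\sigma_{T_2}$. Since every cone-component of $\mathfrak{k}^\pcomplexes$ is an isomorphism onto its target cone, the face-lifting property of Definition \ref{defi: proper poic-complexes} for any composite $\mathfrak{k}^\pcomplexes\circ S$ with $S$ a subdivision becomes equivalent, after cancelling the cone-wise isomorphisms, to weak properness of $S$ itself; the latter holds by the third clause of Definition \ref{defi: subdivision}, and hence $\mathfrak{k}^\pcomplexes$ is proper. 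This is precisely the point where clutching improves on ``forgetting the marking'': there the cone-maps collapse a coordinate (Example \ref{ex: weak properness of forgetting the marking}) and only weak properness survives, whereas here they are isomorphisms.

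For the remaining requirement I would verify the lifting property of $\mathfrak{k}^\pspaces=\kappa$ directly. Given an isomorphism $f\colon\kappa(G,H)\to t$ in $\bbG_{j,C}$ and the cone $(G,H)$ lying over $\kappa(G,H)$, I transport the clutching structure along $f$: the glueing vertex $V_c$ is a cut vertex of $\kappa(G,H)$ separating the part carrying the $A\setminus\{c\}$-markings from the part carrying the $B\setminus\{c\}$-markings, and since $A\cap B=\{c\}$ these two marking sets are disjoint, so the two sides are unambiguously determined. Pushing this decomposition through $f$ exhibits $t$ as a clutching $\kappa(G',H')$ and writes $f=\kappa(f_1,f_2)$ with $f_1\colon G\to G'$ and $f_2\colon H\to H'$ the restrictions of $f$; setting $t'=(G',H')$ and $f'=(f_1,f_2)$ gives the required lift, and uniqueness is immediate because $f_1,f_2$ are forced to be restrictions of $f$. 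The main obstacle is the properness step, and within it the verification that the cone-wise maps are genuine isomorphisms of \emph{partially open} cones — that the subtree-through-$V_c$ bijection matches present faces on the nose — since this is exactly what distinguishes the genuinely proper clutching morphism from the merely weakly proper forgetful one.
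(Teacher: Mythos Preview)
Your proof is correct and follows essentially the same approach as the paper's own (very terse) argument. The paper likewise reduces properness of $\mathfrak{k}^\pcomplexes$ to the observation that the cone-wise maps $\eta_{\mathfrak{k}^\pcomplexes,\bullet}$ are isomorphisms of poics, handles the isomorphism-lifting property of $\mathfrak{k}^\pspaces$ by ``keeping track of the images of the subgraphs of $\kappa(G,H)$ generated by $G$ and $H$'' (your cut-vertex-plus-markings description is exactly this), and dismisses the commuting square as a routine check; your write-up simply fleshes out each of these three steps with more detail than the paper provides.
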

\begin{proof}
    The properness of $\mathfrak{k}^\pcomplexes$ just depends on the properness of $\kappa\colon \Mtrop_{0,A\sqcup\mathbbm{g}}\times\Mtrop_{0,B\sqcup\mathbbm{h}}\to\Mtrop_{0,C\sqcup\mathbbm{j}}$. But this is clear, since the $\eta_{\mathfrak{k}^c,\bullet}$ are isomorphisms of poics. The lifting property of $\mathfrak{k}^\pspaces$ on isomorphisms follows by keeping track of the images of the subgraphs of $\kappa(G,H)$ generated by $G$ and $H$ (and reconstructing the respective isomorphisms in this way). To close, we observe that the relation $\st_{j,C}\circ \mathfrak{k}^\pcomplexes=\mathfrak{k}^\pspaces\circ \left(\st_{g,A}\times\st_{h,B}\right)$ is readily checked.
\end{proof}
\begin{defi}
 We also call the proper morphism of linear poic-fibrations \eqref{eq: clutching morphism fibrations} \emph{the clutching morphism}.
\end{defi} 

\begin{example}

    An example of the clutching morphism is explicitly depicted in Figure \ref{fig: clutching}, where we let $A=\{a_1,a_2,a_3,c\}$ and $B=\{b_1,b_2,c\}$. In this case, we have that $A\Delta B=\{a_1,a_2,a_3,b_1,b_2\}$, and from objects $G$ of $\bbG_{2,A}$ and $H$ of $\bbG_{3,B}$, we obtain an object $\kappa(G,H)$ of $\bbG_{5,A\Delta B}$. For size considerations we omit the labelling of vertices and edges in these depictions.

    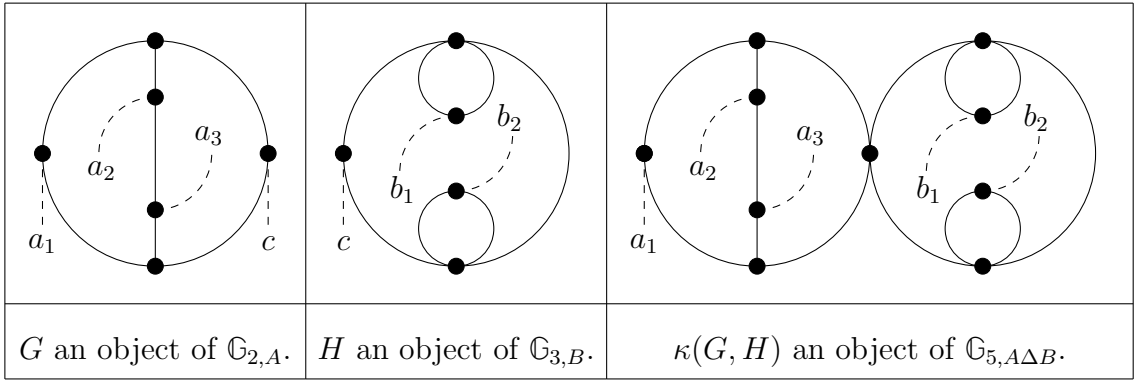
\begin{figure}[htbp]
        \begin{tikzpicture}
            \draw[dashed](-6.5,0)--(-6.5,-1) node [pos=1.2] {$a_1$};
            \draw[dashed] (-5,0.75) arc (90:180:0.75) node [pos = 1.2] {$a_2$};
            \draw[dashed] (-5,-0.75) arc (-90:0:0.75) node [pos = 1.2] {$a_3$};
            \draw[dashed](-3.5,0)--(-3.5,-1) node [pos=1.2] {$c$}; 
            \draw[fill=none](-5,0) circle (1.5);
            \draw[fill=black](-5,1.5) circle (3pt);
            \draw[fill=black](-5,-1.5) circle (3pt);
            \draw[fill=black](-6.5,0) circle (3pt);
            \draw[fill=black](-5,0.75) circle (3pt);
            \draw[fill=black](-5,-0.75) circle (3pt);
            \draw[fill=black](-3.5,0) circle (3pt);
            \draw[](-5,1.5)--(-5,-1.5);

            \draw[fill=none](-1,0) circle (1.5);
            \draw[fill=none](-1,1) circle (0.5);
            \draw[fill=none](-1,-1) circle (0.5);
            \draw[dashed] (-1,0.5) arc (90:180:0.75) node [pos = 1.2] {$b_1$};
            \draw[dashed] (-1,-0.5) arc (-90:0:0.75) node [pos = 1.2] {$b_2$};
            \draw[dashed] (-2.5,0)--(-2.5,-1) node [pos=1.2] {$c$};
            \draw[fill=black](-2.5,0) circle (3pt);
            \draw[fill=black](-1,1.5) circle (3pt);
            \draw[fill=black](-1,0.5) circle (3pt);
            \draw[fill=black](-1,-0.5) circle (3pt);
            \draw[fill=black](-1,-1.5) circle (3pt);

            \draw[dashed](1.5,0)--(1.5,-1) node [pos=1.2] {$a_1$};
            \draw[dashed] (3,0.75) arc (90:180:0.75) node [pos = 1.2] {$a_2$};
            \draw[dashed] (3,-0.75) arc (-90:0:0.75) node [pos = 1.2] {$a_3$};
            \draw[dashed] (6,0.5) arc (90:180:0.75) node [pos = 1.2] {$b_1$};
            \draw[dashed] (6,-0.5) arc (-90:0:0.75) node [pos = 1.2] {$b_2$};
            \draw[fill=none](3,0) circle (1.5);
            \draw[](3,1.5)--(3,-1.5);
            \draw[fill=none](6,0) circle (1.5);
            \draw[fill=none](6,1) circle (0.5);
            \draw[fill=none](6,-1) circle (0.5);
            \draw[fill=black](3,1.5) circle (3pt);
            \draw[fill=black](3,-1.5) circle (3pt);
            \draw[fill=black](1.5,0) circle (3pt);
            \draw[fill=black](3,0.75) circle (3pt);
            \draw[fill=black](3,-0.75) circle (3pt);
            \draw[fill=black](1.5,0) circle (3pt);
            \draw[fill=black](4.5,0) circle (3pt);
            \draw[fill=black](6,1.5) circle (3pt);
            \draw[fill=black](6,0.5) circle (3pt);
            \draw[fill=black](6,-0.5) circle (3pt);
            \draw[fill=black](6,-1.5) circle (3pt);

            \draw (-7,2)--(8,2);
            \draw (-7,2)--(-7,-3);
            \draw (-3,2)--(-3,-3);
            \draw (1,2)--(1,-3);
            \draw (8,2)--(8,-3);
            \draw (-7,-2)--(8,-2);
            \draw (-7,-3)--(-3,-3) node [midway, above] {$G$ an object of $\bbG_{2,A}$};
            \draw (-3,-3)--(1,-3) node [midway, above] {$H$ an object of $\bbG_{3,B}$};
            \draw (1,-3)--(8,-3) node [midway, above] {$\kappa(G,H)$ an object of $\bbG_{5,A\Delta B}$};

        \end{tikzpicture}
        \caption{The morphism $\kappa$ in action}
        \label{fig: clutching}
    \end{figure}
\end{example}
\subsection{Mumford curves and cycle rigidification.} In \cite{CavalieriGrossMarkwig}, the authors approach similar constructions regarding the moduli spaces of tropical curves. It is worth mentioning that they consider discrete graphs with non-trivial weight functions on their vertices, something that we do not consider. They define the notion of a tropical space, and the category of tropical spaces, which they provide with the structure of a site. They additionally define the notion of a \emph{family of $n$-marked genus-$g$ stable
tropical curves} over a fixed tropical space, and introduce the category $\overline{\mathscr{M}}_{g,n}$ fibered in groupoids over the category of tropical spaces given by
\begin{equation*}
    \overline{\mathscr{M}}_{g,n}= \{ \textnormal{ families of }n\textnormal{-marked genus-}g \textnormal{ stable tropical curves over } B\},
\end{equation*}
where $B$ is a tropical space. This fibered category is a stack with respect to the specified site structure, that is not geometric and it is not possible to provide it with an atlas. The issue being topological, since there cannot exist a local isomorphism onto $\overline{\mathscr{M}}_{g,n}$. Therefore, the intersection theory they define for tropical spaces cannot be applied to $\overline{\mathscr{M}}_{g,n}$, but nonetheless they are able to define divisors and line bundles on this stack, and they settle by considering tropical cycles on tropical spaces $T$ that are equipped with a morphism $T\to \overline{\mathscr{M}}_{g,n}$. However, this stack contains the open substack $\mathscr{M}^{\Mf}_{g,n}$ of \emph{Mumford curves} that does admit an atlas. 

A \emph{Mumford curve} in \cite{CavalieriGrossMarkwig} is a tropical curve, where the genus function of the underlying combinatorial type (the discrete graph) is identically zero. Namely, what we call a tropical curve in this paper. In terms of spaces, our $\cM^{\trop}_{g,n}$ corresponds identically to their $\mathscr{M}^{\Mf}_{g,n}$. The atlas they construct for $\mathscr{M}^{\Mf}_{g,n}$ is by means of cycle rigidifications. That is, they construct a space $\mathscr{V}^{\Mf}_{g,n}$ whose points parametrize cycle rigidified tropical curves. A cycle rigidified graph is simply a graph together with an oriented cycle basis consisting of primitive cycles (those that are not subsets of another cycle). We briefly explain how to interpret the space $\mathscr{V}^{\Mf}_{g,n}$ of cycle rigidified Mumford curves in terms of our framework.
\begin{const}
    Suppose $g,n\geq 0$ are such that $n+2g-2>0$. Let $\bbV_{g,n}$ denote the category specified by:
    \begin{itemize}
        \item The objects of $\bbV_{g,n}$ consist of tuples $(G,c_1^G,\dots,c_g^G)$, where $G$ is an object of $\bbG_{g,n}$ and $(c_i^G)_{1\leq i\leq g}$ is an oriented cycle basis of $H_1(G,\bbZ)$ consisting of primitive cycles.
        \item For two objects $(G,c_i^G)$ and $(H,c_i^H)$, the set of morphisms $\Hom_{\bbV_{g,n}}((G,c_i^G),(H,c_i^H))$ consists of the contractions $f\colon G\to H$ such that $f_*c_i^G=c_i^H$.
        \item Composition of morphisms is just composition of maps.
    \end{itemize}
    We remark that $\bbV_{g,n}$ is essentially finite, and there are no non-trivial automorphisms at objects of $\bbV_{g,n}$ (see Lemma 4.2 of \cite{CavalieriGrossMarkwig}). Naturally, the association
    \begin{equation*}
        {\drgdf_{g,n}}\colon \bbV_{g,n}\to\bbG_{g,n}, (G,(c_i^G))\mapsto G,
    \end{equation*}
    (here $\drgdf$ stands for derigidification) is a functor, and by setting $\mathrm{V}_{g,n}:=\Mtrop_{g,n}\circ {\drgdf_{g,n}}$, we obtain a poic-space $\mathrm{V}_{g,n}$ with a morphism of poic-spaces $\drgdf\colon\mathrm{V}_{g,n}\to\Mtrop_{g,n}$. This is just a poic-space, because the category $\bbV_{g,n}$ is not thin. It is clear from construction that the realization of this space is naturally homeomorphic to $\mathscr{V}^{\Mf}_{g,n}$.
\end{const}
\begin{nota}
    Let $\bbR_{\neq0}$ denote the linear poic-complex given as $\bbR_{>0}\sqcup \bbR_{<0}$, where we regard these poics as subcones of $\underline{\bbR}$. The underlying category of this poic-complex consists of two objects, which we will denote by $\pm 1$.
\end{nota}
Letting $g,n\geq0$ as above, we now construct a poic-fibration over $\bbV_{g,n}$ in the same spirit as $\st_{g,n}$.
\begin{const}
    Consider the linear poic-complex $\Mtrop_{0,n+2g}\times (\bbR_{\neq0})^g$. So an object of the underlying category is a tuple $(T, (s_i)_{1\leq i\leq g})$ where $s_i = \pm1$. To such an object we associate the cycle rigidified graph $(\st_{g,n}(T), (c^{s_i})_{1\leq i\leq g})$, where $c^{s_i}$ is the oriented primitive cycle defined by the primitive cycle created by the added edge $e_i\in E(\st_{g,n}(T))$ with the following orientation:
    \begin{itemize}
        \item if $s_i=1$, then the orientation is from the vertex incident to $\partial\ell_{n+i}(T)$ towards the vertex incident to $\ell_{n+g+i}(T)$, 
        \item if $s_i=-1$, then the reverse orientation.
    \end{itemize}
    The morphisms are assigned just as with $\st_{g,n}$, and this association can be seen to give rise to a well-defined functor. Further, this functor can be improved to a morphism of poic-spaces
    \begin{equation}
        \rgdfst_{g,n}\colon \Mtrop_{0,n+2g}\times (\bbR_{\neq0})^g\to \mathrm{V}_{g,n},\label{eq: cycle rigidified poic-fibration}
    \end{equation}
    by letting the natural transformation be just like in \eqref{eq: poic morphisms from spanning tree cover} (here $\rgdfst$ stands for rigidification of spanning trees). As in Proposition \ref{prop: spanning tree fibration}, this is a poic-fibration.
\end{const}

We can now take the realization of \eqref{eq: cycle rigidified poic-fibration} to obtain a continuous map
\begin{equation}
        |\rgdfst_{g,n}|\colon \cM^\trop_{0,n+2g}\times (\bbR_{\neq0})^g\to \mathscr{V}^{\Mf}_{g,n},
\end{equation}
which is a morphism of tropical spaces (where $\cM^\trop_{0,n+2g}\times (\bbR_{\neq0})^g$ has the tropical space structure given by its natural embedding to $Q_{n+2g}\oplus\bbR^g$). Under this morphism, we can identify tropical cycles of the tropical space $\mathscr{V}^{\Mf}_{g,n}$ as tropical cycles of the tropical space $\cM^\trop_{0,n+2g}\times (\bbR_{\neq0})^g$ that are invariant with respect to the map $|\rgdfst_{g,n}|$. This coincides with our constructions, so to conclude that we can compute the tropical cycles on the tropical space $\mathscr{V}^{\Mf}_{g,n}$ in terms of our framework, it suffices to explain how to compute the tropical cycles on the tropical space $\cM^\trop_{0,n+2g}\times (\bbR_{\neq0})^g$ in terms of our framework. Since the tropical space  $\cM^\trop_{0,n+2g}\times (\bbR_{\neq0})^g$ is embedded in a real vector space, the tropical cycles of this tropical space coincide with the classical (as in \cite{AllermannRau}) tropical cycles. Therefore, following an analogous construction to Construction \ref{const: polyhedral cycles}, we can modify $\cM^\trop_{0,n+2g}\times (\bbR_{\neq0})^g$ and produce, in this way, a linear poic-complex whose cycles are the same as the tropical cycles of this tropical space.
It is possible to follow a similar pattern of the construction of $\rgdfst_{g,n}$ for $\Mtrop_{0,n+2g}\times \underline{\bbR}^g$ and $\overline{\mathscr{M}}_{g,n}$. Of course this does not solve the previously mentioned topological issue of providing the stack $\overline{\mathscr{M}}_{g,n}$ with an atlas, because the underlying map is by no means a local isomorphism. However, this construction can be understood in a similar combinatorial vein as what has been done for $\cM^{\trop}_{g,n}$. In this case, instead of $\bbG_{g,n}$ one must consider the category of genus-$g$ $n$-marked stable graphs, and perform the analogous constructions. Since we make use of the specific poic-fibration $\st_{g,n}$ in upcoming work, we defer the details of this case for future work.

\section{Appendix}
\subsection{Equivalent poic-complexes and subdivisions.} The notion of isomorphism of poic-complexes is quite restrictive, and it is often the case that we can change the underlying category by an equivalent one, in order to simplify some computations. Let $\Phi\colon C_\Phi\to\POIC$ denote a poic-complex. 
\begin{lem}\label{lem: equivalence poic-complexes.}
    If $D$ is a category with a functor $F:D\to C_\Phi$ that is an equivalence, then $\Phi\circ F$ is also a poic-complex. Moreover, the functor $F$ induces a morphism of poic-complexes $\Phi\circ F\to \Phi$ whose realization is a homeomorphism, which is piecewise linear at the cones.
\end{lem}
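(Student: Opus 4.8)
The plan is to verify the two defining conditions of a poic-complex for $\Phi\circ F$, then to exhibit the claimed morphism as the pair consisting of $F$ together with the identity natural transformation, and finally to deduce that its realization is a homeomorphism from the fact that precomposition with an equivalence of categories preserves colimits.

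First I would check that the domain category $D$ of $\Phi\circ F$ is essentially finite and thin. Since $F$ is an equivalence, $D$ is equivalent to $C_\Phi$, which is essentially finite, so $D$ is essentially finite; and $F$ being fully faithful gives $\Hom_D(d_1,d_2)\cong \Hom_{C_\Phi}(F(d_1),F(d_2))$, which has at most one element, so $D$ is thin. The first condition of a poic-complex is immediate: for a morphism $h$ of $D$ the map $(\Phi\circ F)(h)=\Phi(F(h))$ is a face-embedding because $\Phi$ is a poic-complex. For the second condition I would start from a face $\tau\leq \Phi(F(d))=(\Phi\circ F)(d)$ and apply the second condition for $\Phi$ to obtain a morphism $g\colon p\to F(d)$ of $C_\Phi$, unique up to isomorphism, with $\Phi(g)\cong \tau$. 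Essential surjectivity of $F$ furnishes an object $d'$ of $D$ and an isomorphism $F(d')\cong p$, and full faithfulness of $F$ lifts $g$ (composed with this isomorphism) to a unique morphism $d'\to d$ of $D$ realizing $\tau$; uniqueness up to isomorphism in $D$ transports back from uniqueness in $C_\Phi$ along the faithfulness of $F$.

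Next I would produce the morphism $\Phi\circ F\to \Phi$. Taking the underlying functor to be $F\colon C_{\Phi\circ F}=D\to C_\Phi$, the target of the required natural transformation is $\Phi\circ F$ itself, so I may take $\eta$ to be the identity natural transformation. The non-degeneracy condition holds trivially, since $\eta_d$ is the identity of $\Phi(F(d))$ and its image is all of $\Phi(F(d))$, hence contained in no proper face.

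Finally, for the realization I would use that $\lvert\Phi\circ F\rvert=\colim_D(\lvert\bullet\rvert\circ\Phi\circ F)$ and $\lvert\Phi\rvert=\colim_{C_\Phi}(\lvert\bullet\rvert\circ\Phi)$. Since $F$ is an equivalence it is final, so the canonical comparison map $\colim_D(\lvert\bullet\rvert\circ\Phi\circ F)\to\colim_{C_\Phi}(\lvert\bullet\rvert\circ\Phi)$ induced by $F$ is an isomorphism in $\Top$; this comparison map is precisely $\lvert(F,\mathrm{id})\rvert$, so the realization is a homeomorphism. Because $\eta_d$ is the identity linear map on each cone, the realization restricts to a linear isomorphism on the image of every cone, so it is piecewise linear at the cones. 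The only point demanding care is the bookkeeping in the second condition, where essential surjectivity and full faithfulness of $F$ must be used simultaneously to secure both existence and uniqueness up to isomorphism of the lifted morphism; this is the main, though routine, obstacle.
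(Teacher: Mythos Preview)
Your proof is correct and follows essentially the same approach as the paper: both verify that $D$ inherits thinness and essential finiteness from $C_\Phi$ via the equivalence, take the morphism to be $F$ together with the identity natural transformation, and deduce the homeomorphism on realizations from the fact that $F$ is an equivalence. Your version is simply more explicit, spelling out the verification of the second poic-complex condition and invoking finality of equivalences for the colimit comparison, whereas the paper leaves these as implicit routine checks.
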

\begin{proof}
    An equivalence is given by an essentially surjective fully faithful functor. The functor being fully faithful implies that $D$ is thin, and together with essential surjectivity, it also implies that $D$ is essentially finite. Hence, the tuple $\Phi\circ F$ is a poic-complex, and the identity natural transformation $\Id:(\Phi\circ F)\implies \Phi\circ F$ gives rise to a morphism of poic-complexes $\Phi\circ F\to \Phi$, where at each cone of $\Phi\circ F$ it is an (integral linear) isomorphism. Since $F$ is essentially surjective and fully faithful, it follows from the previous fact that the induced map between the realizations is a homemorphism with the prescribed properties. 
\end{proof}
\begin{defi}
    We say that a morphism of poic-complexes $E\colon \Sigma\to \Phi$ is an \emph{equivalence}, if the underlying functor $E$ is an equivalence, and $\eta_E$ consists of isomorphisms. In this case, the morphism $E$ induces an isomorphism between the corresponding realizations. 
\end{defi}
\begin{lem}\label{lem: precomposition of subd with eq is subd.}
    If $E\colon\Sigma\to\Phi$ is an equivalence and $\Phi_X$ is a linear poic-complex, then $E$ is a proper morphism and 
    \begin{equation*}
        E_*\colon M_k(\Sigma_{X\circ E})\to M_k(\Phi_X)
    \end{equation*}
    is an isomorphism for every integer $k\geq 0$. Moreover, if $S\colon\Sigma^\prime \to \Sigma$ is a subdivision, then $S\circ E\colon \Sigma^\prime\to \Phi$ is also a subdivision.
\end{lem}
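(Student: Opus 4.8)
The plan is to verify the three assertions in turn, exploiting throughout the single structural fact that an equivalence $E$ has $\eta_E$ consisting of poic-isomorphisms, so that $E$ merely relabels cones along an equivalence of (thin) categories and applies a lattice isomorphism at each cone. The cleanest route to properness is an auxiliary observation: \emph{any} morphism of poic-complexes all of whose linear parts are injective is automatically weakly proper. Indeed, if $P_p\colon \Phi(p)\to\Psi(P(p))$ is injective then $\overline{P_p(\Phi(p))}=P_p(\overline{\Phi(p)})$ is a faithful copy of $\overline{\Phi(p)}$, so each face $\tau$ of this image cone is $P_p$ of a face of $\overline{\Phi(p)}$, i.e. of a poic-face of $\Phi(p)$; by condition (2) of a poic-complex this poic-face is realized by a unique $q\to p$, and since $P_p$ preserves dimension this $q\to p$ has the correct codimension and $P$ carries it to $\tau$. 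Now for any subdivision $S'\colon \Sigma'\to\Sigma$ the linear part of the composite at $p$ is $\eta_{E,S'(p)}\circ (S')_p$, a composite of an isomorphism with an injection, hence injective; so the composite is weakly proper, and therefore $E$ is proper in the sense of Definition \ref{defi: proper poic-complexes}.

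For the pushforward I would first note that the identity subdivision $\mathrm{Id}\colon \Phi\to\Phi$ is $E$-fine: every $E_s$ is an isomorphism, and taking $t=E(s)$ with $f=\mathrm{id}$ the images of the relative interiors of $\Phi(E(s))$ and of $\Sigma(s)$ coincide, giving $(E_{\mathrm{Id}})_*([s])=[E(s)]$. Plugging into the formula of Construction \ref{const: pushforward general weights}, and using that $E$ induces a bijection $[E]\colon[\Sigma](k)\to[\Phi](k)$ (an equivalence is bijective on isomorphism classes), for each $[t]\in[\Phi](k)$ there is exactly one $[s]$ with $[E(s)]=[t]$, and the index $[N^t:E_sN^s]=1$ because $E_s$ is a lattice isomorphism. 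Hence $(E_{\mathrm{Id}})_*$ is just the transport of weights along $[E]$, in particular an isomorphism on the full weight groups $W_k$. By Lemma \ref{lem: pushforward} it carries $M_k(\Sigma_{X\circ E})$ into $M_k(\Phi_X)$; conversely, given $\omega'\in M_k(\Phi_X)$ the weight $\omega'\circ [E]$ is $(X\circ E)$-balanced, since each $\eta_{E,\cdot}$ identifies the star $\Star^1_\Sigma(s)$ with $\Star^1_\Phi(E(s))$ together with the relevant lattices and normal vectors. This shows $E_*$ restricts to a bijection $M_k(\Sigma_{X\circ E})\to M_k(\Phi_X)$.

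For the last claim I would check the three conditions of Definition \ref{defi: subdivision} for the composite $\Sigma'\xrightarrow{S}\Sigma\xrightarrow{E}\Phi$ (denoted $S\circ E$ in the statement). Essential surjectivity is inherited from $S$ and $E$, injectivity of the linear parts from the observation above, and the dimension-matching clause follows because $\eta_E$ preserves dimensions, so $d(t)=d((E\circ S)(t))$ forces $d(t)=d(S(t))$, whence $S_t$ and then the composite are isomorphisms. For the relative-interior condition I would use that $E$ identifies a cone $x$ of $\Phi$ with $E(s)$ for an up-to-isomorphism unique $s$, that the cones $p$ of $\Sigma'$ with $(E\circ S)(p)\cong x$ are exactly those with $S(p)\cong s$, and then transport the corresponding statement for $S$ through the isomorphism $\eta_{E,s}$. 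For the morphism-lifting condition, a morphism $x\to (E\circ S)(t)=E(S(t))$ in $\Phi$ corresponds, by fullness and essential surjectivity of $E$, to a morphism $x'\to S(t)$ in $\Sigma$; lifting this along the subdivision $S$ produces $q\to t$ with $S(q\to t)\cong x'\to S(t)$, and applying $E$ yields $(E\circ S)(q\to t)\cong x\to E(S(t))$.

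The bookkeeping is uniform, so the only place demanding genuine care is the second claim: one must check that $E_*$ is surjective \emph{onto balanced weights}, i.e. that $(X\circ E)$-balancing at $s$ is equivalent to $X$-balancing at $E(s)$. I expect this to be the main (though not deep) obstacle, and it is handled exactly as in Lemma \ref{lem: balancing respects isomorphisms}: the poic-isomorphism $\eta_{E,s}$ induces compatible identifications of $N^s$ with $N^{E(s)}$, of $N^t$ with $N^{E(t)}$, and of the quotient lattices, under which the normal vectors $u^{X\circ E}_{[s\to t]}$ and $u^{X}_{[E(s)\to E(t)]}$ agree, so the two balancing equations are literally the same.
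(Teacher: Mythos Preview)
Your proof is considerably more detailed than the paper's, which does not address properness at all and dismisses the subdivision claim as ``a routine check''; its only substantive content is the sentence that the bijection $[\Sigma]\to[\Phi]$ together with the isomorphisms $\eta_E$ forces the balancing conditions on both sides to coincide. Your treatment of the pushforward (identifying $E_*$ with transport along $[E]$ via the identity as an $E$-fine subdivision) and your verification of the three subdivision axioms for the composite are correct and fill in what the paper leaves implicit; you also correctly work with the composite $E\circ S$ despite the typo in the statement.

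There is one genuine gap. Your auxiliary claim that every morphism with injective linear parts is weakly proper rests on the step ``each face $\tau$ of $\overline{P_p(\Phi(p))}$ is $P_p$ of a face of $\overline{\Phi(p)}$, i.e.\ of a poic-face of $\Phi(p)$''. For partially open cones this last identification fails: the origin is a face of $\overline{\bbR_{>0}}=\bbR_{\geq 0}$ but is not a poic-face of $\bbR_{>0}$, so condition (2) of a poic-complex provides no $q\to p$ to lift it. (Definition \ref{defi: proper poic-complexes} is admittedly ambiguous about which faces one must lift, but your argument as written does not go through under the reading that matches how weak properness is used in Lemma \ref{lem: pushforward}.) For the present lemma you do not need this generality: since each $\eta_{E,s}$ is a \emph{surjection} onto $\Phi(E(s))$, the image already equals the target cone, and every poic-face of $\Phi(E(s))$ is realized by some morphism in $C_\Phi$, which the equivalence pulls back to a morphism in $C_\Sigma$ of the same codimension. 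Properness then follows because composing with the isomorphism $\eta_E$ reduces weak properness of $E\circ S'$ to the corresponding lifting property for $S'$ inside $\Sigma$, which is read off from the subdivision axioms directly.
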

\begin{proof}
    An equivalence $E\colon\Sigma\to \Phi$ establishes a bijection between isomorphism classes, and therefore the corresponding weight groups are isomorphic. Since $\eta_E\colon \Sigma \implies \Phi\circ F_E$ consists of isomorphisms, it follows that the balancing conditions are the same, so the Minkowski weights are isomorphic. The statement on subdivisions is a routine check.
\end{proof}
\begin{lem}\label{lem: poic-complex equiv to poset poic-complex}
    Any poic-complex $\Phi$ is equivalent to a poic-complex whose underlying category is a poset.
\end{lem}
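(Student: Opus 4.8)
The plan is to reduce the statement to pure category theory and then invoke Lemma \ref{lem: equivalence poic-complexes.} to transport the poic-complex structure. Recall that the underlying category $C_\Phi$ is, by definition, essentially finite and thin. A thin category is exactly a preorder: setting $p\leq q$ whenever $\Hom_{C_\Phi}(p,q)\neq\varnothing$, identities give reflexivity, composition gives transitivity, and thinness guarantees that these hom-sets carry no further data. The only feature obstructing $C_\Phi$ from being literally a poset is the possible failure of antisymmetry, that is, the presence of non-identity isomorphisms between distinct objects.

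First I would pass to a skeleton. Since $C_\Phi$ is essentially finite, I choose a full subcategory $D\subset C_\Phi$ containing exactly one object from each isomorphism class; then $D$ has finitely many objects, and the inclusion $F\colon D\to C_\Phi$ is an equivalence, being fully faithful by construction and essentially surjective because every object of $C_\Phi$ is isomorphic to one in $D$. Next I would verify that $D$ is a poset. As a full subcategory of a thin category, $D$ is again thin, hence a preorder under the relation above. For antisymmetry, suppose $p\leq q$ and $q\leq p$ in $D$; the composites $p\to q\to p$ and $q\to p\to q$ are forced to be the identities by thinness, so $p$ and $q$ are isomorphic. But $D$ retains only one object per isomorphism class, whence $p=q$. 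Thus $D$ is a finite poset.

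Finally, Lemma \ref{lem: equivalence poic-complexes.} applies directly to the equivalence $F\colon D\to C_\Phi$: the functor $\Phi\circ F$ is again a poic-complex, and $F$ induces a morphism of poic-complexes $\Phi\circ F\to\Phi$ whose realization is a homeomorphism. Since the underlying category of $\Phi\circ F$ is the poset $D$, this exhibits $\Phi$ as equivalent to a poic-complex on a poset, as desired. I should be candid that there is no substantial obstacle here: the content is essentially a repackaging of the remark that any finite thin category is equivalent to a finite poset, and all of the poic-complex structure (the face-embedding and face-lifting axioms) is carried over for free by Lemma \ref{lem: equivalence poic-complexes.}. The only point demanding any care is the antisymmetry check together with the observation that essential finiteness makes the chosen skeleton genuinely finite.
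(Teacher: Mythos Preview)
Your proposal is correct and follows essentially the same approach as the paper: both pick a skeleton of $C_\Phi$ by choosing one representative per isomorphism class, observe that the resulting full subcategory is a finite poset, and then invoke Lemma~\ref{lem: equivalence poic-complexes.} to transport the poic-complex structure along the inclusion equivalence. Your write-up is in fact slightly more explicit in verifying antisymmetry of the skeleton, but the argument is the same.
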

\begin{proof}
    Let $\Phi$ be an arbitrary poic-complex. By definition, the isomorphism classes $[\Phi]$ form a finite set. For each class $x\in [\Phi]$, let 
    \begin{equation*}
        \texttt{S} =\{ s_x : s_x \textnormal{ is an object of } C_\Phi \textnormal{ and }[s_x]=x\}
    \end{equation*}
    be a full set of representatives\footnote{This is instance of the axiom of choice is commonly known as the global axiom of choice, and we make use of it. If the reader prefers, it can instead be assumed that every essentially finite category that we consider comes with a specified choice of representatives. This can be granted in the cases of interest.} of $[\Phi]$. Let $C_{\texttt{S}(\Phi)}$ denote the full subcategory of $C_\Phi$ given by $\texttt{S}$. By construction, this category is actually a poset and the inclusion $I:C_{\texttt{S}(\Phi)}\to C_\Phi$ is a fully faithful essentially surjective functor. Therefore, Lemma \ref{lem: precomposition of subd with eq is subd.} shows that $\Phi\circ I$ is a poi-complex an equivalence of categories, and by definition the inclusion morphism $\Phi\circ I\to \Phi$ is an equivalence . 
\end{proof}

\begin{lem}\label{lem: equiva poic-complexes equivalent subdivisions}
    Suppose $\Sigma$ and $\Phi$ are poic-complexes, where the underlying category of $\Sigma$ is a given by a (finite) poset. A morphism $E\colon\Sigma\to\Phi$ that is an equivalence induces an equivalence of categories $\Subd(\Sigma)\to \Subd(\Phi)$. In addition, if $\Phi_X$ is a linear poic-complex, then the morphism induces an isomorphism $E_*\colon Z_k(\Sigma_{X\circ E})\to Z_k(\Phi_X)$ for any integer $k\geq0$. 
\end{lem}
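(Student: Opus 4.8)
The plan is to realize the functor $E_\sharp\colon\Subd(\Sigma)\to\Subd(\Phi)$ by postcomposition with $E$ and to exhibit a quasi-inverse built from a quasi-inverse of $E$. On objects, a subdivision $S\colon\Upsilon\to\Sigma$ is sent to $E\circ S\colon\Upsilon\to\Phi$, which is again a subdivision by Lemma \ref{lem: precomposition of subd with eq is subd.}; on morphisms, a refinement $Q\colon S'\to S$ (a subdivision $Q$ with $S\circ Q=S'$) is sent to the same $Q$, now viewed as a morphism $E\circ S'\to E\circ S$, since $(E\circ S)\circ Q=E\circ S'$ by associativity. The first thing I would record is that, because the underlying functor of $E$ is an equivalence onto $C_\Phi$ whose source $C_\Sigma$ is a poset, $E$ is injective on objects: if $E(a)=E(b)$ then $a\cong b$ by full faithfulness, hence $a=b$ by antisymmetry. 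I would then fix a quasi-inverse $F\colon\Phi\to\Sigma$ of poic-complexes, taking a quasi-inverse $\bar F$ of the underlying functor and equipping it with a natural transformation $\eta_F$ assembled from the inverses of the components of $\eta_E$ and from $\Phi$ applied to the structural isomorphisms $v_x\colon EF(x)\to x$; all of these are isomorphisms of poics, so $\eta_F$ consists of isomorphisms and $F$ is again an equivalence, which (using injectivity of $E$ on objects and the poset structure of $C_\Sigma$) I normalize so that $F\circ E=\Id_\Sigma$ strictly.

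Next I would prove that $E_\sharp$ is fully faithful, the key cancellation being that for morphisms of poic-complexes $G_1,G_2\colon\Upsilon'\to\Sigma$ the equality $E\circ G_1=E\circ G_2$ forces $G_1=G_2$. Indeed, on underlying functors $E\circ\bar G_1=E\circ\bar G_2$ together with injectivity of $E$ on objects gives $\bar G_1=\bar G_2$ (the categories being thin), and then the composition rule $\eta_{E\circ G_i,p}=\eta_{E,\bar G_i(p)}\circ\eta_{G_i,p}$ allows me to cancel the isomorphisms $\eta_{E,-}$ and conclude $\eta_{G_1}=\eta_{G_2}$. Faithfulness of $E_\sharp$ is then clear, and fullness follows because a morphism $E\circ S'\to E\circ S$ is a subdivision $Q$ with $(E\circ S)\circ Q=E\circ S'$, whence $S\circ Q=S'$ by cancellation, so that $Q$ is already a morphism $S'\to S$ in $\Subd(\Sigma)$.

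The delicate step, and the one I expect to be the main obstacle, is essential surjectivity. Given a subdivision $T\colon\Upsilon\to\Phi$ I would set $S:=F\circ T$, again a subdivision of $\Sigma$ by the analogue for $F$ of Lemma \ref{lem: precomposition of subd with eq is subd.}, so that $E_\sharp(S)=E\circ F\circ T$, and I would use the natural isomorphism $v\colon E\circ F\Rightarrow\Id$ to identify $E\circ F\circ T$ with $T$. The subtlety is that $\Subd(\Phi)$ carries no built-in $2$-cells and that $E\circ F$ need not equal $\Id_\Phi$ on the nose: it agrees with the identity only up to the isomorphisms $v_x$, which can move one object of $C_\Phi$ to a distinct but isomorphic one when $\Phi$ is not skeletal. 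I would resolve this by manufacturing the required isomorphism in $\Subd(\Phi)$ directly from the poic-isomorphisms $\Phi(v_{T(p)})$, exploiting that $\eta_E$, $\eta_F$ and the $v_x$ are all isomorphisms, and by reducing to the case where $C_\Upsilon$ is a poset via Lemma \ref{lem: poic-complex equiv to poset poic-complex}; this is precisely the point where one must verify that the componentwise isomorphisms assemble into an honest invertible subdivision witnessing $E\circ F\circ T\cong T$. With full faithfulness and essential surjectivity established, $E_\sharp$ is an equivalence.

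Finally, for the cycle statement I would observe that the Minkowski-weight functor on $\Subd(\Sigma)$ attached to the linear structure $X\circ E$ coincides with the composite of $E_\sharp$ and the Minkowski-weight functor on $\Subd(\Phi)$ attached to $X$: for $S\colon\Upsilon\to\Sigma$ one has $(X\circ E)\circ S=X\circ(E\circ S)$, so the groups $M_k(\Upsilon_{(X\circ E)\circ S})$ and $M_k(\Upsilon_{X\circ(E\circ S)})$ are literally the same and $E\circ S=E_\sharp(S)$. Since $E_\sharp$ is an equivalence, it is both cofinal and co-cofinal, so precomposing the diagram $M_k(\bullet)$ on $\Subd(\Phi)^{\op}$ with $E_\sharp^{\op}$ does not change the colimit, and the comparison map is exactly the pushforward $E_*$ supplied at the level of each subdivision by Lemma \ref{lem: precomposition of subd with eq is subd.}. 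Thus $E_*\colon Z_k(\Sigma_{X\circ E})\to Z_k(\Phi_X)$ is an isomorphism for every $k\geq 0$; injectivity also follows from full faithfulness (equivalently, injectivity of each $E_*$ on Minkowski weights), and surjectivity from essential surjectivity together with the fact that any two subdivisions of $\Phi$ admit a common refinement.
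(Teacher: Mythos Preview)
Your approach is correct and follows the same overall strategy as the paper: define the functor by postcomposition with $E$, establish full faithfulness, construct a quasi-inverse for essential surjectivity, and deduce the cycle isomorphism from the resulting equivalence. The difference lies in how essential surjectivity is handled. The paper does not invoke an abstract quasi-inverse $F\colon\Phi\to\Sigma$; instead, given a subdivision $S'\colon\Phi'\to\Phi$, it builds an explicit poic-complex $E^{-1}\Phi'$ on the \emph{same} underlying category $C_{\Phi'}$ but with new poics obtained as fibre products along $\eta_E$ (using the bijection $C_\Sigma\to[\Phi]$ that comes from $C_\Sigma$ being a poset). Your construction instead keeps the source $\Upsilon$ intact and composes with an honest quasi-inverse morphism $F$ of poic-complexes, which you carefully assemble from $\eta_E^{-1}$ and the counit $v$. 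Both constructions produce the same object up to the obvious identification; yours is more categorical and reusable, the paper's is more hands-on and avoids having to check that $F$ really is a morphism of poic-complexes.

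The subtlety you flag---that $E\circ F$ is only naturally isomorphic to $\Id_\Phi$, so $E\circ F\circ T$ and $T$ may land in isomorphic but distinct objects of $C_\Phi$---is real and is present in the paper's argument as well (there $E\circ G$ need not equal $\bar{S'}$ strictly). Your acknowledgement of it is an improvement in clarity; the resolution in both cases amounts to absorbing the isomorphisms $\Phi(v_x)$ into the natural transformation of the comparison map, which is harmless because $C_\Phi$ is thin. Your derivation of the cycle isomorphism via cofinality of the equivalence $E_\sharp$ is exactly what the paper means by ``follows from Lemma~\ref{lem: precomposition of subd with eq is subd.} and this equivalence''.
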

\begin{proof}
    Observe that the following association is a well defined functor:
    \begin{equation*}
        E_*\colon \Subd(\Sigma)\to \Subd(\Phi), S\mapsto S\circ E.
    \end{equation*}
    We want to show that $E_*$ is an essentially surjective fully faithful functor. The fully faithfulness follows from the fact that $E$ is an equivalence, so we focus on the essential surjectivity. For this, consider given a subdivision $S^\prime\colon\Phi^\prime\to \Phi$, we will define a subdivision $E^{-1}S^\prime\colon E^{-1}\Phi^\prime\to \Sigma$ such that $E^*\left( E^{-1}S^\prime\right) \cong \Phi^\prime$ in $\Subd(\Phi)$. Since $E$ is an equivalence and $C_\Sigma$ is a poset, the functor $E$ establishes a bijection $b_E: C_\Sigma\to [\Phi]$. We let $b_G:[\Phi]\to C_\Sigma$ denote its inverse. Observe that for a cone $s$ of $\Phi^\prime$, there is a unique object $p\in C_\Sigma$ with $E(p)\cong S(s)$, namely $p = b_G([S(s)])$. In fact, this defines a functor
    \begin{equation}
        G:C_{\Phi^\prime}\to C_\Sigma, s\mapsto b_G([S(s)]).\label{eq: the inverse}
    \end{equation}
    To construct the poic-complex $E^{-1}\Phi^\prime$, we set $C_{E^{-1}\Phi^\prime} = C_{\Phi^\prime}$, and for an object $s\in C_{E^{-1}\Phi^\prime}$, we define the poic ${E^{-1}\Phi^\prime}(s)$ as the following fibre square
    \begin{equation*}
        \begin{tikzcd}
                    &&{\Phi^\prime}(s)\arrow{d}{\eta_{S,s}}\\
                    {\Sigma}(G(s))\arrow{rr}[swap]{\eta_{E,G(s)}}&&{\Phi}(S(s)).
        \end{tikzcd}
    \end{equation*}
    In other words, the poic ${E^{-1}\Phi^\prime}(s)$ is the subcone of ${E}(G(s))$ given by the image of ${\Phi^\prime}(s)$ under $\eta_{S,s}$. This definition is naturally functorial, so that $E^{-1}\Phi^\prime$ is a poic-complex. The functor $G:C_{E^{-1}\Phi^\prime}\to C_\Sigma$ and the natural inclusions define a subdivision $S^{-1}E:E^{-1}\Phi^\prime\to \Sigma$. To finalize, we remark that the identity functor $C_{\Phi^\prime}\to C_{\Phi^\prime} = C_{E^{-1}\Phi^\prime}$ and the natural transformation $\eta_S$ (and its inverses defined on the corresponding image cones) define an isomorphism of subdivisions $E_*\left(E^{-1}\Phi^\prime\right) \cong \Phi^\prime$. The isomorphism between the cycle groups follows from Lemma \ref{lem: precomposition of subd with eq is subd.} and this equivalence.
\end{proof}
\subsection{Poic-complexes coming from partially open fans.}\label{appendix: poic-complexes coming from partially open fans} Suppose $N$ is a finite rank lattice and let $V$ denote the real vector space $V:=N\otimes_\bbZ\bbR$. A \emph{partially open fan $\Phi$ in $V$} is a partially open polyhedral complex $\Phi$ in $V$ (see \cite{GathmannOchseMSCTV} and \cite{GathmannMarkwigOchseTMSSMC}) whose polyhedra consist of partially open integral cones of $V$. Naturally, any partially open fan in $V$ gives rise to a poic-complex (with the tautological association) and, furthermore, to a linear poic-complex through the embedding map to $V$. We call this linear poic-complex structure on the partially open fan $\Phi$ in $V$ the \emph{natural linear poic-complex structure in $\Phi$}. The support of a partially open fan $\Phi$ in $V$ is the subset $|\Phi|\subset V$ given by
\begin{equation*}
    |\Phi| := \bigcup_{\sigma\in \Phi}\sigma \subset V.
\end{equation*}
\begin{defi}
    Suppose $\Phi$ is a partially open fan in $V$. A partially open fan $\Psi$ in $V$ is called an \emph{honest subdivision of $\Phi$} if any cone of $\Phi$ is a union of cones of $\Psi$ and $|\Psi|=|\Phi|$. If $\Psi$ is an honest subdivision of $\Phi$, then mapping a cone of $\Psi$ to the minimal cone of $\Phi$ it lies in determines a subdivision of the corresponding poic-complexes. We let $\textnormal{HnstSubd}(\Phi)$ denote the category of honest subdivisions of $\Phi$, where the maps are commuting subdivision maps.
\end{defi}

\begin{lem}\label{lem: partially open fan subd equiv to hnstsubd}
    Suppose $\Phi$ is a partially open fan in $V$. Any subdivision $S\colon \Phi^\prime\to\Phi$ (where $\Phi^\prime$ is an arbitrary poic-complex) is equivalent to an honest subdivision $\underline{\Phi^\prime}$ of $\Phi$. Furthermore, this association gives rise to an equivalence of categories 
    \begin{equation*}
        \underline{\bullet}\colon \Subd(\Phi)\to \textnormal{HnstSubd}(\Phi).
    \end{equation*}
\end{lem}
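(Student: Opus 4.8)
The plan is to realize each abstract cone of the subdivision as an honest partially open integral cone inside $V$, using the embedding that the fan structure on $\Phi$ already provides, and then to check that the resulting collection is an honest subdivision, naturally in the subdivision $S$. Concretely: given a subdivision $S\colon\Phi^\prime\to\Phi$ and a cone $p$ of $\Phi^\prime$, the poic-morphism $\eta_{S,p}\colon\Phi^\prime(p)\to\Phi(S(p))$ has injective linear part, and since $\Phi$ is a fan in $V$ the target $\Phi(S(p))$ is tautologically a subset of $V$; hence $\sigma_p:=\eta_{S,p}\bigl(\Phi^\prime(p)\bigr)\subset V$ is a partially open integral cone. I would then define $\underline{\Phi^\prime}:=\{\sigma_p : p \text{ a cone of }\Phi^\prime\}$. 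It is harmless, and clarifying, to first replace $\Phi^\prime$ by an equivalent poset poic-complex via Lemma \ref{lem: poic-complex equiv to poset poic-complex} and Lemma \ref{lem: precomposition of subd with eq is subd.}, but the construction is insensitive to this: the assignment $p\mapsto\sigma_p$ automatically collapses isomorphic cones.

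Next I would verify that $\underline{\Phi^\prime}$ is a partially open fan in $V$ and an honest subdivision of $\Phi$. Closure under faces follows from the first poic-complex axiom: a face of $\sigma_p$ comes from a face of $\Phi^\prime(p)$, hence from a morphism $p^\prime\to p$ in $\Phi^\prime$, and compatibility of the realizations identifies it with $\sigma_{p^\prime}$. The second axiom of Definition \ref{defi: subdivision} is exactly what supplies both required properties of an honest subdivision: the relative interior of each $\Phi(s)$ is the disjoint union of the images of the relative interiors of the $p$ with $S(p)\cong s$, which gives $|\underline{\Phi^\prime}|=|\Phi|$ (as $|S|$ is a homeomorphism onto $|\Phi|$) and shows every cone of $\Phi$ is a union of cones of $\underline{\Phi^\prime}$. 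Disjointness of relative interiors, together with closure under faces, also forces $\sigma_p\cap\sigma_q$ to be a common face, so $\underline{\Phi^\prime}$ is a genuine partially open fan. Sending each $\sigma_p$ to the minimal cone of $\Phi$ containing it recovers the poic-complex subdivision structure, and the functor $p\mapsto\sigma_p$ with the natural transformation assembled from the $\eta_{S,p}$ gives a morphism $\Phi^\prime\to\underline{\Phi^\prime}$ in $\Subd(\Phi)$. This morphism is essentially surjective by construction, and the characterization-up-to-isomorphism clause of the second axiom gives that $\sigma_p=\sigma_q$ forces $p\cong q$; hence it is an equivalence, proving $S$ is equivalent to the honest subdivision $\underline{\Phi^\prime}$.

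Finally I would promote $\underline{\bullet}$ to a functor $\Subd(\Phi)\to\textnormal{HnstSubd}(\Phi)$ and check it is an equivalence. A morphism $Q\colon S^\prime\to S$ of subdivisions realizes in $V$ as a refinement of $\underline{\Phi^\prime}$ by $\underline{\Phi^{\prime\prime}}$, yielding the honest subdivision map, and functoriality is immediate from compatibility of realizations. Essential surjectivity is clear, since any honest subdivision $\Psi$ of $\Phi$ is tautologically a poic-complex subdivision with $\underline{\Psi}\cong\Psi$. For full faithfulness I would argue that a morphism in either category is determined by its underlying piecewise-integral-linear map of realizations inside $V$: such a map lifting the identity on $|\Phi|$ is pinned down by its effect on relative interiors, so passage to realizations is faithful, and every honest refinement map lifts uniquely to an abstract subdivision morphism.

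The hard part will be the verification that the collection $\{\sigma_p\}$ is a \emph{bona fide} partially open fan rather than merely a set of cones covering $|\Phi|$ --- in particular that two realized cones always meet along a common face and that their relative interiors are pairwise disjoint \emph{globally}, not just within a single $\Phi(s)$. This is precisely where the full strength of the second axiom of Definition \ref{defi: subdivision} (disjointness together with characterization up to isomorphism) must be used, and it is the same subtlety, ruling out a codimension-one cone bounding two top-dimensional cones on the same side, that already had to be handled in the injectivity argument for $S^*$ on Minkowski weights.
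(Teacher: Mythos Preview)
Your proposal is correct and follows essentially the same approach as the paper: define $\sigma_p:=\eta_{S,p}(\Phi^\prime(p))\subset V$, assemble these into the honest subdivision $\underline{\Phi^\prime}$, exhibit $p\mapsto\sigma_p$ as an equivalence of poic-complexes over $\Phi$, and then observe that the resulting functor $\underline{\bullet}$ is fully faithful and essentially surjective (the latter because an honest subdivision is already a poic-complex subdivision with $\underline{\Psi}=\Psi$). If anything, you are more explicit than the paper about the verification that $\{\sigma_p\}$ is a genuine partially open fan --- the paper simply asserts that ``the conditions on subdivisions of poic-complexes imply that this collection is a partially open fan'' --- so your identification of this as the delicate step is well placed.
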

\begin{proof}
    A cone $s$ of the poic-complex $\Phi^\prime$ determines a subcone of $S(s)\in\Phi$, namely $\eta_{S,s}\left({\Phi^\prime}(s)\right)$. We remark that if $s\cong s^\prime$, then the corresponding cones are the same, and hence the collection
    \begin{equation*}
        \{\eta_{S,s}\left({\Phi^\prime}(s)\right) : s \textnormal{ is a cone of }\Phi^\prime\}
    \end{equation*}
    is a finite set of partially open integral cones of $V$. The conditions on subdivisions of poic-complexes imply that this collection is a partially open fan $\underline{\Phi^\prime}$ in $V$ that is also an honest subdivision of $\Phi$. Let $I\colon \underline{\Phi^\prime}\to\Phi$ denote the subdivision map induced by the honest subdivision. The association
    \begin{equation*}
        {E_{\Phi^\prime}}\colon C_{\Phi^\prime}\to \underline{\Phi^\prime}, s\mapsto \eta_{S,s}\left({\Phi^\prime}(s)\right),
    \end{equation*}
    where the morphisms of $\Phi^\prime$ are mapped to the corresponding face relations, is a functor, which must be an equivalence of categories. It can readily be observed that $I\circ {E_{\Phi^\prime}}=S$ and, in addition, the identity maps give rise now to a natural transformation $\eta_{E_{\Phi^\prime}}: {\Phi^\prime}\implies{\underline{\Phi^\prime}}\circ {E_{\Phi^\prime}} $, which actually consists of isomorphisms. Hence, the tuple $E_{\Phi^\prime}\colon \Phi^\prime\to\underline{\Phi^\prime}$ is the sought-after equivalence. By construction, this association
    \begin{equation*}
        \underline{\bullet}\colon\Subd(\Phi)\to \textnormal{HnstSubd}(\Phi)
    \end{equation*}
    is functorial and gives a fully faithful functor. In addition, if $S\colon \Phi^\prime\to\Phi$ was an honest subdivision, then $\underline{\Phi^\prime}$ is this same honest subdivision. Thus it is also essentially surjective, and therefore an equivalence.
\end{proof}

Of course, if $\Phi$ is a partially open fan, then $\textnormal{HnstSubd}(\Phi)$ is a small category. We close with a proof of Lemma \ref{lem: classical cycles and our cycles}.
\classicaltint*
\begin{proof}
    It is sufficient to consider honest subdivisions of $\Phi_\cX$ to compute its tropical cycles. Any honest subdivision of $\Phi_\cX$ gives rise to a subdivision of $\cX$, by intersecting it with the hyperplane $z=1$. Since honest subdisivions of $\Phi_\cX$ are given by partially open fans in the underlying vector space, it follows that taking intersection and keeping the corresponding weights will produce a Minkowski of one dimension less. To go the other way around, we consider a subdivision whose recessions cones for a fan (\cite{GilSombra}), and then extend the weights correspondingly (namely, following the notation of the construction, we can set $\omega(\sigma_\xi):=\omega(\xi)$). This produces a Minkowski weight of $\Phi_\cX$ of one dimension higher, and these processes are inverse to each other.
\end{proof}

\subsection{The ${\ord(\Phi)}$-construction.} We assume, for the sake of simplicity, that any two zero-dimensional cones of a poic-complex are isomorphic. This is not a restrictive condition and simplifies our work in this subsection. We will explain how to construct, for an arbitrary poic-complex whose underlying category is a poset, a subdivision that is a partially open simplicial fan in a specific vector space. This subdivision has the feature that at each cone of the poic-complex we obtain its barycentric subdivision. We first deal with the case where $\Phi$ consists of closed convex cones, and then explain how to do the general case. 

\begin{nota}
    For a closed simplicial cone $\sigma$, we let $n_\sigma$ denote the vector lying in the interior of the underlying polyhedral cone given by the sum of the integral generators of the rays of $\sigma$. 
\end{nota}

\begin{const}
    Suppose $\Phi$ is a poic-complex of closed convex polyhedral cones, and whose underlying category is a poset (in this case the underlying set is the set of isomorphism classes). We let $\inf\Phi$ denote the unique zero dimensional cone of $\Phi$. Let $C_{\ord(\Phi)}$ denote the poset of totally ordered chains of $C_\Phi$ of non-trivial cones, and let $N_{{\ord(\Phi)}}$ denote the free abelian group on $\Phi$, that is $N_{{\ord(\Phi)}} :=\bbZ^{\Phi}$. We set $V_{{\ord(\Phi)}}:=N_{{\ord(\Phi)}}\otimes_\bbZ \bbR = \bbR^{\Phi}$, and construct a simplicial fan in this vector space. Consider the following closed integral polyhedral cones of $V_{{\ord(\Phi)}}$ defined by the chains in $C_{\ord(\Phi)}$:
    \begin{itemize}
        \item At chains $\{s\}$ of length $1$, we define $\sigma_{s}$ as the one dimensional closed integral polyhedral cone of $V_{{\ord(\Phi)}}$ generated by the basis vector corresponding to $s$.
        \item At chains $c=\{s_1<s_2<\dots<s_n\}$, we define $\sigma_{s_1<\dots<s_n}$ as the closed integral polyhedral cone of $V_{{\ord(\Phi)}}$ generated by the rays $\sigma_{s_1},\dots,\sigma_{s_n}$.
    \end{itemize}
    Containment relations of chains in $C_{\ord(\Phi)}$ become face relations of the corresponding cones. Let $\underline{o}$ denote the closed integral polyhedral cone of $V_{{\ord(\Phi)}}$ given by the origin. By construction, the collection
    \begin{equation*}
        {\ord(\Phi)} := \{\sigma_c : c\in C_{\ord(\Phi)}\}\cup\{\underline{o}\}
    \end{equation*}
    is a fan in $V_{{\ord(\Phi)}}$. Notice that for positive $k$, the $k$-dimensional cones of ${\ord(\Phi)}$ correspond to the $k$-length chains of $\Phi$. We remark that the function
    \begin{equation*}
        \max : {\ord{\Phi}}\to \Phi, \begin{cases}
            \sigma_c\mapsto \max c, & c\in C_{\ord(\Phi)},\\
            \underline{o}\mapsto\inf\Phi,
        \end{cases}
    \end{equation*}
    is actually a functor. We upgrade this functor to a morphism of poic-complexes by defining the natural transformation as follows:
    \begin{itemize}
        \item At the trivial cone $\underline{o}$, it is the only possible morphism between zero dimensional poics.
        \item At the one-dimensional cones $\sigma_s$ with $s\in\Phi$, it is defined as the linear map $\eta_{\max,\sigma_s}\colon \sigma_s \to \Phi(s)$ given by mapping the generator of this ray to the vector $n_{\Phi(s)}$.
        \item At chains $c=\{s_1<\dots <s_n\}$ with $n>1$, it is defined as the linear map
        \begin{equation*}
            \eta_{\max,\sigma_c}\colon \sigma_c\to \Phi(\max c),
        \end{equation*}
        defined by the compositions $\sigma_{s_i}\xrightarrow{\eta_{\max, s_i}} \Phi(s_i)\xrightarrow{\Phi(s_i\leq \max c)}\Phi(\max c)$.
    \end{itemize}
    With the above, we obtain a morphism of poic-complexes $\max:{\ord(\Phi)}\to \Phi$.
\end{const}
\begin{lem}
    If $\Phi$ is a poic-complex of closed convex polyhedral cones whose underlying category is a poset, then the morphism $\max:{\ord(\Phi)}\to \Phi$ is a subdivision.
\end{lem}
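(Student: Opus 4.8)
The plan is to verify directly the three defining conditions of a subdivision from Definition \ref{defi: subdivision} for the morphism $\max\colon\ord(\Phi)\to\Phi$. The combinatorial parts — essential surjectivity of the underlying functor and the lifting property — are straightforward from the description of $C_{\ord(\Phi)}$ as the poset of chains of $C_\Phi$, so the substance of the argument is geometric and rests on a single lemma: for any chain $c=\{s_1<\dots<s_n\}$ of $C_\Phi$, the barycenters $n_{\Phi(s_1)},\dots,n_{\Phi(s_n)}$, viewed inside $\Phi(s_n)$ through the face inclusions, are $\bbR$-linearly independent. First I would record essential surjectivity: every cone of $\Phi$ is isomorphic to some $s$, and the length-one chain $\{s\}$ satisfies $\max\{s\}=s$, while the unique zero-dimensional cone is hit by $\underline{o}$.

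For the injectivity required in the first bullet of Definition \ref{defi: subdivision}, I would prove the independence lemma by induction on $n$. The vector $n_{\Phi(s_i)}$ lies in the relative interior $\Phi(s_i)^{o}$, hence in $\Lin(\Phi(s_i))$ but in the span of no proper face; since $s_{n-1}\to s_n$ is a strict face inclusion we have $\Lin(\Phi(s_{n-1}))\subsetneq\Lin(\Phi(s_n))$, and by induction $\Span(n_{\Phi(s_1)},\dots,n_{\Phi(s_{n-1})})\subseteq\Lin(\Phi(s_{n-1}))$, so $n_{\Phi(s_n)}$ cannot be a combination of the earlier barycenters. Independence makes each $\eta_{\max,\sigma_c}$ injective, as desired. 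It also yields the count $d(\sigma_c)=\#c\le\dim\Phi(\max c)$, with equality exactly for complete flags $\dim\Phi(s_i)=i$; in that case injectivity between spaces of equal dimension forces $\eta_{\max,\sigma_c}$ to be a linear isomorphism onto $\Lin(\Phi(\max c))$, which is the content of the ``furthermore'' clause.

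The heart of the proof, and the main obstacle, is the second bullet: that for each cone $s$ the relative interior $\Phi(s)^{o}$ is the disjoint union of the images $\eta_{\max,\sigma_c}(\sigma_c^{o})$ over chains $c$ with $\max c\cong s$, the images characterizing the chains. I would reduce this to a single cone $\sigma=\Phi(s)$ and prove that the order cells $\cone(n_{\Phi(s_1)},\dots,n_{\Phi(s_n)})$ attached to flags $s_1<\dots<s_n=s$ form a polyhedral subdivision of $\sigma$ whose relative interiors partition $\sigma^{o}$. This is the classical statement that the order (barycentric) subdivision of a cone is genuine, provable by induction on $\dim\sigma$: the chains ending at $s$ are precisely $\{s\}$ together with the chains obtained by appending $s$ to a chain of a proper face, so the order cells over $s$ are exactly $\cone(n_\sigma,-)$ applied to the order cells of the proper faces, and coning the barycenter $n_\sigma\in\sigma^{o}$ over the subdivision of $\partial\sigma=\bigcup_{\tau\lneq\sigma}\tau$ supplied by the inductive hypothesis tiles $\sigma^{o}$. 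Disjointness is immediate once one observes that from a point of such a cell one recovers the flag — each $n_{\Phi(s_i)}$ remembers the face $\Phi(s_i)$ in whose relative interior it sits — and since $C_\Phi$ is a poset a chain is determined by its underlying set; this simultaneously yields the required ``characterize up to isomorphism.''

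Finally, for the lifting property of the third bullet I would use that chains extend: given $\sigma_c$ with $\max c=s_n$ and a morphism $s_n\to s'$ of $\Phi$, the chain $c\cup\{s'\}$ is well defined since $s_n<s'$, the inclusion $\sigma_c\le\sigma_{c\cup\{s'\}}$ is a face relation of $\ord(\Phi)$, and $\max$ carries it to $s_n\to s'$, so every required morphism is the image of one in $\ord(\Phi)$. The one point demanding care is the meaning of ``isomorphism'' in the first bullet: since $\Phi$ need not be unimodular, the barycenters of a complete flag need not form a lattice basis, so $\eta_{\max,\sigma_c}$ must be read as an isomorphism of the underlying cones (equivalently, an isomorphism after $\otimes_\bbZ\bbR$), which is exactly what the independence lemma delivers. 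I expect this interpretive point, together with the inductive verification of the tiling in the second bullet, to be the only genuinely delicate steps.
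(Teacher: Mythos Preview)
The paper does not give a proof here: it reads, in full, ``The above construction provides the barycentric subdivision at each cone of $\Phi$. The proof is a long, but routine, check which we omit.'' Your outline therefore supplies what the paper leaves out, and the mathematical plan---independence of the barycenters along any flag, plus an inductive proof that coning the barycenter over the already-subdivided boundary tiles each cone---is the standard barycentric-subdivision argument and is correct.

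One point to watch: your treatment of the third bullet of Definition~\ref{defi: subdivision} lifts morphisms $\max(\sigma_c)\to s'$ upward by appending $s'$ to the chain, but the bullet as literally written asks to lift $s\to\max(\sigma_c)$ downward, i.e.\ to exhibit a face of $\sigma_c$ over a prescribed face $s$ of its image. That literal reading actually \emph{fails} for $\ord(\Phi)$ (the ray $\sigma_{\{s_2\}}$ has no face mapping to $s_1$ when $s_1<s_2$), while the upward version you prove is exactly what the paper invokes later when it says ``surjectivity follows from the third condition''; so you may have repaired a misprint rather than misread. Your closing remark about unimodularity is likewise well placed: when $\Phi(s)$ is a non-unimodular simplicial cone the barycenters of a complete flag span only the sublattice generated by the primitive ray vectors, so the integral-isomorphism clause in the first bullet holds after $\otimes_\bbZ\bbR$ but not integrally. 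Neither subtlety is addressed in the paper's one-line proof.
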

\begin{proof}
    The above construction provides the barycentric subdivison at each cone of $\Phi$. The proof is a long, but routine, check which we omit.
\end{proof}

We use the above construction to obtain an analogous result for general poic-complexes whose underlying categories are posets. The idea is to first construct a poic-complex of closed convex polyhedral cones whose underlying category is also a poset that additionally contains our starting poic-complex as a full subcategory (and as subcones), then perform the previous construction in the new one, and finally just remove faces accordingly.

\begin{const}
    Suppose $\Phi$ is a poic-complex whose underlying category is a poset. For each cone $s\in\Phi$, let $\overline{\Phi(s)}$ denote the closed poic defined by $\Phi(s)$. For a cone $s$ of $\Phi$, we define $\add(s)$ as the set of faces $\tau\leq \overline{\Phi(s)}$ ($\add(s)$ stands for additional faces of $s$) subject to the condition: there does not exist $t\lneq s$ such that $\tau$ is a face of the image of $\overline{\Phi(t)}$ under $\Phi(t\lneq s)$. These come with a natural relation given by inclusion of faces.\\
    We now define the poset $C_{\overline{\Phi}}$ inductively as follows. Let $n = \max \{\dim s : s\in C_\Phi\}$, and set $C_{0} = C_\Phi$. Define 
    \begin{equation*}
        C_{i+1} := C_i \cup \bigcup_{s\in C_\Phi, d(s) = i+1} \add(s),
    \end{equation*}
    so that $C_{\overline{\Phi}}:=C_n$, and its order relation is given by extension of the following:
    \begin{itemize}
        \item the order relation of $C_\Phi$.
        \item the order relation (face relation) of each $\add(s)$, for each cone $s$ of $\Phi$ .
        \item setting that $\tau\leq s$, for every $\tau\in \add(s)$ and $s\in C_\Phi$.
    \end{itemize}
    We now set 
    \begin{equation*}
        {\overline{\Phi}}:C_{\overline{\Phi}}\to \POIC, p\mapsto \begin{cases}
            \overline{\Phi(p)},& p\in C_\Phi,\\
            p, &p\in \add(s),\textnormal{ for some }s\in C_\Phi,
        \end{cases}
    \end{equation*}
    this preserves the face relations, and hence makes the tuple $\overline{\Phi}:=(C_{\overline{\Phi}},{\overline{\Phi}})$ into a poic-complex. Furthermore, this is a poic-complex of closed polyhedral cones whose underlying category is a poset. To finalize, we consider $\ord(\overline{\Phi})$, which is a fan that subdivides $\overline{\Phi}$, and take the corresponding partially open integral cones to obtain a partially open fan that subdivides $\Phi$.
\end{const}
In general, when $\Phi$ is a poic-complex whose underlying category is a poset, we will refer to this construction as the \emph{$\ord(\Phi)$-construction}. For general poic-complexes, namely if we drop the assumption on zero dimensional cones, we obtain an analogous result by looking at the maximal subcomplexes that satisfy this property. There are finitely many, and for each of these we would obtain such a subdivision.

\subsection{Proofs of Lemmas \ref{lem: subdivisions are essentially small for simplicial}, \ref{lem: modern cycles and our cycles}, \ref{lem: P-fine subdivisions are final}, and \ref{lem: refining to pi-compatible subdivisions}}\label{appendix: proofs of lemmas}

\sbdvs*

\begin{proof}
    From Lemmas \ref{lem: poic-complex equiv to poset poic-complex} and \ref{lem: equiva poic-complexes equivalent subdivisions}, it can be assumed, without loss of generality, that $\Phi$ is a poic-complex whose underlying category is a poset. The result follows by applying the $\ord(\Phi)$-construction and Lemma \ref{lem: partially open fan subd equiv to hnstsubd} (just as at the end of the previous subsection, we must look at the maximal subcomplexes that satisfy the assumption on zero dimensional cones).
\end{proof}

\moderntint*
\begin{proof}
The fan $\ord(\mathrm{p}(\Sigma))$ is a closed fan, and hence a cone complex. In addition, the morphism $\phi^\Sigma$ makes it a weakly embedded cone complex. Notice that $\ord(\mathrm{p}(\Sigma))$ is isomorphic (as weakly embedded cone complexes) to the barycentric subdivision of $\Sigma$, and hence have isomorphic groups of tropical cycles of weakly embedded cone complexes. Proper subdivisions of this weakly embedded cone complex $\ord(\mathrm{p}(\Sigma))$ are the same as honest subdivisions of the underlying fan. Hence, Lemma \ref{lem: partially open fan subd equiv to hnstsubd} shows that the group of tropical cycles of this weakly embedded cone complex is isomorphic to the group of tropical cycles of this fan.
\end{proof}

\pfnsbdvs*

\begin{proof}
    Because of Lemma \ref{lem: poic-complex equiv to poset poic-complex}, we assume without loss of generality that the underlying category of $\Psi$ is a poset. Then can we proceed just as in Construction 2.24 of \cite{GathmannKerberMarkwigTFMSTC}.
\end{proof}

\pfnpicmptblsbdvsns*

\begin{proof}
    If $E\colon \Sigma\to\Phi$ is an equivalence, then:
    \begin{itemize}
        \item $\pi\circ E\colon \Sigma\to \cX$ is a poic-fibration.
        \item Any $(\pi\circ E)$-compatible subdivision is, after pre-composition with $E$ (see Lemma \ref{lem: precomposition of subd with eq is subd.}) a $\pi$-compatible subdivision.
    \end{itemize}
    From Lemma \ref{lem: poic-complex equiv to poset poic-complex}, it follows that there is an equivalence $E\colon \Sigma\to \Phi$ where the underlying category of $\Sigma$ is a poset. We can then apply the functor $E^*$ (see the proof of Lemma \ref{lem: equiva poic-complexes equivalent subdivisions}), and obtain a subdivision $E^*S\colon E^*\Phi^\prime\to \Sigma$ such that $\left(E^*S\right) \circ E \cong S$. Applying once again Lemma \ref{lem: poic-complex equiv to poset poic-complex}, we obtain an equivalence $E^\prime\colon \Sigma^\prime\to E^*\Phi^\prime$. The composition $E^*S\circ E^\prime$ is then a subdivision $S_E\colon \Sigma^\prime\to\Sigma$, where the underlying categories of both poic-complexes are finite posets. Since any subdivision $S^\prime_E\colon \Sigma^{\prime\prime}\to\Sigma^\prime$ gives rise to a subdivision $E^\prime\circ S^\prime_E\colon\Sigma^{\prime\prime}\to E^*\Phi^\prime$ and $S^\prime_E\circ E \cong S$, it is sufficient to show the lemma when the underlying categories of both $\Phi^\prime$ and $\Phi$ are finite posets.\\
    Let $n=\max_{s\in\Phi} d(s)$. Observe that for $s\in\Phi$, the subdivision $S^{-1}(s)$ can be regarded as a full subcategory of $\Phi^\prime$. Such identifications will be used throughout the proof. Since the underlying category of $\Phi$ is a poset, the set of isomorphism classes $[\Phi]$ is simply $\Phi$. The poic-fibration $\pi\colon\Phi\to\cX$ induces a partition of $\Phi$ indexed by $[\cX]$:
    \begin{equation*}
        \Phi = \bigsqcup_{[x]\in\cX} \Phi_{[x]},
    \end{equation*}
    where for $[x]\in\cX$, the set $\Phi_{[x]}$ consists of the $s\in \Phi$ with $[\pi(s)]=[x]$. Now, we make the following choices:
    \begin{itemize}
        \item For each $[x]\in [\cX]$ a $s_{[x]}\in \Phi_{[x]}$.
        \item For every $s\in\Phi_{[x]}$ a fixed isomorphism $f_s\colon\pi(s_{[x]})\to \pi(s)$ of $\cX$.
    \end{itemize}
    We will construct subdivisions $S_i\colon \Phi^\prime_i\to\Phi(\leq i)$ (for $2\leq i\leq \max_{s\in\Phi} d(s)$) inductively by dimension, with the goal that $\Phi^\prime_i$ is stable under isomorphisms between cones of $\cX$ of dimension $\leq i$. 
    \begin{itemize}
        \item For $i=2$, we consider $[x]\in[\cX](2)$. The automorphisms of $\pi(s_{[x]})$ act on ${\Phi}(s_x)^0$, and the subdivision $S^{-1}(s_x)$ can then be pushed through by any of these automorphisms of $\pi(s_{[x]})$. This gives multiple new subdivisions of $\Phi(s_{[x]})^0$, and taking the intersection of all of these produces a subdivision that will be stable under the automorphisms of $\pi(s_{[x]})$. We do this for every $[x]\in[\cX](2)$, and then transport these subdivisions to all $s\in\Phi$ with $d(s)=2$ by means of the maps induced by the isomorphisms $f_s\colon \pi(s)\to \pi(s_{[\pi(s)]})$. Since we are dealing with cones of dimension $2$, subdivisions of these cones can be regarded as honest subdivisions of $\Phi(s)$. Keeping track of the corresponding face relations gives rise to a poset $C_{\Phi^\prime_2}$, and taking corresponding subcones will make this poset into a poic-complex $\Phi^\prime_2$ with a morphism $S_2\colon \Phi^\prime_2\to\Phi^\prime(\leq 2)$. This is already stable with respect to isomorphisms between $2$-dimensional cones of $\cX$ by construction (any isomorphism will factor as a composition of the $f_s$ with some automorphism of the $\pi(s_x)$). The reason for dimension $2$ is just that this is the smallest where the previous construction is non-trivial.
        
        \item Given $S_i\colon \Phi_i^\prime\to \Phi^\prime(\leq i)$, we construct $\Phi_{i+1}^\prime$. First, we extend $S_i$ to a subdivision of $\Phi^\prime(\leq i+1)$ by making stellar subdivisions, also called star subdivisions, (along arbitrary rays) at every $(i+1)$-dimensional cone of $\Phi^\prime$, and then linearly extending from the faces inwards. This produces a subdivision $\hat{S}_{i}\colon \hat{\Phi}_i^\prime\to \Phi^\prime(\leq i+1)$. Since stellar subdivisions do not affect the boundary of the cones, this $\hat{S}_i$ is still stable under isomorphisms between objects of $\cX$ of dimension $\leq i$. Consider now $[x]\in[\cX](i+1)$ and mimic the previous procedure. The automorphisms of $\pi(s_{[x]})$ act on ${\Phi}(s_{[x]})^0$, and we push through these the subdivision $(S\circ\hat{S}_i)^{-1}(s_{[x]})$. This gives multiple new subdivisions of $\Phi(s)^0$, which we intersect and obtain a subdivision of $\Phi(s_{[x]})^0$ stable under the automorphisms of $\pi(s_{[x]})$. We repeat this for every $[x]\in[\cX](i+1)$, and then transport these subdivisions to all the $s\in\Phi$ with $d(s)=i+1$ by means of the maps induced by the isomorphisms $f_s\colon \pi(s)\to \pi(s_{[\pi(s)]})$. Since $\hat{\Phi}^\prime_i$ is stable under isomorphisms between objects of $\cX$ of dimension $\leq i$, it follows that these subdivisions can be extended to the whole $\Phi(s)$. By keeping track of the corresponding face relations we obtain a finite poset $C_{\Phi^\prime_{i+1}}$, and taking corresponding subcones makes this poset into a poic-complex $\Phi^\prime_{i+1}$ with a morphism $S_{i+1}\colon \Phi^\prime_{i+1}\to\Phi^\prime(\leq i+1)$. By construction, this subdivision is stable under isomorphisms between objects of $\cX$ of dimension $\leq i+1$.    
    \end{itemize}
    
    \end{proof}

\end{document}